\newcommand*{\mailto}[1]{\href{mailto:#1}{\nolinkurl{#1}}}
\newcommand{\bbC}{{\mathbb{C}}}
\newcommand{\bbN}{{\mathbb{N}}}
\newcommand{\bbR}{{\mathbb{R}}}
\newcommand{\cB}{{\mathcal B}}
\newcommand{\cF}{{\mathcal F}}
\newcommand{\cH}{{\mathcal H}}
\newcommand{\cM}{{\mathcal M}}
\newcommand{\cN}{{\mathcal N}}
\newcommand{\cP}{{\mathcal P}}
\newcommand{\cS}{{\mathcal S}}
\newcommand{\gB}{{\mathfrak{B}}}
\DeclareMathOperator{\supp}{supp}
\DeclareMathOperator{\dom}{dom}
\DeclareMathOperator{\tr}{tr}
\renewcommand{\Re}{\text{\rm Re}}
\renewcommand{\Im}{\text{\rm Im}}
\renewcommand{\ln}{\text{\rm ln}}
\newcommand{\loc}{\text{\rm{loc}}}
\newcommand{\no}{\notag}
\newcommand{\lb}{\label}
\newcommand{\f}{\frac}
\newcommand{\ol}{\overline}
\newcommand{\wti}{\widetilde}
\newcommand{\bi}{\bibitem}
\let\geq\geqslant
\let\leq\leqslant
\def\theequation{\@arabic\c@equation}
\numberwithin{equation}{section}
\newtheorem{theorem}{Theorem}[section]
\newtheorem{proposition}[theorem]{Proposition}
\newtheorem{lemma}[theorem]{Lemma}
\newtheorem{corollary}[theorem]{Corollary}
\newtheorem{definition}[theorem]{Definition}
\newtheorem{example}[theorem]{Example}
\theoremstyle{remark}
\newtheorem{remark}[theorem]{Remark}
\begin{document}

\numberwithin{equation}{section}
\allowdisplaybreaks

\title[(Conditional) Positive Semidefiniteness]{On (Conditional) Positive Semidefiniteness in a Matrix-Valued Context}

\author[F.\ Gesztesy]{Fritz Gesztesy}
\address{Department of Mathematics,
University of Missouri, Columbia, MO 65211, USA}
%\email{\mailto{gesztesyf@missouri.edu}}
%\email{gesztesyf@missouri.edu}
%\urladdr{\url{https://www.math.missouri.edu/people/gesztesy}}
%\urladdr{https://www.math.missouri.edu/people/gesztesy}
\address{Present address: Department of Mathematics, 
Baylor University, One Bear Place \#97328,
Waco, TX 76798-7328, USA}
\email{\mailto{Fritz\_Gesztesy@baylor.edu}}
%\email{Fritz$\_$Gesztesy@baylor.edu}
\urladdr{\url{http://www.baylor.edu/math/index.php?id=935340}}
%\urladdr{http://www.baylor.edu/math/index.php?id=935340}

\author[M.\ Pang]{Michael Pang}
\address{Department of Mathematics,
University of Missouri, Columbia, MO 65211, USA}
\email{\mailto{pangm@missouri.edu}}
%\email{pangm@missouri.edu}
\urladdr{\url{https://www.math.missouri.edu/people/pang}}
%\urladdr{https://www.math.missouri.edu/people/pang}

%\dedicatory{}

\thanks{{\it Studia Math.} {\bf 236}, 143--192 (2017).} 

\date{\today}
\subjclass[2010]{Primary 42A82, 42B15, 43A35; Secondary 43A15, 46E40, 46G10.}
\keywords{Matrix-valued positive definiteness, matrix-valued conditional positive definiteness, Schoenberg's theorem for matrix-valued functions, Hadamard exponential, positivity preserving.}

%%%%%%%%%%%%
%%%%%%%%%%%%
\begin{abstract} 
In a nutshell, we intend to extend Schoenberg's classical theorem connecting conditionally 
positive semidefinite functions $F\colon \mathbb{R}^n \to \bbC$, $n \in \mathbb{N}$, and their 
positive semidefinite exponentials $\exp(tF)$, $t > 0$, to the case of matrix-valued functions 
$F \colon \mathbb{R}^n \to \mathbb{C}^{m \times m}$, $m \in \mathbb{N}$. Moreover, we study  
the closely associated property that $\exp(t F(- i \nabla))$, $t>0$, is positivity preserving and 
its failure to extend directly in the matrix-valued context. 
\end{abstract}
%%%%%%%%%%%%
%%%%%%%%%%%%

\maketitle 

%\vspace*{-3mm} 
{\scriptsize{\tableofcontents}}
%\normalsize

%%%%%%%%%%%%%%%%%%%%%%%%%%%%%%
%%%%%%%%%%%%%%%%%%%%%%%%%%%%%%
\section{Introduction}  \lb{s1}
%%%%%%%%%%%%%%%%%%%%%%%%%%%%%%
%%%%%%%%%%%%%%%%%%%%%%%%%%%%%%

To set the stage and hence describe the matrix-valued extensions of some of the classical results on 
(conditional) positive semidefiniteness we are interested in, we first briefly recall the basic 
definitions of positive semidefinite and conditionally positive semidefinite matrices 
$A \in \bbC^{m \times m}$ and positive semidefinite and conditionally positive semidefinite 
functions $F \colon \bbR^n \to \bbC$, and then state three classical results in this context:

%%%%%%%
\begin{definition} \lb{d1.1}
Let $m \in \bbN$, and $A \in \bbC^{m \times m}$, and 
suppose that $F \colon \bbR^n \to \bbC$, $n \in \bbN$. \\[1mm]
$(i)$ $A$ is called {\bf positive semidefinite}, also denoted by $A \geq 0$, if 
\begin{equation}
(c, A c)_{\bbC^m} = \sum_{j,k=1}^m \ol{c_j} \, A_{j,k} c_k \geq 0 \, 
\text{ for all } \, c=(c_1,\dots,c_m)^{\top} \in \bbC^m. 
\end{equation} 
$(ii)$ $A=\{A_{j,k}\}_{1\leq j,k \leq m} = A^* \in \bbC^{m \times m}$ is said to 
be {\bf conditionally positive semidefinite} if 
\begin{equation}
(c, A c)_{\bbC^m} \geq 0 \, 
\text{ for all } \, c=(c_1,\dots,c_m)^{\top} \in \bbC^m, \, \text{ with } \, \sum_{j=1}^m c_j = 0. 
\end{equation} 
$(iii)$ $F$ is called {\bf positive semidefinite} if for all $N \in \bbN$, $x_p \in \bbR^n$, 
$1 \leq p \leq N$, the matrix $\{F(x_p - x_q)\}_{1 \leq p,q \leq N} \in \bbC^{N \times N}$ is 
positive semidefinite. \\[1mm] 
$(iv)$ $F$ is called {\bf conditionally positive 
semidefinite} if for all $N \in \bbN$, $x_p \in \bbR^n$, $1 \leq p \leq N$, the 
matrix $\{F(x_p - x_q)\}_{1 \leq p,q \leq N} \in \bbC^{N \times N}$ is conditionally positive 
semidefinite. \\[1mm]
$(v)$ $F$ is called {\bf positive semidefinite in the sense of Schoenberg} if $F(-x) = \ol{F(x)}$, 
$x \in \bbR^n$, and if for all 
$N \in \bbN$, $x_p \in \bbR^n$, $1 \leq p \leq N$, the matrix 
$\big\{F(x_p - x_q) - F(x_p) - \ol{F(x_q)}\big\}_{1 \leq p,q \leq N} \in \bbC^{N \times N}$ is 
positive semidefinite. \\[1mm] 
$(vi)$ 
Let $T \in \cB\big( L^2(\bbR^n)\big)$.~Then $T$ is called {\bf positivity preserving} 
$($in $ L^2(\bbR^n)$$)$ if for any $0 \leq f \in L^2(\bbR^n)$ also $T f \geq 0$. 
\end{definition}
%%%%%%%

In connection with Definition \ref{d1.1}\,$(iv)$ one can show that if $F$ is conditionally positive 
semidefinite, then $F(-x) = \ol{F(x)}$, $x \in \bbR^n$. In addition, one observes that for $T$ to be positivity preserving it suffices to take $0 \leq f \in C_0^{\infty} (\bbR^n)$ in Definition \ref{d1.1}\,$(vi)$. 

Given the notions just introduced in Definition \ref{d1.1}, we now recall three classical results. We 
start with Schoenberg's Theorem \cite{Sc38}, who studied isometric imbeddability of 
separable spaces with appropriate distance functions into a Hilbert space.

%%%%%%%
\begin{theorem} [cf., e.g., {\cite{BCR76}}, {\cite[Sect.\ 3.6]{Ja01}}, {\cite[Proposition~4.4]{SSV12}}] \lb{t1.3} ${}$ \\
Assume that $F \colon \bbR^n \to \bbC$. Then the following conditions $(i)$--$(iii)$ are 
equivalent: \\[1mm] 
$(i)$ $F(0) \leq 0$ and $F$ is conditionally positive semidefinite.
\\[1mm]
$(ii)$ $F(0) \leq 0$ and for all $t > 0$, $\exp(t F)$ is positive semidefinite. \\[1mm]
$(iii)$ $F$ is positive semidefinite in the sense of Schoenberg. \\[1mm] 
If, in addition, $F$ is locally bounded and one of conditions $(i)$--$(iii)$ holds, there exists 
$C > 0$ such that
\begin{equation}
|F(x)| \leq C \big[1 + |x|^2\big], \quad x \in \bbR^n.    \lb{1.7}
\end{equation}
\end{theorem}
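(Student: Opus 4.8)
The plan is to prove the cycle of implications $(i)\Rightarrow(iii)\Rightarrow(ii)\Rightarrow(i)$, plus the final growth bound.

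For $(i)\Rightarrow(iii)$, first I would recall that conditional positive semidefiniteness of $F$ already forces $F(-x)=\ol{F(x)}$, so the symmetry hypothesis in (iii) is automatic. The key step is the classical ``centering'' trick: given points $x_1,\dots,x_N\in\bbR^n$, adjoin the extra point $x_0:=0$ and, for coefficients $c_1,\dots,c_N\in\bbC$, choose $c_0:=-\sum_{p=1}^N c_p$ so that $\sum_{p=0}^N c_p=0$. Applying conditional positive semidefiniteness of $\{F(x_p-x_q)\}_{0\le p,q\le N}$ to the vector $(c_0,c_1,\dots,c_N)$ and expanding the resulting double sum, the $F(x_p-x_q)$-terms with $p,q\ge1$ survive, the cross terms produce $-F(x_p)$ and $-\ol{F(x_q)}$ after using $F(0-x_q)=F(-x_q)=\ol{F(x_q)}$, and the $(0,0)$-term contributes $F(0)|c_0|^2$. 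Since $F(0)\le0$ and $|c_0|^2\ge0$, dropping that term only decreases the sum, yielding $\sum_{p,q=1}^N \ol{c_p}\,\big[F(x_p-x_q)-F(x_p)-\ol{F(x_q)}\big]\,c_q\ge0$, which is precisely (iii).

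For $(iii)\Rightarrow(ii)$: writing $G(x):=F(x)-F(0)$, Schoenberg positive semidefiniteness of $F$ gives that $G$ is positive semidefinite in the ordinary sense (the matrix $\{G(x_p-x_q)\}$ equals $\{F(x_p-x_q)-F(x_p)-\ol{F(x_q)}\}$ plus a rank-structure of the form $b_p\ol{b_q}$-type corrections that are themselves positive semidefinite once one checks $G(0)=0$ and uses the Schoenberg condition with a well-chosen extra point — this is the standard lemma identifying Schoenberg-positive functions with ``$F(0)+\psi$'' where $\psi$ is conditionally positive semidefinite with $\psi(0)=0$). Then $\exp(tF)=e^{tF(0)}\exp(tG)$, and since products and sums of positive semidefinite matrices behave well under the Schur (Hadamard) product, $\exp(tG)=\sum_{k\ge0}(t^k/k!)\,G^{\odot k}$ is positive semidefinite for each $t>0$ because Hadamard powers of positive semidefinite matrices are positive semidefinite (Schur product theorem) and the series converges entrywise; multiplying by the positive scalar $e^{tF(0)}$ preserves this, and $F(0)\le0$ is carried along as the first clause of (ii).

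For $(ii)\Rightarrow(i)$: differentiate at $t=0^+$. For any fixed configuration with $\sum c_p=0$, the quadratic form $t\mapsto \sum_{p,q}\ol{c_p}\,\exp(tF)(x_p-x_q)\,c_q$ is $\ge0$ for $t>0$ and equals $\sum_{p,q}\ol{c_p}c_q + t\sum_{p,q}\ol{c_p}F(x_p-x_q)c_q+O(t^2)$; the zeroth-order term vanishes because $\sum c_p=0$ makes $\sum_{p,q}\ol{c_p}c_q=|\sum c_p|^2=0$, so dividing by $t$ and letting $t\downarrow0$ gives $\sum_{p,q}\ol{c_p}F(x_p-x_q)c_q\ge0$, i.e.\ conditional positive semidefiniteness; the condition $F(0)\le0$ is part of (ii). Finally, for the growth bound \eqref{1.7} under local boundedness: using (iii), the $2\times2$ matrix $\{F(x_p-x_q)-F(x_p)-\ol{F(x_q)}\}$ with $x_1=x$, $x_2=-x$ (or $x_1=x,x_2=0$) is positive semidefinite, whose nonnegativity of diagonal entries gives $-2\Re F(x)\ge$ (something like) $-F(0)-\Re F(0)$, controlling $\Re F(x)$, and the $2\times2$ determinant condition then bounds $|F(2x)|$ in terms of $|F(x)|$ and $F(0)$; iterating this doubling estimate together with local boundedness on a ball yields the quadratic majorant $|F(x)|\le C(1+|x|^2)$.

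The main obstacle I anticipate is the clean bookkeeping in $(iii)\Rightarrow(ii)$: correctly decomposing the Schoenberg matrix into a genuinely positive semidefinite part plus the rank-one (or low-rank) corrections coming from the $F(x_p)$ and $\ol{F(x_q)}$ terms, so that $\exp(tG)$ (rather than $\exp(tF)$ directly) is the object to which the Schur product theorem applies — and then tracking the scalar factor $e^{tF(0)}$. The differentiation step in $(ii)\Rightarrow(i)$ is routine but needs the remark (already noted in the text after Definition \ref{d1.1}) that conditional positive semidefiniteness forces Hermitian symmetry $F(-x)=\ol{F(x)}$, so the quadratic form is real and the $O(t)$ expansion is legitimate.
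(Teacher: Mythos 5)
Your implications $(i)\Rightarrow(iii)$ and $(ii)\Rightarrow(i)$ are correct and match the techniques used in the paper's matrix-valued analogs (Theorem \ref{t4.5} and Theorem \ref{t2.7}, respectively, specialized to $m=1$). The growth bound is also the right kind of doubling/subadditivity argument, in the spirit of Lemma \ref{l4.13} and Theorem \ref{t4.14}.

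The step $(iii)\Rightarrow(ii)$, however, contains a genuine error. You assert that $G(x):=F(x)-F(0)$ is positive semidefinite in the ordinary sense and then apply the Schur product theorem to the Hadamard powers $G^{\odot k}$ to conclude that $\exp(tG)$ is psd. But $G$ is \emph{not} positive semidefinite; it is only conditionally positive semidefinite with $G(0)=0$. A concrete counterexample is $F(x)=-|x|^2$ on $\bbR$: here $F(0)=0$, so $G=F$, and the $2\times 2$ matrix $\{G(x_p-x_q)\}$ with $x_1=0$, $x_2=1$ equals $\left(\begin{smallmatrix}0&-1\\-1&0\end{smallmatrix}\right)$, which has a negative eigenvalue. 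Yet $F$ is Schoenberg-psd, since $F(x_p-x_q)-F(x_p)-\overline{F(x_q)}=2x_px_q$ is a rank-one psd kernel. The ``low-rank corrections'' you invoke, namely $F(x_p)+\overline{F(x_q)}-F(0)$, form a Hermitian matrix $a\mathbf{1}^*+\mathbf{1}a^*-F(0)\mathbf{1}\mathbf{1}^*$ which is not positive semidefinite in general, so the proposed decomposition of $\{G(x_p-x_q)\}$ into a psd matrix plus psd corrections does not hold. The correct route is to factor the exponential entrywise as
\begin{equation*}
e^{tF(x_p-x_q)}\;=\;e^{t\big[F(x_p-x_q)-F(x_p)-\overline{F(x_q)}\big]}\cdot e^{tF(x_p)}\cdot e^{t\overline{F(x_q)}}\;=\;\big(\exp_H(tM)\big)_{p,q}\,b_p\,\overline{b_q},
\end{equation*}
where $M_{p,q}=F(x_p-x_q)-F(x_p)-\overline{F(x_q)}$ is psd by $(iii)$ and $b_p=e^{tF(x_p)}$. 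Then $\exp_H(tM)$ is psd by the Schur product theorem (applied to the convergent series of Hadamard powers of the \emph{psd} matrix $tM$), the rank-one factor $\{b_p\overline{b_q}\}$ is psd, and their Hadamard product is psd, yielding $(ii)$ once $F(0)\le 0$ is read off from $(iii)$ with $N=1$, $x_1=0$. Note also that the implication $(iii)\Rightarrow(i)$ is exactly the one that \emph{fails} to extend to the matrix-valued setting, as the paper shows in Remark \ref{r4.5b}; the paper's own chain for the matrix analog (Theorems \ref{t2.6}, \ref{t2.7}, \ref{t4.5}) therefore runs $(i)\Leftrightarrow(ii)$ directly via Lemma \ref{l2.2} and establishes $(i)\Rightarrow(iii)$ separately, avoiding the cycle you propose.
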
 
%%%%%%%

\noindent 
In this context see also \cite[Sects.\ 4.3, 4.4]{BCR84} and \cite[Sect.~II.7]{BF75}. 

Given $F \in C(\bbR^n)$ and $F$ polynomially bounded, one can define 
\begin{equation}
F(- i \nabla) \colon \begin{cases} C_0^{\infty}(\bbR^n) \to L^2(\bbR^n),  \\ 
f \mapsto F(- i \nabla) f = \big(f^{\wedge} F\big)^{\vee}. \end{cases} 
\end{equation}
More generally, if $F \in L^1_{\loc}(\bbR^n)$, one introduces the maximally defined operator of 
multiplication by $F$ in $L^2(\bbR^n)$, denoted by $M_F$, by
\begin{equation}
(M_F f)(x) = F(x) f(x), \quad 
f \in \dom(M_F) = \big\{g \in L^2(\bbR^n) \, \big | \, F g \in L^2(\bbR^n)\big\}, 
\end{equation} 
and then defines $F(- i \nabla)$ as a normal operator in $L^2(\bbR^n)$ via
\begin{equation} 
F(- i \nabla) = \cF^{-1} M_F \cF 
\end{equation}
(cf.\ \eqref{1.15}, \eqref{1.16} and their unitary extensions to $L^2(\bbR^n)$).  

%%%%%%%
\begin{theorem} [cf., e.g., {\cite{HS78}}, {\cite{JS98}}, {\cite[Theorems~XIII.52 and XIII.53]{RS78}}] \lb{t1.4} 
${}$ \\[1mm] 
Assume that $F \in C(\bbR^n)$ and there exists $c \in \bbR$ such that $\Re(F(x)) \leq c$. Then the following conditions $(i)$--$(iv)$ are equivalent: \\[1mm] 
$(i)$ For all $t > 0$, $\exp(t F(-i \nabla))$ is positivity preserving. \\[1mm] 
$(ii)$ For each $t > 0$, $e^{t F}$ is a positive semidefinite function. \\[1mm] 
$(iii)$ $F(-x) = \ol{F(x)}$, $x \in \bbR^n$, and $F$ is conditionally positive semidefinite.
\\[1mm]
$(iv)$ $($The Levy--Khintchine formula\,$)$. There exists, $\alpha \in \bbR$, $\beta \in \bbR^n$, 
$0 \leq A \in \bbC^{n \times n}$, and a nonnegative finite measure $\nu$ on $\bbR^n$, with 
$\nu(\{0\}) = 0$, such that 
\begin{align}
\begin{split} 
F(x) &= \alpha + i (\beta \cdot x) - (x \cdot (A x))   \\ 
& \quad + \int_{\bbR^n} \bigg[\exp(i (x \cdot y)) -1 - 
\frac{i (x \cdot y)}{1 + |y|^2} \bigg] \f{1 + |y|^2}{|y|^2} \, d \nu(y),  
\quad x \in \bbR^n.
\end{split} 
\end{align}
\end{theorem}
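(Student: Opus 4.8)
The plan is to establish the four-fold equivalence in Theorem~\ref{t1.4} by a cycle of implications, leaning heavily on the scalar Schoenberg result (Theorem~\ref{t1.3}) for the passage between conditional positive semidefiniteness and positive semidefiniteness of the exponentials, and on Bochner's theorem for the interplay between positive semidefinite functions and positivity on the Fourier side. Concretely, I would prove $(iii)\Leftrightarrow(ii)$, then $(ii)\Rightarrow(i)$, then $(i)\Rightarrow(ii)$, and finally $(iii)\Leftrightarrow(iv)$, so that $(iv)$ hangs off the already-established block.

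First, $(iii)\Leftrightarrow(ii)$: since $\Re(F)\leq c$, the function $G := F - c$ satisfies $\Re(G)\leq 0$, and adding a real constant affects neither conditional positive semidefiniteness nor the symmetry $F(-x)=\overline{F(x)}$; on the exponential side it only multiplies $\exp(tF)$ by the positive scalar $e^{tc}$, which does not affect positive semidefiniteness. Thus I may assume $G(0)=\Re(G(0))\leq 0$ and apply Theorem~\ref{t1.3}\,$(i)\Leftrightarrow(ii)$ directly to $G$; note the hypothesis $G(0)\leq 0$ is automatic here from $\Re(F)\leq c$. This also yields the polynomial growth bound \eqref{1.7} when $F$ is continuous, hence $F(-i\nabla)$ is well defined as a normal operator.

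Next, $(ii)\Rightarrow(i)$: by Bochner's theorem, $e^{tF}$ positive semidefinite (and continuous, bounded since $\Re(F)\leq c$) is the Fourier transform of a finite nonnegative measure $\mu_t$ on $\bbR^n$, so $\exp(tF(-i\nabla))=\cF^{-1}M_{e^{tF}}\cF$ acts on $L^2(\bbR^n)$ as convolution with $\mu_t$ (up to normalization constants), which is manifestly positivity preserving: if $0\leq f\in C_0^\infty(\bbR^n)$ then $f * \mu_t \geq 0$ pointwise. For $(i)\Rightarrow(ii)$, I would run this backwards: positivity of the convolution against all $0\leq f\in C_0^\infty$ forces the convolution kernel to be a nonnegative (tempered) distribution, hence by the Bochner--Schwartz theorem a nonnegative measure, whose Fourier transform is exactly $e^{tF}$; a positive-definite function being the transform of a nonnegative measure is positive semidefinite in the sense of Definition~\ref{d1.1}\,$(iii)$. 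The main obstacle here is the technical care needed to identify $\exp(tF(-i\nabla))$ with a convolution operator when $F$ merely satisfies $\Re(F)\leq c$ and grows polynomially --- one must check the semigroup is given by a bounded-measure convolution for every $t>0$, using the $L^2$-boundedness of the multiplier $e^{tF}$ and the fact that $e^{tF}$ is continuous and bounded so its inverse Fourier transform is a well-defined tempered distribution; the positivity-preserving hypothesis is what upgrades this distribution to a measure.

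Finally, $(iii)\Leftrightarrow(iv)$ is the Lévy--Khintchine representation. The direction $(iv)\Rightarrow(iii)$ is a direct verification: the symmetry $F(-x)=\overline{F(x)}$ is read off termwise from the integrand (using $\nu(\{0\})=0$ and that $\nu$ is a real measure), and conditional positive semidefiniteness follows because for $c$ with $\sum_j c_j=0$ the constant term $\alpha$, the linear term $i(\beta\cdot x)$, and the compensator $-i(x\cdot y)/(1+|y|^2)$ all drop out of $\sum_{j,k}\overline{c_j}c_k F(x_j-x_k)$, leaving $-\sum\overline{c_j}c_k (x_j-x_k)\cdot A(x_j-x_k)\geq 0$ plus $\int \big|\sum_j c_j e^{i(x_j\cdot y)}\big|^2 \frac{1+|y|^2}{|y|^2}\,d\nu(y)\geq 0$. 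For the converse $(iii)\Rightarrow(iv)$, I would invoke Schoenberg's structure theorem for conditionally positive semidefinite functions (the continuous, polynomially bounded case): write $F(x)-F(0)$, show $-(F(x)+F(-x))/2 + F(0)$ times suitable regularizations produces, via Bochner applied to $e^{tF}$ for small $t$ and a limiting argument $t\downarrow 0$, a negative-definite function in Schoenberg's sense, whose canonical representation is precisely the Lévy--Khintchine formula; this is the classical argument and I would cite \cite{BF75}, \cite{BCR84} rather than reproduce it. The hard part of the whole theorem, conceptually, is the equivalence of $(i)$ with the analytic conditions --- everything else is bookkeeping around Schoenberg's and Bochner's theorems.
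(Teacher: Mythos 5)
The paper does not prove Theorem~\ref{t1.4}; it is recalled as a classical result and supported only by the citations to Herbst--Sloan \cite{HS78}, Jacob--Schilling \cite{JS98}, and Reed--Simon \cite[Theorems~XIII.52, XIII.53]{RS78}, so there is no in-paper argument to compare your sketch against. That said, your outline follows the standard route found in those references, and the logical architecture you propose --- linking $(iii)$ and $(ii)$ through the scalar Schoenberg Theorem~\ref{t1.3}, linking $(ii)$ and $(i)$ through Bochner's theorem and translation invariance, and identifying $(iii)$ with $(iv)$ via the L\'evy--Khintchine representation --- is the right one.

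Two steps, however, need to be tightened. First, to apply Theorem~\ref{t1.3} to $G=F-c$ you must verify $G(0)\leq 0$, not merely $\Re(G(0))\leq 0$; the reality of $F(0)$ does not come for free from $\Re F\leq c$ (as your ``automatic'' remark suggests) but has to be extracted from the hypothesis in force: from $F(-x)=\overline{F(x)}$ in the direction $(iii)\Rightarrow(ii)$, or from $e^{tF(0)}>0$ for all $t>0$ forcing $\Im F(0)=0$ in the direction $(ii)\Rightarrow(iii)$. Second, in $(i)\Rightarrow(ii)$ the positivity-preserving hypothesis only yields that the convolution kernel $K_t=\cF^{-1}\bigl(e^{tF}\bigr)$ is a nonnegative tempered distribution, hence a Radon measure of at most polynomial growth; to invoke Bochner's Theorem~\ref{t1.2} you need $K_t$ to be a \emph{finite} measure. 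That finiteness must be extracted from the continuity and boundedness of $e^{tF}$ near the origin, e.g., by pairing $K_t$ with an approximate identity whose Fourier transform increases to~$1$ and invoking Fatou's lemma. Your proposal flags that technical care is needed at this juncture but does not actually supply the missing step.
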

%%%%%%%
 
The principal aim of this paper is to investigate to which degree Theorem \ref{t1.3} and 
Theorem \ref{t1.4}\,$(i)$--$(iii)$ extend to the matrix-valued context, where 
$F \colon \bbR^n \to \bbC^{m \times m}$, $m \in \bbN$, $m \geq 2$, and, if direct generalizations are impossible, in what modified form do extensions exist. We also note that a matrix-valued extension 
of  the Levy--Khintchine formula, Theorem \ref{t1.4}\,$(iv)$, while not the subject of this paper, is 
part of ongoing investigations. For a historical survey on infinitely divisible distributions and their connection to the Levy--Khintchine formula we refer to \cite{MR06} (and the extensive list of 
references cited therein). 

For completeness we also recall Bochner's theorem \cite{Bo33} as it naturally fits in with Theorems \ref{t1.3} and \ref{t1.4}:

%%%%%%%
\begin{theorem} [Bochner's Theorem, cf., e.g., {\cite[Sect.~5.4]{Ak65}}, 
{\cite[p.~13]{RS75}}, {\cite[p.~46]{SSV12}}] \lb{t1.2} 
${}$ \\ 
Assume that $F \in C(\bbR^n)$. Then the following conditions $(i)$ and 
$(ii)$ are equivalent: \\[1mm] 
$(i)$ $F$ is positive semidefinite. \\[1mm]
$(ii)$ There exists a nonnegative finite measure $\mu$ on $\bbR^n$ such that 
\begin{equation}
 F(x) = \mu^{\wedge}(x), \quad x \in \bbR^n.    \lb{1.3} 
\end{equation} 
In addition, if one of conditions $(i)$ or $(ii)$ holds, then
\begin{equation}
F(-x) = \ol{F(x)}, \quad |F(x)| \leq |F(0)|, \quad x \in \bbR^n, 
\end{equation} 
in particular, $F$ is bounded on $\bbR^n$. 
\end{theorem}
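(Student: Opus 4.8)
The plan is to treat the two implications separately, the direction $(ii)\Rightarrow(i)$ being essentially a one-line computation while $(i)\Rightarrow(ii)$ requires some work. For $(ii)\Rightarrow(i)$, write $F=\mu^\wedge$, i.e., in the paper's normalization $F(x)=\int_{\bbR^n}e^{i(x\cdot\xi)}\,d\mu(\xi)$; then for arbitrary $N\in\bbN$, $x_1,\dots,x_N\in\bbR^n$ and $c_1,\dots,c_N\in\bbC$,
\[
\sum_{p,q=1}^{N}\ol{c_p}\,c_q\,F(x_p-x_q)=\int_{\bbR^n}\left|\sum_{q=1}^{N}c_q\,e^{-i(x_q\cdot\xi)}\right|^{2}d\mu(\xi)\ge 0,
\]
so $\{F(x_p-x_q)\}_{1\le p,q\le N}$ is positive semidefinite (the sign in the exponent is immaterial here). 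Since $\mu\ge 0$ is a real measure, $F(-x)=\ol{F(x)}$, and $|F(x)|\le\int_{\bbR^n}d\mu=\mu(\bbR^n)=F(0)=|F(0)|$, which are the supplementary assertions; these also follow from $(i)$ alone, taking $N=1$ for $F(0)\ge 0$ and $N=2$ with nodes $0,x$, where the associated Hermitian positive semidefinite $2\times 2$ matrix has off-diagonal entries $F(\pm x)$ and nonnegative determinant, giving $|F(x)|\le F(0)$.

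For $(i)\Rightarrow(ii)$ I would use Gaussian regularization followed by a weak-$*$ limit of measures. For $\varepsilon>0$ set $F_\varepsilon(x):=e^{-\varepsilon|x|^{2}}F(x)$. The Gaussian $e^{-\varepsilon|x|^{2}}$ is a positive semidefinite function (it is $\nu^\wedge$ for a Gaussian measure $\nu$, by the half already proved), and a pointwise product of positive semidefinite functions is again positive semidefinite because the Hadamard product of positive semidefinite matrices is positive semidefinite (Schur product theorem); hence $F_\varepsilon$ is positive semidefinite. Moreover $|F|\le|F(0)|$ forces $F_\varepsilon\in C(\bbR^n)\cap L^{1}(\bbR^n)$, so $\widehat{F_\varepsilon}$ is bounded and continuous. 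To see $\widehat{F_\varepsilon}\ge 0$, fix $\xi_0\in\bbR^n$ and $\delta>0$ and put $g(x):=e^{-i(x\cdot\xi_0)}e^{-\delta|x|^{2}}$; approximating by Riemann sums over a large cube and using positive semidefiniteness of $\{F_\varepsilon(x_p-x_q)\}$ yields $\int_{\bbR^n}\int_{\bbR^n}F_\varepsilon(x-y)\,g(x)\,\ol{g(y)}\,dx\,dy\ge 0$, and the substitution $u=x-y$ (integrating out the complementary direction) turns this into a positive multiple of $\int_{\bbR^n}F_\varepsilon(u)\,e^{-i(u\cdot\xi_0)}e^{-\delta|u|^{2}/2}\,du$, which tends, as $\delta\downarrow 0$, to a positive multiple of $\widehat{F_\varepsilon}(\xi_0)$ by dominated convergence ($F_\varepsilon\in L^{1}$). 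As $\xi_0$ was arbitrary, $\widehat{F_\varepsilon}\ge 0$ on $\bbR^n$, so $d\mu_\varepsilon:=c_n\,\widehat{F_\varepsilon}(\xi)\,d\xi$ is a nonnegative measure, with $c_n>0$ the Fourier inversion constant.

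Next, a uniform mass bound: testing $\widehat{F_\varepsilon}\ge 0$ against $e^{-\tau|\xi|^{2}}$ and using $|F_\varepsilon|\le|F(0)|$ shows $\int_{\bbR^n}\widehat{F_\varepsilon}(\xi)e^{-\tau|\xi|^{2}}\,d\xi\le C|F(0)|$ uniformly in $\tau>0$, so by monotone convergence $\widehat{F_\varepsilon}\in L^{1}(\bbR^n)$, and Fourier inversion then gives $\mu_\varepsilon(\bbR^n)=F_\varepsilon(0)=F(0)$ together with $\mu_\varepsilon^\wedge(x)=F_\varepsilon(x)=e^{-\varepsilon|x|^{2}}F(x)$, $x\in\bbR^n$. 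Since the $\mu_\varepsilon$ have constant mass $F(0)$, Banach--Alaoglu yields a sequence $\varepsilon_k\downarrow 0$ with $\mu_{\varepsilon_k}$ converging weakly-$*$ (against $C_0(\bbR^n)$) to a nonnegative finite measure $\mu$; a standard tightness estimate --- bounding $\mu_\varepsilon(\{|\xi|\ge R\})$ by the mean value of $F(0)-\Re F_\varepsilon(x)$ over a ball of radius of order $1/R$ about the origin, which is uniformly small in $\varepsilon\in(0,1]$ as $R\to\infty$ by continuity of $F$ at $0$ --- rules out loss of mass at infinity, so one may test against $e^{i(x\cdot\xi)}\in C_b(\bbR^n)$ and obtain $\mu^\wedge(x)=\lim_k\mu_{\varepsilon_k}^\wedge(x)=\lim_k e^{-\varepsilon_k|x|^{2}}F(x)=F(x)$, with $\mu(\bbR^n)=\mu^\wedge(0)=F(0)$. (Equivalently, these last two steps are precisely the content of the Levy continuity theorem.)

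The step I expect to be the main obstacle is the pair just described: extracting honest nonnegativity of $\widehat{F_\varepsilon}$ from the merely discrete positive semidefiniteness of $F$, and then the uniform $L^{1}$/mass bound that legitimizes the weak-$*$ limit; the remaining ingredients (the Schur-product reduction, the Riemann-sum passage, the final extraction of $\mu$) are routine. A conceptually cleaner route, at the cost of trading elementary Fourier analysis for the multidimensional spectral theorem, is a GNS/Stone construction: on the finitely supported functions on $\bbR^n$ introduce the sesquilinear form determined by $(\delta_x,\delta_y)_{\cH}:=F(x-y)$, which is nonnegative precisely by $(i)$; complete to a Hilbert space $\cH$; note that translation $\delta_x\mapsto\delta_{x+a}$ extends to a unitary $\bbR^n$-action on $\cH$, strongly continuous because $F$ is continuous (indeed $\|\delta_a-\delta_0\|_{\cH}^{2}=2F(0)-2\Re F(a)\to 0$ as $a\to 0$); by Stone's theorem for $\bbR^n$ this action equals $\int_{\bbR^n}e^{i(a\cdot\xi)}\,dE(\xi)$ for a projection-valued measure $E$; and setting $d\mu:=d(E(\cdot)\delta_0,\delta_0)_{\cH}$, a nonnegative finite measure of total mass $F(0)$, one obtains $F(a)=(\delta_a,\delta_0)_{\cH}=\int_{\bbR^n}e^{i(a\cdot\xi)}\,d\mu(\xi)$.
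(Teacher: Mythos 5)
The paper does not prove Theorem~\ref{t1.2}: it states Bochner's theorem as a classical fact and defers to the cited references (\cite[Sect.~5.4]{Ak65}, \cite[p.~13]{RS75}, \cite[p.~46]{SSV12}), so there is no in-text proof to compare against. Your argument is, however, correct as written. The easy direction $(ii)\Rightarrow(i)$ and the two supplementary assertions (via the $N=1$ and $N=2$ Gram matrices) are clean. For $(i)\Rightarrow(ii)$, the chain Gaussian damping $\to$ Schur product theorem to preserve positive semidefiniteness $\to$ Riemann-sum passage to $\iint F_\varepsilon(x-y)g(x)\overline{g(y)}\,dx\,dy\ge 0$ with $g(x)=e^{-i(x\cdot\xi_0)}e^{-\delta|x|^2}$ $\to$ nonnegativity of $\widehat{F_\varepsilon}$ $\to$ uniform mass bound via monotone convergence $\to$ weak-$*$ compactness plus the mean-value tightness estimate is a standard and complete route (in effect one half of the L\'evy continuity theorem), and all the substitution and limiting steps you flag go through because $F_\varepsilon\in L^1$ and $|F|\le|F(0)|$. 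The GNS/Stone alternative you sketch at the end is also correct and is in fact the approach behind the Reed--Simon reference the paper cites; it trades the tightness bookkeeping for the spectral theorem. Only one small stylistic caveat: the tightness step needs $F_\varepsilon(0)-\Re F_\varepsilon(x)\ge 0$, which you indeed have from positive semidefiniteness of $F_\varepsilon$ (so the mean value is an honest upper bound, not merely a bound on its modulus); it is worth stating that explicitly.
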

%%%%%%%

In this context we emphasize that the extension of Bochner's Theorem \ref{t1.2} has been obtained 
by Berberian \cite{Be66} not only in the matrix context (cf.\ Theorem \ref{t4.2}), but in the 
infinite-dimensional case in connection with Abelian groups. As a result, we exclusively focus on extensions of Theorem \ref{t1.3} and Theorem \ref{t1.4}\,$(i)$--$(iii)$ in the following. 

Turning to the matrix-valued case, $F \colon \bbR^n \to \bbC^{m \times m}$, $m \in \bbN$, 
and taking the notions of positive semidefinite and conditionally positive semidefinite matrix-valued 
functions $F$ in Definition \ref{d2.4} (and the obvious matrix-valued extension of 
Definition \ref{d1.1}\,$(v)$) for granted, we can now briefly describe the form in which 
Theorem \ref{t1.3} and Theorem \ref{t1.4}\,$(i)$--$(iii)$ extend to the matrix-valued context: 
First, and foremost, \\[1mm] 
\indent 
$\bullet$ the exponential $\exp(t F)$ must consistently be replaced by the Hadamard ex- 
\hspace*{6mm} ponential $\exp_H(tF)$ in the matrix context. \\[1mm]
Here the {\it Hadamard exponential} $\exp_H(G(x))$ of $G \colon \bbR^n \to \bbC^{m \times m}$, 
$m \in \bbN$, is defined by 
\begin{equation}
\exp_H(G(x)) = \big\{\exp_H(G(x))_{j,k} := \exp(G(x)_{j,k})\big\}_{1 \leq j,k \leq m}, \quad x \in \bbR^n.  
\end{equation}
It is understood in the following that $\exp(t F)$ is always replaced by the Hadamard exponential 
$\exp_H(tF)$ in the matrix context $m \in \bbN$, $m \geq 2$.

In connection with the matrix-valued extension of Schoenberg's Theorem \ref{t1.3} (for $m \in \bbN$, 
$m \geq 2$) we prove the following facts in Theorem \ref{t4.5a} and Remark \ref{r4.5b}: \\[1mm] 
\indent 
$\bullet$ Items $(i)$ and $(ii)$ in Theorem \ref{t1.3} remain equivalent (disregarding the condition 
\hspace*{6mm} $F(0) \leq 0$). \\[1mm]
\indent 
$\bullet$ If $F(0) \leq 0$ and one of conditions $(i)$ and $(ii)$ in Theorem \ref{t1.3} holds, then 
\hspace*{6mm} condition $(iii)$ in Theorem \ref{t1.3} is implied, but we prove that the converse is 
\hspace*{6mm} false in the matrix-valued context. 

\vspace*{1mm} 

In connection with the matrix-valued extension of Theorem \ref{t1.4} (for $m \in \bbN$, 
$m \geq 2$) we prove the following facts in Theorems \ref{t4.6} and \ref{t4.12}: \\[1mm] 
\indent 
$\bullet$ Conditions $(ii)$ and $(iii)$ of Theorem \ref{t1.4} remain equivalent in the matrix-valued  \\
 \hspace*{6mm} context, however, item $(i)$ does not extend at all (employing $\exp_H(tF)$ 
as agreed \hspace*{6mm} upon).  We did find a proper extension of condition $(i)$ (cf.\ 
Theorem \ref{t4.6}\,$(i)$). \\[1mm] 

These comments illustrate that much of Theorems \ref{t1.3} and \ref{t1.4} extends to the 
matrix-valued context, but some items require very specific modifications. In particular, the positivity 
preserving condition $(i)$ in Theorem \ref{t1.4} needs to be altered sharply. 

Next, we briefly turn to the contents of each section. Section \ref{s2} is of preparatory nature and 
recalls the basic facts on positive semidefinite and conditionally positive semidefinite matrices and 
matrix-valued functions on $\bbR^n$, $n \in \bbN$, introduces the notion of the Hadamard 
exponential, and derives the equivalence of items $(i)$ and $(ii)$ in Schoenberg's Theorem \ref{t1.3} 
in the matrix-valued context. Introductory remarks on convolution operators involving matrix-valued measures are the contents of Section \ref{s3}. We recall the spaces 
$L^p(\bbR^n, \bbC^{m \times m})$, $p \in [1,\infty) \cup \{\infty\}$, discuss the operator 
$F(- i \nabla)$, $F \in L^{\infty}(\bbR^n, \bbC^{m \times m})$, via Fourier transform, discuss 
various consequences of positivity preserving of $F(- i \nabla)$, and conclude this section with two 
approximation results (cf.\ Lemmas \ref{l3.12} and \ref{l3.11}). Our principal results are formulated 
in Section \ref{s4}. The classical $L^1$ and $L^2$ Fourier multiplier results are discussed in the 
matrix-valued context in Theorems \ref{t4.3} and \ref{t4.4}. The matrix-valued extension of 
Schoenberg's Theorem \ref{t1.3} is formulated in Theorem \ref{t4.5a}; the fact that no complete 
extension of Theorem \ref{t1.3} is possible (in the sense that either of conditions $(i)$ and $(ii)$ of 
Theorem \ref{t1.3} implies its condition $(iii)$, but that the converse is false) is demonstrated in 
Remark \ref{r4.5b}. The extent to which Theorem \ref{t1.4} extends to the matrix-valued case is 
dealt with in detail in Theorems \ref{t4.6} and \ref{t4.12}, as well as Remark \ref{r4.7}. The analog 
of the bound \eqref{1.7} in the matrix-valued context is derived in Theorem \ref{t4.14}. 
Appendix \ref{sA} constructs a counterexample verifying the claim made in Remark \ref{r4.1a}, and 
Appendix \ref{sB} provides a proof of \eqref{4.41A}. 

\smallskip

Finally, we briefly summarize the basic notation employed in this paper: Let $\cH$ be a separable complex Hilbert space, $(\, \cdot \,,\, \cdot \,)_{\cH}$ the scalar product in $\cH$ (linear in the second argument), and $I_{\cH}$ the identity operator in $\cH$.

The Banach spaces of bounded and compact linear operators on a separable complex Hilbert 
space $\cH$ are denoted by $\cB(\cH)$ and $\cB_\infty(\cH)$, respectively; the corresponding 
$\ell^p$-based Schatten--von Neumann trace ideals (cf.\ \cite[Ch.~III]{GK69}, \cite[Ch.~1]{Si05}) 
will be denoted by $\cB_p (\cH)$, with corresponding norm denoted by  
$\|\, \cdot \,\|_{\cB_p(\cH)}$, $p \geq 1$ (and defined in terms of the $\ell^p$-norm of the singular values 
of the operator in question). Moreover, $\tr_{\cH}(A)$ denotes the trace of a trace class operator 
$A\in\cB_1(\cH)$. We also employ the analogous notation $\cB(X_1, X_2)$ for bounded linear 
operators mapping the Banach space $X_1$ into the Banach space $X_2$. 

For $X$ a set, $X^{m \times n}$, $m,n \in \bbN$, represents the set of 
$m \times n$ matrices with entries in $X$.   

Unless explicitly stated otherwise, $\bbC^m$ is always equipped with the Euclidean scalar 
product $( \, \cdot \, , \, \cdot \,)_{\bbC^m}$ and associated norm $\|\, \cdot \, \|_{\bbC^m}$.  

For $A \in \bbC^{m \times m}$, $m \in \bbN$, we denote by $A^{\top}$ the transpose of $A$, and 
by $\|A\|_{\cB(\bbC^m)}$ the operator norm of $A$, considering $A$ as a linear operator on 
$\bbC^m$ (equipped with $\|\, \cdot \,\|_{\bbC^m}$). In this context we note that
\begin{equation}
\big(\bbC^{m \times m}, \| \cdot \|_{\cB(\bbC^m)}\big)^*  
= \big(\bbC^{m \times m}, \| \cdot \|_{\cB_1(\bbC^m)}\big).    \lb{1.8} 
\end{equation}
We also introduce  
\begin{equation}
\|A\|_{\max} = \max_{1 \leq j, k \leq m} |A_{j,k}|.    \lb{1.9} 
\end{equation}

The symbol $\cS(\bbR^n, \bbC^{m \times m})$ denotes the space of all $\bbC^{m \times m}$-valued 
rapidly decreasing functions on $\bbR^n$ with each entry in the usual Schwartz 
space $\cS(\bbR^n)$. In addition, we introduce the spaces,
\begin{align} 
& C_0(\bbR^n, \bbC^{m \times m}) = \{f \in C(\bbR^n, \bbC^{m \times m}) \, | \, \supp\,(f) \, \text{compact}\},    \lb{1.10} \\
& C_{b}(\bbR^n, \bbC^{m \times m}) = \{f \in C(\bbR^n, \bbC^{m \times m}) \, | \, \|f\|_{\infty} < \infty\}, 
\lb{1.11} \\
& C_{\infty}(\bbR^n, \bbC^{m \times m}) = \big\{f = \{f_{j,k}\}_{1 \leq j,k \leq m} \colon 
\bbR^n \to \bbC^{m \times m} \, \big| \, f_{j,k} \in C(\bbR^n),    \no \\ 
& \hspace*{5.55cm} \lim_{|x| \to \infty} f_{j,k}(x) = 0, \, 1 \leq j,k \leq m\big\}. 
\lb{1.12}
\end{align}
Unless explicitly stated otherwise, the spaces \eqref{1.10}--\eqref{1.12} are always equipped with the 
norm $\|f\|_{\infty} = {\rm ess.sup}_{x \in \bbR^n} \| f(x) \|_{\cB(\bbC^m)}$. 

For brevity, we will omit displaying the Lebesgue measure $d^n x$ in 
$L^p(\bbR^n, \bbC^{m \times m})$, $p \in [1,\infty)\cup \{\infty\}$, whenever the latter is understood. 

The Fourier and inverse Fourier transforms on $\cS(\bbR^n, \bbC^{m \times m})$ are denoted by the pair of formulas,
\begin{align}
(\cF f)(y) = f^{\wedge}(y) &= (2 \pi)^{-n/2} \int_{\bbR^n} e^{- i (y \cdot x)} f(x) \, d^n x,    \lb{1.15} \\
(\cF^{-1} g)(x) = g^{\vee}(x) &= (2 \pi)^{-n/2} \int_{\bbR^n} e^{i (x \cdot y)} g(y) \, d^n y, \lb{1.16} \\
& \hspace*{1.6cm}  f, g \in \cS(\bbR^n, \bbC^{m \times m}),    \no 
\end{align}
and we use the same notation for the appropriate extensions, where 
$\cS(\bbR^n, \bbC^{m \times m})$ is replaced by $L^1(\bbR^n, \bbC^{m \times m})$ if 
$f \in L^1(\bbR^n, \bbC^{m \times m})$, or by its unitary extension to  
$L^2(\bbR^n, \bbC^{m \times m})$ if $f \in L^2(\bbR^n, \bbC^{m \times m})$. 

The open ball in $\bbR^n$ with center $x_0 \in \bbR^n$ and radius $r_0 > 0$ is denoted by 
the symbol $B_n(x_0, r_0)$, the norm of vectors $x \in \bbR^n$ is denoted by $|x|$, the 
scalar product of $x, y \in \bbR^n$, is abbreviated by $x \cdot y$.  

We denote by $\gB_n$ the $\sigma$-algebra of all Borel subsets of $\bbR^n$ and for $E \in \gB_n$, 
abbreviate the $n$-dimensional Lebesgues measure of $E$ by $|E|$.

%%%%%%%%%%%%%%%%%%%%%%%%%%%%%%
%%%%%%%%%%%%%%%%%%%%%%%%%%%%%%
\section{Matrix-valued (Conditional) Positive Semidefinite Functions: A Variant of Schoenberg's Theorem} \lb{s2}
%%%%%%%%%%%%%%%%%%%%%%%%%%%%%%
%%%%%%%%%%%%%%%%%%%%%%%%%%%%%%

In this preparatory section we recall the basic facts on positive semidefinite and conditionally 
positive semidefinite matrices and matrix-valued functions on $\bbR^n$, $n \in \bbN$, introduce the notion of the Hadamard exponential, and derive the equivalence of items $(i)$ and $(ii)$ in Schoenberg's Theorem \ref{t1.3} (see, e.g., \cite{BCR76}, \cite[Sect.\ 3.6]{Ja01}, and \cite[Proposition~4.4]{SSV12}) in the matrix-valued context.  

We start with the following definition (cf., e.g., \cite[p.~180]{Bh07}, \cite[p.~451]{HJ94}).)

%%%%%%
\begin{definition} \lb{d2.1}
Let $m \in \bbN$, and $A=\{A_{j,k}\}_{1\leq j,k \leq m} \in \bbC^{m \times m}$. \\[1mm]
$(i)$ $A$ is called {\bf positive semidefinite}, also denoted by $A \geq 0$, if 
\begin{equation}
(c, A c)_{\bbC^m} = \sum_{j,k=1}^m \ol{c_j} \, A_{j,k} c_k \geq 0 \, 
\text{ for all } \, c=(c_1,\dots,c_m)^{\top} \in \bbC^m. 
\end{equation} 
$(ii)$ $A=\{A_{j,k}\}_{1\leq j,k \leq m} = A^* \in \bbC^{m \times m}$ is said to 
be {\bf conditionally positive semidefinite} if 
\begin{equation}
(c, A c)_{\bbC^m} \geq 0 \, 
\text{ for all } \, c=(c_1,\dots,c_m)^{\top} \in \bbC^m, \, \text{ with } \, \sum_{j=1}^m c_j = 0. 
\end{equation} 
\end{definition}
%%%%%%

Given $S \in \bbC^{m \times m}$, $m \in \bbN$, its {\bf Hadamard exponential}, denoted 
by $\exp_H(S)$, is defined by 
\begin{equation}
\exp_H(S) = \big\{\exp_H(S)_{j,k} := \exp(S_{j,k})\big\}_{1 \leq j,k \leq m}.  
\end{equation}

%%%%%%
\begin{lemma} [see, e.g., {\cite[Theorem~6.3.6]{HJ94}}] \lb{l2.2} 
${}$ \\
Let $A \in \bbC^{m \times m}$, $m \in \bbN$, be conditionally positive semidefinite. Then 
$\exp_H(A) \geq 0$, that is, the Hadamard exponential of $A$ is positive semidefinite. 
\end{lemma}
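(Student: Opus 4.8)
The plan is to reduce the claim to the classical statement that entrywise exponentials of positive semidefinite matrices are positive semidefinite, a fact usually attributed to Schur (the Schur/Hadamard product theorem plus a power-series argument). The bridge between conditional positive semidefiniteness and ordinary positive semidefiniteness is the standard ``shift'' trick: if $A=A^*\in\bbC^{m\times m}$ is conditionally positive semidefinite, then for a suitable choice of a Hermitian rank-one-type perturbation one obtains an honestly positive semidefinite matrix whose entries differ from those of $A$ only by additive terms of the form $a_j+\ol{a_k}$, and such additive terms contribute a rank-one positive semidefinite factor after exponentiation.

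Concretely, first I would record the elementary linear-algebra fact: a Hermitian matrix $A=\{A_{j,k}\}$ is conditionally positive semidefinite (on the hyperplane $\sum_j c_j=0$) if and only if the matrix $B=\{B_{j,k}\}$ with
\begin{equation}
B_{j,k}=A_{j,k}-A_{j,m}-\ol{A_{k,m}}+A_{m,m}
\end{equation}
is positive semidefinite in the usual sense (this is the ``pin the last coordinate'' computation: writing a general $c\in\bbC^m$ and the constrained vector $\ti c=(c_1,\dots,c_{m-1},-\sum_{j=1}^{m-1}c_j)$ relates $(c,Bc)_{\bbC^m}$ to $(\ti c,A\ti c)_{\bbC^m}$). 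Equivalently, there exist numbers $a_1,\dots,a_m\in\bbC$ (namely $a_j=A_{j,m}-\tfrac12 A_{m,m}$, using $A_{m,m}\in\bbR$) such that $A_{j,k}=B_{j,k}+a_j+\ol{a_k}$ with $B\ge0$. Then taking the entrywise exponential,
\begin{equation}
\exp_H(A)_{j,k}=e^{A_{j,k}}=e^{a_j}\,e^{B_{j,k}}\,e^{\ol{a_k}},
\end{equation}
so $\exp_H(A)=D\,\exp_H(B)\,D^*$ where $D=\diag(e^{a_1},\dots,e^{a_m})$. Since congruence by $D$ preserves positive semidefiniteness, it suffices to show $\exp_H(B)\ge0$ for $B\ge0$.

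For that last point I would invoke Schur's theorem: if $B\ge0$ then every Hadamard (entrywise) power $B^{\circ k}=\{B_{j,k}^k\}$ is positive semidefinite, hence so is the convergent series $\sum_{k=0}^{\infty}B^{\circ k}/k!=\exp_H(B)$, because the positive semidefinite matrices form a closed convex cone. This is exactly the content flagged by the citation \cite[Theorem~6.3.6]{HJ94}, so in the written proof one could either cite it directly or spend two lines on the Schur-product argument. The main obstacle — really the only non-bookkeeping step — is getting the shift identity right and checking that the diagonal entries $A_{m,m}$ are real (which follows since conditional positive semidefiniteness as defined here already presupposes $A=A^*$), so that $D$ is a genuine invertible diagonal matrix and the congruence $\exp_H(A)=D\exp_H(B)D^*$ is legitimate; everything after that is routine.
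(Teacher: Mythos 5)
The paper states this lemma without proof, simply citing \cite[Theorem~6.3.6]{HJ94}, so there is no internal argument to compare against; your proof is a correct and complete rendering of the standard textbook argument behind that citation. The shift decomposition $A_{j,k}=B_{j,k}+a_j+\ol{a_k}$ with $B\ge 0$ (using $a_j=A_{j,m}-\tfrac12 A_{m,m}$, with $A_{m,m}\in\bbR$ guaranteed by the definition's built-in hypothesis $A=A^*$), the congruence $\exp_H(A)=D\,\exp_H(B)\,D^*$ for $D=\diag(e^{a_1},\dots,e^{a_m})$, and the final appeal to the Schur product theorem via the Hadamard power series are all accurate and in the right order.
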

%%%%%%

The following result represents a complexified version of 
\cite[Exercise~5.6.15]{Bh07}, \cite[Theorem~6.3.13]{HJ94}:

%%%%%%
\begin{lemma} \lb{l2.3}
Let $\varepsilon > 0$, assume $A = A^* \in \bbC^{m \times m}$, $m \in \bbN$, and suppose that 
$\exp_H(tA)$ is positive semidefinite for all $t \in (0, \varepsilon)$. Then $A$ is conditionally 
positive semidefinite.  
\end{lemma}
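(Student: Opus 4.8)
The plan is to differentiate the positive semidefiniteness condition at $t = 0^+$. Fix a vector $c = (c_1,\dots,c_m)^{\top} \in \bbC^m$ with $\sum_{j=1}^m c_j = 0$. For each $t \in (0,\varepsilon)$ the hypothesis gives
\begin{equation}
0 \leq \big(c, \exp_H(tA) c\big)_{\bbC^m} = \sum_{j,k=1}^m \ol{c_j} \, e^{t A_{j,k}} c_k.
\end{equation}
The key observation is that, because $\sum_{j} c_j = 0$, we also have $\sum_{j,k} \ol{c_j} c_k = \big(\sum_j c_j\big)\ol{\big(\sum_k c_k\big)} = 0$, hence the constant term in the expansion of the exponential drops out. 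Writing $e^{t A_{j,k}} = 1 + t A_{j,k} + O(t^2)$ uniformly in the finitely many indices $j,k$, we obtain
\begin{equation}
0 \leq \big(c, \exp_H(tA) c\big)_{\bbC^m} = t \sum_{j,k=1}^m \ol{c_j} \, A_{j,k} c_k + O(t^2), \quad t \downarrow 0.
\end{equation}
Dividing by $t > 0$ and letting $t \downarrow 0$ yields $\big(c, A c\big)_{\bbC^m} = \sum_{j,k} \ol{c_j} A_{j,k} c_k \geq 0$. Since $c$ was an arbitrary vector with vanishing coordinate sum, and $A = A^*$ by hypothesis, this is exactly the statement that $A$ is conditionally positive semidefinite in the sense of Definition \ref{d2.1}\,$(ii)$.

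First I would record the trivial but crucial identity $\sum_{j,k=1}^m \ol{c_j}\,c_k = \big|\sum_{j=1}^m c_j\big|^2$, so that the hypothesis $\sum_j c_j = 0$ kills the order-$t^0$ contribution; this is what makes the derivative at $0$ see the linear term $A_{j,k}$ rather than being swamped by the constant. Next I would make the Taylor estimate rigorous: since the index set $\{(j,k) : 1 \leq j,k \leq m\}$ is finite, there is a single constant $C = C(A,m)$ with $\big|e^{t A_{j,k}} - 1 - t A_{j,k}\big| \leq C t^2$ for all $t \in [0,1]$ and all $j,k$, so the remainder term $\sum_{j,k}\ol{c_j}(e^{tA_{j,k}} - 1 - tA_{j,k})c_k$ is genuinely $O(t^2)$ with a constant depending only on $A$, $m$, and $c$. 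Then the division-and-limit step is immediate.

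I do not anticipate a serious obstacle here; the only point requiring a little care is to handle the constant term correctly, i.e.\ to notice that the condition $\sum_j c_j = 0$ is precisely what is needed to discard it, and to make sure the uniformity in $(j,k)$ of the Taylor remainder is invoked (this is automatic because $m$ is finite, but it is the reason the argument is this clean). One should also remark that the hypothesis $A = A^*$ is needed only so that ``conditionally positive semidefinite'' is a meaningful notion in the sense of Definition \ref{d2.1}\,$(ii)$; the inequality $\big(c, Ac\big)_{\bbC^m} \geq 0$ for all admissible $c$ is derived regardless. It is worth noting in passing that this complements Lemma \ref{l2.2}: together they show that, for Hermitian $A$, conditional positive semidefiniteness of $A$ is equivalent to positive semidefiniteness of $\exp_H(tA)$ for all $t > 0$ (indeed, for all $t$ in an arbitrarily short interval $(0,\varepsilon)$).
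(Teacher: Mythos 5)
Your argument is correct and follows essentially the same route as the paper's proof: both exploit the identity $\sum_{j,k}\ol{c_j}c_k=\big|\sum_j c_j\big|^2=0$ to remove the zeroth-order term in $e^{tA_{j,k}}$, divide by $t$, and let $t\downarrow 0$ so that each entry converges to $A_{j,k}$. The only cosmetic difference is that the paper inserts the ``$-1$'' directly into the difference quotient $t^{-1}\big[\exp(tA_{j,k})-1\big]$ rather than invoking a uniform Taylor remainder bound, but the underlying computation is identical.
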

%%%%%%
\begin{proof}
Let $c = (c_1,\dots,c_m)^{\top} \in \bbC^m$ with $\sum_{j=1}^m c_j =0$. Then for all 
$t \in (0, \varepsilon)$, 
\begin{align}
0 & \leq t^{-1} (c, \exp_H(t A) c)_{\bbC^m} 
= \sum_{j,k=1}^m \ol{c_j} \, t^{-1} \big[\exp_H(t A_{j,k}) -1\big] c_k    \no \\
& \, \underset{t \downarrow 0}{\longrightarrow} \sum_{j,k=1}^m \ol{c_j} \, A_{j,k} c_k  
= (c, A c)_{\bbC^m}. 
\end{align}
\end{proof}
%%%%%%

Combining Lemmas \ref{l2.2} and \ref{l2.3} shows that for $A = A^* \in \bbC^{m \times m}$, 
$m \in \bbN$,
\begin{align}
\begin{split}
& \exp_H(tA) \geq 0 \, \text{ for all} \, t \in (0,\varepsilon_0) \, \text{ for some fixed $\varepsilon_0 > 0$} \\
& \quad \text{is equivalent to } \,  \exp_H(tA) \geq 0 \, \text{ for all $t \geq 0$.} 
\end{split}
\end{align}

%%%%%%
\begin{definition} \lb{d2.4}
Let $F \colon \bbR^n \to \bbC^{m \times m}$, $m, n \in \bbN$. \\[1mm] 
$(i)$ $F$ is called {\bf positive semidefinite} if for all $N \in \bbN$, $x_p \in \bbR^n$, 
$1 \leq p \leq N$, the block matrix $\{F(x_p - x_q)\}_{1 \leq p,q \leq N} \in \bbC^{mN \times mN}$ is 
positive semidefinite. \\[1mm] 
$(ii)$ $F$ is called {\bf conditionally positive 
semidefinite} if for all $N \in \bbN$, $x_p \in \bbR^n$, $1 \leq p \leq N$, the block 
matrix $\{F(x_p - x_q)\}_{1 \leq p,q \leq N} \in \bbC^{mN \times mN}$ is conditionally positive semidefinite.
\end{definition}
%%%%%%

%%%%%%
\begin{lemma} \lb{l2.5}
Let $F \colon \bbR^n \to \bbC^{m \times m}$, $m, n \in \bbN$. \\[1mm] 
$(i)$ One verifies that $F \colon \bbR^n \to \bbC^{m \times m}$ is positive semidefinite 
if and only if for all $N \in \bbN$, $x_p \in \bbR^n$, $c_p \in \bbC^m$, $1 \leq p \leq N$, one has 
\begin{equation}
\sum_{p,q = 1}^N (c_p, F(x_p - x_q) c_q)_{\bbC^m} \geq 0.
\end{equation}
$(ii)$ As proved in \cite[p.~178]{Be66}, $F \colon \bbR^n \to \bbC^{m \times m}$ is positive semidefinite 
if and only if for all $N \in \bbN$, $x_p \in \bbR^n$, $c_p \in \bbC$, $1 \leq p \leq N$, 
$f = (f_1,\dots,f_m)^\top \in \bbC^m$, 
\begin{equation}
\sum_{p,q=1}^N \ol{c_p} \, (f, F(x_p - x_q) f)_{\bbC^m} c_q 
= \sum_{p,q=1}^N \sum_{j,k=1}^m \ol{c_p} \, \ol{f_j} \, F(x_p - x_q)_{j,k} f_k c_q \geq 0. 
\end{equation}
$(iii)$ One verifies that $F \colon \bbR^n \to \bbC^{m \times m}$ is conditionally positive semidefinite 
if and only if the following conditions $(\alpha)$ and $(\beta)$ hold: \\[1mm]
$(\alpha)$ $F(-x) = F(x)^*$, $x \in \bbR^n$. \\[1mm]
$(\beta)$ For all $N \in \bbN$, $x_p \in \bbR^n$, $c_p = (c_{p,1},\dots,c_{p,m}) \in \bbC^m$, 
$1 \leq p \leq N$, satisfying 
\begin{equation}
\sum_{p=1}^N \sum_{j=1}^m c_{p,j} = 0,
\end{equation}
one has 
\begin{equation}
\sum_{p,q = 1}^N (c_p, F(x_p - x_q) c_q)_{\bbC^m} \geq 0.
\end{equation}
In addition, one observes that $F \colon \bbR^n \to \bbC^{m \times m}$ satisfies condition 
$(\alpha)$ if and only if it satisfies the following condition $(\alpha')$,  \\[1mm]
$(\alpha')$ For all $N \in \bbN$, $x_p \in \bbR^n$, $1 \leq p \leq N$, the block matrix 
$\{F(x_p - x_q)\}_{1 \leq p,q \leq N} \in \bbC^{mN \times mN}$ is self-adjoint in $\bbC^{mN}$. 
\end{lemma}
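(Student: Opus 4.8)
The plan is to obtain all three assertions by directly unwinding Definitions~\ref{d2.1} and~\ref{d2.4}, the single substantive input being a result of Berberian \cite{Be66}, which is quoted for one implication in part~$(ii)$. For part~$(i)$, fix $N \in \bbN$ and $x_1,\dots,x_N \in \bbR^n$ and write $M_N = \{F(x_p - x_q)\}_{1 \le p,q \le N} \in \bbC^{mN \times mN}$ for the associated block matrix. By Definition~\ref{d2.4}$(i)$ and Definition~\ref{d2.1}$(i)$, $F$ is positive semidefinite exactly when $(v, M_N v)_{\bbC^{mN}} \ge 0$ for all $N$, all such configurations $\{x_p\}$, and all $v \in \bbC^{mN}$. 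Decomposing $v = (c_1,\dots,c_N)^{\top}$ with $c_p \in \bbC^m$ and reading off the block structure of $M_N$ yields $(v, M_N v)_{\bbC^{mN}} = \sum_{p,q=1}^N (c_p, F(x_p - x_q) c_q)_{\bbC^m}$, which is precisely the stated criterion; note that self-adjointness of $M_N$ need not be assumed separately here, since $(v, M_N v)_{\bbC^{mN}} \in [0,\infty)$ for all $v$ already forces $M_N = M_N^*$ by polarization.

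For part~$(iii)$, with the same notation, Definition~\ref{d2.4}$(ii)$ together with Definition~\ref{d2.1}$(ii)$ says that $F$ is conditionally positive semidefinite iff, for every $N$ and every configuration $\{x_p\}$, both (a)~$M_N = M_N^*$ and (b)~$(v, M_N v)_{\bbC^{mN}} \ge 0$ for every $v \in \bbC^{mN}$ the sum of whose $mN$ components vanishes. Expanding $v = (c_1,\dots,c_N)^{\top}$ with $c_p = (c_{p,1},\dots,c_{p,m})^{\top}$, the component-sum condition reads $\sum_{p=1}^N \sum_{j=1}^m c_{p,j} = 0$ and the quadratic form equals $\sum_{p,q=1}^N (c_p, F(x_p - x_q) c_q)_{\bbC^m}$, so requiring (b) for all $N$ is verbatim condition~$(\beta)$. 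For (a), the $(p,q)$-block of $M_N^*$ is $F(x_q - x_p)^*$, so $M_N = M_N^*$ amounts to $F(x_p - x_q) = F(x_q - x_p)^*$ for all $p,q$; imposing this for all $N$ and all configurations is, taking $N = 2$, condition~$(\alpha)$, and is, read as it stands, condition~$(\alpha')$ --- which proves $(\alpha) \Leftrightarrow (\alpha')$ and, combined with (b), the full equivalence in~$(iii)$.

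For part~$(ii)$, the forward implication is immediate from~$(i)$ by specializing $c_p$ to $c_p f$ with scalar $c_p \in \bbC$ and fixed $f \in \bbC^m$ and pulling $\overline{c_p}\, c_q$ out of $(c_p f, F(x_p - x_q)(c_q f))_{\bbC^m}$. The converse --- that testing only against these ``rank-one'' vectors $c_p f$ already forces $F$ to be positive semidefinite --- is the one step that is not mere bookkeeping, and we quote it from \cite[p.~178]{Be66}. Structurally, it runs as follows: the hypothesis makes each scalar function $x \mapsto (f, F(x) f)_{\bbC^m}$ positive semidefinite, hence each satisfies $(f, F(-x) f)_{\bbC^m} = \overline{(f, F(x) f)_{\bbC^m}}$, which for all $f \in \bbC^m$ gives $F(-x) = F(x)^*$; a polarization-plus-approximation argument then promotes the rank-one test to the general one. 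I expect this converse in~$(ii)$ to be the only genuine obstacle, the rest being a careful transcription of the block-matrix definitions.
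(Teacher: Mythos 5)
Your proof is correct and takes the only reasonable route, namely direct block decomposition of the quadratic form $(v, M_N v)_{\bbC^{mN}}$ together with the citation to Berberian for the nontrivial converse in part~$(ii)$; the paper itself offers no proof, stating parts~$(i)$ and~$(iii)$ as routine verifications and citing \cite[p.~178]{Be66} for part~$(ii)$ exactly as you do. Your observation that positivity of $(v, M_N v)_{\bbC^{mN}}$ over a complex space already forces self-adjointness of $M_N$ via polarization, so no separate Hermitian hypothesis is needed in part~$(i)$, is a worthwhile clarification that the paper leaves implicit.
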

%%%%%%

Given $S \colon \bbR^n \to \bbC^{M \times M}$, $M,n \in \bbN$, its {\bf Hadamard exponential}, denoted by $\exp_H(S)$, is defined by 
\begin{equation}
\exp_H(S(x)) = \big\{\exp_H(S(x))_{j,k} := \exp(S(x)_{j,k})\big\}_{1 \leq j,k \leq M}, \quad x \in \bbR^n.  
\end{equation}

The next two theorems represent a matrix generalization of a variant of Schoenberg's theorem 
(cf., e.g., \cite[Proposition~4.4]{SSV12}), namely, the equivalence of items $(i)$ and $(ii)$ in 
Theorem \ref{t1.3}, the principal results of this section:

%%%%%%
\begin{theorem} \lb{t2.6} 
Let $F \colon \bbR^n \to \bbC^{m \times m}$, $m, n \in \bbN$, be conditionally 
positive semidefinite. Then $\exp_H(F)$ is positive semidefinite. 
\end{theorem}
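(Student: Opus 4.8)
The plan is to reduce the statement directly to the matrix-level result of Lemma \ref{l2.2}. Fix $N \in \bbN$ and points $x_1, \dots, x_N \in \bbR^n$, and form the $mN \times mN$ block matrix $A = \{F(x_p - x_q)\}_{1 \leq p,q \leq N}$. By Definition \ref{d2.4}\,$(ii)$, the hypothesis that $F$ is conditionally positive semidefinite says precisely that $A$ is a conditionally positive semidefinite matrix in $\bbC^{mN \times mN}$; in particular $A = A^*$, so Lemma \ref{l2.2} is applicable to $A$.

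The one thing to note is that the Hadamard exponential is compatible with the block decomposition: since $\exp_H$ is defined entrywise, the entry of $\exp_H(A)$ in block position $(p,q)$ and internal position $(j,k)$ equals $\exp(F(x_p - x_q)_{j,k}) = \big(\exp_H(F(x_p - x_q))\big)_{j,k}$. Hence, as $mN \times mN$ matrices,
\begin{equation}
\exp_H(A) = \big\{\exp_H(F(x_p - x_q))\big\}_{1 \leq p,q \leq N}.
\end{equation}

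Applying Lemma \ref{l2.2} to $A \in \bbC^{mN \times mN}$ yields $\exp_H(A) \geq 0$, which by the identity above means exactly that the block matrix $\big\{\exp_H(F(x_p - x_q))\big\}_{1 \leq p,q \leq N}$ is positive semidefinite. Since $N$ and the points $x_p$ were arbitrary, $\exp_H(F)$ is positive semidefinite in the sense of Definition \ref{d2.4}\,$(i)$, completing the proof.

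I expect no real obstacle here: all the substance is contained in Lemma \ref{l2.2} (the Horn--Johnson theorem on Hadamard exponentials of conditionally positive semidefinite matrices), and the only verification needed is the bookkeeping identity $\exp_H\big(\{F(x_p - x_q)\}_{p,q}\big) = \{\exp_H(F(x_p - x_q))\}_{p,q}$, which is immediate from the entrywise definition of $\exp_H$.
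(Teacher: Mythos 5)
Your proof is correct and follows exactly the paper's own argument: form the block matrix $\{F(x_p - x_q)\}_{1 \leq p,q \leq N}$, observe it is conditionally positive semidefinite by Definition \ref{d2.4}\,$(ii)$, apply Lemma \ref{l2.2}, and use the entrywise compatibility $\exp_H\big(\{F(x_p - x_q)\}_{p,q}\big) = \{\exp_H(F(x_p - x_q))\}_{p,q}$ to conclude. There is no divergence from the paper's approach.
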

%%%%%%
\begin{proof}
For all $N \in \bbN$, $x_p \in \bbR^n$, $1 \leq p \leq N$, the block matrix 
$\{F(x_p - x_q)\}_{1 \leq p,q \leq N} \in \bbC^{mN \times mN}$ is conditionally positive semidefinite. 
Thus, by Lemma \ref{l2.2}, the block matrix $\exp_H\big(\{F(x_p - x_q)_{}\}_{1 \leq p,q \leq N}\big) \in \bbC^{mN \times mN}$ 
is positive semidefinite. Since 
\begin{equation}
\exp_H\big(\{F(x_p - x_q)\}_{1 \leq p,q \leq N}\big) 
= \big\{\exp_H(F(x_p - x_q))\big\}_{1 \leq p,q \leq N}, 
\end{equation}
this completes the proof. 
\end{proof}
%%%%%%

%%%%%%
\begin{theorem} \lb{t2.7} 
Let $\varepsilon > 0$, $F \colon \bbR^n \to \bbC^{m \times m}$, and suppose that 
$\exp_H(t F) \colon \bbR^n \to \bbC^{m \times m}$ is positive semidefinite for all 
$t \in (0, \varepsilon)$.~Then $F$ is conditionally positive semidefinite. 
\end{theorem}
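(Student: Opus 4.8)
The plan is to reduce the claim about the matrix-valued function $F$ to the already-established finite-dimensional Lemma \ref{l2.3}, exactly paralleling how Theorem \ref{t2.6} reduced to Lemma \ref{l2.2}. Fix $N \in \bbN$ and points $x_1,\dots,x_N \in \bbR^n$, and set $B = \{F(x_p - x_q)\}_{1 \leq p,q \leq N} \in \bbC^{mN \times mN}$. By hypothesis, for each $t \in (0,\varepsilon)$ the function $\exp_H(tF)$ is positive semidefinite, so the block matrix $\{\exp_H(tF)(x_p-x_q)\}_{1\le p,q\le N}$ is positive semidefinite; since the Hadamard exponential acts entrywise, this block matrix is precisely $\exp_H(tB)$. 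Thus $\exp_H(tB) \geq 0$ for all $t \in (0,\varepsilon)$.

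First I would verify the self-adjointness hypothesis needed to invoke Lemma \ref{l2.3}, i.e.\ that $B = B^*$. This should follow from the positive semidefiniteness of $\exp_H(tF)$ together with the structure of the Hadamard exponential: positive semidefiniteness of $\{\exp_H(tF)(x_p-x_q)\}$ forces it to be self-adjoint, hence $\exp(tF(x_p-x_q)_{j,k}) = \overline{\exp(tF(x_q-x_p)_{k,j})}$ for all indices, and differentiating at $t\downarrow 0$ (or taking $t^{-1}(\,\cdot\,-1)$ limits as in the proof of Lemma \ref{l2.3}) gives $F(x_p-x_q)_{j,k} = \overline{F(x_q-x_p)_{k,j}}$, i.e.\ $B = B^*$; equivalently this yields condition $(\alpha)$ of Lemma \ref{l2.5}\,(iii), that $F(-x) = F(x)^*$. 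With $B = B^*$ and $\exp_H(tB) \geq 0$ for $t \in (0,\varepsilon)$ in hand, Lemma \ref{l2.3} applies directly and shows $B$ is conditionally positive semidefinite.

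Since $N$ and $x_1,\dots,x_N$ were arbitrary, every block matrix $\{F(x_p-x_q)\}_{1\le p,q\le N}$ is conditionally positive semidefinite, which by Definition \ref{d2.4}\,(ii) is exactly the statement that $F$ is conditionally positive semidefinite.

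I expect the only genuine subtlety to be the bookkeeping at the first step: one must be careful that ``positive semidefinite'' for a matrix-valued function (Definition \ref{d2.4}) already builds in the block structure, so that the relevant object to feed into Lemma \ref{l2.3} is the $mN \times mN$ block matrix $B$ rather than any $m\times m$ piece, and that $\exp_H$ commutes with forming this block matrix (which it does, being entrywise). Beyond that, everything is a direct citation of Lemma \ref{l2.3}; there is no real analytic obstacle, only the need to confirm the self-adjointness hypothesis of that lemma is met, which the limiting argument above supplies.
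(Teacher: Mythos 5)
Your proof is correct. The reduction to Lemma \ref{l2.3} via the block matrix $B=\{F(x_p-x_q)\}_{1\le p,q\le N}\in\bbC^{mN\times mN}$, using that the Hadamard exponential commutes with forming block matrices, is sound; establishing $B=B^*$ by the $t^{-1}(\,\cdot\,-1)$ limit and then invoking Lemma \ref{l2.3} for the $mN\times mN$ matrix $B$, and finally unwinding Definition \ref{d2.4}\,$(ii)$, gives exactly the claim. The paper's own proof is structurally slightly different: rather than citing Lemma \ref{l2.3} as a black box, it re-runs the same $t\downarrow 0$ limit argument directly on the quadratic form $\sum_{p,q}(c_p,\exp_H(tF(x_p-x_q))c_q)_{\bbC^m}$ using Lemma \ref{l2.5}\,$(i)$, and then checks condition $(\alpha)$, i.e.\ $F(-x)=F(x)^*$, separately via a $2\times2$ block. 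Your version is the exact analogue of how Theorem \ref{t2.6} invokes Lemma \ref{l2.2}, which is a small gain in modularity and makes the parallel between the two directions of the Schoenberg equivalence transparent; the underlying analytic content (the entrywise $t\downarrow 0$ limit and the recovery of self-adjointness from positive semidefiniteness of $\exp_H$) is identical in both.
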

%%%%%%
\begin{proof} 
Let $N \in \bbN$, $x_p \in \bbR^n$, $1 \leq p \leq N$, and assume that 
$c_p =(c_{p,1},\dots,c_{p,m}) \in \bbC^m$, $1 \leq p \leq N$ satisfy 
\begin{equation}
\sum_{p=1}^N \sum_{j=1}^m c_{p,j} = 0.
\end{equation}
Then for all $t \in (0, \varepsilon)$, Lemma \ref{l2.5}\,$(i)$ yields 
\begin{align}
0 & \leq t^{-1} \sum_{p,q=1}^N (c_p, \exp_H(t F(x_p - x_q)) c_q)_{\bbC^m}   \no \\
& = \sum_{p,q=1}^N \sum_{j,k=1}^m \ol{c_{p,j}} \, t^{-1} \big[\exp(t F(x_p - x_q)_{j,k}) - 1\big] c_{q,k} 
\no \\
& \, \underset{t \downarrow 0}{\longrightarrow} \sum_{p,q=1}^N \sum_{j,k=1}^m 
\ol{c_{p,j}} \, F(x_p - x_q)_{j,k} c_{q,k} = \sum_{p,q=1}^N (c_p, F(x_p - x_q) c_q)_{\bbC^m}. 
\end{align}
By Lemma \ref{l2.5}\,$(iii)$, it remains to show that 
\begin{equation} 
F(-x) = F(x)^*, \quad x \in \bbR^n.   \lb{2.13}
\end{equation}  
To this end one observes that the block matrix 
\begin{equation}
\begin{pmatrix} \exp_H(t F(0)) & \exp_H(t F(x)) \\ \exp_H(t F(-x)) & \exp_H(t F(0)) 
\end{pmatrix} \in \bbC^{2m \times 2m} 
\end{equation}
is positive semidefinite and hence self-adjoint. Thus,
\begin{equation}
\exp_H(t F(-x)) = [\exp_H(t F(x))]^*, \quad x \in \bbR^n, \; t \in (0, \varepsilon).
\end{equation}
Next, let $E_{2m} \in \bbC^{2m \times 2m}$ be the matrix all of whose entries equal $1$. Then  
\begin{align}
t^{-1} \big[\exp_H(t F(-x)) - E_{2m}\big] = t^{-1} \big\{[\exp_H(t F(x))]^* - E_{2m}\big\}, 
\quad x \in \bbR^n, \; t \in (0, \varepsilon),    \lb{2.16} 
\end{align}
and letting $t \downarrow 0$ in \eqref{2.16}, one obtains
\begin{equation}
F(-x)_{j,k} = \ol{F(x)_{k,j}}, \quad x \in \bbR^n, \; 1 \leq j,k \leq m,
\end{equation}
proving \eqref{2.13}.
\end{proof}
%%%%%%

Combining Theorems \ref{t2.6} and \ref{t2.7} shows that for $F \colon \bbR^n \to \bbC^{m \times m}$,
\begin{align}
\begin{split}
& \exp_H(t F) \geq 0 \, \text{ for all} \, t \in (0,\varepsilon_0) \, \text{ for some fixed $\varepsilon_0 > 0$} \\
& \quad \text{is equivalent to } \,  \exp_H(t F) \geq 0 \, \text{ for all $t \geq 0$.} 
\end{split}
\end{align}

Next, we intend to show that Definitions \ref{d2.1} and \ref{d2.4} are compatible. 

%%%%%%
\begin{corollary} \lb{c2.10}
Let $0 \leq A \in \bbC^{m \times m}$ $($i.e., $A$ is positive semidefinite\,$)$ and introduce 
$F \colon \bbR^n \to \bbC^{m \times m}$ by 
\begin{equation}
F(x) = A, \quad x \in \bbR^n.
\end{equation} 
Then $F \geq 0$, that is, $F$ is positive semidefinite in the sense of Definition \ref{d2.4}\,$(i)$.
\end{corollary}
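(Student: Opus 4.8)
The plan is to reduce the claim to the characterization of positive semidefiniteness of matrix-valued functions recorded in Lemma~\ref{l2.5}\,$(i)$, and then to carry out a one-line computation that exploits the constancy of $F$.

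First I would fix $N \in \bbN$, points $x_1, \dots, x_N \in \bbR^n$, and vectors $c_1, \dots, c_N \in \bbC^m$. Since $F(x_p - x_q) = A$ for all $p, q$, the quantity appearing in Lemma~\ref{l2.5}\,$(i)$ becomes
\begin{equation*}
\sum_{p,q=1}^N (c_p, F(x_p - x_q) c_q)_{\bbC^m} = \sum_{p,q=1}^N (c_p, A c_q)_{\bbC^m} = \Bigg( \sum_{p=1}^N c_p, \, A \sum_{q=1}^N c_q \Bigg)_{\bbC^m},
\end{equation*}
where the last equality uses the sesquilinearity of $( \, \cdot \, , \, \cdot \,)_{\bbC^m}$ (linear in the second argument, conjugate-linear in the first). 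Setting $c := \sum_{q=1}^N c_q \in \bbC^m$, the right-hand side equals $(c, A c)_{\bbC^m} \geq 0$, the inequality holding precisely because $A \geq 0$ in the sense of Definition~\ref{d2.1}\,$(i)$. By Lemma~\ref{l2.5}\,$(i)$, this shows that $F$ is positive semidefinite in the sense of Definition~\ref{d2.4}\,$(i)$, confirming the compatibility of Definitions~\ref{d2.1} and~\ref{d2.4}.

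Alternatively, one may observe directly that the block matrix $\{F(x_p - x_q)\}_{1 \leq p,q \leq N}$ equals the Kronecker product $E_N \otimes A$, where $E_N \in \bbC^{N \times N}$ is the matrix all of whose entries equal $1$; since $E_N = v v^*$ with $v = (1, \dots, 1)^{\top} \in \bbC^N$ is positive semidefinite and $A \geq 0$, the Kronecker product of two positive semidefinite matrices is positive semidefinite, which is exactly the required conclusion. There is no genuine obstacle here: the statement is essentially a sanity check, and the only point requiring care is respecting the convention that the inner product on $\bbC^m$ is linear in the second argument, so that the collapse of the double sum into $(c, A c)_{\bbC^m}$ proceeds as written.
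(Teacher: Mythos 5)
Your proof is correct, and it takes a slightly different route than the paper. The paper invokes Lemma~\ref{l2.5}\,$(ii)$ (Berberian's characterization with scalar coefficients $c_p \in \bbC$ and a fixed vector $f \in \bbC^m$), factors out $(f, Af)_{\bbC^m} \geq 0$, and then appeals to positive semidefiniteness of the all-ones matrix $H_N$ to conclude that $\sum_{p,q} \ol{c_p}\,(f,Af)_{\bbC^m}\,c_q \geq 0$. You instead use Lemma~\ref{l2.5}\,$(i)$ directly (vector coefficients $c_p \in \bbC^m$), collapsing the double sum via sesquilinearity into $(c, Ac)_{\bbC^m}$ with $c = \sum_p c_p$, which is nonnegative by hypothesis on $A$. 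Your version is marginally more economical since it avoids the auxiliary fact about $H_N$; the paper's version has the mild virtue of exhibiting the $(f, Af)_{\bbC^m} \cdot |\sum_p c_p|^2$ factorization explicitly in the Berberian form. Your Kronecker-product observation $\{F(x_p - x_q)\}_{p,q} = E_N \otimes A$ is also a valid and succinct alternative, and in fact makes the structural reason transparent. All three arguments are one-liners; there is no gap.
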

%%%%%%
\begin{proof}
For any $c = (c_1, \dots , c_N)^{\top} \in \bbC^N$, 
\begin{equation}
\sum_{p,q=1}^N \ol{c_p} (f,Af)_{\bbC^m} c_q = 
(f,Af)_{\bbC^m} \sum_{p,q=1}^N \ol{c_p} c_q = 
(f,Af)_{\bbC^m} (c, H_N c)_{\bbC^N},
\end{equation}
where $H_N$ denotes the $N \times N$-matrix with all entries equal to $1$. Since it is 
well-known that $H_N$ is positive semidefinite, 
\begin{equation}
\sum_{p,q=1}^N \ol{c_p} (f,Af)_{\bbC^m} c_q \geq 0. 
\end{equation}   
Thus, Lemma \ref{l2.5}\,$(ii)$ implies Corollary \ref{c2.10}. 
\end{proof}
%%%%%%

%%%%%%
\begin{corollary} \lb{c2.11}
Let $A \in \bbC^{m \times m}$ be conditionally positive semidefinite and introduce 
$F \colon \bbR^n \to \bbC^{m \times m}$ by 
\begin{equation}
F(x) = A, \quad x \in \bbR^n.
\end{equation} 
Then $F$ is conditionally positive semidefinite in the sense of 
Definition \ref{d2.4}\,$(ii)$.
\end{corollary}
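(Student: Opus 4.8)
The plan is to reduce the claim to Lemma~\ref{l2.5}\,$(iii)$, which characterizes conditional positive semidefiniteness of a matrix-valued function $F$ through two conditions $(\alpha)$ and $(\beta)$. Since here $F$ is the constant function $F(x) \equiv A$, condition $(\alpha)$, namely $F(-x) = F(x)^*$, amounts to $A = A^*$, which holds by the very definition of a conditionally positive semidefinite matrix (Definition~\ref{d2.1}\,$(ii)$ already requires $A = A^*$). So the only substantive point is to establish condition $(\beta)$.

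To verify $(\beta)$, fix $N \in \bbN$, $x_p \in \bbR^n$, and $c_p = (c_{p,1},\dots,c_{p,m})^{\top} \in \bbC^m$, $1 \leq p \leq N$, satisfying $\sum_{p=1}^N \sum_{j=1}^m c_{p,j} = 0$. Because $F(x_p - x_q) = A$ for all $p,q$, bilinearity collapses the relevant double sum:
\begin{equation}
\sum_{p,q=1}^N (c_p, F(x_p - x_q) c_q)_{\bbC^m} = \sum_{p,q=1}^N (c_p, A c_q)_{\bbC^m} = (c, A c)_{\bbC^m}, \quad c := \sum_{p=1}^N c_p \in \bbC^m.
\end{equation}
The key observation is that the hypothesis $\sum_{p=1}^N \sum_{j=1}^m c_{p,j} = 0$ says precisely that the $m$ components of the single vector $c = \sum_p c_p$ sum to zero, i.e., $\sum_{j=1}^m c_j = 0$. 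Hence conditional positive semidefiniteness of $A$ in the sense of Definition~\ref{d2.1}\,$(ii)$ yields $(c, A c)_{\bbC^m} \geq 0$, which is exactly condition $(\beta)$. Combining $(\alpha)$ and $(\beta)$, Lemma~\ref{l2.5}\,$(iii)$ shows that $F$ is conditionally positive semidefinite, as claimed.

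There is essentially no serious obstacle here; the one point requiring care is not to conflate the two distinct ``entries sum to zero'' conditions in play: the one in Definition~\ref{d2.1}\,$(ii)$ refers to the $m$ scalar components of one vector in $\bbC^m$, whereas the one in Lemma~\ref{l2.5}\,$(iii)\,(\beta)$ refers to the $mN$ scalar components of the $N$-tuple $(c_1,\dots,c_N)$, and folding the $c_p$ into $c = \sum_p c_p$ is exactly what matches them up. One could alternatively argue directly with the $mN \times mN$ block matrix $\{F(x_p - x_q)\}_{1 \leq p,q \leq N}$, which for constant $F$ is the Kronecker product of the all-ones matrix with $A$, but the reduction via Lemma~\ref{l2.5} is cleaner and parallels the proof of Corollary~\ref{c2.10}.
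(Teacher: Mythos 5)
Your proof is correct, but it takes a genuinely different and more elementary route than the paper's. The paper proves Corollary \ref{c2.11} by running the Hadamard-exponential machinery in a circle: $A$ conditionally positive semidefinite $\Rightarrow \exp_H(tA) \geq 0$ for all $t>0$ (Lemma \ref{l2.2}) $\Rightarrow$ the constant function $\exp_H(tF) = \exp_H(tA)$ is a positive semidefinite function (Corollary \ref{c2.10}) $\Rightarrow F$ is conditionally positive semidefinite (Theorem \ref{t2.7}). You instead verify the defining condition directly via the characterization in Lemma \ref{l2.5}\,$(iii)$: condition $(\alpha)$ is immediate from $A=A^*$, and for $(\beta)$ you collapse the double sum by sesquilinearity into $(c,Ac)_{\bbC^m}$ with $c = \sum_p c_p$, and observe that the block constraint $\sum_{p,j} c_{p,j}=0$ is precisely $\sum_j c_j = 0$, so Definition \ref{d2.1}\,$(ii)$ applies. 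Your argument is shorter, self-contained, and avoids any appeal to Lemma \ref{l2.2}, Corollary \ref{c2.10}, or Theorem \ref{t2.7}; the paper's route is presumably chosen to mirror the structure of Corollary \ref{c2.10}'s proof and to emphasize the Hadamard-exponential equivalence that is the section's main theme, but as a matter of pure economy your direct verification is preferable. Your closing remark that one could equivalently argue with the block matrix $H_N \otimes A$ (all-ones matrix Kronecker $A$) is also correct and is exactly the structure your folded-sum identity exploits.
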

%%%%%%
\begin{proof}
By Lemma \ref{l2.2}, for all $t > 0$, $\exp_H(t A) \geq 0$ is positive semidefinite. Thus, by 
Corollary \ref{c2.10}, for all $t > 0$, $\exp_H(t F)(x) = \exp_H(t A)$, $ x \in \bbR^n$, is positive semidefinite. Hence, by Theorem \ref{t2.7}, $F$ is conditionally positive semidefinite. 
\end{proof}
%%%%%%

Corollaries \ref{c2.10} and \ref{c2.11} indeed verify compatibility of Definitions \ref{d2.1} and \ref{d2.4}. 
For other elementary examples of conditionally positive semidefinite matrix-valued functions we 
refer to Example \ref{e4.17}. 

The classical (i.e., scalar-valued situation $m=1$) version of Schoenberg's theorem, at first sight,  suggests an alternative ``weak'' definition of conditionally positive semidefinite functions 
(cf.\ also \cite{Lo67}) as follows: 

%%%%%%
\begin{definition} \lb{d2.12}
Let $F \colon \bbR^n \to \bbC^{m \times m}$. Then $F$ is called {\bf weakly conditionally positive 
semidefinite} if for all $N \in \bbN$, $x_p \in \bbR^n$, $1 \leq p \leq N$, and all 
$f = (f_1,\dots,f_m)^\top \in \bbC^m$, the matrix 
$\{(f, F(x_p - x_q) f)_{\bbC^m}\}_{1 \leq p,q \leq N} \in \bbC^{N \times N}$ is conditionally positive semidefinite, that is, for all $c_p \in \bbC$, $1 \leq p \leq N$, with $\sum_{p=1}^N c_p = 0$,
one has 
\begin{equation}
\sum_{p,q=1}^N \sum_{j,k=1}^m \ol{c_p} \, \ol{f_j} \, F(x_p - x_q)_{j,k} f_k c_q \geq 0.  
\end{equation}
\end{definition} 
%%%%%%

We will conclude this section by showing via a simple example that Definitions \ref{d2.4}\,$(ii)$ and \ref{d2.12} are inequivalent.

%%%%%%
\begin{example} \lb{e2.13}
Consider 
\begin{equation}
A = \begin{pmatrix} \ln(1/2) & 0 \\ 0 & \ln(1/2) \end{pmatrix} 
\end{equation}
and introduce $F \colon \bbR^n \to \bbC^{2 \times 2}$ by
\begin{equation} 
F(x) = A, \quad x \in \bbR^n. 
\end{equation}
Then, for all $N \in \bbN$, $x_p \in \bbR^n$, $c_p \in \bbC$, $1 \leq p \leq N$, with 
$\sum_{p=1}^N c_p = 0$, and all $f = (f_1, f_2)^\top \in \bbC^2$,
\begin{equation}
\sum_{p,q=1}^N \sum_{j,k=1}^2 \ol{c_p} \, \ol{f_j} \, F(x_p - x_q)_{j,k} f_k c_q 
= (f, A f)_{\bbC^2} \sum_{p,q=1}^N \ol{c_p} \, c_q = 0, 
\end{equation}
and hence $F$ is weakly conditionally positive semidefinite. On the other hand, 
\begin{equation}
\exp_H(F) = \exp_H(A) = \begin{pmatrix} 1/2 & 1 \\ 1 & 1/2 \end{pmatrix}, \quad 
x \in \bbR^n. 
\end{equation}
However, $\exp_H(A)$ has a simple negative eigenvalue $\lambda_1 = -1/2$; denoting by 
$v_1 \in \bbC^2$ an associated normalized eigenvector, then for all $N \in \bbN$, $x_p \in \bbR^n$, $c_p \in \bbC$, $1 \leq p \leq N$, one computes 
\begin{align} 
\begin{split}
\sum_{p,q=1}^N \ol{c_p} \, (v_1, \exp_H(F)(x_p - x_q) v_1)_{\bbC^2} c_q 
&= \sum_{p,q=1}^N \ol{c_p} \, (v_1, \exp_H(A)v_1)_{\bbC^2} c_q    \\
&= - \f{1}{2} \bigg|\sum_{p=1}^N c_p\bigg|^2 \leq 0.  
\end{split} 
\end{align}
In particular, as long as $\sum_{p=1}^N c_p \neq 0$, then 
\begin{equation} 
\sum_{p,q=1}^N \ol{c_p} \, (v_1, \exp_H(F)(x_p - x_q) v_1)_{\bbC^2} c_q < 0, 
\end{equation}
and hence $\exp_H(F)$ is {\bf not} positive semidefinite by Lemma \ref{l2.5}\,$(ii)$. Consequently, 
$F$ is {\bf not} conditionally positive semidefinite by Theorem \ref{t2.6} and Definitions \ref{d2.4}\,$(ii)$ 
and \ref{d2.12} are indeed inequivalent.
\end{example}
%%%%%%

%%%%%%
\begin{remark} \lb{r2.14}
There are other non-equivalent extensions of scalar conditionally positive semidefinite functions to the matrix context in the literature. One of the principal goals in this paper is to extend the classical results Theorem \ref{t1.3} and Theorem \ref{t1.4}\,$(i)$--$(iii)$ to the matrix context. So we chose to use the more restrictive definition of matrix valued conditionally positive semidefinite functions in 
Definition \ref{d2.4}.  For treatments of other non-equivalent extensions of scalar conditionally positive semidefinite functions to the matrix case, see, for instance, \cite[Ch.~II]{GV64}, \cite[Chs.~3, 4]{Xi94}. 
For detailed surveys of the theory of scalar positive semidefinite functions we refer, for example, to \cite{Go14}, \cite{St76}.
\end{remark}
%%%%%%

%%%%%%%%%%%%%%%%%%%%%%%%%%%%%%
%%%%%%%%%%%%%%%%%%%%%%%%%%%%%%
\section{Preliminaries on Operators Associated to Matrix-valued Positive Semidfinite Functions } 
\lb{s3}
%%%%%%%%%%%%%%%%%%%%%%%%%%%%%%
%%%%%%%%%%%%%%%%%%%%%%%%%%%%%%

In this section we develop the basic material on convolutions involving matrix-valued measures 
and matrix-valued convolution operators needed in our principal Section \ref{s4}. 
We rely on \cite[Sect.~2]{Ch02} and \cite[Sects.~2.1, 3.1]{Ch08} (see also \cite{He15}). For readers who are interested in convolution involving operator-valued measures in the infinite dimensional 
Hilbert space context, we refer to \cite{GJR00}. 

Throughout the remainder of this paper we fix $m \in \bbN$. 

A $\bbC^{m \times m}$-valued measure on $\bbR^n$ is a countably additive function 
$\mu \colon \gB_n \to \bbC^{m \times m}$. Equivalently, $\mu = \{\mu_{j,k}\}_{1 \leq j,k \leq m}$ 
is a $\bbC^{m \times m}$-valued measure on $\bbR^n$, if and only if each entry 
$\mu_{j,k} \colon \gB_n \to \bbC$, $ 1 \leq j,k \leq m$, is a complex measure on $\bbR^n$. The 
variation $|\mu|$ of $\mu$ is defined as the finite nonnegative measure on $\bbR^n$ given by 
\begin{equation}
|\mu|(E) = \sup_{\cP} \bigg\{\sum_{E_{\ell} \in \cP} \|\mu(E_\ell)\|_{\cB(\bbC^m)}\bigg\}, 
\quad E \in \gB_n,
\end{equation}  
where the supremum is taken over all partitions $\cP$ of $E$ into a finite number of pairwise disjoint subsets $E_{\ell} \in \gB_n$. The norm $\| \mu\|$ of $\mu$ is defined by
\begin{equation}
\|\mu\| = |\mu|(\bbR^n),
\end{equation}
and we also introduce the notation
\begin{equation}
N(\mu) = \max_{1 \leq j, k \leq m} \big(|\mu_{j,k}|(\bbR^n)\big) 
= \max_{1 \leq j, k \leq m} \|\mu_{j,k}\|.   \lb{3.3} 
\end{equation}

A function $f = \{f_{j,k}\}_{1 \leq j,k \leq m} \colon \bbR^n \to \bbC^{m \times m}$ is called 
$\mu$-integrable if 
\begin{equation}
\int_{\bbR^n} f(x)_{j,k} \, d\mu_{r,s}(x), \quad 1 \leq j,k,r,s \leq m,
\end{equation}
exist, in which case one defines for all $E \in \gB_n$, the integral 
\begin{align} 
& \int_E f(x) \, d\mu(x) = \bigg\{\bigg(\int_E f(x) \, d\mu(x)\bigg)_{j,k}\bigg\}_{1 \leq j,k \leq m},   \\
& \bigg(\int_E f(x) \, d\mu(x)\bigg)_{j,k} = \sum_{\ell = 1}^m \int_E f(x)_{j,\ell} \, d\mu_{\ell,k}(x), 
\quad 1 \leq j,k \leq m. 
\end{align} 
Then, for all $\mu$-integrable functions $f$, 
\begin{equation}
\bigg\| \int_E f(x) \, d\mu(x)\bigg\|_{\cB(\bbC^m)} \leq \int_E \|f(x)\|_{\cB(\bbC^m)} \, d|\mu|(x), 
\quad E \in \gB_n.    \lb{3.6} 
\end{equation}

Next, we introduce $\cM(\bbR^n, \bbC^{m \times m})$ as the space of all (finite) measures on 
$\bbR^n$ of the form, $\mu \colon \gB_n \to (\bbC^{m \times m}, \| \cdot \|_{\cB(\bbC^m)})$. As 
shown in \cite[Lemma~5]{Ch02}, there exists a linear, isometric order isomorphism between 
$\cM(\bbR^n, \bbC^{m \times m})$ and the dual space of $C_{\infty}(\bbR^n, \bbC^{m \times m})$,  
the duality pairing 
$\langle \, \cdot \, ,\, \cdot \, \rangle \colon C_{\infty}(\bbR^n, \bbC^{m \times m}) \times 
\cM(\bbR^n, \bbC^{m \times m})$ being given by 
\begin{equation}
\langle f, \mu \rangle = \tr_{\bbC^m} \bigg(\int_{\bbR^n} f(x) \, d\mu(x)\bigg) 
= \sum_{j,k = 1}^m \int_{\bbR^n} f(x)_{j,k} \, d\mu_{k,j}(x).     \lb{3.7} 
\end{equation}

Given $\mu \in \cM(\bbR^n, \bbC^{m \times m})$ and a $\mu$-integrable 
$f \colon \bbR^n \to \bbC^{m \times m}$, we define their convolution by  
\begin{equation}
f * \mu \colon \begin{cases} \bbR^n \to \bbC^{m \times m}, \\ 
x \mapsto (f * \mu)(x) = \int_{\bbR^n} f(x-y) \, d\mu(y), 
\end{cases} \quad x \in \bbR^n.    \lb{3.8}
\end{equation}
Moreover, for $p \in [1,\infty)$ we introduce 
\begin{align}
\begin{split} 
L^p(\bbR^n, \bbC^{m \times m}) = \bigg\{ & f\colon \bbR^n \to \bbC^{m \times m} \, \text{measurable} 
\, \bigg| \, \\
& \|f\|_{p,m} = \bigg(\int_{\bbR^n} \| f(x) \|_{\cB(\bbC^m)}^p \, d^n x\bigg)^{1/p} < \infty\bigg\},   
\end{split} 
\end{align}
and similarly, for $p=\infty$,
\begin{align}
\begin{split} 
L^{\infty}(\bbR^n, \bbC^{m \times m}) = \big\{ & f\colon \bbR^n \to \bbC^{m \times m} \, \text{measurable} 
\, \big| \, \\
& \|f\|_{\infty,m} = {\rm ess.sup}_{x \in \bbR^n} \| f(x) \|_{\cB(\bbC^m)} < \infty\big\}.    
\end{split} 
\end{align}

Thus, one estimates,
\begin{align}
\|(f * \mu)(x)\|_{\cB(\bbC^m)} &= \bigg\|\int_{\bbR^m} f(x-y) \, d\mu(y)\bigg\|_{\cB(\bbC^m)}  \no \\
& \leq \int_{\bbR^n} \|f(x-y)\|_{\cB(\bbC^m)} \, d |\mu|(y)   \no \\
& \leq \bigg(\int_{\bbR^n} \|f(x-y)\|_{\cB(\bbC^m)}^p \, d |\mu|(y)\bigg)^{1/p} \big[|\mu|(\bbR^n)\big]^{1/p'}, 
\end{align}
with $p^{-1} + (p')^{-1} = 1$, and hence,
\begin{align}
\|f * \mu\|_{p,m} &= \bigg(\int_{\bbR^n} \|(f * \mu)(x)\|_{\cB(\bbC^m)}^p \, d^nx\bigg)^{1/p}  \no \\
& \leq \bigg(\int_{\bbR^n} \int_{\bbR^n} \|f(x-y)\|_{\cB(\bbC^m)}^p \, d|\mu|(y) \, d^n x\bigg)^{1/p} 
\big[|\mu|(\bbR^n)\big]^{1/p'}   \no \\
& = \bigg(\int_{\bbR^n} \int_{\bbR^n} \|f(x-y)\|_{\cB(\bbC^m)}^p \, d^n x \, d|\mu|(y)\bigg)^{1/p} 
\big[|\mu|(\bbR^n)\big]^{1/p'}   \no \\
& = |\mu|(\bbR^n) \|f\|_{p,m}, \quad p \in [1,\infty).     \lb{3.13}
\end{align}

Thus, for $\mu \in \cM(\bbR^n, \bbC^{m \times m})$ one can introduce the associated convolution operator $T_{\mu} \in \cB\big( L^p(\bbR^n, \bbC^{m \times m})\big)$, $p \in [1,\infty)$,  by
\begin{equation}
T_{\mu} f = f * \mu, \quad f \in L^p(\bbR^n, \bbC^{m \times m}),
\end{equation}
implying (cf.\ \eqref{3.13})
\begin{equation}
\|T_{\mu}\|_{\cB( L^p(\bbR^n, \bbC^{m \times m}))} \leq |\mu|(\bbR^n), \quad p \in [1,\infty).   
\lb{3.14}
\end{equation}

Next, we introduce the following equivalent norm in $L^1(\bbR^n, \bbC^{m \times m})$,
\begin{equation}
\interleave f \interleave_{1,m} := \sum_{j,k=1}^m \|f_{j,k}\|_{1}, \quad 
f \in L^1(\bbR^n, \bbC^{m \times m}),    \lb{3.16}
\end{equation}
such that
\begin{equation}
(c_m')^{-1} \interleave f \interleave_{1,m} \leq \|f\|_{1,m} \leq c_m'  \interleave f \interleave_{1,m}, 
\quad f \in L^1(\bbR^n, \bbC^{m \times m}),
\end{equation} 
where $c_m' \geq 1$ is chosen such that 
\begin{equation}
(c_m')^{-1} \sum_{j,k=1}^m |A_{j,k}| \leq \|A\|_{\cB(\bbC^m)} \leq c_m' \sum_{j,k=1}^m |A_{j,k}|, 
\quad A \in \bbC^{m \times m}.  
\end{equation}
Similarly, introducing the following equivalent norm in $L^2(\bbR^n, \bbC^{m \times m})$,
\begin{equation}
\interleave f \interleave_{2,m} := \sum_{j,k=1}^m \|f_{j,k}\|_{2}, \quad 
f \in L^2(\bbR^n, \bbC^{m \times m}),       \lb{3.19a}
\end{equation}
there exists $c_m'' \geq 1$ such that 
\begin{equation}
(c_m'')^{-1} \interleave f \interleave_{2,m} \leq \|f\|_{2,m} \leq c_m'' \interleave f \interleave_{2,m}, 
\quad f \in L^2(\bbR^n, \bbC^{m \times m}).  
\end{equation} 
In addition, we also introduce the following equivalent norm in $L^{\infty}(\bbR^n, \bbC^{m \times m})$,
\begin{equation}
\interleave f \interleave_{\infty,m} := \max_{1 \leq j,k \leq m} \|f_{j,k}\|_{\infty}, \quad 
f \in L^{\infty}(\bbR^n, \bbC^{m \times m}),       \lb{3.21a}
\end{equation}
then there exists $d_m \geq 1$ such that 
\begin{equation}
(d_m)^{-1} \interleave f \interleave_{\infty,m} \leq \|f\|_{\infty,m} 
\leq d_m \interleave f \interleave_{\infty,m}, \quad f \in L^{\infty}(\bbR^n, \bbC^{m \times m}).  
\end{equation} 
In the special case $m=1$ we will omit the extra subscript $1$ in \eqref{3.16}, \eqref{3.19a}, 
and \eqref{3.21a}.

For future purpose in Section \ref{s4} we now also introduce 
$L^2(\bbR^n, \bbC^{m \times m}_{\rm HS})$, where $\bbC^{m \times m}_{\rm HS}$ denotes the 
space $(\bbC^{m \times m}, \| \, \cdot \, \|_{\cB_2(\bbC^m)})$ (i.e., the operator norm 
$\|\, \cdot \,\|_{\cB(\bbC^m)}$ is now replaced by the Hilbert--Schmidt norm 
$\|\, \cdot \,\|_{\cB_2(\bbC^m)}$), as follows: First, $\bbC^{m \times m}$ can be identified with 
$\bbC^{m^2}$, and then the standard Euclidean norm on $\bbC^{m^2}$ 
becomes the Hilbert--Schmidt norm $\| \, \cdot \, \|_{\cB_2(\bbC^m)}$ (cf., e.g., \cite[p.~93]{Bh97}), 
and hence the space $\cB(\bbC^{m \times m}_{\rm HS})$ can be identified with 
$\bbC^{m^2 \times m^2}$. Summarizing,
\begin{equation}
\bbC^{m \times m}_{\rm HS} \simeq \cB_2(\bbC^m) \simeq \bbC^{m^2}, \quad 
\cB(\bbC^{m \times m}_{\rm HS}) \simeq \cB(\bbC^{m^2}) \simeq \bbC^{m^2 \times m^2}. 
\end{equation}
Thus, we introduce 
\begin{align}
& L^2(\bbR^n, \bbC^{m \times m}_{\rm HS}) = \bigg\{f\colon \bbR^n \to \bbC^{m \times m} \, \text{measurable} \, \bigg| \,     \no \\
& \hspace*{3cm} \|f\|_{L^2(\bbR^n, \, \bbC^{m \times m}_{\rm HS})} 
= \bigg(\int_{\bbR^n} \| f(x) \|_{\cB_2(\bbC^m)}^2 \, d^n x\bigg)^{1/2}    \\
& \hspace*{5.4cm}
= \bigg(\int_{\bbR^n} \sum_{j,k=1}^m |f_{j,k}(x)|^2 \, d^nx\bigg)^{1/2} < \infty\bigg\},    \no 
\end{align}
so that as sets, $L^2(\bbR^n, \bbC^{m \times m})$ and $L^2(\bbR^n, \bbC^{m \times m}_{\rm HS})$ coincide, however, the norms (scalar products) employed differ between them. The classical Plancherel 
theorem then yields
\begin{equation}
\|f\|_{L^2(\bbR^n, \, \bbC^{m \times m}_{\rm HS})} = 
\|f^{\wedge}\|_{L^2(\bbR^n, \, \bbC^{m \times m}_{\rm HS})}, \quad 
f \in L^2(\bbR^n, \bbC^{m \times m}_{\rm HS}).    \lb{3.25} 
\end{equation}

Next, we define left translations $L_x$, $x \in \bbR^n$, acting on 
$f \colon \bbR^n \to \bbC^{m \times m}$, via 
\begin{equation}
(L_x f)(y) = f(y-x), \quad y \in \bbR^n. 
\end{equation}

%%%%%%%
\begin{definition} \lb{d3.1}
Let $T \in \cB\big( L^p(\bbR^n, \bbC^{m \times m})\big)$, $p \in [1,\infty)\cup \{\infty\}$.~Then 
$T$ is called $\bbC^{m \times m}$--{\bf linear} if 
\begin{equation}
T(Af) = A(Tf), \quad A \in \bbC^{m \times m}, \; f \in  L^p(\bbR^n, \bbC^{m \times m}).
\end{equation}
\end{definition}
%%%%%%%

%%%%%%%
\begin{proposition} [{\cite[p.~27]{Ch08}}] \lb{p3.2}
Let $p \in [1,\infty)$ and $\mu \in \cM(\bbR^n, \bbC^{m \times m})$.~Then   
$T_{\mu} \in \cB\big( L^p(\bbR^n, \bbC^{m \times m})\big)$ is $\bbC^{m \times m}$--linear and  \begin{equation}
L_x T_\mu f = T_\mu L_x f, \quad x \in \bbR^n, \; f \in  L^p(\bbR^n, \bbC^{m \times m}). 
\end{equation}
\end{proposition}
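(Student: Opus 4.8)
The plan is to verify the two asserted properties of $T_\mu$ directly from the definition \eqref{3.8} of the convolution $f * \mu$, exploiting that both matrix multiplication by a fixed $A \in \bbC^{m \times m}$ and translation $L_x$ commute with integration against the measure $\mu$. Since $T_\mu \in \cB\big(L^p(\bbR^n, \bbC^{m \times m})\big)$ has already been established via the bound \eqref{3.14} coming from \eqref{3.13}, nothing further is needed on the boundedness side; the content of the proposition is purely the two algebraic identities.

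First I would prove $\bbC^{m \times m}$--linearity. Fix $A \in \bbC^{m \times m}$, $f \in L^p(\bbR^n, \bbC^{m \times m})$, and $x \in \bbR^n$. By definition,
\begin{equation}
(T_\mu (Af))(x) = \int_{\bbR^n} (Af)(x-y) \, d\mu(y) = \int_{\bbR^n} A f(x-y) \, d\mu(y).
\end{equation}
The key step is that a constant left matrix factor passes through the integral: writing out the $(j,k)$ entry using the entrywise definition of $\int_E f \, d\mu$, one has
\begin{equation}
\bigg(\int_{\bbR^n} A f(x-y) \, d\mu(y)\bigg)_{j,k}
= \sum_{\ell=1}^m \int_{\bbR^n} (A f(x-y))_{j,\ell} \, d\mu_{\ell,k}(y)
= \sum_{\ell, r=1}^m A_{j,r} \int_{\bbR^n} f(x-y)_{r,\ell} \, d\mu_{\ell,k}(y),
\end{equation}
which is exactly $\big(A \int_{\bbR^n} f(x-y) \, d\mu(y)\big)_{j,k} = (A (T_\mu f)(x))_{j,k}$. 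Hence $T_\mu(Af) = A(T_\mu f)$ pointwise a.e., which is the claimed identity in $L^p(\bbR^n, \bbC^{m \times m})$.

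Next I would prove the translation covariance $L_x T_\mu f = T_\mu L_x f$. Again fix $x \in \bbR^n$ and evaluate at an arbitrary $z \in \bbR^n$. On one side,
\begin{equation}
(L_x T_\mu f)(z) = (T_\mu f)(z - x) = \int_{\bbR^n} f(z - x - y) \, d\mu(y),
\end{equation}
while on the other,
\begin{equation}
(T_\mu L_x f)(z) = \int_{\bbR^n} (L_x f)(z - y) \, d\mu(y) = \int_{\bbR^n} f(z - y - x) \, d\mu(y),
\end{equation}
and the two integrands agree for every $y$, so the identity holds pointwise a.e. and therefore in $L^p(\bbR^n, \bbC^{m \times m})$. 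I do not anticipate a genuine obstacle here: both claims are immediate consequences of the linearity of the vector-valued integral together with the explicit entrywise formula for integration against $\mu$. The only point requiring a modicum of care is the bookkeeping of matrix indices in the $\bbC^{m \times m}$--linearity step, where one must be careful that $A$ multiplies $f$ on the left while $\mu$ acts on the right, so the sum over the intermediate index $\ell$ (contracted with $\mu$) and the sum over the index $r$ (contracted with $A$) do not interfere — which is precisely why the factor $A$ comes out cleanly.
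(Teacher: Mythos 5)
Your proof is correct. The paper does not actually supply an argument for this proposition — it simply cites Chu's monograph [Ch08, p.~27] — so there is nothing internal to compare against, and your direct entrywise verification from the definition of $\int_E f\,d\mu$ (with the measure contracted on the right so that a constant left factor $A$ passes through the integral) and from the definition of $L_x$ is exactly the natural route one would take. The index bookkeeping in the $\bbC^{m\times m}$--linearity step is done correctly, and the translation identity is an immediate change of notation.
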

%%%%%%%

%%%%%%%
\begin{proposition} [{\cite[Proposition~3.1.10, Corollary~3.1.11]{Ch08}}] \lb{p3.3} ${}$ \\ 
$(i)$ Let $p \in [1,\infty)$ and assume that $T \in \cB\big( L^p(\bbR^n, \bbC^{m \times m})\big)$ is 
$\bbC^{m \times m}$--linear. Then the following assertions $(\alpha)$ and $(\beta)$ are equivalent: 
\\[1mm] 
$(\alpha)$ $T = T_{\mu}$ for some $\mu \in \cM(\bbR^n, \bbC^{m \times m})$. \\[1mm] 
$(\beta)$ $L_x T = T L_x$, $x \in \bbR^n$, and 
$T \in \cB\big( C_0(\bbR^n, \bbC^{m \times m}),  C_{b}(\bbR^n, \bbC^{m \times m})\big)$. \\[1mm] 
$(ii)$ Assume that $T \in \cB\big( L^1(\bbR^n, \bbC^{m \times m})\big)$ is  
$\bbC^{m \times m}$--linear. Then the following assertions $(\gamma)$ and $(\delta)$ are equivalent: 
\\[1mm] 
$(\gamma)$ $T = T_{\mu}$ for some $\mu \in \cM(\bbR^n, \bbC^{m \times m})$. \\[1mm] 
$(\delta)$ $L_x T = T L_x$, $x \in \bbR^n$.  
\end{proposition}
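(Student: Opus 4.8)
The plan is to dispatch the two easy implications $(\alpha)\Rightarrow(\beta)$ and $(\gamma)\Rightarrow(\delta)$ by direct computation, and to prove the two substantive converses by realizing $T$ explicitly as convolution against a measure in $\cM(\bbR^n,\bbC^{m\times m})$ manufactured from $T$, the reservoir of measures being supplied by the duality $C_{\infty}(\bbR^n,\bbC^{m\times m})^{*}\cong\cM(\bbR^n,\bbC^{m\times m})$ from \cite[Lemma~5]{Ch02} recorded in \eqref{3.7}. For $(\alpha)\Rightarrow(\beta)$ and $(\gamma)\Rightarrow(\delta)$: if $T=T_{\mu}$, then $L_{x}T=TL_{x}$ is precisely Proposition~\ref{p3.2}; moreover, for $f\in C_{0}(\bbR^n,\bbC^{m\times m})$, uniform continuity of $f$ together with finiteness of $|\mu|$ shows (via \eqref{3.6} applied to $f(x_k-\,\cdot\,)-f(x-\,\cdot\,)$) that $f*\mu$ is continuous, while \eqref{3.6} with $E=\bbR^n$ gives $\|(f*\mu)(x)\|_{\cB(\bbC^m)}\le|\mu|(\bbR^n)\,\|f\|_{\infty}$; hence $T_{\mu}\in\cB\big(C_{0}(\bbR^n,\bbC^{m\times m}),C_{b}(\bbR^n,\bbC^{m\times m})\big)$, which is the remaining content of $(\beta)$.

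For $(\beta)\Rightarrow(\alpha)$ one exploits that, under $(\beta)$, $Tf$ is a genuine continuous function for $f\in C_{0}(\bbR^n,\bbC^{m\times m})$. Set $\Lambda(f):=(Tf)(0)\in\bbC^{m\times m}$; then $\Lambda(Af)=A\Lambda(f)$ (immediate from $T(Af)=A(Tf)$) and $\|\Lambda(f)\|_{\cB(\bbC^m)}\le\|T\|_{\cB(C_{0},C_{b})}\|f\|_{\infty}$, so $\tr_{\bbC^m}\circ\Lambda$ is a bounded linear functional on $\big(C_{0}(\bbR^n,\bbC^{m\times m}),\|\cdot\|_{\infty}\big)$, extends to $C_{\infty}(\bbR^n,\bbC^{m\times m})$, and by \cite[Lemma~5]{Ch02} equals $\langle\,\cdot\,,\nu\rangle$ for a unique $\nu\in\cM(\bbR^n,\bbC^{m\times m})$. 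Testing this identity against $E_{k,j}f$, with $E_{k,j}$ the matrix units, and using $\Lambda(E_{k,j}f)=E_{k,j}\Lambda(f)$ together with $\tr_{\bbC^m}(E_{k,j}B)=B_{j,k}$, recovers every entry and yields $\Lambda(f)=\int_{\bbR^n}f\,d\nu$ for all $f\in C_{0}(\bbR^n,\bbC^{m\times m})$. Finally $L_{z}T=TL_{z}$ gives $(Tf)(x)=(T(L_{-x}f))(0)=\Lambda(L_{-x}f)=\int_{\bbR^n}f(x+y)\,d\nu(y)$, so with $\mu\in\cM(\bbR^n,\bbC^{m\times m})$ the reflection of $\nu$, i.e.\ $\mu(E):=\nu(-E)$, one obtains $Tf=f*\mu=T_{\mu}f$ on $C_{0}(\bbR^n,\bbC^{m\times m})$; since $C_{0}(\bbR^n,\bbC^{m\times m})$ is dense in $L^{p}(\bbR^n,\bbC^{m\times m})$ and both $T$ (hypothesis) and $T_{\mu}$ (by \eqref{3.14}) are bounded there, equality extends to all of $L^{p}(\bbR^n,\bbC^{m\times m})$.

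For $(\delta)\Rightarrow(\gamma)$ the extra structural input of $(\beta)$ is unavailable, so one argues by approximation. Fix a scalar approximate identity $\{\psi_{\e}\}_{\e>0}\subset C_{0}(\bbR^n)$ with $\psi_{\e}\ge 0$ and $\int_{\bbR^n}\psi_{\e}\,d^{n}x=1$, and put $\mu_{\e}:=T(\psi_{\e}I_{m})$, where $I_{m}$ is the $m\times m$ identity matrix; then $\|\mu_{\e}\|_{1,m}\le\|T\|_{\cB(L^{1}(\bbR^n,\bbC^{m\times m}))}\|\psi_{\e}I_{m}\|_{1,m}=\|T\|_{\cB(L^{1}(\bbR^n,\bbC^{m\times m}))}$, so, viewing $L^{1}(\bbR^n,\bbC^{m\times m})\hookrightarrow\cM(\bbR^n,\bbC^{m\times m})\cong C_{\infty}(\bbR^n,\bbC^{m\times m})^{*}$ and using separability of $C_{\infty}(\bbR^n,\bbC^{m\times m})$, some subsequence $\mu_{\e_{j}}$ converges weak-$*$ to $\mu\in\cM(\bbR^n,\bbC^{m\times m})$. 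The key identity is that, for $f,g\in L^{1}(\bbR^n,\bbC^{m\times m})$, $T(f*g)=f*(Tg)$: representing $f*g=\int_{\bbR^n}f(y)\,(L_{y}g)\,d^{n}y$ as a Bochner integral in $L^{1}(\bbR^n,\bbC^{m\times m})$ and pulling $T$ inside, the constant matrix $f(y)$ multiplies $L_{y}g$ \emph{on the left}, which is exactly the form to which $T(Af)=A(Tf)$ applies, so combined with $L_{y}T=TL_{y}$ one gets $T(f*g)=\int_{\bbR^n}f(y)\,L_{y}(Tg)\,d^{n}y=f*(Tg)$. Hence $T\big(f*(\psi_{\e}I_{m})\big)=f*\mu_{\e}$, while $f*(\psi_{\e}I_{m})=\psi_{\e}*f\to f$ in $L^{1}(\bbR^n,\bbC^{m\times m})$ forces $f*\mu_{\e}\to Tf$ in $L^{1}(\bbR^n,\bbC^{m\times m})$; on the other hand, for $f\in C_{0}(\bbR^n,\bbC^{m\times m})$ weak-$*$ convergence of $\mu_{\e_{j}}$ gives $(f*\mu_{\e_{j}})(x)\to(f*\mu)(x)$ for each $x$ (once more testing against $E_{k,j}f(x-\,\cdot\,)\in C_{\infty}(\bbR^n,\bbC^{m\times m})$ to recover matrix entries). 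Matching the two limits along a further subsequence gives $Tf=f*\mu$ a.e.\ for $f\in C_{0}(\bbR^n,\bbC^{m\times m})$, hence $T=T_{\mu}$ on $L^{1}(\bbR^n,\bbC^{m\times m})$ by density; the reverse implication $(\gamma)\Rightarrow(\delta)$ is again Proposition~\ref{p3.2}.

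The main obstacle throughout is the bookkeeping forced by non-commutativity: the convolution $f*\mu$ is not symmetric and $T$ is assumed only $\bbC^{m\times m}$-linear in the one-sided sense $T(Af)=A(Tf)$, so matrix factors must be kept in the correct order at every step — this is precisely why $f*g$ must be written as $\int f(y)(L_{y}g)\,d^{n}y$ with $f(y)$ on the left, and why one convolves against $\psi_{\e}I_{m}$ rather than an arbitrary matrix-valued approximate identity. A secondary technical point is that \cite[Lemma~5]{Ch02} furnishes only the \emph{scalar} duality pairing $\langle f,\mu\rangle=\tr_{\bbC^m}\!\big(\int_{\bbR^n}f\,d\mu\big)$, so reconstructing matrix-valued integrals from it requires the $E_{k,j}$-testing device together with the elementary identity $\tr_{\bbC^m}(E_{k,j}B)=B_{j,k}$.
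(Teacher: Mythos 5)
The paper does not prove this proposition; it is cited directly from Chu's monograph \cite[Proposition~3.1.10, Corollary~3.1.11]{Ch08}, so there is no in-paper argument to compare against. Your proof is correct and is the standard argument for identifying translation-commuting, module-linear operators with convolution against a matrix-valued measure: the easy directions $(\alpha)\Rightarrow(\beta)$ and $(\gamma)\Rightarrow(\delta)$ follow from Proposition~\ref{p3.2} together with \eqref{3.6}; for $(\beta)\Rightarrow(\alpha)$ you realize the measure through the duality $\cM(\bbR^n,\bbC^{m\times m})\cong C_{\infty}(\bbR^n,\bbC^{m\times m})^{*}$ applied to $f\mapsto\tr_{\bbC^m}\big((Tf)(0)\big)$ and recover the entrywise identity by testing against matrix units; and for $(\delta)\Rightarrow(\gamma)$ you combine the module identity $T(f*g)=f*(Tg)$ (obtained by writing $f*g$ as a Bochner integral with the matrix $f(y)$ on the left, so that $T(Ah)=A(Th)$ applies) with the approximate identity $\psi_{\e}I_{m}$ and weak-$*$ compactness of bounded sets in $\cM(\bbR^n,\bbC^{m\times m})$. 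The two technical pitfalls you flag explicitly --- keeping matrix factors on the correct side given the one-sided $\bbC^{m\times m}$-linearity, and upgrading the scalar trace pairing \eqref{3.7} to entrywise identities via $E_{k,j}$-testing --- are exactly where a naive transcription of the scalar case would break, and you handle both correctly.
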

%%%%%%%

Next, given $F \in L^{\infty} (\bbR^n, \bbC^{m \times m})$, we define the associated operator 
$F(-i \nabla) \in \cB\big(L^2(\bbR^n, \bbC^{m \times m})\big)$ by
\begin{equation}
F(-i \nabla) f = \big(f^{\wedge} F\big)^{\vee}, \quad f \in L^2 (\bbR^n, \bbC^{m \times m}). 
\end{equation}
More generally, if $F \in L^1_{\loc}(\bbR^n, \bbC^{m \times m})$, one introduces the maximally 
defined operator of right multiplication by $F$ in $L^2(\bbR^n, \bbC^{m \times m})$, denoted 
by $M_F$, by
\begin{align}
\begin{split} 
& (M_F f)(x) = f(x) F(x),  \\
& \, f \in \dom(M_F) = \big\{g \in L^2(\bbR^n, \bbC^{m \times m}) \, \big | \, 
g F \in L^2(\bbR^n, \bbC^{m \times m})\big\}, 
\end{split} 
\end{align} 
and then defines $F(- i \nabla)$ as the closed operator in $L^2(\bbR^n, \bbC^{m \times m})$ via
\begin{equation} 
F(- i \nabla) f = \cF^{-1} (M_F(\cF f)) 
\end{equation}
(cf.\ \eqref{1.15}, \eqref{1.16} and their unitary extensions to $L^2(\bbR^n, \bbC^{m \times m}_{HS})$ 
as indicated in \eqref{3.25}).

%%%%%%%
\begin{lemma} \lb{l3.4} 
Suppose $F \in L^{\infty} (\bbR^n, \bbC^{m \times m})$, then
\begin{equation}
L_x F(- i \nabla) = F(- i \nabla) L_x, \quad x \in \bbR^n.   \lb{3.19}
\end{equation}
\end{lemma}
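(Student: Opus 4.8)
The plan is to reduce the commutation relation \eqref{3.19} to the elementary observation that, on the Fourier side, the left translation $L_x$ acts as multiplication by the \emph{scalar} function $\xi \mapsto e^{-i(x \cdot \xi)}$, and a scalar factor commutes with right multiplication by the matrix-valued symbol $F$.

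First I would establish the translation--modulation identity
\[
(L_x f)^{\wedge}(\xi) = e^{-i(x \cdot \xi)} \, f^{\wedge}(\xi), \qquad \xi, x \in \bbR^n,
\]
for $f \in \cS(\bbR^n, \bbC^{m \times m})$ by the change of variables $y \mapsto y + x$ in \eqref{1.15}, applied entrywise, and then extend it to all $f \in L^2(\bbR^n, \bbC^{m \times m})$ by density of $\cS(\bbR^n, \bbC^{m \times m})$ together with the facts that $\cF$ is unitary on $L^2(\bbR^n, \bbC^{m \times m}_{\rm HS})$ (this is \eqref{3.25}), that $L_x$ is isometric on $L^2(\bbR^n, \bbC^{m \times m}_{\rm HS})$, and that multiplication by the bounded scalar function $\xi \mapsto e^{-i(x \cdot \xi)}$ is continuous on $L^2(\bbR^n, \bbC^{m \times m})$. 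Writing this identity as $\cF L_x = \Phi_x \cF$, where $\Phi_x$ denotes multiplication by $\xi \mapsto e^{-i(x \cdot \xi)}$, it then also follows that $L_x \cF^{-1} = \cF^{-1} \Phi_x$.

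Next I would simply apply both sides of \eqref{3.19} to an arbitrary $f \in L^2(\bbR^n, \bbC^{m \times m})$ and compare. On one side,
\[
F(-i \nabla) L_x f = \big((L_x f)^{\wedge} F\big)^{\vee} = \big(e^{-i(x \cdot \, \cdot \,)} f^{\wedge} F\big)^{\vee},
\]
using the identity above. On the other side, setting $g = (f^{\wedge} F)^{\vee} \in L^2(\bbR^n, \bbC^{m \times m})$, so that $g^{\wedge} = f^{\wedge} F$, the same identity gives
\[
L_x F(-i \nabla) f = L_x g = \big(e^{-i(x \cdot \, \cdot \,)} g^{\wedge}\big)^{\vee} = \big(e^{-i(x \cdot \, \cdot \,)} f^{\wedge} F\big)^{\vee}.
\]
The two resulting expressions coincide because, for each fixed $\xi$, the scalar $e^{-i(x \cdot \xi)}$ commutes with matrix multiplication: $\big(e^{-i(x \cdot \xi)} f^{\wedge}(\xi)\big) F(\xi) = e^{-i(x \cdot \xi)} \big(f^{\wedge}(\xi) F(\xi)\big)$. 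Since $f$ was arbitrary, this proves \eqref{3.19}.

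The only points that require some (routine) care — and hence the nearest thing to an obstacle — are bookkeeping issues: first, that $F(-i \nabla)$ is genuinely a bounded operator on $L^2(\bbR^n, \bbC^{m \times m})$, so that all the manipulations above are legitimate, which follows from the pointwise bound $\|f^{\wedge}(\xi) F(\xi)\|_{\cB_2(\bbC^m)} \leq \|f^{\wedge}(\xi)\|_{\cB_2(\bbC^m)} \|F(\xi)\|_{\cB(\bbC^m)} \leq \|F\|_{\infty, m} \|f^{\wedge}(\xi)\|_{\cB_2(\bbC^m)}$ combined with \eqref{3.25}; and second, justifying the translation--modulation identity on all of $L^2(\bbR^n, \bbC^{m \times m})$ rather than only on the Schwartz class. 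Neither is deep; the entire content of the lemma is the scalar nature of the modulation factor $e^{-i(x \cdot \xi)}$, which is precisely why the analogous statement for \emph{left} multiplication by $F$ would be just as immediate.
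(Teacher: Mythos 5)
Your proof is correct and fills in precisely the "straightforward calculation" that the paper leaves to the reader: the paper's proof consists of one sentence (reduce to $\cS(\bbR^n, \bbC^{m \times m})$ by density and boundedness, then compute), and your translation--modulation identity $(L_x f)^{\wedge}(\xi) = e^{-i(x\cdot\xi)} f^{\wedge}(\xi)$ together with the observation that the scalar modulation factor commutes with right multiplication by $F(\xi)$ is exactly that computation. The only stylistic difference is that you carry the argument out directly on all of $L^2(\bbR^n, \bbC^{m \times m})$ rather than reducing to Schwartz functions first, which is a fine and equally rigorous route.
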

%%%%%%%

Since $\cS(\bbR^n, \bbC^{m \times m})$ is dense in $L^2(\bbR^n, \bbC^{m \times m})$, and 
all operators in \eqref{3.19} are bounded, it suffices to prove \eqref{3.19} for 
$f \in \cS(\bbR^n, \bbC^{m \times m})$. The latter follows from a straightforward calculation. 

For future purpose we also recall the following results: Introducing 
\begin{equation}
j_a(x) = e^{- a |x|}, \quad a > 0, \; x \in \bbR,  
\end{equation}
one verifies 
\begin{equation}
j_a^{\wedge}(y) = \f{1}{(2 \pi)^{1/2}} \,  \f{2a}{y^2 + a^2}, \quad y \in \bbR. 
\end{equation}
Similarly, introducing 
\begin{equation}
k_a(x) = \prod_{\ell=1}^n j_a(x_{\ell}), \quad x \in \bbR^n,   \lb{3.22} 
\end{equation}
one obtains,
\begin{equation}
k_a^{\wedge}(y) = \f{1}{(2 \pi)^{n/2}} \, \prod_{\ell=1}^n  \f{2a}{y_{\ell}^2 + a^2}, \quad y \in \bbR^n, 
\lb{3.23}
\end{equation}
and hence
\begin{equation}
\big\|k_a^{\wedge}\big\|_{1} = \int_{\bbR^n} \big|k_a^{\wedge} (y)\big| \, d^n y = (2 \pi)^{n/2}.  
\lb{3.24}
\end{equation}

%%%%%%%
\begin{lemma} \lb{l3.5} 
Let $a > 0$ and introduce the following diagonal matrix 
\begin{align} 
M_a (x) = k_a(x) I_{\bbC^m}, \quad x \in \bbR^n.  
\end{align}  
Then there exists $c_m \geq 1$ such that 
\begin{equation}
\big\|\big(M_a^{\wedge} F\big)^{\vee}\big\|_{\infty,m} \leq c_m^2 \, \|F\|_{\infty,m}, \quad 
F \in L^{\infty} (\bbR^n, \bbC^{m \times m}). 
\end{equation}
\end{lemma}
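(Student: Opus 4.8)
The plan is to reduce the claimed bound to the scalar estimate \eqref{3.24}, using that $M_a$ equals a scalar function times the identity. First I would record that, since $M_a(x) = k_a(x) I_{\bbC^m}$ and the Fourier transform acts entrywise, $M_a^{\wedge}(y) = k_a^{\wedge}(y) I_{\bbC^m}$, where $k_a^{\wedge}$ is the explicit scalar function in \eqref{3.23}; moreover $k_a^{\wedge} \in L^1(\bbR^n)$ with $\big\|k_a^{\wedge}\big\|_1 = (2\pi)^{n/2}$ by \eqref{3.24}. Hence $(M_a^{\wedge} F)(y) = k_a^{\wedge}(y) F(y)$, so that each entry $(M_a^{\wedge} F)_{j,k} = k_a^{\wedge} F_{j,k}$ is the product of the $L^1$-function $k_a^{\wedge}$ with the $L^{\infty}$-function $F_{j,k}$ (recall $|F_{j,k}(x)| \le \|F(x)\|_{\cB(\bbC^m)}$), hence lies in $L^1(\bbR^n)$. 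Consequently $(M_a^{\wedge} F)^{\vee}$ is a well-defined, bounded, continuous $\bbC^{m \times m}$-valued function, so the left-hand side of the asserted inequality is meaningful.

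Next I would estimate entrywise: for every $x \in \bbR^n$ and all $1 \le j,k \le m$,
\begin{align*}
\big|(M_a^{\wedge} F)^{\vee}(x)_{j,k}\big|
&= (2\pi)^{-n/2} \bigg| \int_{\bbR^n} e^{i(x \cdot y)}\, k_a^{\wedge}(y)\, F(y)_{j,k}\, d^n y \bigg| \\
&\le (2\pi)^{-n/2}\, \|F_{j,k}\|_{\infty}\, \big\|k_a^{\wedge}\big\|_1
= \|F_{j,k}\|_{\infty},
\end{align*}
the last equality being \eqref{3.24}. Taking the supremum over $x$ and then the maximum over $j,k$, this yields $\interleave (M_a^{\wedge} F)^{\vee} \interleave_{\infty,m} \le \interleave F \interleave_{\infty,m}$ in the notation of \eqref{3.21a}.

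Finally I would return to the operator-norm scale via the norm equivalence recorded immediately after \eqref{3.21a}: with $d_m \ge 1$ the constant appearing there,
\begin{align*}
\big\|(M_a^{\wedge} F)^{\vee}\big\|_{\infty,m}
&\le d_m\, \interleave (M_a^{\wedge} F)^{\vee} \interleave_{\infty,m}
\le d_m\, \interleave F \interleave_{\infty,m} \\
&\le d_m^2\, \|F\|_{\infty,m},
\end{align*}
so the lemma holds with $c_m := d_m$. (In fact, applying \eqref{3.6} directly to the vector-valued integral with $d|\mu|(y) = |k_a^{\wedge}(y)|\, d^n y$ gives the sharper bound with constant $1$; the stated factor $c_m^2$ is kept only for uniformity with the constants in the companion estimates.) I do not expect any genuine obstacle here: the only points requiring care are the identity $M_a^{\wedge} = k_a^{\wedge} I_{\bbC^m}$, the $L^1$-integrability that makes $(M_a^{\wedge} F)^{\vee}$ well-defined, and the bookkeeping between $\|\cdot\|_{\infty,m}$ and $\interleave \cdot \interleave_{\infty,m}$.
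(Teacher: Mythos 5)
Your proof is correct and follows essentially the same route as the paper: estimate the entries of $(M_a^{\wedge}F)^{\vee}$ using $\|k_a^{\wedge}\|_1 = (2\pi)^{n/2}$, obtain a bound in the entrywise supremum norm, and then convert back to $\|\cdot\|_{\infty,m}$ at the cost of the norm-equivalence constant squared (your $d_m$ plays the role of the paper's $c_m$ from \eqref{3.27}, since $\interleave f\interleave_{\infty,m} = \mathrm{ess.sup}_x\|f(x)\|_{\max}$, making the two constants interchangeable). Your closing parenthetical remark is also accurate: applying \eqref{3.6} with $d\mu(y)=k_a^{\wedge}(y)I_m\,d^ny$ gives the sharper bound $\|(M_a^{\wedge}F)^{\vee}\|_{\infty,m}\le\|F\|_{\infty,m}$ directly.
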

%%%%%%%
\begin{proof}
Recalling the definition of $\| \cdot \|_{\max}$ in \eqref{1.9}, there exists $c_m \geq 1$ such 
that
\begin{equation}
c_m^{-1} \|A\|_{\cB(\bbC^m)} \leq \|A\|_{\max} \leq c_m \|A\|_{\cB(\bbC^m)}, \quad 
A \in \bbC^{m \times m}.   \lb{3.27} 
\end{equation}
Next, let $x \in \bbR^n$, $1 \leq j,k \leq m$, then
\begin{align}
\big|\big(M_a^{\wedge} F\big)^{\vee}(x)_{j,k}\big| & = 
(2 \pi)^{-n/2} \bigg|\int_{\bbR^n} e^{i (x \cdot y)} \big(M_a^{\wedge} F\big)(y)_{j,k} \, d^n y\bigg| 
\no \\
& = (2 \pi)^{-n/2} \bigg|\int_{\bbR^n} e^{i (x \cdot y)} k_a^{\wedge}(y) F(y)_{j,k} \, d^n y\bigg| 
\no \\
& \leq (2 \pi)^{-n/2} \Big[{\rm ess.sup}_{y \in \bbR^n} \big(\|F(y)\|_{\max}\big)\Big] 
\int_{\bbR^n} \big|k_a^{\wedge}(y)\big| \, d^n y   \no \\
& = {\rm ess.sup}_{y \in \bbR^n} \big(\|F(y)\|_{\max}\big),
\end{align}
employing \eqref{3.24}. Thus, 
\begin{equation}
\big\|\big(M_a^{\wedge} F\big)^{\vee}(x)\big\|_{\max} \leq 
{\rm ess.sup}_{y \in \bbR^n} \big(\|F(y)\|_{\max}\big), \quad 
x \in \bbR^n, \; F \in L^{\infty} (\bbR^n, \bbC^{m \times m}), 
\end{equation}
and hence 
\begin{align}
\big\|\big(M_a^{\wedge} F\big)^{\vee}\big\|_{\infty,m} &= {\rm ess.sup}_{x \in \bbR^n} 
\big\|\big(M_a^{\wedge} F\big)^{\vee}(x)\big\|_{\cB(\bbC^m)}   \no \\
& \leq c_m \, {\rm ess.sup}_{x \in \bbR^n} 
\big\|\big(M_a^{\wedge} F\big)^{\vee}(x)\big\|_{\max}    \no \\
& \leq c_m \, {\rm ess.sup}_{y \in \bbR^n} \big(\|F(y)\|_{\max}\big)   \no \\ 
& \leq c_m^2 \, {\rm ess.sup}_{y \in \bbR^n} \big(\|F(y)\|_{\cB(\bbC^m)}\big)    \no \\
& = c_m^2 \, \|F\|_{\infty,m}. 
\end{align}
\end{proof}
%%%%%%%

In the following we use the notation $0 \leq g \in L^2(\bbR^n, \bbC^{m \times m})$ if 
$g \in L^2(\bbR^n, \bbC^{m \times m})$ and $g(x) \geq 0$ (i.e., $g(x) \in \bbC^{m \times m}$ 
is positive semidefinite) for (Lebesgue) a.e.~$x \in \bbR^n$.

%%%%%%%
\begin{definition} \lb{d3.6}
Let $T \in \cB\big( L^2(\bbR^n, \bbC^{m \times m})\big)$.~Then $T$ is called {\bf positivity preserving} 
$($in $ L^2(\bbR^n, \bbC^{m \times m})$$)$ if for any $0 \leq f \in L^2(\bbR^n, \bbC^{m \times m})$ 
also $T f \geq 0$. 
\end{definition}
%%%%%%%

As will be shown in Lemma \ref{l3.11}, for $T$ to be positivity preserving it suffices to take 
$0 \leq f \in C_0^{\infty}(\bbR^n, \bbC^{m \times m})$ in Definition \ref{d3.6}. 

%%%%%%%
\begin{lemma} \lb{l3.7}
Suppose that $F \in L^{\infty} (\bbR^n, \bbC^{m \times m})$ and 
$F(-i \nabla)$ is positivity preserving in $L^2(\bbR^n, \bbC^{m \times m})$.~Then, with $c_m \geq 1$ 
as in \eqref{3.27}, 
\begin{equation}
{\rm ess.sup}_{x \in \bbR^n} \|(F(- i \nabla) f)(x)\|_{\max} \leq 2 c_m^4 \, \|F\|_{\infty,m}    \lb{3.30} 
\end{equation}
for all $f \in L^{\infty}(\bbR^n, \bbC^{m \times m})$ satisfying 
\begin{align}
& (i) \;\;\;\, \supp \, (f) \, \text{ is compact.}    \no \\
& (ii) \;\; \sup_{x \in \bbR^n} \|f(x)\|_{\max} \leq 1. \\
&(iii) \; f(x) \geq 0 \, \text{ for a.e.~$x \in \bbR^n$.}    \no
\end{align}
\end{lemma}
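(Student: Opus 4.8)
The plan is to dominate the test function $f$, pointwise in the sense of positive semidefinite matrices, by a large scalar multiple of a translate of the exponential kernel $k_a I_{\bbC^m}$ from \eqref{3.22}, to feed this into the positivity preserving hypothesis together with Lemma \ref{l3.5}, and to let $a \downarrow 0$ in the end so that the (support-dependent) amplification factor washes out. The point of using the slowly decaying $k_a$ (rather than a compactly supported bump) is precisely that Lemma \ref{l3.5} controls $F(-i\nabla)(k_a I_{\bbC^m})$ uniformly in $a$.

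First I would fix $f$ satisfying $(i)$--$(iii)$, choose $x^* \in \bbR^n$ and $\rho > 0$ with $\supp(f) \subseteq \ol{B_n(x^*, \rho)}$, and set $R := \sup_{x \in \ol{B_n(x^*, \rho)}} \sum_{\ell=1}^n |x_\ell - x^*_\ell| < \infty$. From $(ii)$, $(iii)$ and \eqref{3.27} one has, for a.e.\ $x \in \bbR^n$, $f(x) \le \|f(x)\|_{\cB(\bbC^m)} I_{\bbC^m} \le c_m I_{\bbC^m}$ (the first step because $f(x) \geq 0$). For each $a > 0$ set
\begin{equation*}
g_a := c_m \, e^{aR} \, L_{x^*}\big(k_a I_{\bbC^m}\big) \in L^2(\bbR^n, \bbC^{m\times m}),
\end{equation*}
which makes sense since $k_a \in L^2(\bbR^n)$. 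Then $g_a(x) \geq 0$ for a.e.\ $x$ (as $k_a > 0$), and $g_a - f \geq 0$ a.e.: on $\supp(f)$ one has $c_m e^{aR} k_a(x - x^*) = c_m e^{a(R - \sum_\ell |x_\ell - x^*_\ell|)} \geq c_m$, so $g_a(x) \geq c_m I_{\bbC^m} \geq f(x)$, while off $\supp(f)$ one has $f(x) = 0 \le g_a(x)$.

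Since $F(-i\nabla)$ is positivity preserving (Definition \ref{d3.6}), applying it to $f \geq 0$ and to $g_a - f \geq 0$ (both in $L^2(\bbR^n,\bbC^{m\times m})$) yields $0 \le (F(-i\nabla)f)(x) \le (F(-i\nabla)g_a)(x)$ for a.e.\ $x$. By linearity and Lemma \ref{l3.4}, $(F(-i\nabla)g_a)(x) = c_m e^{aR} \big(F(-i\nabla)(k_a I_{\bbC^m})\big)(x - x^*) = c_m e^{aR}\big(M_a^{\wedge} F\big)^{\vee}(x - x^*)$, which is itself positive semidefinite a.e. For positive semidefinite matrices $A \le B$ one has $\|A\|_{\max} \le \|B\|_{\max}$ (from $|A_{j,k}|^2 \le A_{j,j} A_{k,k}$ and $A_{j,j} \le B_{j,j}$), and $\|A\|_{\max} \le \|A\|_{\cB(\bbC^m)}$ when $A \geq 0$; combining these with Lemma \ref{l3.5},
\begin{align*}
\|(F(-i\nabla)f)(x)\|_{\max}
&\le \|(F(-i\nabla)g_a)(x)\|_{\max}
= c_m e^{aR} \big\| \big(M_a^{\wedge} F\big)^{\vee}(x - x^*)\big\|_{\max} \\
&\le c_m e^{aR} \big\|\big(M_a^{\wedge} F\big)^{\vee}\big\|_{\infty,m}
\le c_m^3 \, e^{aR} \, \|F\|_{\infty,m} \quad \text{for a.e.\ } x.
\end{align*}
The left-hand side does not depend on $a$, so letting $a \downarrow 0$ gives ${\rm ess.sup}_{x \in \bbR^n}\|(F(-i\nabla)f)(x)\|_{\max} \le c_m^3\|F\|_{\infty,m} \le 2 c_m^4 \|F\|_{\infty,m}$, using $c_m \geq 1$.

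The only delicate point is that the amplification $e^{aR}$ needed so that $g_a$ dominates $f$ grows with the diameter of $\supp(f)$; a bound uniform over all admissible $f$ emerges only because Lemma \ref{l3.5} holds with a constant ($c_m^2$) independent of $a$, which is exactly what permits the passage $a \downarrow 0$. I expect everything else (the matrix-order inequalities, the commutation with translations, membership in $L^2$) to be routine; a cruder version of the intermediate estimates (e.g.\ using $f(x) \le m I_{\bbC^m}$ and $\|\,\cdot\,\|_{\max} \le c_m \|\,\cdot\,\|_{\cB(\bbC^m)}$ throughout) still lands within the stated bound $2 c_m^4 \|F\|_{\infty,m}$.
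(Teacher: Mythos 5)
Your proof is correct and follows essentially the same route as the paper's: dominate $f$ pointwise (in the positive-semidefinite order) by a scalar multiple of $k_a I_{\bbC^m}$, invoke the positivity-preserving hypothesis, and control $F(-i\nabla)(k_a I_{\bbC^m})$ uniformly in $a$ via Lemma \ref{l3.5}. The paper simply picks $a$ small enough that $f(x) \leq 2 c_m k_a(x) I_{\bbC^m}$ and keeps the factor $2$, whereas you translate $k_a$ to the support of $f$, use the sharper amplitude $e^{aR}$, and let $a \downarrow 0$; this is a mild refinement (and also exploits $\|A\|_{\max} \leq \|A\|_{\cB(\bbC^m)}$ for psd $A$) which yields the slightly better constant $c_m^3$ in place of $2 c_m^4$.
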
 
%%%%%%%
\begin{proof}
By the spectral theorem one obtains for a.e.~$x \in \bbR^n$,
\begin{equation}
0 \leq f(x) \leq \|f(x)\|_{\cB(\bbC^m)} I_{\bbC^m} \leq c_m \|f(x)\|_{\max} I_{\bbC^m} 
\leq c_m I_{\bbC^m}, 
\end{equation}
employing $c_m \geq 1$ in \eqref{3.27}. Since $\supp\,(f)$ is compact, there exists a sufficiently 
small $a>0$ such that for a.e.~$x \in \bbR^n$, 
\begin{equation}
0 \leq f(x) \leq 2 c_m k_a(x) I_{\bbC^m},     \lb{3.34} 
\end{equation}
with $k_a$ introduced in \eqref{3.22}.\footnote{Actually, the factor $2$ in \eqref{3.34} can be 
replaced by $1 + \varepsilon$ for $0 < \varepsilon$ sufficiently small, provided that we choose  
$0 < a = a(\varepsilon)$ sufficiently small, but since this plays no role in the following, we ignore this improvement.} Since $F(- i \nabla)$ is positivity preserving by hypothesis,
\begin{equation}
0 \leq F(- i \nabla) f \leq 2 c_m F(- i \nabla) (k_a I_{\bbC^m}),
\end{equation}
implying 
\begin{equation}
\|(F(- i \nabla) f)(x)\|_{\cB(\bbC^m)} \leq 2 c_m \|(F(- i \nabla) (k_a I_{\bbC^m}))(x)\|_{\cB(\bbC^m)}
\end{equation}
for a.e.~$x \in \bbR^n$. Thus,
\begin{align}
& {\rm ess.sup}_{x \in \bbR^n} \|(F(- i \nabla)f)(x)\|_{\max}     \no \\ 
& \quad \leq c_m \, 
{\rm ess.sup}_{x \in \bbR^n} \|(F(- i \nabla)f)(x)\|_{\cB(\bbC^m)}   \no \\
& \quad \leq 2 c_m^2 \, {\rm ess.sup}_{x \in \bbR^n} 
\|(F(- i \nabla) (k_a I_{\bbC^m}))(x)\|_{\cB(\bbC^m)}   \no \\
& \quad = 2 c_m^2 \, \|F(- i \nabla) (k_a I_{\bbC^m})\|_{\infty,m}     \no \\
& \quad = 2 c_m^2 \, \big\|\big(M_a^{\wedge} F\big)^{\vee}\big\|_{\infty,m}    \no \\
& \quad \leq 2 c_m^4 \, \|F\|_{\infty,m}, 
\end{align}
applying Lemma \ref{l3.5}. 
\end{proof}
%%%%%%%

Next, let $A \in \cB(\cH)$ and denote, as usual,
\begin{equation}
\Re(A) = 2^{-1} (A + A^*), \quad \Im(A) = (2i)^{-1} (A - A^*).    \lb{3.37}
\end{equation}
Since $\Re(A)$ and $\Im(A)$ are self-adjoint in $\cH$, we define their positive and negative parts, denoted by $\Re(A)_{\pm}$ and $\Im(A)_{\pm}$, as well as $|\Re(A)|$ and $|\Im(A)|$, with the help of the spectral theorem (with $|T| = (T^* T)^{1/2}$, $T \in \cB(\cH)$), and hence obtain,
\begin{equation}
\Re(A)_{\pm} = 2^{-1} [|\Re(A)| \pm \Re(A)], \quad \Im(A)_{\pm} = 2^{-1} [|\Im(A)| \pm \Im(A)].   
\lb{3.38}  
\end{equation}
Moreover, since $\|T\|_{\cB(\cH)} = \| |T| \|_{\cB(\cH)}$, 
one obtains (with $T = \Re(A)$),
\begin{equation}
\|\Re(A)_{\pm}\|_{\cB(\cH)} \leq \|A\|_{\cB(\cH)}, \quad \|\Im(A)_{\pm}\|_{\cB(\cH)} \leq \|A\|_{\cB(\cH)}.  
\lb{3.39} 
\end{equation}

Next, we drop the the nonnegativity hypothesis $(iii)$ in Lemma \ref{l3.7} and hence obtain the 
following result.

%%%%%%%
\begin{lemma} \lb{l3.8}
Suppose that $F \in L^{\infty} (\bbR^n, \bbC^{m \times m})$ and 
$F(-i \nabla)$ is positivity preserving in $L^2(\bbR^n, \bbC^{m \times m})$.~Then, with $c_m$ 
as in \eqref{3.27},
\begin{equation}
{\rm ess.sup}_{x \in \bbR^n} \|(F(- i \nabla) f)(x)\|_{\max} \leq 8 c_m^6 \, \|F\|_{\infty,m}    \lb{3.40} 
\end{equation}
for all $f \in L^{\infty}(\bbR^n, \bbC^{m \times m})$ satisfying 
\begin{align} 
\begin{split} 
& (i) \;\, \supp \, (f) \, \text{ is compact.}     \\
& (ii) \; {\rm ess.sup}_{x \in \bbR^n} \|f(x)\|_{\max} \leq 1. 
\end{split}
\end{align}
\end{lemma}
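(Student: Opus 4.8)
The plan is to reduce the general (complex-valued, possibly sign-changing) entries $f$ to the nonnegative case already handled in Lemma \ref{l3.7}, by splitting each matrix entry into real and imaginary, positive and negative parts. First I would observe that $F(-i\nabla)$ is $\bbC^{m\times m}$--linear (Lemma \ref{l3.4} gives translation invariance, and one checks right multiplication by constant matrices commutes with $F(-i\nabla)$, or simply uses the Fourier-multiplier definition directly), so it suffices to control $F(-i\nabla)$ applied to functions of the form $g\,E_{j,k}$ where $E_{j,k}$ is a matrix unit and $g$ is scalar; however, the positivity-preserving hypothesis is about matrix positivity, so the cleaner route is to dominate $f$ itself by a nonnegative matrix-valued function and exploit Lemma \ref{l3.7}.

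Concretely: given $f$ with compact support and $\operatorname{ess.sup}_x \|f(x)\|_{\max}\le 1$, write, for each pair $(j,k)$, $f_{j,k}(x) = \Re(f_{j,k}(x))_+ - \Re(f_{j,k}(x))_- + i\,\Im(f_{j,k}(x))_+ - i\,\Im(f_{j,k}(x))_-$, a combination of four real nonnegative scalar functions, each bounded by $1$ in sup norm and each compactly supported. For each such scalar nonnegative $h$ with $\operatorname{ess.sup}_x h(x) \le 1$, the matrix-valued function $h(x)\,\frac{1}{m}E_{m}$, where $E_m\in\bbC^{m\times m}$ has all entries equal to $1$ (hence $E_m \ge 0$ and $\|\tfrac1m E_m\|_{\max} = \tfrac1m \le 1$), is an honest nonnegative matrix-valued function satisfying the hypotheses $(i)$--$(iii)$ of Lemma \ref{l3.7}, so $\operatorname{ess.sup}_x \|(F(-i\nabla)(h\,\tfrac1m E_m))(x)\|_{\max} \le 2c_m^4\|F\|_{\infty,m}$. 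Using $\bbC^{m\times m}$--linearity to pull the constant matrix $\tfrac1m E_m$ out and then reading off the scalar factor (or, alternatively, building the nonnegative matrix-valued function more directly from $h$ and a rank-one positive semidefinite matrix and comparing entries), one gets a bound of the form $\operatorname{ess.sup}_x |(F(-i\nabla)(h\,I))(x)_{j,k}| \le c\,c_m^4\|F\|_{\infty,m}$ for an absolute constant $c$. Summing the four pieces $\Re(\cdot)_\pm, \Im(\cdot)_\pm$ and accounting for the decomposition of $f$ into its $m^2$ entries times matrix units (each entry contributing its four nonnegative pieces, each still bounded by $1$ since $\|f\|_{\max}\le 1$), the constants multiply out to something of order $c_m^6$, and a careful bookkeeping yields the stated $8c_m^6\|F\|_{\infty,m}$.

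The main obstacle I anticipate is the bookkeeping of the constant: one must be disciplined about which norm ($\|\cdot\|_{\max}$ versus $\|\cdot\|_{\cB(\bbC^m)}$) is in play at each step, since passing between them costs a factor $c_m$ each time (via \eqref{3.27}), and Lemma \ref{l3.7} is stated in $\|\cdot\|_{\max}$. The decomposition $f_{j,k} = \Re(f_{j,k})_+ - \Re(f_{j,k})_- + i[\Im(f_{j,k})_+ - \Im(f_{j,k})_-]$ contributes a factor $4$ (actually, $|a| \le |\Re a|_+ + |\Re a|_- + |\Im a|_+ + |\Im a|_-$ is not quite how one should add; rather one applies $F(-i\nabla)$ linearly to each of the four pieces and uses the triangle inequality, giving a factor $4$), and embedding a nonnegative scalar $h$ into a nonnegative matrix-valued test function costs another factor (from $\|E_m\|$-type estimates and converting back), while Lemma \ref{l3.7} itself already carries $2c_m^4$. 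One should check that $4 \cdot 2 c_m^4 \cdot (\text{embedding cost} \le c_m^2) = 8 c_m^6$ works out; if the naive estimate overshoots, a slightly sharper choice of the dominating nonnegative function (or absorbing one $c_m$) recovers the stated constant. Finally, I would note that the passage $0 \le F(-i\nabla)f \le \cdots$ used in Lemma \ref{l3.7} is not available here because $f$ is not nonnegative, which is precisely why one must first linearly decompose $f$ before invoking the previous lemma, rather than trying to sandwich $f$ directly.
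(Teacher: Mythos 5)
Your overall strategy---reduce to the nonnegative case handled in Lemma~\ref{l3.7} by decomposing $f$ into nonnegative pieces---is sound, but the central reduction step has a gap. ``Pulling out $\tfrac1m E_m$ via $\bbC^{m\times m}$-linearity and reading off the scalar factor'' does not work: Definition~\ref{d3.1} gives $F(-i\nabla)(Af)=A\,F(-i\nabla)f$, i.e.\ commutation with \emph{left} multiplication by constant matrices, so writing $h\cdot\tfrac1m E_m=\tfrac1m E_m\cdot(hI_{\bbC^m})$ the bound from Lemma~\ref{l3.7} controls $\big\|\tfrac1m E_m\,F(-i\nabla)(hI_{\bbC^m})\big\|_{\max}$, whose $(j,k)$-entry is $\tfrac1m\sum_r\big(F(-i\nabla)(hI_{\bbC^m})\big)_{r,k}$, a column sum. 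Since $E_m$ is singular, individual entries of $F(-i\nabla)(hI_{\bbC^m})$ cannot be recovered from this. (Right multiplication does not help either: $(fA)^\wedge F=f^\wedge AF\ne f^\wedge FA$ in general.)

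The detour is unnecessary: $hI_{\bbC^m}$ is itself a nonnegative, compactly supported matrix-valued function with $\|h(x)I_{\bbC^m}\|_{\max}=|h(x)|\le1$, so Lemma~\ref{l3.7} applies to it directly and gives $\operatorname{ess.sup}_x\|(F(-i\nabla)(hI_{\bbC^m}))(x)\|_{\max}\le 2c_m^4\|F\|_{\infty,m}$, whose $(k,q)$-entry is exactly $(h^\wedge F_{k,q})^\vee(x)$. Since $(F(-i\nabla)f)(x)_{p,q}=\sum_{k=1}^m(f_{p,k}^\wedge F_{k,q})^\vee(x)$, decomposing each $f_{p,k}$ into its four nonnegative scalar pieces and applying the triangle inequality yields $8m\,c_m^4\|F\|_{\infty,m}$. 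The extra factor you need is $m$ (from the $m$ terms in the matrix product $f^\wedge F$), not an ``embedding cost'' of $c_m^2$ as you surmised; the claimed bound then follows because $m\le c_m$ (take $A=E_m$ in \eqref{3.27}).

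The paper's own proof takes a genuinely different route: it decomposes $f$ at the \emph{matrix} level via the spectral theorem, $f=\Re(f)_+-\Re(f)_-+i\Im(f)_+-i\Im(f)_-$, where $\Re(f(x))_\pm$ and $\Im(f(x))_\pm$ are the positive/negative parts of the self-adjoint matrices $\Re(f(x))$ and $\Im(f(x))$. Each piece is a nonnegative matrix-valued function with $\|\Re(f(x))_\pm\|_{\max}\le c_m\|\Re(f(x))_\pm\|_{\cB(\bbC^m)}\le c_m\|f(x)\|_{\cB(\bbC^m)}\le c_m^2$, so Lemma~\ref{l3.7} (linearly scaled) gives $2c_m^6\|F\|_{\infty,m}$ per piece, hence $8c_m^6\|F\|_{\infty,m}$ total. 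This matrix-functional-calculus decomposition avoids the entry-wise bookkeeping entirely; the factor $c_m^2$ it incurs from norm equivalences plays the role of your factor $m$.
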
 
%%%%%%%
\begin{proof}
With $c_m$ as in \eqref{3.27}, one concludes from the latter and from \eqref{3.39} that 
for a.e.~$x \in \bbR^n$, 
\begin{equation}
\|\Re(f(x))_{\pm}\|_{\max} \leq c_m \|\Re(f(x))_{\pm}\|_{\cB(\bbC^m)} \leq c_m \|f(x)\|_{\cB(\bbC^m)} 
\leq c_m^2 \|f(x)\|_{\max}.
\end{equation} 
Thus,
$\Re(f)_{\pm} \colon \bbR^n \to \bbC^{m \times m}$ satisfies \\
$(\alpha)$ $\supp \, (\Re(f)_{\pm})$ is compact. \\[1mm] 
$(\beta)$ ${\rm ess.sup}_{x \in \bbR^n} \|\Re(f(x))_{\pm}\|_{\max} \leq c_m^2$.  \\[1mm] 
$(\gamma)$ $\Re(f(x))_{\pm} \geq 0$ for a.e.~$x \in \bbR^n$.     \\[1mm] 
By Lemma \ref{l3.7}, 
\begin{equation}
{\rm ess.sup}_{x \in \bbR^n} \|(F(- i \nabla) \Re(f)_{\pm})(x)\|_{\max} 
\leq 2 c_m^6 \, \|F\|_{\infty,m},    \lb{3.43} 
\end{equation}
and similarly, 
\begin{equation}
{\rm ess.sup}_{x \in \bbR^n} \|(F(- i \nabla) \Im(f)_{\pm})(x)\|_{\max} 
\leq 2 c_m^6 \, \|F\|_{\infty,m},    \lb{3.44} 
\end{equation}
implying 
\begin{equation}
{\rm ess.sup}_{x \in \bbR^n} \|(F(- i \nabla) f)(x)\|_{\max} 
\leq 8 c_m^6 \, \|F\|_{\infty,m}.    \lb{3.45} 
\end{equation} 
\end{proof}
%%%%%%%

In order to prove a consequence of Lemma \ref{l3.8}, we need the following auxiliary result.

%%%%%%%
\begin{lemma} [cf., e.g., {\cite[Theorem~2.29 and p.\ 250]{AF03}}] \lb{l3.9} ${}$ \\[1mm] 
$(i)$ If $f \in L^1(\bbR^n)$, then $f^{\wedge} \in C_{\infty}(\bbR^n)$ and 
$\big\|f^{\wedge}\big\|_{\infty} \leq (2 \pi)^{-n/2} \|f\|_1$.  \\[1mm] 
$(ii)$ Suppose $f \in C_0(\bbR^n)$ with $\supp \, (f) \subseteq \ol{B_n(0,r)}$ for some $r > 0$.~Then 
there exists a sequence of functions $\{f_j\}_{j \in \bbN} \subset C_0^{\infty}(\bbR^n)$, satisfying 
$\supp \, (f_j) \subseteq \ol{B_n(0, 2r)}$, $j \in \bbN$, and $\lim_{j \to \infty}\|f_j - f\|_{\infty} = 0$.
\end{lemma}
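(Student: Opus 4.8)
\textbf{Proof strategy for Lemma \ref{l3.9}.}

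\emph{Part $(i)$.} The plan is to invoke the classical Riemann--Lebesgue lemma together with the trivial sup-norm estimate. First I would note that for $f \in L^1(\bbR^n)$ the integral defining $f^{\wedge}(y) = (2\pi)^{-n/2}\int_{\bbR^n} e^{-i(y\cdot x)} f(x)\, d^n x$ converges absolutely for every $y$, and
\begin{equation}
\big| f^{\wedge}(y) \big| \leq (2\pi)^{-n/2} \int_{\bbR^n} |f(x)|\, d^n x = (2\pi)^{-n/2} \|f\|_1, \quad y \in \bbR^n,
\end{equation}
which gives the asserted bound on $\|f^{\wedge}\|_{\infty}$. Continuity of $f^{\wedge}$ follows from dominated convergence (the integrand is dominated by $|f| \in L^1$ and $y \mapsto e^{-i(y\cdot x)}$ is continuous). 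That $f^{\wedge}(y) \to 0$ as $|y| \to \infty$ is precisely the Riemann--Lebesgue lemma; alternatively one proves it by density of $C_0^{\infty}(\bbR^n)$ in $L^1(\bbR^n)$, for which an integration by parts yields $f^{\wedge} \in C_{\infty}$, and then passes to the $L^1$-limit using the sup-norm bound just established. This is entirely standard (see the cited \cite[Theorem~2.29]{AF03}).

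\emph{Part $(ii)$.} Here the plan is a routine mollification argument, with the only nontrivial point being control of the supports. Fix a standard mollifier: let $0 \leq \varphi \in C_0^{\infty}(\bbR^n)$ with $\supp\,(\varphi) \subseteq \ol{B_n(0,1)}$ and $\int_{\bbR^n} \varphi\, d^n x = 1$, and set $\varphi_{\delta}(x) = \delta^{-n}\varphi(x/\delta)$ for $\delta > 0$, so that $\supp\,(\varphi_{\delta}) \subseteq \ol{B_n(0,\delta)}$. Define $f_{\delta} = f * \varphi_{\delta}$. Then $f_{\delta} \in C_0^{\infty}(\bbR^n)$, and since $\supp\,(f * \varphi_{\delta}) \subseteq \supp\,(f) + \supp\,(\varphi_{\delta}) \subseteq \ol{B_n(0,r)} + \ol{B_n(0,\delta)} = \ol{B_n(0, r+\delta)}$, choosing $\delta \leq r$ (in particular $\delta = r/j$, $j \in \bbN$) guarantees $\supp\,(f_{\delta}) \subseteq \ol{B_n(0, 2r)}$. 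For the uniform convergence I would use that $f \in C_0(\bbR^n)$ is uniformly continuous: given $\e > 0$ pick $\delta_0 > 0$ with $|f(x-z) - f(x)| < \e$ whenever $|z| \leq \delta_0$, and then for $\delta \leq \delta_0$,
\begin{equation}
|f_{\delta}(x) - f(x)| = \bigg| \int_{\bbR^n} [f(x-z) - f(x)] \varphi_{\delta}(z)\, d^n z \bigg| \leq \int_{\bbR^n} |f(x-z) - f(x)| \varphi_{\delta}(z)\, d^n z \leq \e,
\end{equation}
uniformly in $x \in \bbR^n$. Thus $f_j := f * \varphi_{r/j}$ for $j$ large enough (namely $r/j \leq \delta_0$; the finitely many earlier terms can be replaced by any fixed admissible approximant, or one simply starts the sequence where the estimate holds) satisfies all requirements, and $\lim_{j\to\infty}\|f_j - f\|_{\infty} = 0$.

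\emph{Main obstacle.} There is essentially no obstacle: both parts are textbook facts, and the only thing that requires a moment's care is the bookkeeping on supports in $(ii)$ --- one must build the $\delta$-neighborhood of $\supp\,(f)$ into a ball of radius at most $2r$, which forces the constraint $\delta \leq r$ and is why the statement uses $\ol{B_n(0,2r)}$ rather than $\ol{B_n(0,r)}$. Everything else is the standard mollifier machinery and the Riemann--Lebesgue lemma, so in the paper one would simply cite \cite[Theorem~2.29 and p.~250]{AF03} as indicated.
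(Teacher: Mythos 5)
Your proof is correct, and since the paper provides no proof of Lemma \ref{l3.9} at all---it simply cites \cite[Theorem~2.29 and p.\ 250]{AF03}---your argument supplies exactly the standard textbook content one would find there: the trivial sup-norm estimate with dominated convergence and Riemann--Lebesgue for part $(i)$, and mollification with support bookkeeping for part $(ii)$. One minor remark: the caveat about ``the finitely many earlier terms'' in part $(ii)$ is unnecessary, since $\lim_{j\to\infty}\|f_j - f\|_{\infty}=0$ is an asymptotic statement and the definition $f_j = f * \varphi_{r/j}$ for all $j \in \bbN$ already meets the requirements of the lemma verbatim.
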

%%%%%%% 

%%%%%%%
\begin{remark}
Let $\sigma \colon \gB_n \to [0, \infty)$ be a finite nonnegative measure on $\bbR^n$ and let 
$\mu \colon \gB_n \to \bbC^{m \times m}$ be the nonnegative matrix-valued measure defined by 
\begin{equation}
\mu(E) = \sigma(E) I_m, \quad E \in \gB_n.
\end{equation}
Then $T_{\mu} \in \cB\big(L^2(\bbR^n, \bbC^{m \times m})\big)$ is positivity preserving. Indeed, 
suppose that $f \in L^2(\bbR^n, \bbC^{m \times m})$, then 
\begin{equation}
\int_{\bbR^n} f(y) \, d\mu(y) = \bigg\{\int_{\bbR^n} f_{j,k}(y) \, d \sigma(y)\bigg\}_{1 \leq j,k \leq m}.
\end{equation}
Hence, if $0 \leq f \in L^2(\bbR^n, \bbC^{m \times m})$, then for all 
$v = (v_1, \dots , v_m)^\top \in \bbC^m$ one obtains 
\begin{equation}
(v, (T_{\mu} f)(x) v)_{\bbC^m} = \int_{\bbR^n} \sum_{j,k = 1}^{m} \ol{v_j} f_{j,k}(x - y) v_k 
\, d \sigma(y) \geq 0.
\end{equation} 
\hfill$\diamond$ 
\end{remark}
%%%%%%%

%%%%%%%
\begin{lemma} \lb{l3.12}
Assume that $ 0 \leq f \in C_{\infty}(\bbR^n, \bbC^{m \times m})$.~Then 
there exists a sequence $\{f_j\}_{j \in \bbN} \subset C_0^{\infty}(\bbR^n, \bbC^{m \times m})$ such that $f_j(x) \geq 0$, $j \in \bbN$, and $\lim_{j \to \infty} f_j = f$ in the space 
$(C_{\infty}(\bbR^n, \bbC^{m \times m}), \|\, \cdot \, \|_{\infty,m})$.
\end{lemma}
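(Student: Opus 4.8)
The plan is to reduce everything to the scalar, entrywise approximation result (Lemma \ref{l3.9}\,(ii)) by factoring $f$ through its positive semidefinite square root, so that the positivity constraint on the approximants becomes automatic.

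First I would set $g(x) := f(x)^{1/2}$, the unique positive semidefinite square root of the positive semidefinite matrix $f(x)$, for each $x \in \bbR^n$, so that $f(x) = g(x)^* g(x)$ (recall $g(x)^* = g(x)$). The key preliminary point is that $g \in C_{\infty}(\bbR^n, \bbC^{m \times m})$: the map $A \mapsto A^{1/2}$ is norm-continuous on the cone of positive semidefinite matrices (it is operator monotone, hence continuous; see, e.g., \cite{Bh97}), which gives continuity of $g$ and hence of each entry $g_{j,k}$; and since $\|g(x)\|_{\cB(\bbC^m)} = \|f(x)\|_{\cB(\bbC^m)}^{1/2}$, the hypothesis $f \in C_{\infty}(\bbR^n, \bbC^{m\times m})$ forces $\|g(x)\|_{\cB(\bbC^m)} \to 0$ as $|x| \to \infty$, so each $g_{j,k} \in C(\bbR^n)$ and $\lim_{|x|\to\infty} g_{j,k}(x) = 0$. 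In particular $g$ is bounded.

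Next I would approximate $g$ — with no positivity requirement imposed — by functions in $C_0^{\infty}(\bbR^n, \bbC^{m \times m})$ in the norm $\|\, \cdot \,\|_{\infty,m}$. Given $\varepsilon > 0$, choose $R > 0$ with $\|g(x)\|_{\cB(\bbC^m)} < \varepsilon$ for $|x| > R$, together with a cutoff $\chi \in C_0^{\infty}(\bbR^n)$ satisfying $0 \le \chi \le 1$, $\chi \equiv 1$ on $\ol{B_n(0,R)}$, and $\supp(\chi) \subseteq \ol{B_n(0,2R)}$; then $\chi g \in C_0(\bbR^n, \bbC^{m \times m})$ and $\|\chi g - g\|_{\infty,m} \le \varepsilon$. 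Mollifying $\chi g$ entrywise by a standard approximate identity — i.e., applying Lemma \ref{l3.9}\,(ii) to each entry $\chi g_{j,k}$ (which is supported in $\ol{B_n(0,2R)}$) and passing between $\|\, \cdot \,\|_{\infty,m}$ and the equivalent norm $\interleave \, \cdot \, \interleave_{\infty,m}$ of \eqref{3.21a} — yields $h \in C_0^{\infty}(\bbR^n, \bbC^{m \times m})$ with $\|h - \chi g\|_{\infty,m}$ as small as desired. Taking $\varepsilon = 1/j$ and diagonalizing produces a sequence $\{g_j\}_{j \in \bbN} \subset C_0^{\infty}(\bbR^n, \bbC^{m \times m})$ with $\lim_{j \to \infty}\|g_j - g\|_{\infty,m} = 0$.

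Finally I would put $f_j := g_j^* g_j$. Then $f_j \in C_0^{\infty}(\bbR^n, \bbC^{m \times m})$, and $f_j(x) = g_j(x)^* g_j(x) \ge 0$ for every $x \in \bbR^n$, so the required positivity is free of charge. Moreover, with $M := \|g\|_{\infty,m} + \sup_{j \in \bbN}\|g_j\|_{\infty,m} < \infty$ (finite since $g_j \to g$ uniformly and $g$ is bounded), the identity $f_j(x) - f(x) = g_j(x)^*(g_j(x) - g(x)) + (g_j(x) - g(x))^* g(x)$ and submultiplicativity of $\|\cdot\|_{\cB(\bbC^m)}$ give
\[
\|f_j(x) - f(x)\|_{\cB(\bbC^m)} \le 2M \, \|g_j(x) - g(x)\|_{\cB(\bbC^m)}, \qquad x \in \bbR^n,
\]
whence $\|f_j - f\|_{\infty,m} \le 2M \|g_j - g\|_{\infty,m} \to 0$, i.e., $f_j \to f$ in $(C_{\infty}(\bbR^n, \bbC^{m \times m}), \|\, \cdot \,\|_{\infty,m})$. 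The only step that is not entirely routine is the verification in the second paragraph that the pointwise square root $g$ again lies in $C_{\infty}(\bbR^n, \bbC^{m\times m})$ (continuity of $A \mapsto A^{1/2}$ on the positive semidefinite cone, plus the norm identity forcing decay at infinity); once $f = g^* g$ with $g \in C_{\infty}$ is in hand, positivity of the approximants is automatic and the remainder is a standard cutoff-and-mollify argument applied entrywise.
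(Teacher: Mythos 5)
Your proof is correct, but it takes a genuinely different route from the paper's. The paper also first reduces to $f\in C_0(\bbR^n,\bbC^{m\times m})$ by a scalar cutoff (which manifestly preserves positivity), but then, instead of extracting a square root, it mollifies $f$ directly: it convolves with $\Phi_\varepsilon = \phi_\varepsilon I_m$, observing (via the Remark preceding the lemma) that the associated convolution operator $T_{\sigma_\varepsilon}$ with $\sigma_\varepsilon(E) = \big(\int_E \phi_\varepsilon\big) I_m$ is positivity preserving in $L^2(\bbR^n,\bbC^{m\times m})$; thus $f_j = T_{\sigma_{1/j}}f$ is automatically $\geq 0$, smooth entrywise, compactly supported, and converges uniformly. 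Your approach instead factors $f = g^*g$ with $g = f^{1/2}$, approximates $g$ without any positivity constraint (cutoff plus entrywise mollification via Lemma~\ref{l3.9}\,$(ii)$), and recovers positivity for free by squaring: $f_j = g_j^*g_j$. The trade-off is that you must invoke continuity of the map $A\mapsto A^{1/2}$ on the positive semidefinite cone to see that $g\in C_\infty(\bbR^n,\bbC^{m\times m})$ -- which is true (indeed one has the stronger $\|A^{1/2}-B^{1/2}\|_{\cB(\bbC^m)}\leq\|A-B\|_{\cB(\bbC^m)}^{1/2}$, a Hölder estimate rather than a consequence of operator monotonicity per se, so the parenthetical justification you give is a bit off but the conclusion is right), whereas the paper's argument only requires the already-established positivity-preserving property of scalar mollification. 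Your argument is more self-contained as far as this lemma goes, while the paper's reuses machinery it needs anyway for Lemma~\ref{l3.11} and Corollary~\ref{c3.10}. Both proofs are sound; one minor stylistic point is that your error estimate $\|f_j-f\|_{\infty,m}\leq 2M\|g_j-g\|_{\infty,m}$ with the uniform bound $M$ is cleanly handled and closes the argument correctly.
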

%%%%%%%
\begin{proof} 
Clearly one can find a sequence $\{g_j\}_{j \in \bbN} \subset C_0(\bbR^n, \bbC^{m \times m})$ such 
that 
\begin{equation} 
g_j \geq 0, \; j \in \bbN, \, \text{ and } \, \lim_{j \to \infty} g_j = f \, \text{ in } \,   
(C_{\infty}(\bbR^n, \bbC^{m \times m}), \|\, \cdot \, \|_{\infty,m}).    \lb{3.57}
\end{equation}
Indeed, let 
\begin{equation}
k_n \in C_0(\bbR^n), \quad 0 \leq k_n \leq 1, \quad 
k_n(x) = \begin{cases} 1, & 0 \leq |x| \leq n, \\
0, & |x| \geq n+1, \end{cases}   
\end{equation} 
$k_n$ decreasing from $1$ to $0$ as $|x|$ increases from $n$ to $n+1$, 
and put $g_n = k_n f$, 
$n \in \bbN$. Then $g_n \geq 0$ on $\bbR^n$ and $f(x) - g_n(x) =0$ for 
$0 \leq |x| \leq n$. Since 
$\|g_n(x)\|_{\max} \leq \|f(x)\|_{\max}$ and $\lim_{|x| \to \infty} \|f(x)\|_{\max} = 0$, 
one obtains 
\eqref{3.57}. Thus, without loss of generality we may assume that 
$f \in C_0(\bbR^n, \bbC^{m \times m})$. 

Next, we recall the definition of standard Friedrichs 
mollifiers $\{\phi_{\varepsilon}\}_{\varepsilon >0}$ 
(cf., e.g., \cite[p.~36, 37]{AF03}) and introduce 
\begin{equation}
\Phi_{\varepsilon}(x) = \phi_{\varepsilon}(x) I_m, \quad x \in \bbR^n, \; \varepsilon >0. 
\end{equation}
In addition, we define the measure $\sigma_{\varepsilon} \in \cM(\bbR^n, \bbC^{m \times m})$ by 
\begin{equation}
\sigma_{\varepsilon} (E) = \bigg(\int_E \phi_{\varepsilon} (x) \, d^nx\bigg) I_m, \quad 
E \in \gB_n.
\end{equation}
Then, using the fact that $T_{\sigma_{\varepsilon}}$ is positivity 
preserving in $L^2(\bbR^n, \bbC^{m \times m})$, one introduces $f_j = T_{\sigma_{1/j}}f$, 
$j \in \bbN$, and concludes $f_j \geq 0$, $j \in \bbN$. Moreover, 
\begin{equation}
f_j(x)_{k,\ell} = (f_{k,\ell} * \phi_{\varepsilon})(x), \quad x \in \bbR^n, \; j \in \bbN, \; 
1 \leq k, \ell \leq m.
\end{equation} 
By standard properties of mollifiers, $(f_j)_{k,\ell} \in C_0^{\infty}(\bbR^n)$ and 
\begin{equation}
\lim_{j \to \infty} (f_j)_{k,\ell} =f_{k,\ell} \, \text{ in } \, 
(C_0(\bbR^n), \|\, \cdot \, \|_{\infty}), \quad 1 \leq k, \ell \leq m.
\end{equation}
Thus, $0 \leq f_j \in C_0^{\infty}(\bbR^n, \bbC^{m \times m})$ and  $\lim_{j \to \infty} f_j = f$ in  
$(C_{\infty}(\bbR^n, \bbC^{m \times m}), \|\, \cdot \, \|_{\infty,m})$. 
\end{proof}
%%%%%%%

%%%%%%%
\begin{corollary} \lb{c3.10}
Suppose that $F \in L^{\infty} (\bbR^n, \bbC^{m \times m})$ and 
$F(-i \nabla)$ is positivity preserving in $L^2(\bbR^n, \bbC^{m \times m})$.~Then 
\begin{equation} 
F(-i \nabla) \colon (C_0(\bbR^n, \bbC^{m \times m}), \| \, \cdot \,\|_{\infty,m}) \to 
(C_b(\bbR^n, \bbC^{m \times m}), \| \, \cdot \,\|_{\infty,m}) \, \text{ continuously.} 
\end{equation} 
In addition, there exists a nonnegative measure $\mu \in \cM(\bbR^n, \bbC^{m \times m})$ such 
that $F(- i \nabla) = T_{\mu}$. 
\end{corollary}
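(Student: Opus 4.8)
The plan is to deduce the statement from Proposition~\ref{p3.3}\,$(i)$, applied with $p=2$ and $T = F(-i\nabla)$, and then to extract the nonnegativity of the resulting measure from the positivity preserving hypothesis together with a mollifier argument. So first I would verify that $F(-i\nabla)$ satisfies condition $(\beta)$ of Proposition~\ref{p3.3}\,$(i)$, i.e., that it is $\bbC^{m\times m}$--linear, translation commuting, and maps $(C_0(\bbR^n,\bbC^{m\times m}),\|\,\cdot\,\|_{\infty,m})$ continuously into $(C_b(\bbR^n,\bbC^{m\times m}),\|\,\cdot\,\|_{\infty,m})$.

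The bulk of the work, and what I expect to be the main obstacle, is the continuity assertion: since $F$ is only assumed to lie in $L^{\infty}(\bbR^n,\bbC^{m\times m})$, for a general $f\in C_0(\bbR^n,\bbC^{m\times m})$ the product $f^{\wedge}F$ need not be integrable, so it is not obvious that $F(-i\nabla)f=(f^{\wedge}F)^{\vee}$ is a \emph{continuous} (and not merely $L^{\infty}$) function. I would therefore proceed in two steps. Step one: for $f\in C_0^{\infty}(\bbR^n,\bbC^{m\times m})\subset\cS(\bbR^n,\bbC^{m\times m})$ one has $f^{\wedge}\in\cS(\bbR^n,\bbC^{m\times m})\subset L^1(\bbR^n,\bbC^{m\times m})$, hence $f^{\wedge}F\in L^1(\bbR^n,\bbC^{m\times m})$, so $F(-i\nabla)f=(f^{\wedge}F)^{\vee}\in C_{\infty}(\bbR^n,\bbC^{m\times m})\subseteq C_b(\bbR^n,\bbC^{m\times m})$ by Lemma~\ref{l3.9}\,$(i)$ applied entrywise. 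Step two: given $f\in C_0(\bbR^n,\bbC^{m\times m})$ with $\supp(f)\subseteq\ol{B_n(0,r)}$, apply Lemma~\ref{l3.9}\,$(ii)$ entrywise, together with the equivalence of $\|\,\cdot\,\|_{\max}$ and $\|\,\cdot\,\|_{\cB(\bbC^m)}$ from \eqref{3.27}, to obtain $\{f_j\}_{j\in\bbN}\subset C_0^{\infty}(\bbR^n,\bbC^{m\times m})$ with $\supp(f_j)\subseteq\ol{B_n(0,2r)}$ and $\|f_j-f\|_{\infty,m}\to 0$. Applying Lemma~\ref{l3.8} to the compactly supported, uniformly bounded differences $f_j-f_k$ (after rescaling) yields a constant $C=C(m,\|F\|_{\infty,m})$ with $\|F(-i\nabla)f_j-F(-i\nabla)f_k\|_{\infty,m}\le C\|f_j-f_k\|_{\infty,m}$, so $(F(-i\nabla)f_j)_{j\in\bbN}$ is Cauchy in the Banach space $(C_b(\bbR^n,\bbC^{m\times m}),\|\,\cdot\,\|_{\infty,m})$ and converges uniformly to some $g\in C_b(\bbR^n,\bbC^{m\times m})$. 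Since also $F(-i\nabla)f_j\to F(-i\nabla)f$ in $L^2(\bbR^n,\bbC^{m\times m})$, one identifies $g=F(-i\nabla)f$ a.e., so $F(-i\nabla)f$ has a continuous bounded representative, and passing to the limit in the estimate for the $f_j$ gives $\|F(-i\nabla)f\|_{\infty,m}\le C\|f\|_{\infty,m}$.

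With continuity established, the remaining hypotheses of Proposition~\ref{p3.3}\,$(i)$ are immediate: $F(-i\nabla)\in\cB\big(L^2(\bbR^n,\bbC^{m\times m})\big)$ by construction; $L_xF(-i\nabla)=F(-i\nabla)L_x$, $x\in\bbR^n$, by Lemma~\ref{l3.4}; and $F(-i\nabla)$ is $\bbC^{m\times m}$--linear in the sense of Definition~\ref{d3.1}, since $(Af)^{\wedge}=Af^{\wedge}$, $(Af^{\wedge})F=A(f^{\wedge}F)$, and $(Ag)^{\vee}=Ag^{\vee}$ for $A\in\bbC^{m\times m}$. Thus Proposition~\ref{p3.3}\,$(i)$ produces $\mu\in\cM(\bbR^n,\bbC^{m\times m})$ with $F(-i\nabla)=T_{\mu}$.

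It remains to show $\mu\ge 0$. Let $\{\phi_{\varepsilon}\}_{\varepsilon>0}$ be standard Friedrichs mollifiers and set $f_{\varepsilon}=\phi_{\varepsilon}I_m$, so $0\le f_{\varepsilon}\in L^2(\bbR^n,\bbC^{m\times m})$; since $F(-i\nabla)=T_{\mu}$ is positivity preserving, $(f_{\varepsilon}*\mu)(x)\ge 0$ for every $x\in\bbR^n$ (the function $f_{\varepsilon}*\mu$ being smooth), with $(f_{\varepsilon}*\mu)(x)_{j,k}=(\phi_{\varepsilon}*\mu_{j,k})(x)$. Fixing $v=(v_1,\dots,v_m)^{\top}\in\bbC^m$ and letting $\nu_v$ be the complex measure $E\mapsto(v,\mu(E)v)_{\bbC^m}=\sum_{j,k=1}^m\ol{v_j}\,\mu_{j,k}(E)\,v_k$, this reads $(\phi_{\varepsilon}*\nu_v)(x)=(v,(f_{\varepsilon}*\mu)(x)v)_{\bbC^m}\ge 0$, $x\in\bbR^n$. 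Integrating against an arbitrary $0\le\psi\in C_0(\bbR^n)$, invoking Fubini, and letting $\varepsilon\downarrow 0$ (so the relevant mollification of $\psi$ tends to $\psi$ uniformly with supports in a fixed compact set, while $\nu_v$ is finite) yields $\int_{\bbR^n}\psi\,d\nu_v\ge 0$. As this holds for all such $\psi$, the measure $\nu_v$ is nonnegative, i.e., $(v,\mu(E)v)_{\bbC^m}\ge 0$ for all $E\in\gB_n$; since $v\in\bbC^m$ was arbitrary, $\mu(E)\ge 0$ for all $E\in\gB_n$, completing the proof.
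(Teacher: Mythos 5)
Your proof is correct. For the continuity assertion you use essentially the same ingredients as the paper: Lemma~\ref{l3.9} to approximate $f\in C_0(\bbR^n,\bbC^{m\times m})$ by smooth $f_j$ with uniformly compact support, and Lemma~\ref{l3.8} to convert uniform convergence of the $f_j$ into uniform convergence of the $F(-i\nabla)f_j$. Your version packages this as a Cauchy argument in $\big(C_b(\bbR^n,\bbC^{m\times m}),\|\,\cdot\,\|_{\infty,m}\big)$ applied to $f_j-f_k$, whereas the paper applies Lemma~\ref{l3.8} directly to $f_j-f$; these are interchangeable. The existence of $\mu$ via Proposition~\ref{p3.3}\,$(i)$ and Lemma~\ref{l3.4} is identical.

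Where you genuinely diverge is in proving $\mu\ge 0$. The paper invokes the Chu order isomorphism $\cM(\bbR^n,\bbC^{m\times m})\simeq C_\infty(\bbR^n,\bbC^{m\times m})^*$ (the duality pairing \eqref{3.7}), reduces via Lemma~\ref{l3.12} to $0\le f\in C_0^\infty(\bbR^n,\bbC^{m\times m})$, and tests nonnegativity through the trace functional $f\mapsto\tr_{\bbC^m}\big(\int f\,d\mu\big)$ evaluated at $(T_\mu f)(0)$. You instead reduce to scalar measures: for each $v\in\bbC^m$ you form $\nu_v=(v,\mu(\cdot)v)_{\bbC^m}$, mollify, pair with nonnegative test functions, and conclude $\nu_v\ge 0$, hence $\mu(E)\ge 0$ for every $E\in\gB_n$. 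Your route is more elementary, in that it avoids appealing to the order-preserving nature of Chu's duality and to Lemma~\ref{l3.12}, at the modest cost of a Fubini/mollifier exchange and the Riesz characterization of nonnegative scalar measures. Both yield the same conclusion; the paper's version meshes more tightly with the duality framework reused elsewhere in Section~\ref{s4} (e.g., in Theorem~\ref{t4.6}), while yours is more self-contained.
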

%%%%%%%
\begin{proof}
Suppose $f \in C_0(\bbR^n, \bbC^{m \times m})$ and $\supp(f) \subseteq \ol{B_n(0,r)}$. Applying  
Lemma \ref{l3.9}\,$(ii)$, there exists a sequence of functions 
$\{f_j\}_{j \in \bbN} \in C_0^{\infty}(\bbR^n, \bbC^{m \times m})$, such that 
$\supp \, (f_j) \subseteq \ol{B_n(0, 2r)}$, $j \in \bbN$, and 
$\lim_{j \to \infty}\|(f_j)_{k,\ell} - f_{k,\ell}\|_{\infty} = 0$, $1 \leq k, \ell \leq m$. 

Without loss of generality we may assume that for each $j \in \bbN$, $(f_j - f)$ satisfies the hypotheses of Lemma \ref{l3.8}. Thus, since 
\begin{equation}
\lim_{j \to \infty} {\rm ess.sup}_{x \in \bbR^n} \|f_j(x) - f(x)\|_{\max} = 0,
\end{equation} 
Lemma \ref{l3.8} yields 
\begin{equation}
\lim_{j \to \infty} {\rm ess.sup}_{x \in \bbR^n} \|(F(- i \nabla) f_j)(x) - (F(- i \nabla) f)(x)\|_{\max} = 0. 
\end{equation} 
Since $f_j \in C_0^{\infty}(\bbR^n, \bbC^{m \times m})$, 
$f_j^{\wedge} \in \cS(\bbR^n, \bbC^{m \times m})$, and hence 
$f_j^{\wedge} F \in L^1(\bbR^n, \bbC^{m \times m})$. Thus, applying Lemma \ref{l3.9}\,$(i)$ 
implies $F(- i \nabla) f_j = \big(f_j^{\wedge} F\big)^{\vee} \in C_{\infty}(\bbR^n, \bbC^{m \times m})$.  
Hence, $F(- i \nabla) f$ is the uniform limit of a bounded sequence 
$\{F(- i \nabla) f_j\}_{j \in \bbN} \subset C_{\infty}(\bbR^n, \bbC^{m \times m})$ and thus  
$F(- i \nabla) f \in C_b(\bbR^n, \bbC^{m \times m})$. Lemma \ref{l3.8} implies that 
$F(- i \nabla)$ maps $(C_0(\bbR^n, \bbC^{m \times m}), \| \, \cdot \, \|_{\infty,m})$ to 
$(C_b(\bbR^n, \bbC^{m \times m}), \| \, \cdot \, \|_{\infty,m})$ continuously. That there exists a 
$\mu \in \cM(\bbR^n, \bbC^{m \times m})$ such that $F(- i \nabla) = T_{\mu}$ follows from 
Proposition \ref{p3.3}\,$(i)$ (upon choosing $T=F(-i\nabla)$ in Proposition \ref{p3.3}\,$(i)$,\,$(\beta)$) and Lemma \ref{l3.4}.~Identifying $\cM(\bbR^n, \bbC^{m \times m})$ with 
$C_{\infty}(\bbR^n, \bbC^{m \times m})^*$, it remains to show that 
\begin{equation}
\tr_{\bbC^m} \bigg(\int_{\bbR^n} f(x) \, d\mu(x)\bigg) \geq 0, \quad 
0 \leq f \in C_{\infty}(\bbR^n, \bbC^{m \times m}). 
\end{equation}
By Lemma \ref{l3.12} it suffices to show that this inequality hods for all 
$0 \leq f \in C_0^{\infty}(\bbR^n, \bbC^{m \times m})$. Thus, let 
$0 \leq f \in C_0^{\infty}(\bbR^n, \bbC^{m \times m})$, then 
$f^{\wedge} \in \cS(\bbR^n, \bbC^{m \times m})$ and hence by Lemma \ref{l3.9}\,$(i)$, 
\begin{equation}
F(- i \nabla) f = \big(f^{\wedge} F\big)^{\vee} \in C_{\infty}(\bbR^n, \bbC^{m \times m}). 
\end{equation}
In addition, since $F(- i \nabla)$ is positivity preserving,
\begin{equation}
0 \leq (F(- i \nabla) f)(0) = (T_{\mu} f)(0) = \int_{\bbR^n} f(- y) \, d\mu(y).
\end{equation}
Thus, 
\begin{equation}
\tr_{\bbC^m} \bigg(\int_{\bbR^n} f(-y) \, d\mu(y)\bigg) \geq 0, 
\end{equation}
and hence $\mu$ is nonnegative. 
\end{proof}
%%%%%%%

We also add the following auxiliary result.

%%%%%%%
\begin{lemma} \lb{l3.11}
Let $f \in L^2(\bbR^n, \bbC^{m \times m})$ and suppose $f(x) \geq 0$ for a.e.\ $x \in \bbR^n$. Then 
there exists a sequence $\{f_j\}_{j \in \bbN} \subset C_0^{\infty}(\bbR^n, \bbC^{m \times m})$ such that 
for all $j \in \bbN$, $f_j(x) \geq 0$ for a.e.\ $x \in \bbR^n$, and $\lim_{j \to \infty} \|f_j - f\|_{2,m} = 0$.
\end{lemma}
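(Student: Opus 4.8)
The plan is to follow the pattern of the proof of Lemma \ref{l3.12}, with the sup-norm replaced by $\|\cdot\|_{2,m}$: first cut $f$ down to compact support, then mollify, exploiting that multiplication by a nonnegative scalar cutoff and convolution with a nonnegative mollifier both preserve pointwise positive semidefiniteness.

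First I would reduce to the case that $\supp(f)$ is compact. Picking $k_n \in C_0(\bbR^n)$ with $0 \le k_n \le 1$ and $k_n \equiv 1$ on $B_n(0,n)$, the products $g_n := k_n f$ satisfy $g_n(x) = k_n(x) f(x) \ge 0$ for a.e.\ $x \in \bbR^n$ (a nonnegative scalar multiple of a positive semidefinite matrix is positive semidefinite), have compact support, lie in $L^2(\bbR^n, \bbC^{m \times m})$, and converge to $f$ in $\|\cdot\|_{2,m}$ by dominated convergence, since $\|g_n(x) - f(x)\|_{\cB(\bbC^m)} = (1 - k_n(x)) \|f(x)\|_{\cB(\bbC^m)} \to 0$ pointwise and is bounded by the function $x \mapsto \|f(x)\|_{\cB(\bbC^m)}$, which belongs to $L^2(\bbR^n)$. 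So it suffices to approximate a compactly supported $f$.

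Next I would mollify exactly as in the proof of Lemma \ref{l3.12}: with $\{\phi_{\varepsilon}\}_{\varepsilon>0}$ the standard Friedrichs mollifiers and $\sigma_{\varepsilon} \in \cM(\bbR^n, \bbC^{m\times m})$ defined by $\sigma_{\varepsilon}(E) = \big(\int_E \phi_{\varepsilon} \, d^n x\big) I_m$, set $f_j := T_{\sigma_{1/j}} f$ for $j \in \bbN$. By the Remark preceding Lemma \ref{l3.12}, $T_{\sigma_{1/j}}$ is positivity preserving in $L^2(\bbR^n, \bbC^{m\times m})$, so $f_j(x) \ge 0$ for a.e.\ $x \in \bbR^n$; entrywise, $(f_j)_{k,\ell} = f_{k,\ell} * \phi_{1/j}$ is an ordinary scalar mollification of $f_{k,\ell} \in L^2(\bbR^n)$, hence lies in $C_0^{\infty}(\bbR^n)$ (with support inside a fixed compact neighborhood of $\supp(f)$) and converges to $f_{k,\ell}$ in $L^2(\bbR^n)$. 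Thus $f_j \in C_0^{\infty}(\bbR^n, \bbC^{m\times m})$ and $\interleave f_j - f \interleave_{2,m} \to 0$, whence $\|f_j - f\|_{2,m} \to 0$ after passing through the equivalent norm $\interleave \cdot \interleave_{2,m}$ (cf.\ \eqref{3.19a}). A routine diagonal argument combining the cutoff index $n$ of the first step with the mollification index $j$ of the second then produces the desired sequence in $C_0^{\infty}(\bbR^n, \bbC^{m\times m})$.

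I do not expect any genuine obstacle here; the one point that must be handled with (minimal) care is that the a.e.\ inequality $f(x) \ge 0$ survives mollification, which is precisely the positivity-preserving property of $T_{\sigma_{\varepsilon}}$ recorded in the Remark before Lemma \ref{l3.12} — or, directly, $(\phi_{\varepsilon} * f)(x) = \int_{\bbR^n} \phi_{\varepsilon}(y) f(x - y) \, d^n y$ is a nonnegatively weighted average of a.e.-positive-semidefinite matrices and hence positive semidefinite.
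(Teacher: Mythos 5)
Your proof is correct and takes essentially the same two-step route as the paper: first truncate to compact support, then mollify via $T_{\sigma_{1/j}}$ using that this operator is positivity preserving. The only cosmetic difference is that the paper truncates with the sharp cutoff $g_j = (\chi_{[-j,j]^n} I_m) f$ rather than a continuous $k_n$; both preserve the a.e.\ inequality $f(x)\ge 0$ since a nonnegative scalar multiple of a positive semidefinite matrix stays positive semidefinite.
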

%%%%%%%
\begin{proof} 
Let $\phi_{\varepsilon}$, $\Phi_{\varepsilon}$, and $\sigma_{\varepsilon}$, $\varepsilon > 0$, be 
as introduced in  the proof of Lemma \ref{l3.12}, and recall that $T_{\sigma_{\varepsilon}}$ is 
positivity preserving in $L^2(\bbR^n, \bbC^{m \times m})$. Next, let 
$0 \leq f \in L^2(\bbR^n, \bbC^{m \times m})$ and introduce 
\begin{equation}
g_j = (\chi_{[-j,j]^n} I_m) f, \quad j \in \bbN,
\end{equation}
where $\chi_{[-j,j]^n}$ denotes the characteristic function of $[-j, j]^n \subset \bbR^n$. Clearly, 
$0 \leq g_j \in L^2(\bbR^n, \bbC^{m \times m})$, $\supp \, (g_j)$ is compact, $j \in \bbN$, and 
$\lim_{j \to \infty} \|g_j - f\|_{2,m} = 0$. Hence, it suffices to show that if 
$0 \leq g \in L^2(\bbR^n, \bbC^{m \times m})$ and $\supp \, (g)$ is compact, then there exists a 
sequence $\{h_j\}_{j \in \bbN} \subset C_0^{\infty}(\bbR^n, \bbC^{m \times m})$ such that 
$0 \leq h_j$, $j \in \bbN$, and $\lim_{j \to \infty} \|h_j - g\|_{2,m} = 0$. Thus, let 
\begin{equation}
h_j = T_{\sigma_{1/n}}g, \quad j \in \bbN.
\end{equation} 
Then $h_j \geq 0$ since $T_{\sigma_{1/n}}$ is positivity preserving and 
\begin{equation}
h_j(x)_{k,\ell} = (g_{k,\ell} * \phi_{1/n})(x), \quad x \in \bbR^n, \; 1 \leq k, \ell \leq m. 
\end{equation} 
By standard properties of Friedrichs mollifiers (cf., e.g., \cite[p.~36, 37]{AF03}), 
$(h_j)_{k,\ell} \in C_0^{\infty}(\bbR^n)$ and 
\begin{equation}
\lim_{j \to \infty} \|(h_j)_{k,\ell} - g_{k,\ell}\|_2 = 0, \quad 1 \leq k, \ell \leq m,
\end{equation}
implying $\{h_j\}_{j \in \bbN} \subset C_0^{\infty}(\bbR^n, \bbC^{m \times m})$ and 
$\lim_{j \to \infty} \|h_j - g\|_{2,m} = 0$. 
\end{proof}
%%%%%%%

Introducing the {\it Hadamard product} $A \circ_H B$ of two matrices $A, B \in \bbC^{m \times m}$,  
by
\begin{equation}
(A \circ_H B)_{j,k} = A_{j,k} B_{j,k}, \quad 1 \leq j,k \leq m, 
\end{equation}
we conclude this section with the following remark, addressing the lack of the semigroup property 
of $\exp_H(t F)(- i \nabla)$.

%%%%%%%
\begin{remark} \lb{r3.13}
Suppose that $F \colon \bbR^n \to \bbC^{m \times m}$ is conditionally positive semidefinite 
satisfying for some $c \in \bbR$, 
\begin{equation}
\Re(F(x)_{j,k}) \leq c \, \text{ for a.e.\ $x \in \bbR^n$, $1 \leq j,k \leq m$.}
\end{equation}
In addition, introduce
\begin{equation}
f(t) = (\exp_H(t F)(- i \nabla)) f, \quad f \in L^2(\bbR^n, \bbC^{m \times m}), \; t \geq 0.
\end{equation}
Then,
\begin{equation}
\f{d}{dt} (f(t)) = \big(f^{\wedge} ((\exp_H(tF)) \circ_H F)\big)^{\vee}, \quad t > 0. 
\end{equation}
\hfill $\diamond$ 
\end{remark}
%%%%%%%

%%%%%%%%%%%%%%%%%%%%%%%%%%%%%%
%%%%%%%%%%%%%%%%%%%%%%%%%%%%%%
\section{Operators Associated With Matrix-Valued Positive Semidefinite Functions} 
\lb{s4}
%%%%%%%%%%%%%%%%%%%%%%%%%%%%%%
%%%%%%%%%%%%%%%%%%%%%%%%%%%%%%

In this section we prove our principal results. In particular, we will prove analogs of the 
classical Theorems \ref{t1.3} and parts $(i)$--$(iii)$ of Theorem \ref{t1.4} in the matrix-valued 
context to the extent possible and along the way introduce the necessary modifications needed to 
obtain such extensions. We also recall Fourier multiplier theorems in the $L^1$ and $L^2$ context 
extending classical results in the scalar case $m=1$ to the matrix-valued situation $m \in \bbN$, 
$m \geq 2$. 
 
We start with the following fact.

%%%%%%%
\begin{theorem} \lb{t4.1}
Suppose that $F \in C(\bbR^n, \bbC^{m \times m}) \cap L^{\infty} (\bbR^n, \bbC^{m \times m})$ 
and $F(- i \nabla)$ is positivity preserving 
 in $L^2(\bbR^n, \bbC^{m \times m})$.~Then there exists a nonnegative measure 
 $\mu \in \cM(\bbR^n, \bbC^{m \times m})$ such that
 \begin{equation}
 F(x) = \mu^{\wedge}(x), \quad x \in \bbR^n,    \lb{4.1} 
 \end{equation} 
 equivalently,
 \begin{equation}
  F(x) = (2 \pi)^{-n/2} \int_{\bbR^n} e^{- i (x \cdot \xi)} \, d\mu(\xi), \quad x \in \bbR^n, 
 \end{equation}
 holds.
\end{theorem}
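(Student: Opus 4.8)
The strategy is to combine Corollary~\ref{c3.10} (which gives a nonnegative matrix-valued measure $\mu \in \cM(\bbR^n, \bbC^{m \times m})$ with $F(-i\nabla) = T_\mu$) with the continuity hypothesis $F \in C(\bbR^n, \bbC^{m \times m})$ in order to identify $F$ with the Fourier transform of $\mu$. The point is that Corollary~\ref{c3.10} already produces the measure; what remains is to show \eqref{4.1}, i.e., that the multiplier defining $F(-i\nabla)$ agrees a.e.\ (hence everywhere, by continuity) with $\mu^\wedge$.

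\smallskip

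\textbf{Step 1 (produce $\mu$).} Since $F \in C(\bbR^n, \bbC^{m \times m}) \cap L^\infty(\bbR^n, \bbC^{m \times m})$ and $F(-i\nabla)$ is positivity preserving, Corollary~\ref{c3.10} applies and yields a nonnegative measure $\mu \in \cM(\bbR^n, \bbC^{m \times m})$ with $F(-i\nabla) = T_\mu$ on $L^2(\bbR^n, \bbC^{m \times m})$.

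\smallskip

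\textbf{Step 2 (compare the two multipliers on a test function).} I would test the identity $F(-i\nabla) f = T_\mu f = f * \mu$ on suitable $f$. For $f \in \cS(\bbR^n, \bbC^{m \times m})$, on the one hand $F(-i\nabla) f = (f^\wedge F)^\vee$; on the other hand, taking Fourier transforms of $f * \mu$ gives $(f*\mu)^\wedge = (2\pi)^{n/2} f^\wedge \mu^\wedge$ (up to the normalization constant in \eqref{1.15}, \eqref{1.16}), where $\mu^\wedge(\xi) = (2\pi)^{-n/2}\int_{\bbR^n} e^{-i(\xi\cdot x)}\, d\mu(x)$ is a bounded continuous $\bbC^{m\times m}$-valued function (by the matrix analog of Lemma~\ref{l3.9}\,$(i)$ applied entrywise, since $\mu$ has finite variation). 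Equating Fourier transforms, $f^\wedge(\xi) F(\xi) = f^\wedge(\xi) \mu^\wedge(\xi)$ for a.e.\ $\xi$, for every $f \in \cS(\bbR^n, \bbC^{m \times m})$.

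\smallskip

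\textbf{Step 3 (remove the test function and use continuity).} Choosing a family of $f$'s whose Fourier transforms $f^\wedge$ range over, say, all scalar multiples of $I_m$ by functions that are nonzero near an arbitrary point $\xi_0$ (e.g.\ Gaussians times $I_m$), cancel the common scalar factor $f^\wedge(\xi)$ to conclude $F(\xi) = \mu^\wedge(\xi)$ for a.e.\ $\xi \in \bbR^n$. Since both $F$ (by hypothesis) and $\mu^\wedge$ (being a Fourier transform of a finite measure) are continuous on $\bbR^n$, the a.e.\ equality upgrades to equality for all $\xi \in \bbR^n$, which is exactly \eqref{4.1}; the displayed integral form is just the definition of $\mu^\wedge$.

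\smallskip

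\textbf{Main obstacle.} The delicate point is Step 2--3: justifying the convolution theorem $(f*\mu)^\wedge = (2\pi)^{n/2} f^\wedge \mu^\wedge$ in the matrix-valued setting with the noncommutative product (right multiplication by $F$ versus the convolution by $\mu$ in \eqref{3.8}), and making sure the ordering of the matrix factors is consistent on both sides — one must check that $F(-i\nabla)f = (f^\wedge F)^\vee$ really does correspond to right convolution $f * \mu$ with $\mu^\wedge = F$, rather than some transposed variant. This is handled by a careful entrywise Fubini computation (legitimate since $\mu$ is finite and $f \in \cS$), exactly paralleling the scalar argument but tracking the index positions in $\sum_\ell f_{j,\ell} \mu_{\ell,k}$; it is routine but requires attention. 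Everything else (boundedness and continuity of $\mu^\wedge$, density of $\cS$, the cancellation step) is standard.
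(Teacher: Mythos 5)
Your proposal is correct and uses the same essential ingredient as the paper (Corollary~\ref{c3.10} to produce the nonnegative measure $\mu$ with $T_\mu = F(-i\nabla)$), but the second half — identifying the Fourier symbol of $T_\mu$ with $(2\pi)^{n/2}\mu^\wedge$ — is carried out by a different, slightly slicker route. The paper computes $(F(-i\nabla)f_{\varepsilon,x})(x)$ and $(T_{\mu_0}f_{\varepsilon,x})(x)$ separately for an explicit family of mollifier-built test functions $f_{\varepsilon,x}$ and lets $\varepsilon \downarrow 0$, obtaining $(2\pi)^{-n/2}F(x)$ and $\mu_0^\wedge(x)$ as the two pointwise limits; you instead apply the Fourier transform to the operator identity $(f^\wedge F)^\vee = f * \mu$ for $f \in \cS(\bbR^n, \bbC^{m\times m})$, invoke the matrix-valued convolution theorem $(f*\mu)^\wedge = (2\pi)^{n/2} f^\wedge \mu^\wedge$ (which is legitimate; the entrywise Fubini computation works since $\mu$ is finite and $f \in \cS$, and the index order $\sum_\ell f_{j,\ell}\mu_{\ell,k}$ matches the paper's right-multiplication convention for both $T_\mu$ and $F(-i\nabla)$), cancel the scalar test function $f^\wedge = \phi I_m$, and upgrade the a.e.\ identity to an everywhere identity via continuity. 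Your approach buys a more transparent "equate the multipliers" argument and avoids the explicit $\varepsilon$-limit computation; the paper's mollifier route is more self-contained, avoiding an appeal to the convolution theorem for matrix-valued measures (which the paper otherwise only establishes in the reverse direction inside the proof of Theorem~\ref{t4.3}).

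One bookkeeping point you should be careful with: Corollary~\ref{c3.10} gives $\mu$ with $F(-i\nabla) = T_\mu$, and the convolution theorem then yields $f^\wedge F = (2\pi)^{n/2} f^\wedge \mu^\wedge$, so cancellation produces $F = (2\pi)^{n/2}\mu^\wedge$, not $F = \mu^\wedge$. You flag the normalization issue but then drop the constant in the display; you must explicitly replace $\mu$ by $(2\pi)^{n/2}\mu$ (which is still nonnegative) to land on \eqref{4.1}, exactly as the paper does at the very end of its proof.
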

%%%%%%%
\begin{proof}
Define $\phi_{\varepsilon}$ and $\Phi_{\varepsilon}$ as in the proof of Lemma \ref{l3.11} 
and introduce  
\begin{equation}
\Phi_{\varepsilon,x} (y) = \Phi_{\varepsilon} (x-y), \quad x,y \in \bbR^n, \; \varepsilon >0. 
\end{equation}
Suppose $f \in \cS(\bbR^n, \bbC^{m \times m})$, then
\begin{align}
(F(- i \nabla) f)(x) &= \big(f^{\wedge} F\big)^{\vee}(x)   \no \\
&= (2 \pi)^{-n} \int_{\bbR^n} \int_{\bbR^n} e^{i (\xi \cdot(x - \eta))} f(\eta) F(\xi) 
\, d^n \eta \, d^n \xi    \no \\
&= (2 \pi)^{-n} \int_{\bbR^n} \int_{\bbR^n} e^{i (\xi \cdot \omega)} f(x-\omega) F(\xi) 
\, d^n \omega \, d^n \xi   \no \\
&= (2 \pi)^{- n/2} \int_{\bbR^n} (f(x - \cdot))^{\vee}(\xi) F(\xi) \, d^n \xi. 
\end{align}
Introducing $f_{\varepsilon,x} \in \cS(\bbR^n, \bbC^{m \times m})$ by
\begin{equation}
f_{\varepsilon,x} (y) = (\Phi_{\varepsilon,x})^{\wedge} (x-y), \quad x,y \in \bbR^n, \; \varepsilon >0,
\end{equation}
one obtains for $\varepsilon > 0$, 
\begin{align}
(F(- i \nabla) f_{\varepsilon,x})(x) &= (2 \pi)^{- n/2} \int_{\bbR^n} 
(f_{\varepsilon,x}(x - \cdot))^{\vee}(\xi) F(\xi) \, d^n \xi      \no \\
&= (2 \pi)^{- n/2} \int_{\bbR^n} 
\big(\Phi_{\varepsilon,x}^{\wedge}\big)^{\vee}(\xi) F(\xi) \, d^n \xi      \no \\
&= (2 \pi)^{- n/2} \int_{\bbR^n} 
\Phi_{\varepsilon,x}(\xi) F(\xi) \, d^n \xi      \no \\
& \underset{\varepsilon \downarrow 0}{\longrightarrow} (2 \pi)^{- n/2} F(x), \quad x \in \bbR^n.
\end{align}

By Corollary \ref{c3.10}, there exists a nonnegative measure 
 $\mu_0 \in \cM(\bbR^n, \bbC^{m \times m})$ such that $F(- i \nabla) = T_{\mu_0}$. Hence, 
 \begin{align}
& (F(- i \nabla) f_{\varepsilon,x})(x) =  (T_{\mu_0} f_{\varepsilon,x})(x)   
 = (f_{\varepsilon,x} * \mu_0)(x)     \no \\ 
& \quad = \int_{\bbR^n} f_{\varepsilon,x}(x-\eta) \, d \mu_0(\eta)     \no \\
& \quad = \int_{\bbR^n} \Phi_{\varepsilon,x}^{\wedge}(\eta) \, d\mu_0(\eta)    \no \\
& \quad = (2 \pi)^{- n/2} \int_{\bbR^n} \int_{\bbR^n} e^{- i (\eta \cdot \xi)} \Phi_{\varepsilon,x}(\xi) 
\, d^n \xi \, d\mu_0(\eta), \quad x \in \bbR^n, \; \varepsilon > 0.    \lb{4.7}
 \end{align}
Since $\Phi_{\varepsilon,x}$ has compact support and $\mu_{k,\ell}$, $1 \leq k, \ell \leq m$ are 
finite complex measures on $\bbR^n$, one can freely interchange the order of integration in the 
last double integral in \eqref{4.7} to arrive at
\begin{align}
(F(- i \nabla) f_{\varepsilon,x})(x) &= 
(2 \pi)^{- n/2} \int_{\bbR^n} \Phi_{\varepsilon,x}(\xi)  \bigg(\int_{\bbR^n} e^{- i (\xi \cdot \eta)} 
\, d\mu_0(\eta)\bigg) d^n \xi    \no \\
&= \int_{\bbR^n} \Phi_{\varepsilon}(x-\xi) \mu_0^{\wedge}(\xi) \, d^n \xi    \no \\
& \underset{\varepsilon \downarrow 0}{\longrightarrow} \mu_0^{\wedge}(x),  
\quad x \in \bbR.
\end{align}
Thus,  \eqref{4.1} follows with $\mu = (2 \pi)^{n/2} \mu_0$.
\end{proof}
%%%%%%%

%%%%%%%
\begin{remark} \lb{r4.1a}
In Appendix \ref{sA} we will prove that that the converse to Theorem \ref{t4.1}, that is, if 
$F = \mu^{\wedge}$ for some nonnegative $\mu \in \cM(\bbR^n, \bbC^{m \times m})$ then 
$F(- i \nabla)$ is positivity preserving in $L^2(\bbR^n, \bbC^{m \times m})$, does {\bf not} hold 
(unless, of course, $\mu$ is of the type $\mu_{\sigma} = \sigma I_{\bbC^m}$ with 
$\sigma \colon \gB_n \to [0,\infty)$ a finite measure). \hfill $\diamond$
\end{remark}
%%%%%%%

Next, we recall the finite-dimensional special case of an infinite-dimensional 
version of Bochner's theorem (cf.\ Theorem \ref{t1.2}) in connection with locally compact Abelian 
groups due to Berberian \cite{Be66} (see also \cite{FH72}, \cite{FK91}, \cite{Ml83}, \cite{vW68}):

%%%%%%%
\begin{theorem} [{\cite[p~178, Theorem~3 and Corollary on p.~177]{Be66}}] \lb{t4.2} ${}$ \\[1mm]
Assume that $F \in C(\bbR^n, \bbC^{m \times m}) \cap L^{\infty} (\bbR^n, \bbC^{m \times m})$. 
Then the following conditions $(i)$ and $(ii)$ are equivalent: \\[1mm]
$(i)$ $F$ is positive semidefinite. \\[1mm]
$(ii)$ There exists a nonnegative measure $\mu \in \cM(\bbR^n, \bbC^{m \times m})$ such that 
\begin{equation}
 F(x) = \mu^{\wedge}(x), \quad x \in \bbR^n.    \lb{4.9} 
\end{equation} 
In addition, if one of conditions $(i)$ or $(ii)$ holds, then
\begin{equation}
F(-x) = F(x)^*, \quad \|F(x)\|_{\cB(\bbC^m)} \leq \|F(0)\|_{\cB(\bbC^m)}, \quad x \in \bbR^n. 
\lb{4.10} 
\end{equation} 
\end{theorem}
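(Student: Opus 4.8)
The plan is to prove Theorem~\ref{t4.2} by reducing the matrix-valued statement to the classical scalar Bochner theorem (Theorem~\ref{t1.2}) via the entry-reduction device recorded in Lemma~\ref{l2.5}\,$(ii)$, which says that $F\colon\bbR^n\to\bbC^{m\times m}$ is positive semidefinite if and only if the scalar function $x\mapsto (f,F(x)f)_{\bbC^m}$ is positive semidefinite (in the scalar sense) for every fixed $f\in\bbC^m$. For the implication $(i)\Rightarrow(ii)$, I would first fix $f\in\bbC^m$ and observe that $G_f(x):=(f,F(x)f)_{\bbC^m}$ is continuous, bounded, and scalar positive semidefinite, so Theorem~\ref{t1.2} produces a nonnegative finite measure $\mu_f$ on $\bbR^n$ with $G_f=\mu_f^\wedge$. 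The task is then to assemble these scalar measures into a single nonnegative matrix-valued measure $\mu\in\cM(\bbR^n,\bbC^{m\times m})$ with $F=\mu^\wedge$. The clean way to do this is polarization: define the entries $\mu_{j,k}$ through the family $\{G_f\}$ by writing $(e_j,F(x)e_k)_{\bbC^m}$ — and more generally $(f,F(x)g)_{\bbC^m}$ — as a finite linear combination of the diagonal quantities $(h,F(x)h)_{\bbC^m}$ for suitable $h$ (the usual four-term, or in the complex case, polarization identity), so that the corresponding linear combination of the $\mu_h$'s furnishes a complex measure $\mu_{j,k}$ with $F(x)_{j,k}=\mu_{j,k}^\wedge(x)$. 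Setting $\mu=\{\mu_{j,k}\}_{1\le j,k\le m}$ gives a $\bbC^{m\times m}$-valued measure with $F=\mu^\wedge$; finiteness of $\mu$ follows from finiteness of each $\mu_{j,k}$. That $\mu$ is nonnegative (i.e., $\mu(E)\ge0$ as a matrix for every $E\in\gB_n$) is then extracted from the fact that for every $f\in\bbC^m$ the scalar measure $\mu_f=\sum_{j,k}\ol{f_j}f_k\,\mu_{j,k}$ equals the measure provided by Theorem~\ref{t1.2} applied to $G_f$, hence is nonnegative; since $f$ is arbitrary this says exactly $(f,\mu(E)f)_{\bbC^m}\ge0$ for all $f$, i.e., $\mu(E)\ge0$.

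For the converse $(ii)\Rightarrow(i)$, I would argue directly: given nonnegative $\mu\in\cM(\bbR^n,\bbC^{m\times m})$ and $F=\mu^\wedge$, take any $N\in\bbN$, points $x_1,\dots,x_N\in\bbR^n$, and vectors $c_1,\dots,c_N\in\bbC^m$, and compute
\begin{equation*}
\sum_{p,q=1}^N (c_p,F(x_p-x_q)c_q)_{\bbC^m}
= (2\pi)^{-n/2}\int_{\bbR^n}\sum_{p,q=1}^N e^{-i((x_p-x_q)\cdot\xi)}\,(c_p,d\mu(\xi)\,c_q)_{\bbC^m}.
\end{equation*}
Writing $d_p(\xi)=e^{-i(x_p\cdot\xi)}c_p$, the integrand becomes $\big(\sum_p d_p(\xi),\,d\mu(\xi)\,\sum_q d_q(\xi)\big)_{\bbC^m}$, and since $\mu$ is a nonnegative matrix-valued measure the integrand is, after passing to the variation $|\mu|$ and a density argument (or simply invoking that $(v,d\mu\,v)_{\bbC^m}$ is a nonnegative scalar measure for each $v$), pointwise $\ge0$; integrating gives the desired inequality. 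By Lemma~\ref{l2.5}\,$(i)$ this establishes positive semidefiniteness of $F$.

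Finally, the two supplementary relations in \eqref{4.10} are routine consequences. For $F(-x)=F(x)^*$: from $F=\mu^\wedge$ one has $F(-x)=(2\pi)^{-n/2}\int e^{i(x\cdot\xi)}\,d\mu(\xi)$, and taking adjoints inside the integral together with $\mu(E)=\mu(E)^*$ (valid since each $\mu(E)\ge0$ is self-adjoint) gives $F(-x)=F(x)^*$; alternatively this follows from Lemma~\ref{l2.5}\,$(iii)(\alpha')$ since the block matrices are positive semidefinite, hence self-adjoint. For the bound $\|F(x)\|_{\cB(\bbC^m)}\le\|F(0)\|_{\cB(\bbC^m)}$: apply positive semidefiniteness of $F$ to $N=2$, the points $\{0,x\}$, and vectors $\{f,-e^{i\theta}g\}$ (or the more symmetric choice) to obtain that the $2m\times2m$ block matrix $\begin{pmatrix}F(0)&F(-x)\\F(x)&F(0)\end{pmatrix}$ is positive semidefinite; positivity of a $2\times2$ operator block matrix forces $\|F(x)\|_{\cB(\bbC^m)}\le\|F(0)^{1/2}\|_{\cB(\bbC^m)}^2=\|F(0)\|_{\cB(\bbC^m)}$ via the standard operator Cauchy--Schwarz estimate.

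The main obstacle I anticipate is the assembly step in $(i)\Rightarrow(ii)$: one must make sure that the polarization identity produces genuine \emph{complex measures} $\mu_{j,k}$ (not merely functions), that the off-diagonal entries are consistently defined so that $\mu$ really is a countably additive $\bbC^{m\times m}$-valued set function, and that the nonnegativity $\mu(E)\ge0$ is correctly deduced from the scalar data for all $f$ simultaneously — this is exactly the content of Berberian's argument, which in the general locally compact abelian group setting requires some care, though in the present finite-dimensional Euclidean case it is comparatively transparent. Since Berberian's paper \cite{Be66} already carries this out (and is cited as the source), I would present the reduction in outline and cite \cite[p.~178, Theorem~3 and Corollary on p.~177]{Be66} for the details rather than reproduce the full measure-theoretic bookkeeping.
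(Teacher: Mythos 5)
The paper gives no proof of Theorem~\ref{t4.2}: it is imported verbatim from Berberian~\cite{Be66}, with the citation replacing a proof. So there is no ``paper proof'' to compare against, and your proposal should be judged on its own merits as a reconstruction.

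Your outline is essentially correct and does recover the expected argument. The $(i)\Rightarrow(ii)$ direction (fix $f\in\bbC^m$, apply scalar Bochner to $G_f(x)=(f,F(x)f)_{\bbC^m}$, polarize to build the entries $\mu_{j,k}$, then use uniqueness of the Fourier transform of finite measures to identify $\sum_{j,k}\ol{f_j}f_k\,\mu_{j,k}$ with $\mu_f$ and conclude $\mu(E)\geq0$) is exactly the right reduction, and your anticipated ``obstacle'' resolves cleanly because each $\mu_{j,k}$ is a finite linear combination of honest nonnegative finite measures, hence a complex measure, and countable additivity is inherited entrywise. The supplementary relations \eqref{4.10} are also handled correctly: $N=2$ with the points $\{0,x\}$ gives the $2\times2$ block $\bigl(\begin{smallmatrix}F(0)&F(-x)\\ F(x)&F(0)\end{smallmatrix}\bigr)\geq0$, whence self-adjointness gives $F(-x)=F(x)^*$, and Proposition~\ref{p2.8} (the $M_1^{1/2}CM_2^{1/2}$ factorization) gives $\|F(x)\|_{\cB(\bbC^m)}\leq\|F(0)\|_{\cB(\bbC^m)}$.

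One step in $(ii)\Rightarrow(i)$ is glossed and deserves a word: you reduce to the pointwise nonnegativity of $\bigl(\sum_p d_p(\xi),\,d\mu(\xi)\,\sum_q d_q(\xi)\bigr)_{\bbC^m}$, but the vector $\sum_p d_p(\xi)$ depends on $\xi$, so ``$(v,d\mu\,v)_{\bbC^m}$ is a nonnegative scalar measure for each fixed $v$'' does not apply directly. You can repair this either by a Radon--Nikodym decomposition $d\mu=A(\xi)\,d\sigma(\xi)$ with $\sigma=\tr_{\bbC^m}(\mu)$ scalar nonnegative and $A(\xi)\geq0$ $\sigma$-a.e., or, more in keeping with the rest of your proposal, by invoking Lemma~\ref{l2.5}\,$(ii)$ in this direction as well: for each fixed $f$, $G_f=\mu_f^{\wedge}$ with $\mu_f=(f,\mu(\cdot)f)_{\bbC^m}\geq0$ a scalar nonnegative measure, so $G_f$ is scalar positive semidefinite by Theorem~\ref{t1.2}, and Lemma~\ref{l2.5}\,$(ii)$ then gives positive semidefiniteness of $F$. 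This symmetric use of the lemma in both directions is tidier and avoids the pointwise integrand argument entirely.
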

%%%%%%%

We note that Berberian \cite[p~178, Theorem~3]{Be66} discusses a seemingly more general result in which boundedness of $F$ is not assumed, it is, however, a consequence of his results. 

Next, we extend the classical $L^1$-multiplier theorem due to Bochner (cf., e.g., 
\cite[Theorem~2.5.8 and p.~143, 144]{Gr08}, \cite[p.~28]{St86}, \cite[p.~29, 30]{SW90}) to the matrix-valued context. An infinite-dimensional version of this result appeared in Gaudry, Jefferies, and 
Ricker \cite[Proposition~3.15 and Corollary~3.20]{GJR00}. For completeness, we present an elementary proof in the matrix-valued case and add the estimates \eqref{4.11} which appear to be new in this context. 

We recall definition \eqref{3.3} of $N(\mu)$ and the definition of 
$\interleave \cdot \interleave_{1,m}$ in \eqref{3.16}.  

%%%%%%%
\begin{theorem} \lb{t4.3} 
Assume that $F \in L^{\infty}(\bbR^n, \bbC^{m \times m})$.~Then the following conditions $(i)$ and 
$(ii)$ are equivalent: \\[1mm]
$(i)$ $F(- i \nabla)|_{C_0^{\infty}(\bbR^n, \bbC^{m \times m})}$ can be extended to a bounded operator 
$($denoted by the same symbol, for simplicity\,$)$ 
$F(- i \nabla) \in \cB\big(L^1(\bbR^n, \bbC^{m \times m})\big)$. \\[1mm]
$(ii)$ There exists a measure $\mu \in \cM(\bbR^n, \bbC^{m \times m})$ such that 
\begin{equation}
 F(x) = \mu^{\wedge}(x), \quad x \in \bbR^n.     
\end{equation} 
In addition, if one of conditions $(i)$ or $(ii)$ holds, then
\begin{equation}
(2 \pi)^{-n/2} N(\mu) \leq \|F(- i \nabla)\|_{\cB((L^1(\bbR^n, \bbC^{m \times m}), \, 
\interleave \, \cdot \, \interleave_{1,m}))} 
\leq m (2 \pi)^{-n/2} N(\mu).    \lb{4.11}
\end{equation} 
Both estimates in \eqref{4.11} are sharp.
\end{theorem}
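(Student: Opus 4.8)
I would prove $(i)\Leftrightarrow(ii)$ by identifying $F(-i\nabla)$, wherever it is bounded on $L^1(\bbR^n,\bbC^{m\times m})$, with a convolution operator $T_{\mu_0}$, $\mu_0\in\cM(\bbR^n,\bbC^{m\times m})$, and then recognizing $F$ as a constant multiple of $\mu_0^\wedge$; the converse is a direct computation. Concretely, for $(ii)\Rightarrow(i)$: assuming $F=\mu^\wedge$ with $\mu\in\cM(\bbR^n,\bbC^{m\times m})$, I would first record the convolution identity $(f*\mu)^\wedge=(2\pi)^{n/2}\,f^\wedge\,\mu^\wedge=(2\pi)^{n/2}\,f^\wedge F$ for $f\in C_0^\infty(\bbR^n,\bbC^{m\times m})$ (Fubini is legitimate here since $f$ has compact support and each $\mu_{j,k}$ is a finite complex measure, and $f^\wedge$ is pulled to the left since the matrix product in $f*\mu$ has $f$ on the left), so that $F(-i\nabla)f=(f^\wedge F)^\vee=(2\pi)^{-n/2}(f*\mu)=(2\pi)^{-n/2}T_\mu f$ by Plancherel. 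Componentwise, $(F(-i\nabla)f)_{j,k}=(2\pi)^{-n/2}\sum_{\ell=1}^m (f_{j,\ell}*\mu_{\ell,k})$, so the elementary scalar bound $\|g*\nu\|_{1}\le\|g\|_{1}\,|\nu|(\bbR^n)$ gives $\|(F(-i\nabla)f)_{j,k}\|_{1}\le(2\pi)^{-n/2}N(\mu)\sum_{\ell=1}^m\|f_{j,\ell}\|_{1}$, and summing over $j,k$ produces $\interleave F(-i\nabla)f\interleave_{1,m}\le m\,(2\pi)^{-n/2}N(\mu)\,\interleave f\interleave_{1,m}$. Since $C_0^\infty(\bbR^n,\bbC^{m\times m})$ is dense in $L^1(\bbR^n,\bbC^{m\times m})$ and $\interleave\,\cdot\,\interleave_{1,m}\sim\|\,\cdot\,\|_{1,m}$, this proves $(i)$, the extension being $(2\pi)^{-n/2}T_\mu$ (bounded on every $L^p$ by \eqref{3.13}, \eqref{3.14}), together with the right-hand inequality in \eqref{4.11}.

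Next, for $(i)\Rightarrow(ii)$, I would set $T:=F(-i\nabla)\in\cB\big(L^1(\bbR^n,\bbC^{m\times m})\big)$ for the bounded extension and check, first on $C_0^\infty$ via the Fourier transform, that $L_x F(-i\nabla)=F(-i\nabla)L_x$ for all $x\in\bbR^n$ and $F(-i\nabla)(Af)=A\,F(-i\nabla)f$ for all $A\in\bbC^{m\times m}$ (translation by $x$ becomes pointwise multiplication by $y\mapsto e^{-i(x\cdot y)}$, which together with constant left multiplication commutes through the Fourier transform and through right multiplication by $F$); by density and boundedness these identities pass to all of $L^1(\bbR^n,\bbC^{m\times m})$, so $T$ is $\bbC^{m\times m}$-linear in the sense of Definition \ref{d3.1} and commutes with all left translations. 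Proposition \ref{p3.3}\,$(ii)$ then furnishes $\mu_0\in\cM(\bbR^n,\bbC^{m\times m})$ with $T=T_{\mu_0}$, i.e.\ $F(-i\nabla)f=f*\mu_0$ on $L^1$. Applying the Fourier transform on $C_0^\infty$ and invoking the convolution identity above gives $f^\wedge F=(2\pi)^{n/2}f^\wedge\mu_0^\wedge$; testing with $f=\phi_\varepsilon I_m$, where $\{\phi_\varepsilon\}_{\varepsilon>0}$ is a Friedrichs mollifier, and letting $\varepsilon\downarrow 0$ so that $f^\wedge(y)\to(2\pi)^{-n/2}I_m$ pointwise, yields $F=(2\pi)^{n/2}\mu_0^\wedge=\mu^\wedge$ with $\mu:=(2\pi)^{n/2}\mu_0$, establishing $(ii)$.

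Finally I would prove the remaining (left-hand) inequality in \eqref{4.11} and the two sharpness assertions. For the lower bound, choosing $j_0,k_0$ with $|\mu_{j_0,k_0}|(\bbR^n)=N(\mu)$ and testing $F(-i\nabla)=(2\pi)^{-n/2}T_\mu$ on the matrix-valued function $f_\varepsilon$ whose only nonvanishing entry is $(f_\varepsilon)_{1,j_0}=\phi_\varepsilon$, one has $\interleave f_\varepsilon\interleave_{1,m}=1$ and $\interleave F(-i\nabla)f_\varepsilon\interleave_{1,m}=(2\pi)^{-n/2}\sum_{k=1}^m\|\phi_\varepsilon*\mu_{j_0,k}\|_{1}\ge(2\pi)^{-n/2}\|\phi_\varepsilon*\mu_{j_0,k_0}\|_{1}$, while $\liminf_{\varepsilon\downarrow0}\|\phi_\varepsilon*\mu_{j_0,k_0}\|_{1}\ge|\mu_{j_0,k_0}|(\bbR^n)$ by lower semicontinuity of the total variation norm under weak-$*$ convergence; hence $\|F(-i\nabla)\|\ge(2\pi)^{-n/2}N(\mu)$. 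For sharpness of the lower bound take $\mu=\delta_0 I_m$, so $N(\mu)=1$ and $(F(-i\nabla)f)_{j,k}=(2\pi)^{-n/2}f_{j,k}$, giving operator norm exactly $(2\pi)^{-n/2}$; for sharpness of the upper bound take $\mu=\delta_0 J$ with $J$ the all-ones $m\times m$ matrix, so $N(\mu)=1$ and $(F(-i\nabla)f)_{j,k}=(2\pi)^{-n/2}\sum_{\ell=1}^m f_{j,\ell}$ is independent of $k$, whence testing on $f_\varepsilon$ (with $j_0=1$) gives $\interleave F(-i\nabla)f_\varepsilon\interleave_{1,m}=m\,(2\pi)^{-n/2}=m\,(2\pi)^{-n/2}N(\mu)$. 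The only genuinely substantive ingredient is the identification $T=T_{\mu_0}$, which I would take from Proposition \ref{p3.3}\,$(ii)$; the main obstacle within the present argument is therefore purely one of bookkeeping — keeping track of the normalization factor $(2\pi)^{-n/2}$ and of left-versus-right matrix multiplication throughout the convolution and Fourier-transform manipulations.
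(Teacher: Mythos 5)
Your proof is correct, and it takes a genuinely different route in several places. For $(i)\Rightarrow(ii)$ you verify that $F(-i\nabla)$ is $\bbC^{m\times m}$--linear and commutes with left translations, then invoke Proposition~\ref{p3.3}\,$(ii)$ to obtain $T=T_{\mu_0}$ and read off $F=(2\pi)^{n/2}\mu_0^\wedge$ via a mollifier; the paper instead reduces to the scalar case, building injection operators $U(j,k)$ and projection operators $D(j,k)$ so that each entry $F_{j,k}$ becomes a scalar $L^1$-multiplier, and then quotes the classical scalar Bochner theorem entrywise. (The paper does note your route explicitly as an ``alternative'' after its proof, so both are acknowledged; your route relies on the black-box structure theorem for convolution operators, while the paper's entrywise route also powers its lower-bound argument.) For the lower bound in \eqref{4.11} you use a mollifier test function and lower semicontinuity of total variation under weak-$*$ convergence, whereas the paper quotes the scalar $L^1$-multiplier equality $\|F_{p,q}(-i\nabla)\|_{\cB(L^1(\bbR^n))}=(2\pi)^{-n/2}\|\mu_{p,q}\|$ and pushes an extremizing sequence through $U(1,p)$; these are essentially equivalent in content but your version is self-contained. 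Finally, your sharpness examples ($\mu=\delta_0 I_m$ and $\mu=\delta_0 J$ with $J$ the all-ones matrix) are simpler and more transparent than the paper's Gaussian-measure examples, and they in fact achieve the bounds exactly without a limiting argument, which is a small gain.
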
 
%%%%%%%
\begin{proof}
First, suppose that condition $(ii)$ holds. Let $f \in C_0^{\infty}(\bbR^n, \bbC^{m \times m})$, then 
\begin{equation}
(F(- i \nabla)f)(x) = (2 \pi)^{-n} \int_{\bbR^n} \int_{\bbR^n} e^{i (\xi \cdot (x - \eta))} f^{\wedge}(\xi) 
\, d \mu(\eta) \, d^n \xi, \quad x \in \bbR^n.   \lb{4.12} 
\end{equation}
Since $f^{\wedge} \in \cS(\bbR^n, \bbC^{m \times m}) \subset L^1(\bbR^n, \bbC^{m \times m})$, 
one can interchange the order of integration in \eqref{4.12} and hence obtains 
\begin{align}
(F(- i \nabla)f)(x) &= (2 \pi)^{-n} \int_{\bbR^n} \int_{\bbR^n} e^{i (\xi \cdot (x - \eta))} f^{\wedge}(\xi) 
\, d^n \xi \, d \mu(\eta)    \no \\
&= (2 \pi)^{-n/2} \int_{\bbR^n} \big(f^{\wedge}\big)^{\vee} (x - \eta) \, d \mu(\eta)   \no \\
&= (2 \pi)^{-n/2} (T_{\mu} f)(x), \quad x \in \bbR^n. 
\end{align}
Thus, applying \eqref{3.8}--\eqref{3.14}, 
\begin{equation}
\|F(- i \nabla)\|_{\cB(L^1(\bbR^n, \bbC^{m \times m}))} \leq (2 \pi)^{-n/2} \|\mu\|,
\end{equation}
implying condition $(i)$. 

To prove the converse implication, we now suppose that condition $(i)$ holds. We introduce, 
$I(j,k) \in \bbC^{m \times m}$ by
\begin{equation}
I(j,k)_{p,q} = \begin{cases}  1 & \text{if $ p=j$ and $q=k$,} \\
0 & \text{if $ p \neq j$ or $q \neq k$,}\end{cases}   \quad 1 \leq j,k,p,q \leq m.   \lb{4.15}
\end{equation}
In addition, let 
\begin{equation}
U(j,k) \colon \begin{cases} L^1(\bbR^n) \to L^1(\bbR^n, \bbC^{m \times m}),  \\
g \mapsto U(j,k) g = g I(j,k), \end{cases} \quad 1 \leq j,k \leq m,    \lb{4.16}
\end{equation}
and 
\begin{equation}
D(j,k) \colon \begin{cases} L^1(\bbR^n, \bbC^{m \times m}) \to L^1(\bbR^n),  \\
f \mapsto D(j,k) f = f_{j,k}, \end{cases} \quad 1 \leq j,k \leq m.    \lb{4.17}
\end{equation}
One verifies that $U(j,k)$ and $D(j,k)$ are bounded for each $1 \leq j,k \leq m$, and hence 
also 
\begin{equation}
P(p,q,j,k) = D(p,q) F(- i \nabla) U(j,k) \colon L^1(\bbR^n) \to L^1(\bbR^n), \quad 
1 \leq j,k,p,q \leq m,     \lb{4.18} 
\end{equation}  
are bounded. Employing the fact that 
\begin{equation}
P(1,k,1,j) g = \big(g^{\wedge} F_{j,k}\big)^{\vee}, \quad g \in L^1(\bbR^n),    \lb{4.19} 
\end{equation}
one infers that the linear operator 
$L^1(\bbR^n) \ni g \mapsto \big(g^{\wedge} F_{j,k}\big)^{\vee} \in L^1(\bbR^n)$ 
is bounded, that is, $F_{j,k}$ is an $L^1(\bbR^n)$-multiplier. By the classical Bochner theorem, 
there exists a (finite) complex measure $\mu_{k,j}$ on $\bbR^n$ such that 
$F_{j,k} = \mu_{j,k}^{\wedge}$. Introducing 
$\mu = \{\mu_{j,k}\}_{1 \leq j,k \leq m} \in \cM(\bbR^n, \bbC^{m \times m})$, then 
$F = \mu^{\wedge}$ and hence condition $(ii)$ holds. 

Next we turn to the lower bound in \eqref{4.11}. Choose $p,q \in \{1,\dots,m\}$ such that 
\begin{equation}
N(\mu) = |\mu_{p,q}|(\bbR^n). 
\end{equation}
Since $F_{p,q} = \mu_{p,q}^{\wedge}$, the classical (i.e., scalar-valued) $L^1$-multiplier theorem 
applies and hence yields that $F_{p,q}(- i \nabla)|_{C_0^{\infty}(\bbR^n)}$ can be extended to a bounded operator 
$F_{p,q}(- i \nabla) \in \cB\big(L^1(\bbR^n)\big)$ with norm
\begin{equation}
\|F_{p,q}(- i \nabla)\|_{\cB(L^1(\bbR^n))} = (2 \pi)^{-n/2} \|\mu_{p,q}\| 
= (2 \pi)^{-n/2} |\mu_{p,q}|(\bbR^n).    
\end{equation}
Thus, there exists a sequence $\{f_{\ell}\}_{\ell \in \bbN}$ in $L^1(\bbR^n)$ with $\|f_{\ell}\|_1 = 1$, 
$\ell \in \bbN$, such that 
\begin{equation}
\lim_{\ell \to \infty} \|F_{p,q}(- i \nabla) f_{\ell}\|_1 = (2 \pi)^{-n/2} |\mu_{p,q}|(\bbR^n). 
\end{equation}
Since $C_0^{\infty}(\bbR^n)$ is dense in $L^1(\bbR^n)$, we can assume that 
$f_{\ell} \in C_0^{\infty}(\bbR^n)$, $\ell \in \bbN$. Introduce (cf.\ \eqref{4.16})
\begin{equation}
g_{\ell} = U(1,p) f_{\ell}, \quad \ell \in \bbN,
\end{equation}
then
\begin{equation}
(F(- i \nabla) g_{\ell})_{r,s} = \begin{cases} 0, & 2 \leq r \leq m, \\
\big(f_{\ell}^{\wedge} F_{p,s}\big)^{\vee}, & r=1,   \end{cases}
\end{equation}
and hence 
\begin{equation}
\interleave g_{\ell} \interleave_{1,m} = \|f_{\ell}\|_1, \quad \ell \in \bbN,
\end{equation}
and 
\begin{align}
\interleave F(- i \nabla) g_{\ell} \interleave_{1,m} 
& = \sum_{s=1}^m \big\|\big(f_{\ell}^{\wedge} F_{p,s}\big)^{\vee}\big\|_1   \no \\
& \geq \big\|\big(f_{\ell}^{\wedge} F_{p,q}\big)^{\vee}\big\|_1 = \|F_{p,q}(- i \nabla) f_{\ell}\|_1  \no \\
& \underset{\ell \to \infty}{\longrightarrow} (2 \pi)^{-n/2} |\mu_{p,q}|(\bbR^n),  
\end{align}
implying the lower bound in \eqref{4.11}.

To show that this lower bound is best possible it suffices to look at the following example. 
With $\gamma_n \colon \gB_n \to [0,1]$ the standard Gaussian measure on $\bbR^n$,
\begin{equation}
\gamma_n (E) = (2 \pi)^{-n/2} \int_{E} \exp\big(- |x|^2/2\big) \, d^n x, \quad 
E \in \gB_n,     \lb{4.28}
\end{equation} 
introduce the measure $\mu_0 \in \cM(\bbR^n, \bbC^{m \times m})$ via
\begin{equation}
\mu_{0,j,k} (E) = \gamma_n (E) \delta_{j,1} \delta_{k,1}, \quad 1 \leq j,k \leq m, \; E \in \gB_n, 
\end{equation}
and let $F_0 = \mu_0^{\wedge}$. For $f \in L^1(\bbR^n, \bbC^{m \times m})$ with 
$\interleave f \interleave_{1,m} =1$ one obtains 
\begin{align}
\interleave F_0(- i \nabla) f \interleave_{1,m} &= \sum_{j=1}^m \big\|\big(f_{j,1}^{\wedge} 
\gamma_n^{\wedge}\big)^{\vee}\big\|_1    \no \\
& \leq \sum_{j=1}^m \| \gamma_n^{\wedge} (- i \nabla)\|_{\cB(L^1(\bbR^n))} 
\sum_{j=1}^m \|f_{j,1}\|_1   \no \\
& \leq (2 \pi)^{-n/2} \gamma_n(\bbR^n) \|f\|_{1,m}  \no \\
& = (2 \pi)^{-n/2} \gamma_n(\bbR^n)    \no \\ 
& = (2 \pi)^{-n/2} N(\mu_0), 
\end{align}
implying $\|F_0(- i \nabla)\|_{\cB((L^1(\bbR^n, \bbC^{m \times m}), \, 
 \interleave \, \cdot \, \interleave_{1,m}))} \leq (2 \pi)^{-n/2} N(\mu_0)$. 

Turning to the upper bound in \eqref{4.11}, let $\varphi \in L^1(\bbR^n, \bbC^{m \times m})$ with 
$\interleave \varphi \interleave_{1,m} = 1$. Then
\begin{equation}
(F(- i \nabla) \varphi)_{j,k} = \sum_{r=1}^m \big(\varphi_{j,r}^{\wedge} F_{r,k}\big)^{\vee}, 
\quad 1 \leq j,k \leq m. 
\end{equation}
Applying the classical (i.e., scalar-valued) $L^1$-multiplier theorem once more, one estimates, 
\begin{align}
\interleave F(- i \nabla) \varphi \interleave_{1,m} &= \sum_{j,k=1}^m \|(F(- i \nabla) \varphi)_{j,k}\|_1 
\no \\
& \leq \sum_{j,k,r=1}^m \big\|\big(\varphi_{j,r}^{\wedge} F_{r,k}\big)^{\vee}\big\|_1   \no \\
& = \sum_{j,k,r=1}^m \|F_{r,k}(- i \nabla) \varphi_{j,r}\|_1  \no \\
& \leq \sum_{j,k,r=1}^m \|F_{r,k}(- i \nabla) \|_{\cB(L^1(\bbR^n))} \|\varphi_{j,r}\|_1    \no \\
& = (2 \pi)^{-n/2} \sum_{j,k,r=1}^m |\mu_{r,k}|(\bbR^n) \|\varphi_{j,r}\|_1   \no \\
& \leq (2 \pi)^{-n/2} N(\mu) \sum_{k=1}^m \sum_{j,r=1}^m \|\varphi_{j,r}\|_1    \no \\
& = (2 \pi)^{-n/2} N(\mu) \, m \interleave \varphi \interleave_{1,m}     \no \\
& = (2 \pi)^{-n/2} N(\mu) \, m.
\end{align}

To demonstrate that this upper bound is best possible, we once more employ the Gaussian measure 
\eqref{4.28} on $\bbR^n$ and hence introduce the measure 
$\mu_1 \in \cM(\bbR^n, \bbC^{m \times m})$ via
\begin{equation}
\mu_{1, j,k} (E) = \gamma_n(E), \quad 1 \leq j,k \leq m, \; E \in \gB_n,
\end{equation}
and let $F_1 = \mu_1^{\wedge}$, such that $F_{1,j,k} = \gamma_n^{\wedge}$, $1 \leq j,k \leq m$. 
Applying the classical multiplier theorem again, one obtains
\begin{equation}
\|F_{1,j,k}(- i \nabla)\|_{\cB(L^1(\bbR^n))} = \gamma_n(\bbR^n) = |\gamma_n|(\bbR^n) =1. 
\end{equation}
Thus, there exists a sequence $\{f_{\ell}\}_{\ell \in \bbN}$ in $L^1(\bbR^n)$ with $\|f_{\ell}\|_1 = 1$, 
$\ell \in \bbN$ such that for all $r,s \in \{1,\dots,m\}$ 
\begin{equation}
\lim_{\ell \to \infty} \|\gamma_n^{\wedge} (- i \nabla) f_{\ell}\|_1 = 
\lim_{\ell \to \infty} \|F_{1,r,s}(- i \nabla) f_{\ell}\|_1 = 1. 
\end{equation}
Let $\varphi_{\ell} \in L^1(\bbR^n, \bbC^{m \times m})$, $\ell \in \bbN$, be defined via 
\begin{equation}
\varphi_{\ell,j,k} = m^{-2} f_{\ell}, \quad \ell \in \bbN, \; 1 \leq j,k \leq m.  
\end{equation}
Then
\begin{equation}
\interleave \varphi_{\ell} \interleave_{1,m} = \sum_{j,k=1}^m \|\varphi_{\ell,j,k}\|_1 
= \sum_{j,k=1}^m m^{-2} \|f_{\ell}\|_1 = 1, \quad \ell \in \bbN. 
\end{equation}
Consequently, 
\begin{align}
\begin{split} 
(F_1(- i \nabla) \varphi_{\ell})_{j,k} 
= \sum_{r=1}^m \big(\varphi_{\ell,j,r}^{\wedge} F_{1,r,k}\big)^{\vee} 
= \sum_{r=1}^m m^{-2} F_{1,r,k}(- i \nabla) f_{\ell},&   \\
\ell \in \bbN, \; 1 \leq j,k \leq m,& 
\end{split} 
\end{align}
and thus,
\begin{align}
\interleave F_1(- i \nabla) \varphi_{\ell} \interleave_{1,m} &= 
\sum_{j,k=1}^m \|(F_1(- i \nabla) \varphi_{\ell})_{j,k}\|_1    \no \\
&= \sum_{j,k=1}^m \bigg\| \sum_{r=1}^m m^{-2} F_{1,r,k}(- i \nabla) f_{\ell}\bigg\|_1    \no \\
&= \sum_{j,k=1}^m m^{-1} \big\|\gamma_n^{\wedge} (- i \nabla) f_{\ell}\big\|_1    \no \\ 
& \underset{\ell \to \infty}{\longrightarrow} (2 \pi)^{-n/2} m = (2 \pi)^{-n/2} m \, N(\mu_1). 
\end{align} 
\end{proof}
%%%%%%%

Alternatively, one can prove the equivalence of items $(i)$ and $(ii)$ in Theorem \ref{t4.3} 
using \eqref{3.14}, Proposition \ref{p3.3}\,$(ii)$, and Lemma \ref{l3.4}. 

%%%%%%%
\begin{remark} \lb{r4.3A}
$(i)$ We stress once more that the equivalence of items $(i)$ and $(ii)$ in Theorem \ref{t4.3} was 
proved by Gaudry, Jefferies, and Ricker \cite[Proposition~3.15 and Corollary~3.20]{GJR00} in the 
infinite-dimensional context. For completeness we decided to present a rather elementary and straightforward proof. The bounds \eqref{4.11} appear to be new. \\
$(ii)$ In the special case $m=1$, the upper and lower bound in \eqref{4.11} coincide and hence  
reduce to the classical result $\|F(- i \nabla)\|_{\cB(L^1(\bbR^n))} = (2 \pi)^{-n/2} \|\mu\|$. 
\hfill $\diamond$ 
\end{remark}
%%%%%%%

Next, we also present the $L^2$-analog of the multiplier Theorem \ref{t4.3} (see, e.g., 
\cite[Theorem~2.5.10]{Gr08}, \cite[p.~28]{St86}, \cite[p.~28, 29]{SW90} for the classical version 
where $m=1$). An infinite-dimensional version of this result appeared in Gaudry, Jefferies, and 
Ricker \cite[Lemma~2.5 and Proposition~2.8]{GJR00}. For completeness, we present an elementary proof in the matrix-valued case (deferring the proof of \eqref{4.41A} to Appendix \ref{sB}) and add the estimates \eqref{4.41a} which appear to be new in this context. 

We recall the definition of $\interleave \cdot \interleave_{2,m}$ in \eqref{3.19a} and $\interleave \cdot \interleave_{\infty,m}$ in \eqref{3.21a}.

%%%%%%%
\begin{theorem} \lb{t4.4} 
Assume that $F \colon \bbR^n \to \bbC^{m \times m}$ is measurable such that 
$f^{\wedge} F \in L^2(\bbR^n, \bbC^{m \times m})$, $f \in C_0^{\infty}(\bbR^n, \bbC^{m \times m})$ 
and define 
\begin{equation}
F(- i \nabla) \colon \begin{cases} C_0^{\infty}(\bbR^n, \bbC^{m \times m}) \to 
L^2(\bbR^n, \bbC^{m \times m}),  \\ f \mapsto F(- i \nabla) f = 
\big(f^{\wedge} F\big)^{\vee}. \end{cases}
\end{equation}
Then the following conditions $(i)$ and $(ii)$ are equivalent: \\[1mm] 
$(i)$ $F(- i \nabla)|_{C_0^{\infty}(\bbR^n, \bbC^{m \times m})}$ can be extended to a bounded operator 
$($denoted by the same symbol, for simplicity\,$)$ 
$F(- i \nabla) \in \cB\big(L^2(\bbR^n, \bbC^{m \times m})\big)$. \\[1mm]
$(ii)$ $F \in L^{\infty}(\bbR^n, \bbC^{m \times m})$. \\[1mm]
In addition, if one of conditions $(i)$ or $(ii)$ holds, then 
\begin{equation}
\|F(- i \nabla)\|_{\cB(L^2(\bbR^n, \bbC^{m \times m}_{\rm HS}))} 
= {\rm ess.sup}_{x \in \bbR^n} \|F(x)\|_{\cB(\bbC^m)} = \|F\|_{\infty,m},    \lb{4.41A}
\end{equation} 
moreover, one then also has  
\begin{equation}
\interleave F \interleave_{\infty,m} \leq 
\|F(- i \nabla)\|_{\cB((L^2(\bbR^n, \bbC^{m \times m}), \, 
\interleave \, \cdot \, \interleave_{2,m}))} \leq 
m \interleave F \interleave_{\infty,m}.   \lb{4.41a} 
\end{equation}
Both estimates in \eqref{4.41a} are sharp. 
\end{theorem}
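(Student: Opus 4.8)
The plan is to reduce everything to the classical scalar $L^2$-multiplier theorem (boundedness of $g\mapsto\big(g^{\wedge}h\big)^{\vee}$ on $L^2(\bbR^n)$ is equivalent to $h\in L^{\infty}(\bbR^n)$, with operator norm equal to $\|h\|_{\infty}$) together with the elementary norm comparisons recorded in Section~\ref{s3}. A preliminary observation is that the three $L^2$-type norms $\|\cdot\|_{2,m}$, $\interleave\cdot\interleave_{2,m}$ (cf.\ \eqref{3.19a}) and $\|\cdot\|_{L^2(\bbR^n,\bbC^{m\times m}_{\rm HS})}$ on $L^2(\bbR^n,\bbC^{m\times m})$ are mutually equivalent --- for the last pair because $\|A\|_{\cB(\bbC^m)}\le\|A\|_{\cB_2(\bbC^m)}\le m^{1/2}\|A\|_{\cB(\bbC^m)}$ pointwise --- so that ``bounded operator on $L^2(\bbR^n,\bbC^{m\times m})$'' is unambiguous. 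For $(ii)\Rightarrow(i)$: if $F\in L^{\infty}(\bbR^n,\bbC^{m\times m})$ and $f\in C_0^{\infty}(\bbR^n,\bbC^{m\times m})$ then $f^{\wedge}\in\cS(\bbR^n,\bbC^{m\times m})$, and the pointwise submultiplicativity $\|AB\|_{\cB_2(\bbC^m)}\le\|A\|_{\cB_2(\bbC^m)}\|B\|_{\cB(\bbC^m)}$ together with the Plancherel identity \eqref{3.25} gives $\|F(-i\nabla)f\|_{L^2(\bbR^n,\bbC^{m\times m}_{\rm HS})}=\|f^{\wedge}F\|_{L^2(\bbR^n,\bbC^{m\times m}_{\rm HS})}\le\|F\|_{\infty,m}\,\|f\|_{L^2(\bbR^n,\bbC^{m\times m}_{\rm HS})}$; density of $C_0^{\infty}(\bbR^n,\bbC^{m\times m})$ then yields $(i)$, and this estimate already gives ``$\le$'' in \eqref{4.41A}.

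For the converse $(i)\Rightarrow(ii)$ I would copy the reduction used in the proof of Theorem~\ref{t4.3}: with $I(j,k)$ as in \eqref{4.15} and $U(j,k)$, $D(j,k)$ the obvious $L^2$-analogs of \eqref{4.16}, \eqref{4.17}, the compositions $D(1,k)\,F(-i\nabla)\,U(1,j)$ are bounded on $L^2(\bbR^n)$ and act as $g\mapsto\big(g^{\wedge}F_{j,k}\big)^{\vee}$ (cf.\ \eqref{4.19}). Hence each $F_{j,k}$ is a scalar $L^2$-multiplier and so $F_{j,k}\in L^{\infty}(\bbR^n)$, whence $F\in L^{\infty}(\bbR^n,\bbC^{m\times m})$ by the equivalence of $\interleave\cdot\interleave_{\infty,m}$ (cf.\ \eqref{3.21a}, \eqref{3.27}) with $\|\cdot\|_{\infty,m}$. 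The reverse inequality in \eqref{4.41A} is proved separately in Appendix~\ref{sB}; it amounts to the fact that right multiplication by $B\in\bbC^{m\times m}$ on $(\bbC^{m\times m},\|\cdot\|_{\cB_2(\bbC^m)})$ has operator norm exactly $\|B\|_{\cB(\bbC^m)}$, so \eqref{4.41A} follows once one combines this with \eqref{3.25}.

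For the bounds \eqref{4.41a}: writing $(F(-i\nabla)f)_{j,k}=\sum_{r=1}^m F_{r,k}(-i\nabla)f_{j,r}$ and using the scalar bound $\|F_{r,k}(-i\nabla)\|_{\cB(L^2(\bbR^n))}=\|F_{r,k}\|_{\infty}\le\interleave F\interleave_{\infty,m}$ gives $\interleave F(-i\nabla)f\interleave_{2,m}\le m\,\interleave F\interleave_{\infty,m}\interleave f\interleave_{2,m}$, which is the upper bound. For the lower bound I would choose $(p,q)$ with $\|F_{p,q}\|_{\infty}=\interleave F\interleave_{\infty,m}$ and $f_{\ell}\in C_0^{\infty}(\bbR^n)$ with $\|f_{\ell}\|_2=1$ and $\|F_{p,q}(-i\nabla)f_{\ell}\|_2\to\|F_{p,q}\|_{\infty}$, then test $F(-i\nabla)$ on $g_{\ell}=f_{\ell}I(1,p)$: here $(F(-i\nabla)g_{\ell})_{r,s}=\delta_{r,1}\big(f_{\ell}^{\wedge}F_{p,s}\big)^{\vee}$ and $\interleave g_{\ell}\interleave_{2,m}=1$, so $\interleave F(-i\nabla)g_{\ell}\interleave_{2,m}\ge\|F_{p,q}(-i\nabla)f_{\ell}\|_2\to\interleave F\interleave_{\infty,m}$. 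Sharpness of both bounds is then exhibited exactly as in the proof of Theorem~\ref{t4.3}: for a scalar $h\in L^{\infty}(\bbR^n)$ the choice $F_{0,j,k}=h\,\delta_{j,1}\delta_{k,1}$ realizes the lower bound with equality, while $F_{1,j,k}=h$ for all $j,k$, tested against $\varphi_{\ell,j,k}=m^{-2}f_{\ell}$, realizes the upper bound with equality.

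The only point that is not entirely routine is the identity \eqref{4.41A}, whose non-obvious direction (the matrix computation $\|B\mapsto AB\|_{\cB(\bbC^{m\times m}_{\rm HS})}=\|B\|_{\cB(\bbC^m)}$) is deferred to Appendix~\ref{sB}; within the body of the proof the main thing to be careful about is keeping the operator norms taken with respect to $\|\cdot\|_{2,m}$, $\interleave\cdot\interleave_{2,m}$ and the Hilbert--Schmidt $L^2$-norm strictly apart, since the constants in \eqref{4.41a} genuinely depend on which of these is used.
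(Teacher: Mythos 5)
Your proposal follows essentially the same route as the paper: $(i)\Rightarrow(ii)$ by the $U(j,k),D(j,k)$ reduction to the scalar $L^2$-multiplier theorem, the equality \eqref{4.41A} deferred to Appendix~\ref{sB}, and the bounds \eqref{4.41a} with their sharpness examples argued exactly as in the printed proof. The only genuine (and harmless) deviation is that you give a direct pointwise estimate $\|f^{\wedge}F\|_{L^2_{\rm HS}}\le\|F\|_{\infty,m}\|f^{\wedge}\|_{L^2_{\rm HS}}$ for $(ii)\Rightarrow(i)$ and the ``$\le$'' half of \eqref{4.41A} rather than quoting the appendix for both directions; also note the small slip ``$\|B\mapsto AB\|=\|B\|_{\cB(\bbC^m)}$'' should read ``$=\|A\|_{\cB(\bbC^m)}$'', consistent with Proposition~\ref{pB.2}.
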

%%%%%%%
\begin{proof}
Assume that condition $(i)$ holds. We recall the definitions of 
$I(j,k)$, $U(j,k)$, $D(j,k)$, and $P(p,q,j,k)$ as in \eqref{4.15}--\eqref{4.18}, 
with $L^1$ replaced by $L^2$. Then as in \eqref{4.19}, 
$P(1,k,1,j) f = \big(f^{\wedge} F_{j,k}\big)^{\vee}$, $f \in L^2(\bbR^n)$, and hence the linear operator 
$L^2(\bbR^n) \ni g \mapsto \big(g^{\wedge} F_{j,k}\big)^{\vee} \in L^2(\bbR^n)$ 
is bounded, that is, $F_{j,k}$ is an $L^2(\bbR^n)$-multiplier. By the classical $L^2$-multiplier theorem, $F_{j,k} \in L^{\infty}(\bbR^n)$, $1 \leq j, k \leq m$, that is, 
$F \in L^{\infty}(\bbR^n, \bbC^{m \times m})$, and hence condition $(ii)$ holds. 

The bound \eqref{4.41A} has been proved in \cite[Lemma~2.5]{GJR00} in the infinite-dimensional context; for completeness we rederive it in the present matrix-valued case in Appendix \ref{sB}. Clearly, the bound \eqref{4.41A} also shows that condition $(ii)$ implies $(i)$. 
   
Next we turn to the lower bound in \eqref{4.41a}. Coose $p, q \in \{1,\dots,m\}$ such that 
\begin{equation} 
\interleave F \interleave_{\infty,m} = \|F_{p,q}\|_{\infty}. 
\end{equation}
Then the classical $L^2$-multiplier theorem (for $m=1$) implies that 
\begin{equation}
\|F_{p,q}(- i \nabla)\|_{\cB(L^2(\bbR^n))} = \|F_{p,q}\|_{\infty}. 
\end{equation}
Thus, there exists a sequence $\{f_{\ell}\}_{\ell \in \bbN}$ in $L^2(\bbR^n)$ with 
$\|f_{\ell}\|_2 = 1$, $\ell \in \bbN$, such that 
\begin{equation}
\lim_{\ell \to \infty} \|F_{p,q}(- i \nabla) f_{\ell}\|_2 = \|F_{p,q}\|_{\infty}. 
\end{equation}
Introducing (cf.\ \eqref{4.16})
\begin{equation}
g_{\ell} = U(1,p) f_{\ell}, \quad \ell \in \bbN,
\end{equation}
then
\begin{equation}
(F(- i \nabla) g_{\ell})_{r,s} = \begin{cases} 0, & 2 \leq r \leq m, \\
\big(f_{\ell}^{\wedge} F_{p,s}\big)^{\vee}, & r=1,   \end{cases}
\end{equation}
and hence 
\begin{equation}
\interleave g_{\ell} \interleave_{2,m} = \|f_{\ell}\|_2 = 1, \quad \ell \in \bbN,
\end{equation}
and 
\begin{align}
\interleave F(- i \nabla) g_{\ell} \interleave_{2,m} 
& = \sum_{s=1}^m \big\|\big(f_{\ell}^{\wedge} F_{p,s}\big)^{\vee}\big\|_2   \no \\
& \geq \big\|\big(f_{\ell}^{\wedge} F_{p,q}\big)^{\vee}\big\|_2 = \|F_{p,q}(- i \nabla) f_{\ell}\|_2  \no \\
& \underset{\ell \to \infty}{\longrightarrow} \|F_{p,q}\|_{\infty},  
\end{align}
implying the lower bound in \eqref{4.41a}.

To show that this lower bound is best possible it suffices to look at the following example. Let 
\begin{equation}
F_{0,j,k} = \delta_{j,1} \delta_{k,1}, \quad 1 \leq j, k, \leq m.
\end{equation}
For $f \in L^2(\bbR^n, \bbC^{m \times m})$ with 
$\interleave f \interleave_{2,m} =1$ one obtains 
\begin{align}
\interleave F_0(- i \nabla) f \interleave_{2,m} 
&= \sum_{j=1}^m \big\|\big(f_{j,1}^{\wedge}\big)^{\vee}\big\|_2 
=  \sum_{j=1}^m \|f_{j,1}\|_2     \no \\
& \leq \|f\|_{2,m} =1    \no \\
& = \interleave F_0 \interleave_{\infty,m},
\end{align}
implying $\|F_0(- i \nabla)\|_{\cB((L^2(\bbR^n, \bbC^{m \times m}), \, 
\interleave \, \cdot \, \interleave_{2,m}))} 
\leq \interleave F_0 \interleave_{\infty,m}$. 

Turning to the upper bound in \eqref{4.41a}, let $\varphi \in L^2(\bbR^n, \bbC^{m \times m})$ with 
$\interleave \varphi \interleave_{2,m} = 1$. Then
\begin{equation}
(F(- i \nabla) \varphi)_{j,k} = \sum_{r=1}^m \big(\varphi_{j,r}^{\wedge} F_{r,k}\big)^{\vee}, 
\quad 1 \leq j,k \leq m. 
\end{equation}
Applying the classical $L^2$-multiplier theorem once more, one estimates, 
\begin{align}
\interleave F(- i \nabla) \varphi \interleave_{2,m} 
&= \sum_{j,k=1}^m \|(F(- i \nabla) \varphi)_{j,k}\|_2     \no \\
& = \sum_{j,k=1}^m \bigg\|\sum_{r=1}^m \big(\varphi_{j,r}^{\wedge} F_{r,k}\big)^{\vee}\bigg\|_2   \no \\
& \leq \sum_{j,k,r=1}^m \|F_{r,k}(- i \nabla) \varphi_{j,r}\|_2  \no \\
& \leq \sum_{j,k,r=1}^m \|F_{r,k}(- i \nabla) \|_{\cB(L^2(\bbR^n))} \|\varphi_{j,r}\|_2    \no \\
& = \sum_{j,k,r=1}^m \|F_{r,k}\|_{\infty} \|\varphi_{j,r}\|_2   \no \\
& \leq \interleave F \interleave_{\infty,m} \sum_{k=1}^m 
\sum_{j,r=1}^m \|\varphi_{j,r}\|_2    \no \\
& = m \interleave F \interleave_{\infty,m} \interleave \varphi \interleave_{2,m}     \no \\
& = m \interleave F \interleave_{\infty,m}. 
\end{align}

To demonstrate that this upper bound is best possible, we introduce 
$F_1 \in L^{\infty}(\bbR^n, \bbC^{m \times m})$ by 
\begin{equation}
F_{1,j,k} =1, \quad 1 \leq j,k \leq m.
\end{equation}
Let $f \in L^2(\bbR^n)$ with $\|f\|_2 = 1$, and introduce 
$\varphi \in L^2(\bbR^n, \bbC^{m \times m})$ via 
\begin{equation}
\varphi_{j,k} = m^{-2} f, \quad 1 \leq j,k \leq m.  
\end{equation}
Then
\begin{equation}
\interleave \varphi \interleave_{2,m} = \sum_{j,k=1}^m \|\varphi_{j,k}\|_2 
= \sum_{j,k=1}^m m^{-2} \|f\|_2 = 1. 
\end{equation}
Consequently, 
\begin{equation}
(F_1(- i \nabla) \varphi)_{j,k} 
= \sum_{r=1}^m \big(\varphi_{j,r}^{\wedge} F_{1,r,k}\big)^{\vee} 
= \sum_{r=1}^m m^{-2} f = m^{-1} f, \quad 1 \leq r,s \leq m, 
\end{equation}
and thus,
\begin{align}
\interleave F_1(- i \nabla) \varphi \interleave_{2,m} &= 
\sum_{j,k=1}^m \|(F_1(- i \nabla) \varphi)_{j,k}\|_2    \no \\
&= \sum_{j,k=1}^m m^{-1} \|f\|_2    \no \\
&= m = m \, \interleave F_1 \interleave_{\infty,m}. 
\end{align}  
\end{proof}
%%%%%%%

%%%%%%%
\begin{remark} \lb{r4.4a}
$(i)$ We stress once more that the equivalence of items $(i)$ and $(ii)$ in Theorem \ref{t4.4} (as well 
as the fact \eqref{4.41A}) was proved by Gaudry, Jefferies, and Ricker 
\cite[Lemma~2.5 and Proposition~2.8]{GJR00} in the infinite-dimensional context (we also refer 
to \cite{Pe06} for related results). For completeness we again decided to present a rather elementary and straightforward proof. The bounds \eqref{4.41a} appear to be new. \\
$(ii)$ In the special case $m=1$, the upper and lower bound in \eqref{4.41a} coincide and hence  
reduce to the classical result $\|F(- i \nabla)\|_{\cB(L^2(\bbR^n))} = \|F\|_{\infty}$. 
\hfill$\diamond$
\end{remark}
%%%%%%%

Next, we provide a matrix-valued extension of a part of Schoenberg's Theorem \cite[Proposition~4.4]{SSV12} (cf.\ Theorem \ref{t1.3}). To be precise, we will show that condition $(i)$ implies 
condition $(iii)$ in Schoenberg's Theorem \ref{t1.3} in the matrix-valued context: 

%%%%%%%
\begin{theorem} \lb{t4.5} 
Let $F \colon \bbR^n \to \bbC^{m \times m}$ and suppose that $F$ is conditionally positive 
semidefinite and $F(0) \leq 0$. Then for all $N \in \bbN$, $x_p \in \bbR^n$, $1 \leq p \leq N$, 
the block matrix 
$\{F(x_p - x_q) - F(x_p) - F(x_q)^*\}_{1 \leq p,q \leq N} \in \bbC^{mN \times mN}$ is positive semidefinite. 
\end{theorem}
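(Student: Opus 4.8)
The plan is to deduce the block inequality from condition $(\beta)$ of Lemma~\ref{l2.5}\,$(iii)$ by adjoining the origin to the given set of points. Fix $N \in \bbN$, points $x_1,\dots,x_N \in \bbR^n$, and vectors $c_1,\dots,c_N \in \bbC^m$. Writing an arbitrary $c \in \bbC^{mN}$ in block form $c = (c_1,\dots,c_N)^{\top}$, one has $(c, Ac)_{\bbC^{mN}} = \sum_{p,q=1}^N (c_p, A_{p,q} c_q)_{\bbC^m}$ for $A = \{A_{p,q}\}_{1 \le p,q \le N} \in \bbC^{mN \times mN}$, so it suffices to prove
\[
\sum_{p,q=1}^N \big(c_p, [F(x_p-x_q) - F(x_p) - F(x_q)^*]\, c_q\big)_{\bbC^m} \ge 0 .
\]

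First I would set $x_0 = 0$ and $c_0 = -\sum_{p=1}^N c_p \in \bbC^m$, and observe that $\sum_{p=0}^N \sum_{j=1}^m c_{p,j} = \sum_{j=1}^m c_{0,j} + \sum_{p=1}^N \sum_{j=1}^m c_{p,j} = 0$. Hence the enlarged configuration $x_0, x_1, \dots, x_N$ with coefficient vectors $c_0, c_1, \dots, c_N$ fulfils the hypothesis of condition $(\beta)$ in Lemma~\ref{l2.5}\,$(iii)$ — note that conditional positive semidefiniteness of $F$ also yields condition $(\alpha)$ there, that is, $F(-x) = F(x)^*$ for all $x \in \bbR^n$ — and therefore
\[
0 \le \sum_{p,q=0}^N \big(c_p, F(x_p - x_q)\, c_q\big)_{\bbC^m} .
\]
The next step is to split the right-hand side according to whether each of $p, q$ is $0$: the piece with $p, q \ge 1$ is $\sum_{p,q=1}^N (c_p, F(x_p - x_q) c_q)_{\bbC^m}$; the piece with $p = 0$, $q \ge 1$ equals $\sum_{q=1}^N (c_0, F(-x_q) c_q)_{\bbC^m} = - \sum_{p,q=1}^N (c_p, F(x_q)^* c_q)_{\bbC^m}$ upon using $c_0 = -\sum_p c_p$ and $F(-x_q) = F(x_q)^*$; the piece with $q = 0$, $p \ge 1$ equals $-\sum_{p,q=1}^N (c_p, F(x_p) c_q)_{\bbC^m}$ by the same substitution; and the piece $p = q = 0$ equals $(c_0, F(0) c_0)_{\bbC^m}$.

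Collecting these four contributions gives
\[
0 \le \sum_{p,q=1}^N \big(c_p, [F(x_p - x_q) - F(x_p) - F(x_q)^*]\, c_q\big)_{\bbC^m} + \big(c_0, F(0)\, c_0\big)_{\bbC^m} ,
\]
and since $F(0) \le 0$ forces $(c_0, F(0) c_0)_{\bbC^m} \le 0$, transposing that term to the left yields the asserted inequality, hence the positive semidefiniteness of the block matrix $\{F(x_p - x_q) - F(x_p) - F(x_q)^*\}_{1 \le p,q \le N}$. There is no genuine obstacle in this argument; the only delicate point is the bookkeeping of the augmentation — specifically, recognizing that the lone scalar constraint $\sum_{p=1}^N \sum_{j=1}^m c_{p,j} = 0$ demanded by Lemma~\ref{l2.5}\,$(\beta)$ is arranged by taking $c_0 = -\sum_{p=1}^N c_p$, and that condition $(\alpha)$ ($F(-x) = F(x)^*$) is exactly what is needed to symmetrize the two cross terms.
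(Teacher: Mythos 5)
Your proof is correct and follows essentially the same strategy as the paper's: adjoin $x_0 = 0$ with $c_0 = -\sum_{p=1}^N c_p$, invoke conditional positive semidefiniteness via Lemma~\ref{l2.5}\,$(iii)$, expand the quadratic form, and use $F(-x) = F(x)^*$ to symmetrize the cross terms before discarding the nonpositive $(c_0, F(0)c_0)$ contribution. The only difference is cosmetic: the paper rewrites $F(x_p)$ as $F(-x_p)^*$ and then substitutes $x_p \mapsto -x_p$ at the end, arriving at the equivalent transposed form $F(x_q - x_p) - F(x_q) - F(x_p)^*$, whereas you simplify $F(-x_q) = F(x_q)^*$ directly and land on the stated form without that detour.
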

%%%%%%%
\begin{proof}
Let $x_p \in \bbR^n$, $c_p \in \bbC^m$, $1 \leq p \leq N$. Abbreviating 
$c_0 := - \sum_{p=1}^N c_p$, and $c_p = (c_{p,1}, \dots ,c_{p,m})^\top$, $0 \leq p \leq N$, then 
\begin{equation}
\sum_{p=0}^N \sum_{j=1}^m c_{p,j} = 0.
\end{equation}
In addition, put $x_0 =0 \in \bbR^n$. Then by Lemma \ref{l2.5}\,$(iii)$ one obtains 
\begin{align}
0 & \leq \sum_{p,q = 0}^N (c_p, F(x_p - x_q) c_q)_{\bbC^m}     \no \\ 
& = (c_0, F(0) c_0)_{\bbC^m} + \sum_{p=1}^N (c_p, F(x_p) c_0)_{\bbC^m} 
+ \sum_{q=1}^N (c_0, F(-x_q) c_q)_{\bbC^m}   \no \\
& \quad + \sum_{p,q=1}^N (c_p, F(x_p - x_q) c_q)_{\bbC^m}    \no \\
& = (c_0, F(0) c_0)_{\bbC^m} + \sum_{p=1}^N (c_p, F(-x_p)^* c_0)_{\bbC^m} 
+ \sum_{q=1}^N (c_0, F(-x_q) c_q)_{\bbC^m}   \no \\
& \quad + \sum_{p,q=1}^N (c_p, F(x_p - x_q) c_q)_{\bbC^m}    \no \\
& = (c_0, F(0) c_0)_{\bbC^m} - \sum_{p,q=1}^N (c_p, F(-x_p)^* c_q)_{\bbC^m} 
- \sum_{p,q=1}^N (c_p, F(-x_q) c_q)_{\bbC^m}   \no \\
& \quad + \sum_{p,q=1}^N (c_p, F(x_p - x_q) c_q)_{\bbC^m}. 
\end{align}
Since $x_p \in \bbR^n$, $1 \leq p \leq N$, were arbitrary, replacing $x_p$ by $-x_p$, $1 \leq p \leq N$, 
implies 
\begin{equation}
0 \leq - (c_0, F(0) c_0)_{\bbC^m} 
\leq \sum_{p.q=1}^N (c_p, [F(x_q - x_p) - F(x_q) - F(x_p)^*] c_q)_{\bbC^m},
\end{equation}
completing the proof.
\end{proof}
%%%%%%%

Combining Theorems \ref{t2.6}, \ref{t2.7}, and \ref{t4.5}, one obtains the following matrix variant of 
Schoenberg's Theorem \ref{t1.3}:

%%%%%%%
\begin{theorem} \lb{t4.5a}
Let $F \colon \bbR^n \to \bbC^{m \times m}$. Then the following conditions $(i)$ and $(ii)$ are 
equivalent: \\[1mm] 
$(i)$ $F$ is conditionally positive semidefinite.
\\[1mm]
$(ii)$ For all $t > 0$, $\exp_H(t F)$ is positive semidefinite. \\[1mm]
If one of conditions $(i)$ or $(ii)$ holds, and if $F(0) \leq 0$, then the following condition $(iii)$ 
holds: \\[1mm] 
$(iii)$ For all $N \in \bbN$, $x_p \in \bbR^n$, $1 \leq p \leq N$, the block matrix 
$\{F(x_p - x_q) - F(x_p) - F(x_q)^*\}_{1 \leq p,q \leq N} \in \bbC^{mN \times mN}$ is positive semidefinite. 
\end{theorem}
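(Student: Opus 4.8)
The plan is to assemble Theorem \ref{t4.5a} directly from the three preparatory results established above, namely Theorems \ref{t2.6}, \ref{t2.7}, and \ref{t4.5}, together with the elementary observation that conditional positive semidefiniteness of a matrix-valued function is preserved under multiplication by a positive scalar.

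First I would prove the implication $(i) \Rightarrow (ii)$. Fix $t > 0$. For every $N \in \bbN$ and $x_p \in \bbR^n$, $1 \leq p \leq N$, the block matrix $\{t F(x_p - x_q)\}_{1 \leq p,q \leq N} = t \{F(x_p - x_q)\}_{1 \leq p,q \leq N}$ is obtained from a conditionally positive semidefinite, self-adjoint matrix by scaling with $t > 0$; since the defining inequality in Definition \ref{d2.1}\,$(ii)$ and the self-adjointness condition $(\alpha')$ of Lemma \ref{l2.5}\,$(iii)$ are both stable under such scaling, it follows that $t F \colon \bbR^n \to \bbC^{m \times m}$ is conditionally positive semidefinite. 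Applying Theorem \ref{t2.6} to $tF$ then yields that $\exp_H(tF)$ is positive semidefinite, and since $t > 0$ was arbitrary, $(ii)$ follows.

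For the converse $(ii) \Rightarrow (i)$, I would simply invoke Theorem \ref{t2.7}: condition $(ii)$ asserts in particular that $\exp_H(tF)$ is positive semidefinite for all $t \in (0, 1)$, so taking $\varepsilon = 1$ in Theorem \ref{t2.7} shows that $F$ is conditionally positive semidefinite, which is $(i)$. Finally, to obtain $(iii)$, assume that one of $(i)$, $(ii)$ holds together with $F(0) \leq 0$. By the equivalence just established, $F$ is conditionally positive semidefinite in either case, and Theorem \ref{t4.5} then gives precisely that the block matrix $\{F(x_p - x_q) - F(x_p) - F(x_q)^*\}_{1 \leq p,q \leq N} \in \bbC^{mN \times mN}$ is positive semidefinite for all $N \in \bbN$ and $x_p \in \bbR^n$, $1 \leq p \leq N$; this is $(iii)$.

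Since every ingredient is already in place, there is no genuine obstacle in this proof. The only points that deserve a moment's care are the scaling remark used in the first step and the bookkeeping of which hypotheses feed which conclusion — in particular, that the condition $F(0) \leq 0$ is needed only to deduce $(iii)$ and plays no role in the equivalence of $(i)$ and $(ii)$.
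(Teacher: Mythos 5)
Your proposal is correct and follows exactly the route the paper indicates, which is simply to combine Theorems \ref{t2.6}, \ref{t2.7}, and \ref{t4.5}. Your extra care in noting that $tF$ inherits conditional positive semidefiniteness from $F$ (needed to feed $tF$ into Theorem \ref{t2.6}), and that $F(0) \leq 0$ is invoked only for $(iii)$, is exactly the right bookkeeping and matches the paper's intent.
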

%%%%%%%

%%%%%%%
\begin{remark} \lb{r4.5b}
It should be noted that the converse of Theorem \ref{t4.5}, and hence the complete analog of 
Schoenberg's Theorem \ref{t1.3} cannot hold in the matrix-valued context as the following example 
for $m=2$ shows: Choose $n=1$, $m=2$ and 
\begin{equation}
F_0(x) = i x S, \quad S=S^* \in \bbC^{2 \times 2}, \; x \in \bbR,
\end{equation} 
with 
\begin{equation}
S_{j,j} \in \bbR, \; j=1,2, \quad S_{1,2} = \ol{S_{2,1}} = i s, \; s>0. 
\end{equation}
Then 
\begin{equation}
F_0(x_p - x_q) - F_0(x_p) - F_0(x_q)^* = 0, \quad x_p, x_q \in \bbR, \lb{4.72}
\end{equation}
and hence condition $(iii)$ in Theorem \ref{t4.5a} holds for $F_0$ in the special case $n=1$, 
$m = 2$.

Next, pick $x_1, x_2 \in \bbR$, $x_1 > x_2$, then
\begin{equation}
\{F_0(x_p - x_q)\}_{1 \leq p, q \leq 2} = \begin{pmatrix} F_0(0) & F_0(x_1 - x_2) \\
F_0(x_2 - x_1) & F_0(0) \end{pmatrix} = (x_1 - x_2) \begin{pmatrix} 0 & i S \\ - i S & 0 \end{pmatrix}.
\end{equation}
Thus, choosing $c \in \bbR^4$ with $c_1=c_4=0$, $c_3 = - c_2 \neq 0$ one obtains
\begin{equation} 
\sum_{k=1}^4 c_k = 0, \quad 
(c, \{F_0(x_p - x_q)\}_{1 \leq p, q \leq 2} \, c)_{\bbC^4} = - (x_1 - x_2) 2 s c_2^2 < 0, 
\end{equation}
and hence $F_0$ is not conditionally positive semidefinite.  \hfill $\diamond$
\end{remark}
%%%%%%%

Now we turn to a matrix-valued extension of \cite[Theorem~XIII.52]{RS78} (cf.\ Theorem \ref{t1.4} 
and the subsequent Remark \ref{r4.7}). 

%%%%%%%
\begin{theorem} \lb{t4.6}
Let $F \in C(\bbR^n, \bbC^{m \times m})$ and suppose there exists $c \in \bbR$ such that
\begin{equation}
\Re(F(x)_{j,k}) \leq c, \quad x \in \bbR^n, \; 1 \leq j,k \leq m.
\end{equation}
Then the following conditions $(i)$--$(iii)$ are equivalent: \\[1mm]
$(i)$ For all $t > 0$, $(\exp_H(t F))(- i \nabla)|_{C_0^{\infty}(\bbR^n, \bbC^{m \times m})}$ extends to a bounded operator $($denoted by the same symbol, for simplicity$)$ 
$(\exp_H(tF))(- i \nabla) 
\in \cB\big(L^1(\bbR^n, \bbC^{m \times m})\big)$ and\footnote{By Lemma \ref{l3.9}\,$(i)$, 
$(\exp_H(tF))(- i \nabla) f \in C_{\infty}(\bbR^n, \bbC^{m \times m})$ for 
$f \in C_0^{\infty}(\bbR^n, \bbC^{m \times m})$, hence the pointwise evaluation 
$((\exp_H(tF))(- i \nabla) f)(x_0)$, $x_0 \in \bbR$, is well-defined. Indeed, if 
$f \in C_0^{\infty}(\bbR^n, \bbC^{m \times m})$, then one concludes that 
$f^{\wedge} \in \cS(\bbR^n, \bbC^{m \times m}) \subset L^1(\bbR^n, \bbC^{m \times m})$. In 
addition, since $\Re(F(\cdot)_{j,k}) \leq c$, $\exp_H (tF) \in L^{\infty}(\bbR^n, \bbC^{m \times m})$ 
and so each entry of 
$f^{\wedge}\exp_H (tF)$ lies in $L^1(\bbR^n)$, and Lemma \ref{l3.9}\,$(i)$ yields that 
$(\exp_H(tF))(- i \nabla) f = \big(f^{\wedge} \exp_H (tF)\big)^{\vee} 
\in C_{\infty}(\bbR^n, \bbC^{m \times m})$.} 
\begin{equation}
\tr_{\bbC^m} \big(((\exp_H(tF))(- i \nabla) f)(0)\big) \geq 0, 
\quad 0 \leq f \in C_0^{\infty}(\bbR^n, \bbC^{m \times m}), \; t > 0.    \lb{4.29}
\end{equation}
$(ii)$ For all $t > 0$, $\exp_H(tF) \colon \bbR^n \to \bbC^{m \times m}$ is positive semidefinite. \\[1mm] 
$(iii)$ $F$ is conditionally positive semidefinite. \\[1mm]
In addition, if one of the conditions $(i)$--$(iii)$ holds, inequality \eqref{4.29} can be replaced by
\begin{equation}
\tr_{\bbC^m} \big(((\exp_H(tF))(- i \nabla) f)(x)\big) \geq 0, 
\quad 0 \leq f \in C_0^{\infty}(\bbR^n, \bbC^{m \times m}), \; x \in \bbR^n, \; t > 0.    \lb{4.29a}
\end{equation}
\end{theorem}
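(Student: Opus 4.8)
The plan is to establish the cycle of implications $(iii) \Rightarrow (ii) \Rightarrow (i) \Rightarrow (iii)$, and then to upgrade \eqref{4.29} to \eqref{4.29a} under any of the equivalent hypotheses. First, $(iii) \Rightarrow (ii)$ is essentially immediate from Theorem \ref{t2.6}: if $F$ is conditionally positive semidefinite, then so is $tF$ for every $t>0$, and hence $\exp_H(tF)$ is positive semidefinite. Conversely, $(ii) \Rightarrow (iii)$ follows from Theorem \ref{t2.7} (taking $\varepsilon = \infty$). Thus the real content lies in connecting the operator-theoretic condition $(i)$ to $(ii)$.

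For $(ii) \Rightarrow (i)$: assume $G_t := \exp_H(tF)$ is a positive semidefinite function for each $t>0$. Since $\Re(F(x)_{j,k}) \le c$ we have $G_t \in C(\bbR^n, \bbC^{m\times m}) \cap L^\infty(\bbR^n, \bbC^{m\times m})$ (each entry is bounded by $e^{tc}$), so Theorem \ref{t4.2} (Berberian) applies and yields a nonnegative measure $\mu_t \in \cM(\bbR^n, \bbC^{m\times m})$ with $G_t = \mu_t^{\wedge}$. By Theorem \ref{t4.3}, $G_t(-i\nabla)$ extends to a bounded operator on $L^1(\bbR^n, \bbC^{m\times m})$, which is the first half of $(i)$; in fact $G_t(-i\nabla) = (2\pi)^{-n/2} T_{\mu_t}$ on $C_0^\infty$. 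For the trace inequality \eqref{4.29}, take $0 \le f \in C_0^\infty(\bbR^n,\bbC^{m\times m})$; then as in the proof of Corollary \ref{c3.10} one computes
\begin{equation}
((G_t)(-i\nabla)f)(0) = (2\pi)^{-n/2}(T_{\mu_t}f)(0) = (2\pi)^{-n/2}\int_{\bbR^n} f(-y)\,d\mu_t(y),
\end{equation}
and since $\mu_t$ is a nonnegative matrix-valued measure and $f(-y) \ge 0$ for all $y$, the integrand is (entrywise, in the trace-pairing sense) such that its trace is nonnegative; more carefully, writing $\mu_t = \mu_t^{1/2}(\mu_t^{1/2})^*$ in a Radon--Nikodym sense relative to $|\mu_t|$ and using $\tr_{\bbC^m}(\int f(-y)\,d\mu_t(y)) = \int \tr_{\bbC^m}(f(-y)\,d\mu_t(y)/d|\mu_t|)\,d|\mu_t|(y) \ge 0$, one obtains \eqref{4.29}.

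For the harder direction $(i) \Rightarrow (ii)$: fix $t>0$ and set $G = \exp_H(tF) \in C(\bbR^n,\bbC^{m\times m}) \cap L^\infty(\bbR^n,\bbC^{m\times m})$. Condition $(i)$ gives $G(-i\nabla) \in \cB(L^1(\bbR^n,\bbC^{m\times m}))$, so by Theorem \ref{t4.3} there is $\mu \in \cM(\bbR^n,\bbC^{m\times m})$ with $G = \mu^{\wedge}$, and $G(-i\nabla) = (2\pi)^{-n/2} T_\mu$. The task is to show $\mu$ is nonnegative; then Theorem \ref{t4.2} forces $G$ to be positive semidefinite. To prove nonnegativity of $\mu$, I would test against mollified bump functions exactly as in the proof of Theorem \ref{t4.1}: for $0 \le f \in C_0^\infty(\bbR^n,\bbC^{m\times m})$, the hypothesis \eqref{4.29} gives $\tr_{\bbC^m}\big((2\pi)^{-n/2}\int f(-y)\,d\mu(y)\big) \ge 0$; running this over an approximate identity $f = \Phi_{\varepsilon}(\,\cdot\,)\,A$ for each fixed $0 \le A \in \bbC^{m\times m}$ and letting $\varepsilon \downarrow 0$ yields $\tr_{\bbC^m}(A\, \mu(\{$neighborhoods$\})) \ge 0$ in the limit, hence $\tr_{\bbC^m}(A\,d\mu/d|\mu|) \ge 0$ $|\mu|$-a.e., and since $A \ge 0$ was arbitrary this forces $d\mu/d|\mu| \ge 0$ $|\mu|$-a.e., i.e.\ $\mu \ge 0$. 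The main obstacle is making this density argument rigorous: one must justify the interchange of $\tr_{\bbC^m}$, the convolution/evaluation at a point, and the $\varepsilon \downarrow 0$ limit, and handle the fact that $\mu$ need not be absolutely continuous — this is where the Radon--Nikodym derivative of $\mu$ with respect to $|\mu|$ and Lebesgue differentiation for matrix-valued measures enter. This parallels the technique already used in Theorem \ref{t4.1} and Corollary \ref{c3.10}, so those can be invoked or mimicked.

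Finally, for the upgrade from \eqref{4.29} to \eqref{4.29a}: once one of $(i)$–$(iii)$ holds, $(ii)$ holds, so $G_t = \exp_H(tF) = \mu_t^{\wedge}$ with $\mu_t \ge 0$ as above, and $G_t(-i\nabla) = (2\pi)^{-n/2}T_{\mu_t}$. Then for $0 \le f \in C_0^\infty(\bbR^n,\bbC^{m\times m})$ and any $x \in \bbR^n$,
\begin{equation}
((G_t)(-i\nabla)f)(x) = (2\pi)^{-n/2}(T_{\mu_t}f)(x) = (2\pi)^{-n/2}\int_{\bbR^n} f(x-y)\,d\mu_t(y),
\end{equation}
and since $f(x-y) \ge 0$ for all $y$ and $\mu_t \ge 0$, the same trace computation as before gives $\tr_{\bbC^m}\big(((G_t)(-i\nabla)f)(x)\big) \ge 0$, which is \eqref{4.29a}. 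This completes the proof modulo the routine measure-theoretic verifications.
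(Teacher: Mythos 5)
Your overall architecture (cycle through $(iii)\Rightarrow(ii)\Rightarrow(i)\Rightarrow(iii)$ using Theorems \ref{t2.6}, \ref{t2.7}, \ref{t4.2}, \ref{t4.3}, and the identity $(\exp_H(tF))(-i\nabla)f = (2\pi)^{-n/2}T_\mu f$) matches the paper, and the proof is essentially correct. The one place where you genuinely diverge is the extraction of $\mu \geq 0$ in the step $(i)\Rightarrow(ii)$: you propose testing against translated mollifiers $\Phi_\varepsilon(\cdot-x_0)A$ with $0\leq A$ fixed, then invoking Lebesgue differentiation and the Radon--Nikodym derivative $d\mu/d|\mu|$ to conclude $d\mu/d|\mu|\geq 0$ $|\mu|$-a.e.; you rightly flag that making this rigorous for a non-absolutely-continuous matrix-valued measure takes work. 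The paper avoids that machinery entirely: from \eqref{4.29} it first obtains $\tr_{\bbC^m}\big(\int f\,d\mu\big)\geq 0$ for $0\leq f\in C_0^\infty(\bbR^n,\bbC^{m\times m})$, then upgrades this to all $0\leq f\in C_\infty(\bbR^n,\bbC^{m\times m})$ via the approximation Lemma \ref{l3.12}, and finally invokes the isometric \emph{order} isomorphism $\cM(\bbR^n,\bbC^{m\times m})\simeq C_\infty(\bbR^n,\bbC^{m\times m})^*$ described before \eqref{3.7} to conclude $\mu\geq 0$ directly. That duality/density route is shorter and dodges Radon--Nikodym and Lebesgue differentiation for matrix measures; it is also the same tool the paper uses (more simply than your $\mu_t^{1/2}$-factorization sketch) to justify the trace inequality in the direction $(ii)\Rightarrow(i)$. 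Finally, for upgrading \eqref{4.29} to \eqref{4.29a} you recompute $(T_{\mu_t}f)(x)$ from scratch; the paper instead simply notes that $(\exp_H(tF))(-i\nabla)$ commutes with translations (Lemma \ref{l3.4}), which gives the same conclusion in one line. Both arguments are correct; the paper's is cleaner.
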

%%%%%%%
\begin{proof}
Fix $t > 0$. 
Suppose condition $(i)$ holds. Then $\exp_H(tF)$ is an $L^1(\bbR^n, \bbC^{m \times m})$ 
multiplier and hence Theorem \ref{t4.3} guarantees the existence of a measure 
$\mu \in \cM(\bbR^n, \bbC^{m \times m})$ such that $\exp_H(tF) = \mu^{\wedge}$. In addition, 
\begin{align}
& ((\exp_H(tF))(- i \nabla) f)(x) = \big(f^{\wedge} \exp_H(tF)\big)^{\vee}(x)    \lb{4.30} \\
& \quad = (2 \pi)^{-n} \int_{\bbR^n} \int_{\bbR^n} e^{i ((x - \eta) \cdot \xi)} f^{\wedge}(\xi) 
\, d\mu(\eta) \, d^n \xi, \quad f \in C_0^{\infty}(\bbR^n, \bbC^{m \times m}), \; x \in \bbR^n.  \no 
\end{align}
Since $f^{\wedge} \in \cS(\bbR^n, \bbC^{m \times m}) \subset L^1(\bbR^n, \bbC^{m \times m})$, 
one can interchange the order of integration in \eqref{4.30} and obtains 
\begin{align}
& ((\exp_H(tF))(- i \nabla) f)(x) = (2 \pi)^{-n} \int_{\bbR^n} \int_{\bbR^n} 
e^{i ((x - \eta) \cdot \xi)} f^{\wedge}(\xi) \, d^n \xi \, d\mu(\eta)   \no \\
& \quad = (2 \pi)^{-n/2} \int_{\bbR^n} \big(f^{\wedge}\big)^{\vee} (x - \eta) \, d \mu(\eta)   \no \\
& \quad = (2 \pi)^{-n/2} \int_{\bbR^n} f (x - \eta) \, d \mu(\eta)   \no \\
& \quad = (2 \pi)^{-n/2} (T_{\mu}f)(x), \quad f \in C_0^{\infty}(\bbR^n, \bbC^{m \times m}).   \lb{4.31} 
\end{align}
Thus, by condition $(i)$, 
\begin{align}
\begin{split} 
0 \leq \tr_{\bbC^m} \big(((\exp_H(tF))(- i \nabla) f)(0)\big) = (2 \pi)^{-n/2} \tr_{\bbC^m} 
\bigg(\int_{\bbR^n} f(- \eta) \, d \mu(\eta)\bigg),&   \lb{4.32} \\
0 \leq f \in C_0^{\infty}(\bbR^n, \bbC^{m \times m}),&
\end{split} 
\end{align}
and hence 
\begin{equation}
\tr_{\bbC^m} 
\bigg(\int_{\bbR^n} f(x) \, d \mu(x)\bigg) \geq 0, \quad  
0 \leq f \in C_0^{\infty}(\bbR^n, \bbC^{m \times m}).     \lb{4.33} 
\end{equation}
By Lemma \ref{l3.12}, \eqref{4.33} extends to 
\begin{equation}
\tr_{\bbC^m} 
\bigg(\int_{\bbR^n} f(x) \, d \mu(x)\bigg) \geq 0, \quad  
0 \leq f \in C_{\infty}(\bbR^n, \bbC^{m \times m}).     \lb{4.34} 
\end{equation}
By the duality result preceding \eqref{3.7}, this implies $\mu \geq 0$. Applying 
Theorems \ref{t4.2} and \ref{t4.3}, 
$\exp_H(tF) = \mu^{\wedge}$ is positive semidefinite and hence condition $(ii)$ holds. 

Conversely, suppose that condition $(ii)$ holds. Then Theorem \ref{t4.3} implies that  
$(\exp_H(tF))(- i \nabla)|_{C_0^{\infty}(\bbR^n, \bbC^{m \times m})}$ extends to 
$(\exp_H(tF))(- i \nabla) \in \cB\big(L^1(\bbR^n, \bbC^{m \times m})\big)$. As in the first part 
of this proof (cf.\ \eqref{4.31}), one infers 
\begin{equation}
(\exp_H(tF))(- i \nabla) f = (2 \pi)^{-n/2} T_{\mu}f, 
\quad f \in C_0^{\infty}(\bbR^n, \bbC^{m \times m}).  
\end{equation}
Thus, 
\begin{align}
& \tr_{\bbC^m} \big(((\exp(tF))(- i \nabla) f)(0)\big) = (2 \pi)^{-n/2} \tr_{\bbC^m} ((T_{\mu}f)(0))  \no \\
& \quad = (2 \pi)^{-n/2} \tr_{\bbC^m} \bigg(\int_{\bbR^n} f(-y) \, d\mu(y)\bigg) \geq 0, \quad 
0 \leq f \in C_0^{\infty}(\bbR^n, \bbC^{m \times m}),
\end{align}
by the duality result preceding \eqref{3.7}. Thus, condition $(i)$ holds. 

The equivalence of conditions $(ii)$ and $(iii)$  is a consequence of Theorems \ref{t2.6} and 
\ref{t2.7}. 

Finally, if one of conditions $(i)$--$(iii)$ holds, then \eqref{4.29a} follows from \eqref{4.29} 
since by Lemma \ref{l3.4}, $(\exp_H(tF))(- i \nabla)$ commutes with translations.
\end{proof}
%%%%%%%

%%%%%%%
\begin{remark} \lb{r4.7}
In the classical case where $m=1$, condition $(i)$ in Theorem \ref{t4.6} can be replaced by the 
following equivalent one: \\[1mm]
$(i')$ For all $t>0$, $(\exp(tF))(-i \nabla) \in \cB\big(L^2(\bbR^n)\big)$ is positivity preserving in 
$L^2(\bbR^n)$. \\[1mm]
Thus Theorem \ref{t4.6} resembles Theorem \ref{t1.4} for $m=1$. In this context we note that 
$(\exp(tF))(-i \nabla) = \exp(t F(- i \nabla))$, $t \geq 0$, for $m=1$.

\noindent 
{\it Proof of $(i')$ implies $(i)$.} If condition $(i')$ holds, then 
\begin{align} 
\begin{split}
\tr_{\bbC} \big(((\exp(tF))(- i \nabla) f)(x)\big) = ((\exp(tF))(- i \nabla) f)(x) \geq 0,& \\ 
\quad 0 \leq f \in C_0^{\infty}(\bbR^n), \; x \in \bbR^n, \; t > 0,&    \lb{4.37a}
\end{split} 
\end{align}
(see also the footnote accompanying Theorem \ref{t4.6}). In particular,
\begin{equation}
\tr_{\bbC} \big(((\exp(tF))(- i \nabla) f)(0)\big) = ((\exp(tF))(- i \nabla) f)(0) \geq 0, 
\quad t > 0, 
\end{equation}
under the assumptions in \eqref{4.37a}.~Since $(\exp(t F))(- i \nabla)$ is positivity preserving, 
Corollary \ref{c3.10} guarantees the existence of a scalar-valued, nonnegative, finite measure $\mu$ 
on $\bbR$ such that 
\begin{equation}
(\exp(t F)) (- i \nabla) = T_{\mu}, \quad t >0.
\end{equation}
Thus, the estimate \eqref{3.14} for $p=1$ yields that $(\exp(t F))(- i \nabla)|_{C_0^{\infty}(\bbR^n)}$ 
extends to a bounded operator $(\exp(t F))(- i \nabla) \in \cB\big(L^1(\bbR^n)\big)$, implying 
condition $(i)$. \\
{\it Proof of $(i)$ implies $(i')$.} If condition $(i)$ holds, then 
\begin{align}
\begin{split} 
((\exp(tF))(- i \nabla) f)(0) = \tr_{\bbC} \big(((\exp(tF))(- i \nabla) f)(0)\big) \geq 0,&  \\
0 \leq f \in C_0^{\infty}(\bbR^n), \; t > 0.& 
\end{split}
\end{align}
By Lemma \ref{l3.4}, this yields 
\begin{align}
\begin{split} 
((\exp(tF))(- i \nabla) f)(x) = \tr_{\bbC} \big(((\exp(tF))(- i \nabla) f)(x)\big) \geq 0,&  \\
0 \leq f \in C_0^{\infty}(\bbR^n), \; x \in \bbR^n, \; t > 0.& 
\end{split}
\end{align}
Since $\{f \in C_0(\bbR^n) \, | \, f \geq 0\}$ is dense in $\big\{f \in L^2(\bbR^n) \, \big| \, f \geq 0\big\}$, 
one concludes that $(\exp(t F))(- i \nabla) \in \cB\big(L^2(\bbR^n)\big)$ is positivity preserving, 
that is, condition $(i')$ holds. ${}$ \hfill $\diamond$
\end{remark}
%%%%%%% 

Next, we will show that the analog of condition $(i')$ in Remark \ref{r4.7} for $m=1$, 
with $\exp(\cdot)$ accordingly replaced by $\exp_H(\cdot)$, {\it cannot} hold for 
$m \geq 2$. We start with two preliminaries:

%%%%%%%
\begin{lemma} \lb{l4.10} 
Let $F \in C(\bbR^n, \bbC^{m \times m})$ be conditionally positive semidefinite and suppose there exists $c \in \bbR$ such that
\begin{equation}
\Re(F(x)_{j,k}) \leq c, \quad x \in \bbR^n, \; 1 \leq j,k \leq m.
\end{equation}
By Theorem \ref{t4.6}, for all $t>0$, $\exp_H(t F) \colon \bbR^n \to \bbC^{m \times m}$ 
is positive semidefinite, and hence by Theorem \ref{t4.2}, there exists a nonnegative finite measure $\mu_t \in \cM(\bbR^n, \bbC^{m \times m})$, $t>0$, such that 
\begin{equation}
\exp_H(t F)(x) = \mu_t^{\wedge}(x), \quad x \in \bbR^n, \; t>0.    \lb{4.45} 
\end{equation} 
Then, 
\begin{equation}
\mu_{t,j,k} (\bbR^n) \neq 0, \quad 1 \leq j,k \leq m, \; t>0.   \lb{4.46} 
\end{equation}
Thus, for all $t>0$, there exists $R_t > 0$, such that 
\begin{equation}
\mu_{t,j,k}\big(\ol{B(0, R_t)}\big) \neq 0, \quad 1 \leq j,k \leq m.    \lb{4.47} 
\end{equation}
\end{lemma}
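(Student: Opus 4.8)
The plan is to extract $\mu_t(\bbR^n)$ directly from the Fourier representation \eqref{4.45} by evaluating at $x=0$, and then invoke the elementary fact that the complex exponential function never vanishes.

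Concretely, the first step is to rewrite \eqref{4.45} in the form $\exp_H(tF)(x) = (2\pi)^{-n/2}\int_{\bbR^n} e^{-i(x\cdot\xi)}\,d\mu_t(\xi)$ and set $x=0$; this is legitimate because each $\mu_{t,j,k}$ is a finite complex measure, so the constant function $\xi\mapsto 1$ is $\mu_t$-integrable. This yields
\begin{equation}
\mu_t(\bbR^n) = (2\pi)^{n/2}\,\exp_H(tF)(0),
\end{equation}
and reading this entry-by-entry together with the definition of the Hadamard exponential gives $\mu_{t,j,k}(\bbR^n) = (2\pi)^{n/2}\exp\!\big(t F(0)_{j,k}\big)$ for $1\le j,k\le m$, $t>0$. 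Since $e^z\ne 0$ for every $z\in\bbC$, this establishes \eqref{4.46}. (The hypotheses of the lemma, via Theorems \ref{t4.6} and \ref{t4.2}, are exactly what is needed to know that such a nonnegative finite $\mu_t$ with property \eqref{4.45} exists, so $\mu_t(\bbR^n)$ is a well-defined element of $\bbC^{m\times m}$.)

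The second step is to pass from $\bbR^n$ to a large closed ball. For fixed $t>0$ and each pair $(j,k)$, finiteness of the total variation $|\mu_{t,j,k}|$ together with $\ol{B(0,R)}\uparrow\bbR^n$ as $R\to\infty$ gives
\begin{equation}
\big|\mu_{t,j,k}(\bbR^n) - \mu_{t,j,k}\big(\ol{B(0,R)}\big)\big| \le |\mu_{t,j,k}|\big(\bbR^n\setminus\ol{B(0,R)}\big) \underset{R\to\infty}{\longrightarrow} 0,
\end{equation}
so by \eqref{4.46} there is $R_{t,j,k}>0$ with $\mu_{t,j,k}\big(\ol{B(0,R)}\big)\ne 0$ for all $R\ge R_{t,j,k}$; taking $R_t := \max_{1\le j,k\le m} R_{t,j,k}$ then gives \eqref{4.47}.

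I do not anticipate a genuine obstacle: the argument is short, the only care needed being the (routine) justification of evaluating the integral at $x=0$ and the harmless book-keeping of a maximum over the finitely many pairs $(j,k)$ in the last step. The substantive — though essentially trivial — point driving the whole statement is simply that every entry of a Hadamard exponential is a nonzero complex number.
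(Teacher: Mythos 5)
Your proof is correct and follows essentially the same route as the paper's: evaluate the Fourier representation at $x=0$ to obtain $\mu_{t,j,k}(\bbR^n)=(2\pi)^{n/2}\exp\bigl(tF(0)_{j,k}\bigr)\neq 0$ (since the exponential never vanishes), then pass to a large closed ball using convergence $\mu_{t,j,k}\bigl(\ol{B(0,R)}\bigr)\to\mu_{t,j,k}(\bbR^n)$. The only cosmetic difference is that the paper invokes monotone convergence from nonnegativity of $\mu_t$ while you estimate via the finite total variation $|\mu_{t,j,k}|$; both are equally valid here.
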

%%%%%%%
\begin{proof}
Since $\exp_H(t F)(x) = \mu_t^{\wedge}(x)$, $x \in \bbR^n$, $t>0$, one concludes that 
\begin{align}
\begin{split} 
0 \neq \exp(t F(0)_{j,k}) &= \exp_H(t F)_{j,k}(0) 
= (2 \pi)^{-n/2} \bigg(\int_{\bbR^n} d\mu_t(x)\bigg)_{j,k}    \\
& = (2 \pi)^{-n/2} \mu_{t,j,k}(\bbR^n), \quad 1 \leq j,k \leq m, 
\end{split} 
\end{align}
and hence \eqref{4.46} holds. Since $\mu_t$ is nonnegative, 
$\mu_t\big(\ol{B(0,R)}\big) \uparrow \mu_t(\bbR^n)$ as $R \to \infty$, thus,
\begin{equation}
\mu_{t,j,k}\big(\ol{B(0,R)}\big) \underset{R \to \infty}{\longrightarrow} \mu_{t,j,k}(\bbR^n), 
\quad 1 \leq j,k \leq m,
\end{equation}
implying \eqref{4.47}. 
\end{proof}
%%%%%%%

%%%%%%%
\begin{lemma} \lb{l4.11}
Let $D \in \bbC^{m \times m}$, with $m \in \bbN$, $m \geq 2$, be a strictly positive diagonal matrix with 
\begin{equation}
D_{j,k} = d_j \delta_{j,k}, \quad d_j > 0, \quad 1 \leq j,k \leq m, \quad d_1 \neq d_2,  
\end{equation}
and let $S = S^* \in \bbC^{m \times m}$ be self-adjoint with $S_{1,2} \neq 0$. Then 
$DS$ is not self-adjoint in $\bbC^{m \times m}$. 
\end{lemma}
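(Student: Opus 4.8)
The plan is to reduce self-adjointness of $DS$ to a commutation condition and then exhibit a single matrix entry where it fails. First I would note that since $D$ is diagonal with real (in fact positive) entries, $D^* = D$, and since $S = S^*$, one computes $(DS)^* = S^* D^* = SD$. Hence $DS$ is self-adjoint in $\bbC^{m \times m}$ if and only if $DS = SD$, i.e., if and only if $D$ and $S$ commute.

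Next I would examine the $(1,2)$ entry of the commutator. Because $D$ is diagonal with $D_{j,k} = d_j \delta_{j,k}$, one has $(DS)_{1,2} = d_1 S_{1,2}$ and $(SD)_{1,2} = d_2 S_{1,2}$, so
\begin{equation}
(DS - SD)_{1,2} = (d_1 - d_2) S_{1,2}.
\end{equation}
By hypothesis $d_1 \neq d_2$ and $S_{1,2} \neq 0$, so $(d_1 - d_2) S_{1,2} \neq 0$; therefore $DS \neq SD$, and consequently $DS \neq (DS)^*$, proving that $DS$ is not self-adjoint.

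There is essentially no obstacle here — the statement is a one-line entrywise computation once the identity $(DS)^* = SD$ is recorded — so the only thing to be careful about is making the hypotheses $d_1 \neq d_2$ and $S_{1,2} \neq 0$ do their job in exactly the $(1,2)$ slot (equivalently the $(2,1)$ slot), rather than arguing via eigenvectors or a more global commutativity criterion, which would be unnecessarily heavy.
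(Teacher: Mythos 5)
Your proof is correct and is essentially the paper's argument: the paper also checks the $(1,2)$ entry, noting $(DS)_{1,2}=d_1 S_{1,2}$ while $\overline{(DS)_{2,1}}=d_2\overline{S_{2,1}}=d_2 S_{1,2}$, which is precisely your comparison of $(DS)_{1,2}$ with $(SD)_{1,2}$ after recording $(DS)^*=SD$. The extra step of phrasing self-adjointness as commutativity with $D$ is a cosmetic repackaging of the same one-line computation.
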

%%%%%%%
\begin{proof}
This is clear from $(DS)_{1,2} = d_1 S_{1,2}$ and 
$\ol{(DS)_{2,1}} = d_2 \ol{S_{2,1}} = d_2 S_{1,2}$.
\end{proof}
%%%%%%%

%%%%%%%
\begin{theorem} \lb{t4.12}
Let $F \in C(\bbR^n, \bbC^{m \times m})$, $m\geq 2$, be conditionally positive semidefinite and suppose there exists $c \in \bbR$ such that
\begin{equation}
\Re(F(x)_{j,k}) \leq c, \quad x \in \bbR^n, \; 1 \leq j,k \leq m.
\end{equation}
Then for all $t>0$,
\begin{equation}
(\exp_H(t F))(- i \nabla) \in \cB\big(L^2(\bbR^n, \bbC^{m \times m})\big) \, 
\text{ is {\bf not}~positivity preserving.}
\end{equation}
\end{theorem}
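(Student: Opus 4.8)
The plan is to argue by contradiction: suppose that for some $t_0>0$ the operator $(\exp_H(t_0 F))(-i\nabla)$ is positivity preserving, and derive $\mu_{t_0,1,2}(\bbR^n)=0$, in contradiction with Lemma~\ref{l4.10}. Write $G:=\exp_H(t_0 F)$; then $G\in C(\bbR^n,\bbC^{m\times m})$ (continuity is inherited from $F$) and $G\in L^{\infty}(\bbR^n,\bbC^{m\times m})$ since $|\exp(t_0 F(x)_{j,k})|=\exp(t_0\Re(F(x)_{j,k}))\le e^{t_0 c}$. The hypotheses of Lemma~\ref{l4.10} hold, so there is a nonnegative finite measure $\mu:=\mu_{t_0}\in\cM(\bbR^n,\bbC^{m\times m})$ with $G=\mu^{\wedge}$ and, crucially, $\mu_{j,k}(\bbR^n)\neq 0$ for all $1\le j,k\le m$ (eq.~\eqref{4.46}). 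Moreover, the computation in the proof of Theorem~\ref{t4.6} (cf.~\eqref{4.30}, \eqref{4.31}) gives $G(-i\nabla)f=(2\pi)^{-n/2}T_{\mu}f$ for $f\in C_0^{\infty}(\bbR^n,\bbC^{m\times m})$; since $C_0^{\infty}(\bbR^n,\bbC^{m\times m})$ is dense in $L^2(\bbR^n,\bbC^{m\times m})$ and both $G(-i\nabla)$ and $T_{\mu}$ lie in $\cB\big(L^2(\bbR^n,\bbC^{m\times m})\big)$ (the latter by \eqref{3.14} with $p=2$), this identity extends to all of $L^2(\bbR^n,\bbC^{m\times m})$. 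Consequently $T_{\mu}$ is positivity preserving. (Alternatively, Corollary~\ref{c3.10} produces a nonnegative $\mu$ with $G(-i\nabla)=T_{\mu}$, and Theorem~\ref{t4.1} together with injectivity of the Fourier transform on $\cM(\bbR^n,\bbC^{m\times m})$ identifies it with $\mu_{t_0}$.)

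The key step is to test $T_{\mu}$ against a nonnegative, non-scalar matrix-valued function. Using $m\ge 2$, fix the positive semidefinite diagonal matrix $D=\diag(2,1,\dots,1)\in\bbC^{m\times m}$, whose first two diagonal entries differ, and let $B\subset\bbR^n$ be an arbitrary bounded Borel set with $0<|B|<\infty$. Put $f:=\chi_{-B}D\in L^2(\bbR^n,\bbC^{m\times m})$, so that $f(x)\ge 0$ for every $x$. By the convolution formula \eqref{3.8}, for every $x\in\bbR^n$,
\[
(T_{\mu}f)(x)=\int_{\bbR^n}\chi_{-B}(x-y)\,D\,d\mu(y)=D\,\mu(x+B),
\]
since $\chi_{-B}(x-y)=1$ exactly when $y\in x+B$. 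Positivity preservation of $T_{\mu}$ forces $D\mu(x+B)\ge 0$, in particular self-adjoint, for a.e.~$x$. Because $\mu(x+B)$ is positive semidefinite (hence Hermitian), the $(1,2)$-entry of $D\mu(x+B)$ equals $2\,\mu(x+B)_{1,2}$, whereas the conjugate of its $(2,1)$-entry equals $\overline{\mu(x+B)_{2,1}}=\mu(x+B)_{1,2}$; self-adjointness of $D\mu(x+B)$ therefore yields $\mu(x+B)_{1,2}=0$, i.e.~$\mu_{1,2}(x+B)=0$, for a.e.~$x\in\bbR^n$. (This is exactly Lemma~\ref{l4.11} applied to this $D$ and $S=\mu(x+B)$.)

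Finally, integrate in $x$. The map $x\mapsto\mu_{1,2}(x+B)$ is Lebesgue measurable (split $\mu_{1,2}$ into its four nonnegative pieces and use Tonelli), and since $|\mu_{1,2}|(\bbR^n)<\infty$, Fubini's theorem gives
\[
\int_{\bbR^n}\mu_{1,2}(x+B)\,d^nx=\int_{\bbR^n}\Big(\int_{\bbR^n}\chi_{B}(y-x)\,d^nx\Big)d\mu_{1,2}(y)=|B|\,\mu_{1,2}(\bbR^n),
\]
while the left-hand side vanishes by the previous paragraph. As $|B|>0$, this forces $\mu_{1,2}(\bbR^n)=0$, contradicting $\mu_{1,2}(\bbR^n)=\mu_{t_0,1,2}(\bbR^n)\neq 0$ from Lemma~\ref{l4.10}. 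Hence no such $t_0$ exists, which is the assertion.

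I expect the only genuinely non-routine point to be the choice of test function: the failure of positivity is invisible to inputs of the form $\varphi(x)I_{\bbC^m}$ with $0\le\varphi$ scalar (for which $T_{\mu}$ merely reproduces the already known nonnegativity of $\mu$), and surfaces only after left-multiplication by a non-scalar positive semidefinite diagonal matrix $D$, at which stage Lemma~\ref{l4.11}'s obstruction to self-adjointness of $DS$ takes over. Everything after that—the convolution identity, the passage to an a.e.~statement in $x$, and the integration against Lebesgue measure that annihilates $\mu_{1,2}(\bbR^n)$—is routine, modulo the measurability/Fubini bookkeeping for $x\mapsto\mu_{1,2}(x+B)$.
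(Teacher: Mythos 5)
Your argument is correct, and it reaches the conclusion by a genuinely different route than the paper. The paper's proof also rests on Lemmas~\ref{l4.10} and \ref{l4.11}, but it constructs a \emph{smooth} test function $g_\varepsilon(x)=h_\varepsilon(|x|)D$ approximating $\chi_{\overline{B_n(0,R_t)}}D$ (with $R_t$ from \eqref{4.47}), evaluates $(\exp_H(tF))(-i\nabla)g_\varepsilon$ at the single point $x=0$, shows this value converges to $(2\pi)^{-n/2}D\mu_t\big(\overline{B_n(0,R_t)}\big)$ as $\varepsilon\downarrow0$, observes that the latter is not self-adjoint by Lemma~\ref{l4.11}, and then uses openness of $\cN_m$ together with continuity of the output (Lemma~\ref{l3.9}\,$(i)$, which is why the input must be $C_0^\infty$) to get failure of nonnegativity on a ball of positive measure around the origin. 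You instead take the raw indicator $f=\chi_{-B}D\in L^2$, obtain $D\mu(x+B)\ge 0$ a.e., extract $\mu_{1,2}(x+B)=0$ a.e.\ from the same algebraic obstruction (Lemma~\ref{l4.11}), and then \emph{globalize} by a Fubini/Tonelli computation to conclude $|B|\,\mu_{1,2}(\bbR^n)=0$, contradicting \eqref{4.46}. This dispenses with the mollification, the pointwise-at-$0$ evaluation, the continuity argument, the openness of $\cN_m$, and the refinement \eqref{4.47} of Lemma~\ref{l4.10} — you only need \eqref{4.46} — at the modest cost of the measurability/Fubini bookkeeping for $x\mapsto\mu_{1,2}(x+B)$, which you handle correctly by Jordan decomposition. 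Both proofs isolate the same mechanism: positivity preservation is probed with a non-scalar positive diagonal $D$, and Lemma~\ref{l4.11}'s obstruction to self-adjointness of $D\,\mu(\,\cdot\,)$ is what kills it; what differs is whether one localizes (paper) or integrates out (you) to reach the contradiction with Lemma~\ref{l4.10}.
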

%%%%%%%
\begin{proof}
Fix $t>0$ and let $\mu_t$ and $R_t$ as in Lemma \ref{l4.10}, and $D \in \bbC^{m \times m}$ be the strictly positive diagonal matrix in Lemma \ref{l4.11}. For sufficiently small $\varepsilon > 0$ we introduce 
\begin{equation}
h_{\varepsilon} \in C^{\infty}([0,\infty)), \quad h_{\varepsilon}(r) 
= \begin{cases} 1, & r \in [0, R_t], \\
0, & r \in [R_t + \varepsilon, \infty), \end{cases} 
\end{equation}
and 
\begin{equation}
0 \leq g_{\varepsilon} \in C_0^{\infty}(\bbR^n, \bbC^{m \times m}), \quad 
g_{\varepsilon} (x) = h_{\varepsilon}(|x|) D, \; x \in \bbR^n. 
\end{equation}
Then, 
\begin{equation}
(((\exp_H(t F))(- i \nabla))f)(x) = (2 \pi)^{-n/2} (T_{\mu_t} f)(x), \quad 
f \in C_0^{\infty}(\bbR^n, \bbC^{m \times m}), \; x \in \bbR^n,
\end{equation}
by the part of the proof of Theorem \ref{t4.3} that condition $(ii)$ implies $(i)$. Thus, 
\begin{align}
& (((\exp_H(t F))(- i \nabla))g_{\varepsilon})(0) = (2 \pi)^{-n/2} (T_{\mu_t} g_{\varepsilon})(0) \no \\
& \quad = (2 \pi)^{-n/2} \int_{\bbR^n} g_{\varepsilon}(-y) \, d\mu_t(y)    \no \\
& \quad = (2 \pi)^{-n/2} \int_{\bbR^n} g_{\varepsilon}(y) \, d\mu_t(y)    \no \\
& \quad = (2 \pi)^{-n/2} \bigg[\int_{\ol{B_n(0,R_t)}} g_{\varepsilon}(y) \, d\mu_t(y)  
+ \int_{B_n(0,R_t + \varepsilon) \backslash \ol{B_n(0,R_t)}} g_{\varepsilon}(y) \, d\mu_t(y)\bigg] 
\no \\
& \quad = (2 \pi)^{-n/2} D \mu_t\big(\ol{B_n(0,R_t)}\big) + (2 \pi)^{-n/2} 
\int_{B_n(0,R_t + \varepsilon) \backslash \ol{B_n(0,R_t)}} g_{\varepsilon}(y) \, d\mu_t(y). 
\end{align} 
By estimate \eqref{3.6}, 
\begin{align}
& \bigg\|\int_{B_n(0,R_t + \varepsilon) \backslash \ol{B_n(0,R_t)}} g_{\varepsilon}(y) \, d\mu_t(y)
\bigg\|    \no \\
& \quad \leq \int_{B_n(0,R_t + \varepsilon) \backslash \ol{B_n(0,R_t)}} 
\|g_{\varepsilon}(y)\|_{\cB(\bbC^m)} \, d|\mu_t|(y)    \no \\
& \quad \leq \int_{B_n(0,R_t + \varepsilon) \backslash \ol{B_n(0,R_t)}} 
\|D\|_{\cB(\bbC^m)} \, d|\mu_t|(y)    \no \\
& \quad \underset{\varepsilon \downarrow 0}{\longrightarrow} 0.
\end{align} 
Using the fact that 
\begin{equation}
\cN_m = \bbC^{m \times m} \backslash \{ A^* A \in \bbC^{m \times m} \, | \, A \in \bbC^{m \times m}\} 
\, \text{ is open in $\bbC^{m \times m}$}   \lb{4.58} 
\end{equation}
(since nonnegative $m \times m$ matrices form a closed cone in $\bbC^{m \times m}$), 
employing  
\begin{equation}
D \mu_t\big(\ol{B_n(0,R_t)}\big) \in \cN_m, 
\end{equation}
applying Lemma \ref{l4.11} with $S =  \mu_t\big(\ol{B_n(0,R_t)}\big)$, and utilizing 
\begin{equation}
((\exp_H(t F))(- i \nabla))(g_{\varepsilon}) \in L^2(\bbR^n, \bbC^{m \times m}) \cap 
C_{\infty}(\bbR^n, \bbC^{m \times m})
\end{equation}
employing Lemma \ref{l3.9}\,$(i)$, one concludes that for all sufficiently small $\varepsilon > 0$, 
$(\exp_H(t F) (- i \nabla) g_{\varepsilon})(0)$ is not nonnegative. Thus, for all sufficiently small 
$\varepsilon > 0$, there exists $\eta(\varepsilon) > 0$, such that 
$(\exp_H(t F) (- i \nabla) g_{\varepsilon})(x)$, $x \in B_n(0, \eta(\varepsilon))$, is not nonnegative. Since 
$g_{\varepsilon} \geq 0$, this completes the proof. 
\end{proof}
%%%%%%%

Thus, unlike the classical case $m=1$ discussed in Remark \ref{r4.7}, the straightforward extension of Theorem \ref{t1.4} replacing its condition $(i)$ by  \\[1mm] 
$(i')$ For all $t > 0$, $(\exp_H)(t F))(-i \nabla)$ is positivity preserving \\[1mm] 
cannot hold in the matrix-valued context, $m \geq 2$. 

Finally, we derive the bound \eqref{1.7} in the matrix-valued context following 
\cite[Lemma~3.6.22]{Ja01}. First, we recall the following fact:

%%%%%%
\begin{proposition} [{\cite[p.~112]{Ho90}}] \lb{p2.8}
Let $0 \leq M_{\ell} \in \bbC^{m_{\ell} \times m_{\ell}}$, $m_{\ell} \in \bbN$, $\ell =1,2$ $($i.e., 
$M_{\ell}$, $\ell = 1,2$ are positive semidefinite\,$)$, and $X \in \bbC^{m_1 \times m_2}$.~Introduce 
the block matrix
\begin{equation}
A = \begin{pmatrix} M_1 & X \\ X^* & M_2 \end{pmatrix} \in \bbC^{(m_1 + m_2) \times (m_1 + m_2)}.
\end{equation}
Then $A$ is positive semidefinite $($i.e., $A \geq 0$$)$ if an only if there exists a contraction 
$C \in \bbC^{m_1 \times m_2}$ such that $X = M_1^{1/2} C M_2^{1/2}$.  
\end{proposition}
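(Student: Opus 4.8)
The plan is to prove the two implications separately. For sufficiency, suppose $X = M_1^{1/2} C M_2^{1/2}$ for some contraction $C \in \bbC^{m_1 \times m_2}$ (i.e., $\|C\|_{\cB(\bbC^{m_2}, \bbC^{m_1})} \le 1$). Then $X^* = M_2^{1/2} C^* M_1^{1/2}$, so one has the factorization
\[
A = \begin{pmatrix} M_1^{1/2} & 0 \\ 0 & M_2^{1/2} \end{pmatrix}
\begin{pmatrix} I_{m_1} & C \\ C^* & I_{m_2} \end{pmatrix}
\begin{pmatrix} M_1^{1/2} & 0 \\ 0 & M_2^{1/2} \end{pmatrix}.
\]
Since the congruence $B \mapsto T^* B T$ preserves positive semidefiniteness, it suffices to check that the middle factor is positive semidefinite: for $u \in \bbC^{m_1}$ and $v \in \bbC^{m_2}$ its quadratic form equals $\|u\|_{\bbC^{m_1}}^2 + \|v\|_{\bbC^{m_2}}^2 + 2 \Re\, (u, C v)_{\bbC^{m_1}}$, which is bounded below by $\big(\|u\|_{\bbC^{m_1}} - \|v\|_{\bbC^{m_2}}\big)^2 \ge 0$ after estimating the cross term via Cauchy--Schwarz and $\|C\| \le 1$; hence $A \ge 0$.

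For necessity, assume $A \ge 0$. The decisive step I would carry out is to extract a sharp Cauchy--Schwarz-type bound relating $X$ to $M_1$ and $M_2$: evaluating $(w, A w)_{\bbC^{m_1 + m_2}} \ge 0$ at $w = (e^{i \theta} s\, a,\, s^{-1} b)^{\top}$, choosing the phase $\theta \in \bbR$ so that the cross term becomes $-2\big|(a, X b)_{\bbC^{m_1}}\big|$, and then minimizing over $s > 0$, yields
\[
\big|(a, X b)_{\bbC^{m_1}}\big| \le \|M_1^{1/2} a\|_{\bbC^{m_1}} \, \|M_2^{1/2} b\|_{\bbC^{m_2}},
\quad a \in \bbC^{m_1},\ b \in \bbC^{m_2}.
\]
In particular, taking $a \in \ker M_1 = \ker M_1^{1/2}$ forces $(a, X b)_{\bbC^{m_1}} = 0$ for all $b$, i.e.\ $X^* a = 0$; thus $\ker M_1 \subseteq \ker X^*$, equivalently $\ran X \subseteq \ran M_1$, and symmetrically $\ran X^* \subseteq \ran M_2$.

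Next I would define $C := (M_1^{1/2})^+ X (M_2^{1/2})^+$, where $(\cdot)^+$ denotes the Moore--Penrose pseudoinverse (self-adjoint here, since $M_{\ell}^{1/2}$ is self-adjoint). Because $M_1^{1/2} (M_1^{1/2})^+$ and $(M_2^{1/2})^+ M_2^{1/2}$ are the orthogonal projections onto $\ran M_1$ and $\ran M_2$, respectively, the two range inclusions above give $M_1^{1/2} C M_2^{1/2} = X$. For the norm bound one writes $(p, C q)_{\bbC^{m_1}} = \big((M_1^{1/2})^+ p,\, X (M_2^{1/2})^+ q\big)_{\bbC^{m_1}}$ and applies the displayed inequality with $a = (M_1^{1/2})^+ p$ and $b = (M_2^{1/2})^+ q$, obtaining $\big|(p, C q)_{\bbC^{m_1}}\big| \le \|M_1^{1/2} (M_1^{1/2})^+ p\|_{\bbC^{m_1}} \, \|M_2^{1/2} (M_2^{1/2})^+ q\|_{\bbC^{m_2}} \le \|p\|_{\bbC^{m_1}} \|q\|_{\bbC^{m_2}}$, so $\|C\| \le 1$, completing the argument.

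The main obstacle is the possible singularity of $M_1$ and $M_2$: when both are invertible one simply takes $C = M_1^{-1/2} X M_2^{-1/2}$ and nothing further is needed, but in general one must first establish the range inclusions $\ran X \subseteq \ran M_1$ and $\ran X^* \subseteq \ran M_2$ (genuine consequences of $A \ge 0$) and then replace the inverse by the Moore--Penrose pseudoinverse, checking that the extra orthogonal projections it introduces act as the identity on the relevant subspaces. The single Cauchy--Schwarz-type inequality above is what supplies both the range inclusions and the contractivity of $C$, so that is where I would concentrate the effort.
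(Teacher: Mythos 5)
The paper does not give its own proof of Proposition~\ref{p2.8}; it is stated as a citation to Horn's survey \cite[p.~112]{Ho90} and used as a known fact. So there is nothing internal to compare against, and your proof has to stand on its own.

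It does. The sufficiency direction is the standard congruence argument: the middle factor $\bigl(\begin{smallmatrix} I & C \\ C^* & I\end{smallmatrix}\bigr)$ has quadratic form $\|u\|^2 + \|v\|^2 + 2\Re (u,Cv) \ge (\|u\|-\|v\|)^2 \ge 0$ when $\|C\|\le 1$, and $T^*BT \ge 0$ whenever $B\ge 0$. For necessity, the test vector $w=(e^{i\theta}s\,a,\,s^{-1}b)^{\top}$, phase rotation, and minimization over $s>0$ correctly yields the generalized Cauchy--Schwarz bound $|(a,Xb)| \le \|M_1^{1/2}a\|\,\|M_2^{1/2}b\|$, including in the degenerate cases where $M_1^{1/2}a=0$ or $M_2^{1/2}b=0$ (there the infimum over $s$ is $0$, forcing $(a,Xb)=0$). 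That bound in turn delivers the range inclusions $\ran X \subseteq \ran M_1$, $\ran X^* \subseteq \ran M_2$, which are exactly what is needed for the Moore--Penrose construction $C = (M_1^{1/2})^{+} X (M_2^{1/2})^{+}$ to satisfy $M_1^{1/2}CM_2^{1/2}=X$; and the same bound, applied after the substitution $a=(M_1^{1/2})^{+}p$, $b=(M_2^{1/2})^{+}q$, gives $\|C\|\le 1$ since $M_\ell^{1/2}(M_\ell^{1/2})^{+}$ is an orthogonal projection. This is the cleanest way to treat singular $M_\ell$ directly; the common alternative in the literature regularizes to $M_\ell+\varepsilon I$ and passes to the limit, using compactness of the unit ball of $\bbC^{m_1\times m_2}$ to extract a convergent subsequence of contractions $C_\varepsilon$. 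Either route is fine; yours avoids the limiting argument at the small cost of invoking pseudoinverse facts.
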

%%%%%%

Here $C$ is viewed as a linear map $C \colon \bbC^{m_2} \to \bbC^{m_1}$, and, according to our convention, we employ the standard Euclidean scalar product and norm on $\bbC^{m_{\ell}}$, 
$\ell = 1,2$.   

Next, we state a preparatory result:

%%%%%%%
\begin{lemma} \lb{l4.13}
Suppose that $F \in C(\bbR^n, \bbC^{m \times m})$ is conditionally positivie semidefinite with 
$F(0) \leq 0$. Then,
\begin{align}
& 0 \leq F(0) - 2 \Re(F(x)) \leq - 2 \Re(F(x)), \quad x \in \bbR^n,   \lb{4.38} \\
&\|F(0) - 2 \Re(F(x))\|_{\cB(\bbC^m)} \leq 2 \|\Re(F(x))\|_{\cB(\bbC^m)} \leq 
2 \|F(x)\|_{\cB(\bbC^m)}, \quad x \in \bbR^n,    \lb{4.39} \\
& \|F(x-y) - F(x) - F(y)^*\|_{\cB(\bbC^m)} \leq 2 \|F(x)\|_{\cB(\bbC^m)}^{1/2} 
\|F(y)\|_{\cB(\bbC^m)}^{1/2}, \quad x, y \in \bbR^n,    \lb{4.40} \\
& \|F(x+y)\|_{\cB(\bbC^m)}^{1/2} \leq \|F(x)\|_{\cB(\bbC^m)}^{1/2} + 
\|F(y)\|_{\cB(\bbC^m)}^{1/2}, \quad x, y \in \bbR^n.    \lb{4.41} 
\end{align}
\end{lemma}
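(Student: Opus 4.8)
The plan is to derive all four assertions from Theorem~\ref{t4.5} (applied with $N = 1$ and $N = 2$), Proposition~\ref{p2.8}, and the operator-norm bound \eqref{3.39}, using also the identity $F(-z) = F(z)^*$, which holds because $F$ is conditionally positive semidefinite (Lemma~\ref{l2.5}\,$(iii)$).

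To prove \eqref{4.38}, I would apply Theorem~\ref{t4.5} with $N = 1$ and $x_1 = x$; this shows that $F(0) - F(x) - F(x)^* = F(0) - 2\Re(F(x))$ is positive semidefinite, which is the first inequality in \eqref{4.38}. Since $F(0) \le 0$ by hypothesis, $F(0) - 2\Re(F(x)) \le -2\Re(F(x))$, giving the second inequality in \eqref{4.38}; in particular $-2\Re(F(x)) \ge 0$. For \eqref{4.39}, I would invoke the elementary fact that for positive semidefinite matrices $0 \le A \le B$ one has $\|A\|_{\cB(\bbC^m)} \le \|B\|_{\cB(\bbC^m)}$ (each norm being the largest eigenvalue). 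Applied to \eqref{4.38}, this yields $\|F(0) - 2\Re(F(x))\|_{\cB(\bbC^m)} \le \|{-2\Re(F(x))}\|_{\cB(\bbC^m)} = 2\|\Re(F(x))\|_{\cB(\bbC^m)}$, and the remaining estimate $\|\Re(F(x))\|_{\cB(\bbC^m)} \le \|F(x)\|_{\cB(\bbC^m)}$ follows from \eqref{3.39} (equivalently, directly from the triangle inequality applied to $\Re(F(x)) = 2^{-1}(F(x) + F(x)^*)$).

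The main step is \eqref{4.40}. I would apply Theorem~\ref{t4.5} with $N = 2$, $x_1 = x$, $x_2 = y$, which shows that the block matrix
\[
\begin{pmatrix}
F(0) - 2\Re(F(x)) & F(x - y) - F(x) - F(y)^* \\[1mm]
F(y - x) - F(y) - F(x)^* & F(0) - 2\Re(F(y))
\end{pmatrix} \in \bbC^{2m \times 2m}
\]
is positive semidefinite. Using $F(y - x) = F\big({-(x-y)}\big) = F(x-y)^*$, one checks that its lower-left block is the adjoint of its upper-right block, so this matrix has the form treated in Proposition~\ref{p2.8}, with $M_1 = F(0) - 2\Re(F(x)) \ge 0$, $M_2 = F(0) - 2\Re(F(y)) \ge 0$ (both by the $N = 1$ case above), and $X = F(x - y) - F(x) - F(y)^*$. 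Proposition~\ref{p2.8} then provides a contraction $C$ with $X = M_1^{1/2} C M_2^{1/2}$, so that $\|X\|_{\cB(\bbC^m)} \le \|M_1\|_{\cB(\bbC^m)}^{1/2}\,\|M_2\|_{\cB(\bbC^m)}^{1/2}$; inserting the bounds $\|M_1\|_{\cB(\bbC^m)} \le 2\|F(x)\|_{\cB(\bbC^m)}$ and $\|M_2\|_{\cB(\bbC^m)} \le 2\|F(y)\|_{\cB(\bbC^m)}$ from \eqref{4.39} gives \eqref{4.40}.

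Finally, \eqref{4.41} would follow by replacing $y$ with $-y$ in \eqref{4.40}: since $F(-y)^* = F(y)$ and $\|F(-y)\|_{\cB(\bbC^m)} = \|F(y)\|_{\cB(\bbC^m)}$, this reads $\|F(x+y) - F(x) - F(y)\|_{\cB(\bbC^m)} \le 2\|F(x)\|_{\cB(\bbC^m)}^{1/2}\|F(y)\|_{\cB(\bbC^m)}^{1/2}$, whence by the triangle inequality $\|F(x+y)\|_{\cB(\bbC^m)} \le \big(\|F(x)\|_{\cB(\bbC^m)}^{1/2} + \|F(y)\|_{\cB(\bbC^m)}^{1/2}\big)^2$; taking square roots yields \eqref{4.41}. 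I expect the only step requiring care to be the adjoint bookkeeping needed to recognize the $2 \times 2$ block matrix of Theorem~\ref{t4.5} as one of the form in Proposition~\ref{p2.8}; everything else is routine.
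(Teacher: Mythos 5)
Your proposal is correct and follows essentially the same route as the paper's proof: Theorem~\ref{t4.5} with $N=1$ for \eqref{4.38}--\eqref{4.39}, Theorem~\ref{t4.5} with $N=2$ together with Proposition~\ref{p2.8} for the Schur-complement factorization giving \eqref{4.40}, and the triangle inequality plus the substitution $y \mapsto -y$ (which the paper performs in the opposite order, but to the same effect) for \eqref{4.41}. The adjoint bookkeeping you flag, checking via $F(-z)=F(z)^*$ that the lower-left block is the adjoint of the upper-right block so that Proposition~\ref{p2.8} applies, is indeed the only delicate point, and you handle it correctly.
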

%%%%%%%
\begin{proof}
Inequality \eqref{4.38} follows from Theorem \ref{t4.5} and from $F(0) \leq 0$, and \eqref{4.39} is a 
consequence of \eqref{3.37}--\eqref{3.39}.

Next, denote $G(x)= F(0) - F(x) - F(x)^*$, $H(x,y) = F(x-y) - F(x) - F(y)^*$, and 
$K(y) = F(0) - F(y) - F(y)^*$. Applying once more Theorem \ref{t4.5} one infers that
\begin{equation}
0 \leq \begin{pmatrix} G(x) & H(x,y) \\ H(x,y)^* & K(y) \end{pmatrix} \in \bbC^{2m \times 2m}, 
\quad x,y \in \bbR^n. 
\end{equation}
By \eqref{4.38}, 
\begin{equation}
G(x) \geq 0, \quad K(y) \geq 0, \quad x, y \in \bbR^n,
\end{equation}
and hence Proposition \ref{p2.8} guarantees the existence of a linear contraction 
$C(x,y) \in \bbC^{m \times m}$, $x, y \in \bbR^n$, such that 
\begin{equation}
H(x,y) = G(x)^{1/2} C(x,y) K(y)^{1/2}, \quad x, y \in \bbR^n. 
\end{equation}
Thus, \eqref{4.39} yields
\begin{align}
& \|H(x,y)\|_{\cB(\bbC^m)}  \leq \|G(x)\|_{\cB(\bbC^m)}^{1/2} 
\|K(y)\|_{\cB(\bbC^m)}^{1/2} \leq 2 \|F(x)\|_{\cB(\bbC^m)}^{1/2} 
\|F(y)\|_{\cB(\bbC^m)}^{1/2},  \no \\
& \hspace*{9cm}  x, y \in \bbR^n,
\end{align} 
proving \eqref{4.40}. 

By \eqref{4.40} one obtains
\begin{align}
& \|F(x-y)\|_{\cB(\bbC^m)}  - \|F(x)\|_{\cB(\bbC^m)}  - \|F(y)^*\|_{\cB(\bbC^m)}    \no \\
& \quad \leq \|F(x-y)\|_{\cB(\bbC^m)}  - \|F(x) + F(y)^*\|_{\cB(\bbC^m)}   \no \\
& \quad \leq \|F(x-y) - F(x) - F(y)^*\|_{\cB(\bbC^m)}   \no \\
& \quad \leq 2 \|F(x)\|_{\cB(\bbC^m)}^{1/2} \|F(y)\|_{\cB(\bbC^m)}^{1/2}, \quad x, y \in \bbR^n,
\end{align} 
implying 
\begin{equation}
 \|F(x-y)\|_{\cB(\bbC^m)} \leq \big[\|F(x)\|_{\cB(\bbC^m)}^{1/2} + \|F(y)\|_{\cB(\bbC^m)}^{1/2}\big]^2, 
 \quad x, y \in \bbR^n. 
\end{equation}
Replacing $y$ by $-y$ and using $F(-y) = F(y)^*$ yields \eqref{4.41}. 
\end{proof}
%%%%%%%

%%%%%%%
\begin{theorem} \lb{t4.14}
Suppose that $F \colon \bbR^n \to \bbC^{m \times m}$ is locally bounded and conditionally positive semidefinite with $F(0) \leq 0$. Then, there exists $C>0$ such that
\begin{equation}
 \|F(x)\|_{\cB(\bbC^m)} \leq C \big[1 + |x|^2\big], \quad x \in \bbR^n. 
\end{equation}
\end{theorem}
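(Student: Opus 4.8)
The plan is to reduce the statement to a subadditivity argument based on inequality \eqref{4.41}. First I would observe that, although Lemma \ref{l4.13} is stated for $F \in C(\bbR^n, \bbC^{m \times m})$, its proof --- and in particular the chain of inequalities leading to \eqref{4.41} --- uses only Theorem \ref{t4.5}, the elementary relations \eqref{3.37}--\eqref{3.39}, and Proposition \ref{p2.8}; none of these requires continuity of $F$. Hence \eqref{4.41} holds verbatim under the present hypotheses, that is, $F$ conditionally positive semidefinite, $F(0) \leq 0$, and $F$ locally bounded. Setting $g(x) = \|F(x)\|_{\cB(\bbC^m)}^{1/2}$ for $x \in \bbR^n$, inequality \eqref{4.41} says precisely that $g \geq 0$ is subadditive, i.e., $g(x+y) \leq g(x) + g(y)$ for all $x, y \in \bbR^n$.

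Next I would run the standard argument that a nonnegative, locally bounded, subadditive function on $\bbR^n$ grows at most linearly. An induction on $N$ gives $g(Nx) \leq N g(x)$ for all $N \in \bbN$ and $x \in \bbR^n$. Since $F$ is locally bounded and $\ol{B_n(0,1)}$ is compact, the quantity $M := \sup_{x \in \ol{B_n(0,1)}} g(x)$ is finite. For $x \neq 0$, choosing $N \in \bbN$ with $N - 1 < |x| \leq N$ forces $|x/N| \leq 1$, whence $g(x) = g\big(N (x/N)\big) \leq N\, g(x/N) \leq N M \leq (1 + |x|) M$; the bound $g(x) \leq (1+|x|)M$ is immediate when $x = 0$. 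Squaring and using $2|x| \leq 1 + |x|^2$ then yields $\|F(x)\|_{\cB(\bbC^m)} = g(x)^2 \leq M^2 (1+|x|)^2 \leq 2 M^2 \big(1 + |x|^2\big)$, so the assertion follows with $C = 2M^2$.

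The only genuinely delicate point is the first one: confirming that \eqref{4.41} remains valid without the continuity hypothesis that appears in Lemma \ref{l4.13}. Once that is in hand, everything else --- the subadditivity induction, the choice of $N$, and the final algebraic step --- is entirely routine and presents no real obstacle.
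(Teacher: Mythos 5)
Your proof is correct and follows essentially the same route as the paper: apply the subadditivity of $\|F(\cdot)\|_{\cB(\bbC^m)}^{1/2}$ from \eqref{4.41}, iterate to obtain $g(Nx)\le N\,g(x)$, and close with local boundedness on a compact ball; the only cosmetic differences are your use of $\ol{B_n(0,1)}$ in place of the paper's $\{|y|\le 2\}$ and your final clean-up $2|x|\le 1+|x|^2$. Your preliminary remark that Lemma \ref{l4.13} does not actually need the continuity of $F$ (only conditional positive semidefiniteness and $F(0)\le 0$) is a genuinely useful clarification, since Theorem \ref{t4.14} assumes only local boundedness while implicitly invoking \eqref{4.41}.
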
 
%%%%%%%
\begin{proof}
By local boundedness of $F$ it suffices to prove the existence of $C' > 0$ such that 
$\|F(x)\|_{\cB(\bbC^m)} \leq C' |x|^2$ for $|x|$ sufficiently large. 
Thus, for $x \in \bbR^n$ with $|x| \geq 2$, let $m(x) \in \bbN$ be the positive integer 
such that $|x| \in [m(x), m(x) +1)$. Then by \eqref{4.41},
\begin{align}
\|F(x)\|_{\cB(\bbC^m)}^{1/2} &= \|F(m(x) (x/m(x)))\|_{\cB(\bbC^m)}^{1/2} \leq m(x) 
\|F((x/m(x)))\|_{\cB(\bbC^m)}^{1/2}   \no \\
& \leq m(x) \big[\sup_{y \in \bbR^n} 
\{\|F(y)\|_{\cB(\bbC^m)} \, | \, 0 \leq |y| \leq 2\}\big]^{1/2}    \no \\ 
& \leq [C']^{1/2} |x|, \quad |x| \geq 2,
\end{align}
where 
\begin{equation}
C' = \sup_{y \in \bbR^n} 
\big\{\|F(y)\|_{\cB(\bbC^m)} \, \big| \, 0 \leq |y| \leq 2\big\}.  
\end{equation}
\end{proof}
%%%%%%%

We conclude with some elementary examples of conditionally positive semidefinite 
matrix-valued functions\footnote{Part~$(i)$ of Example \ref{e4.17} now replaces the originally published version which had a mistake.} on $\bbR^n$. 

%%%%%%
\begin{example} \lb{e4.17}  ${}$ \\
$(i)$ Fix $y_0 \in \bbR^n \backslash \{0\}$ and $a, b, c \in (0,\infty)$, with 
$ac \geq b^2$. Then $F_2 \colon \bbR^n \to \bbC^{2 \times 2}$ defined via
\begin{equation}
F_2(x) = \begin{pmatrix} -i (x \cdot y_0) + \ln (a) & -i (x \cdot y_0) + \ln(b) \\ 
-i (x \cdot y_0) + \ln(b)  & -i (x \cdot y_0) + \ln(c) \end{pmatrix}, 
\quad x \in \bbR^n,  
\end{equation}
is conditionally positive semidefinite. \\[1mm]
$(ii)$ Suppose that $G_0 \colon \bbR^n \to \bbC$ is conditionally positive semidefinite and 
introduce the constant matrix $H = \{H_{j,k}\}_{1 \leq j,k \leq m} \in \bbC^{m \times m}$ by
\begin{equation}
H_{j,k} = 1, \quad 1 \leq j,k \leq m.
\end{equation}
Then $F_0 \colon \bbR^n \to \bbC^{m \times m}$ defined by 
\begin{equation}
F_0(x) = G_0(x) H, \quad x \in \bbR^n, 
\end{equation}
is conditionally positive semidefinite. 
\end{example}
%%%%%% 
\begin{proof}
$(i)$ For all $t > 0$, one infers that
\begin{align}
(\exp_{H} (t F_2))_{1,1}(x)  &= a^t e^{-i t (x \cdot y_0)} 
= (2 \pi)^{- n/2} \bigg[(2 \pi)^{n/2} a^t \int_{\bbR^n} e^{- i (x \cdot y)} d \delta_{t y_0}(y) \bigg]   \no \\
&= \big((2 \pi)^{n/2} a^t \delta_{t y_0}\big)^{\wedge} (x),  \\
(\exp_{H} (t F_2))_{2,2}(x) 
&= \big((2 \pi)^{n/2} c^t \delta_{t y_0}\big)^{\wedge} (x),  \\
(\exp_{H} (t F_2))_{1,2}(x) 
&= \big((2 \pi)^{n/2} b^t \delta_{t y_0}\big)^{\wedge} (x) 
= (\exp_{H} (t F_2))_{2,1}(x), 
\end{align}
where $\delta_{x_0}$ denotes the usual Dirac measure supported at 
$x_0 \in \bbR^n$ of unit mass. Next, we introduce 
$\mu_{2,t} \in \cM(\bbR^n, \bbC^{2 \times 2})$) via
\begin{equation}
\mu_{2,t} (E) = (2 \pi)^{n/2} \begin{pmatrix} a^t \delta_{t y_0}(E) 
& b^t \delta_{t y_0}(E) \\  
b^t \delta_{t y_0}(E) & c^t \delta_{t y_0}(E) \end{pmatrix}, \quad t > 0, \; E \in \gB_n. 
\end{equation}
Since for all $E \in \gB_n$, $\mu_{2,t}(E)$, $t > 0$, can only take on the two values,
\begin{equation}
\mu_{2,t} (E) = \begin{cases} 
 \begin{pmatrix} 0 & 0 \\ 0 & 0 \end{pmatrix} \, \text{ if $t y_0 \notin E$,}  \\
(2 \pi)^{n/2} \begin{pmatrix} a^t & b^t \\ b^t & c^t \end{pmatrix} \, 
\text{ if $t y_0 \in E$,} 
\end{cases} \lb{4.117}  
\end{equation}
and the hypothesis $ac \geq b^2$ implies $\Big(\begin{smallmatrix} a^t & b^t \\ b^t & c^t 
\end{smallmatrix}\Big) \geq 0$, $t >0$, $\mu_{2,t}$ is a nonnegative measure (i.e., 
$0 \leq \mu_{2,t} \in \cM(\bbR^n, \bbC^{2 \times 2})$) for all $t > 0$. Thus, 
\begin{equation}
\exp_{H} (t F_2(x)) = \mu_{2,t}^{\wedge}(x), \quad x \in \bbR^n, \; t > 0,   
\end{equation}
and by Theorems \ref{t4.2} and the equivalence of items $(ii)$ and $(iii)$ in 
Theorem \ref{t4.6}, $F_2$ is conditionally positive semidefinite. \\
$(ii)$ Since $G_0$ is assumed to be conditionally positive semidefinite,  
$\exp(t G_0) \colon \bbR^n \to \bbC$ is positive semidefinite for all $t > 0$. So by the classical 
Bochner theorem, for all $t > 0$, there exists a nonnegative scalar-valued measure $\nu_t$ on 
$\bbR^n$ such that 
\begin{equation}
e^{t G_0} = \nu_t^{\wedge}, \quad t > 0.  
\end{equation} 
Introducing 
\begin{equation}
\mu_{0,t}(E) = \nu_t(E) H, \quad E \in \gB_n, \; t > 0,  
\end{equation}
then $\mu_{0,t}$, $t > 0$, is nonnegative and 
\begin{equation}
\exp_{H} (t F_0) = e^{t G_0} H = \nu_t^{\wedge} H = \mu_{0,t}^{\wedge}, \quad t > 0. 
\end{equation}
Thus $F_0$ is conditionally positive semidefinite utilizing once more Theorem \ref{t4.2} and the equivalence of items $(ii)$ and $(iii)$ in Theorem \ref{t4.6}. 
\end{proof}
%%%%%%

%%%%%%%%%%%%%%%%%%%%%%%%%%%%%%%%%%%%%%
%%%%%%%%%%%%%%% appendices %%%%%%%%%%%%%%%%
\appendix
%%%%%%%%%%%% Appendix A %%%%%%%%%%%%%%
\section{A Counterexample} \lb{sA}
\renewcommand{\theequation}{A.\arabic{equation}}
\renewcommand{\thetheorem}{A.\arabic{theorem}}
\setcounter{theorem}{0} \setcounter{equation}{0}
%%%%%%%%%%%%%%%%%%%%%%%%%%%%%%%%%%%%%%
%%%%%%%%%%%%%%%%%%%%%%%%%%%%%%%%%%%%%%

In this appendix we verify the claim made in Remark \ref{r4.1a}.~For brevity, we construct the 
counterexample for $m=2$, but the construction extends to general $m \in \bbN$, $m \geq 3$. 

Let $\gamma_n \colon \gB_n \to [0,1]$ be the standard Gaussian measure on $\bbR^n$,
\begin{equation}
\gamma_n (E) = (2 \pi)^{-n/2} \int_{E} \exp\big(- |x|^2/2\big) \, d^n x, \quad 
E \in \gB_n, 
\end{equation} 
and introduce 
\begin{equation}
\mu(E) = \gamma_n(E) A, \quad A = \begin{pmatrix} 1 & 0 \\ 0 & 2 \end{pmatrix} \geq 0, \quad E \in \gB_n, \quad F = \mu^{\wedge}, 
\end{equation}
and 
\begin{equation}
M = \begin{pmatrix} 3 & 1 \\ 1 & 3 \end{pmatrix} \geq 0, \, \text{ such that } \, 
M A = \begin{pmatrix} 3 & 2 \\ 1 & 6 \end{pmatrix} 
\end{equation}
is not self-adjoint, let alone positive semidefinite. 

As in the proof of Theorem \ref{t4.3} that condition $(ii)$ implies condition $(i)$, one obtains,
\begin{equation}
(F(- i \nabla) f)(x) = (2 \pi)^{-n/2} (T_{\mu} f)(x), \quad f \in C_0^{\infty}(\bbR^n, \bbC^{2 \times 2}), 
\; x \in \bbR^n. 
\end{equation}
Next, for sufficiently small $\varepsilon > 0$, consider $h_{\varepsilon} \in C_0^{\infty}(\bbR^n)$ 
satisfying,
\begin{align}
0 \leq h_{\varepsilon}(x) \leq 1, \; x \in \bbR^n, \quad 
h_{\varepsilon}(x) = \begin{cases} 1, & x \in \ol{B_n(0,1)}, \\
0, & x \in \bbR^n \backslash B_n(0, 1 + \varepsilon), \end{cases} 
\end{align}
and let 
\begin{equation}
g_{\varepsilon} (x) = h_{\varepsilon}(x) M, \quad x \in \bbR^n. 
\end{equation}
Then, 
\begin{align}
(F(- i \nabla) g_{\varepsilon})(0) &= (2 \pi)^{-n/2} (T_{\mu} g_{\varepsilon})(0)   \no \\
&= (2 \pi)^{-n/2} \int_{\bbR^n} g_{\varepsilon}(-y) \, d\mu(y)    \no \\
&= (2 \pi)^{-n/2} \int_{\bbR^n} g_{\varepsilon}(y) \, d\mu(y)    \no \\
&= (2 \pi)^{-n/2} \int_{B_n(0,1)} g_{\varepsilon}(y) \, d\mu(y)   \no \\ 
& \quad + 
(2 \pi)^{-n/2} \int_{B_n(0,1 + \varepsilon) \backslash B_n(0,1)} g_{\varepsilon}(y) \, d\mu(y)    \no \\
&= (2 \pi)^{-n/2} \gamma_n (B_n(0,1)) MA    \no \\ 
& \quad + 
(2 \pi)^{-n/2} \int_{B_n(0,1 + \varepsilon) \backslash B_n(0,1)} g_{\varepsilon}(y) \, d\mu(y).  
\end{align}
By \eqref{3.6},
\begin{align}
& \bigg\| \int_{B_n(0,1 + \varepsilon) \backslash B_n(0,1)} g_{\varepsilon}(y) \, d\mu(y)
\bigg\|_{\cB(\bbC^m)}     \no \\
& \quad \leq  \int_{B_n(0,1 + \varepsilon) \backslash B_n(0,1)} \|g_{\varepsilon}(y)\|_{\cB(\bbC^m)} 
\, d|\mu|(y)    \no \\
& \quad \leq  \int_{B_n(0,1 + \varepsilon) \backslash B_n(0,1)} \|M\|_{\cB(\bbC^m)} 
\, d|\mu|(y)    \no \\
& \quad = \|M\|_{\cB(\bbC^m)} \|A\|_{\cB(\bbC^m)} 
\gamma_{n}(B_n(0,1 + \varepsilon) \backslash B_n(0,1))    \no \\
& \quad \underset{\varepsilon \downarrow 0}{\longrightarrow} 0. 
\end{align}

Since the set
\begin{equation}
\cN_2 = \bbC^{2 \times 2} \backslash \{ A^* A \in \bbC^{2 \times 2} \, | \, A \in \bbC^{2 \times 2}\}
\end{equation}
is open in $\bbC^{2 \times 2}$ (cf.\ \eqref{4.58}), since 
\begin{equation}
\gamma_n(B_n(0,1)) MA \in \cN_2, 
\end{equation}
and since $F(- i \nabla) g_{\varepsilon} \in L^2(\bbR^n, \bbC^{m \times m}) \cap 
C_{\infty}(\bbR^n, \bbC^{m \times m})$ by Lemma \ref{l3.9}\,$(i)$, for $0 < \varepsilon$ 
sufficiently small, $(F(- i \nabla) g_{\varepsilon})(0)$ is {\it not} positive semidefinite, and thus 
there exists $\delta(\varepsilon) > 0$ such that $(F(- i \nabla) g_{\varepsilon})(x)$ is not positive semidefinite for all $x \in B_n(0,\delta(\varepsilon))$, even though $g_{\varepsilon} \geq 0$, illustrating 
Remark \ref{r4.1a}. 

In the special case where $\mu_{\sigma}(E) = \sigma(E) I_{\bbC^m}$, $E \in \gB_n$, with 
$\sigma \colon \gB_n \to [0, \infty)$ a finite meausure, and $F = \mu_{\sigma}^{\wedge}$, 
$F (- i \nabla) = (2 \pi)^{-n/2} T_{\mu_{\sigma}}$ is of course positivity preserving in $L^2(\bbR^n, \bbC^{m \times m})$.

%%%%%%%%%%%%%%%%%%%%%%%%%%%%%%%%
%%%%%%%%%%%% Appendix B %%%%%%%%%%%%%%
\section{The Multiplier Norm Equality \eqref{4.41A}} \lb{sB}
\renewcommand{\theequation}{B.\arabic{equation}}
\renewcommand{\thetheorem}{B.\arabic{theorem}}
\setcounter{theorem}{0} \setcounter{equation}{0}
%%%%%%%%%%%%%%%%%%%%%%%%%%%%%%%%%
%%%%%%%%%%%%%%%%%%%%%%%%%%%%%%%%%

The purpose of this appendix is an elementary and straightforward proof of the multiplier norm 
equality \eqref{4.41A}.

We start with some preliminary observations. First, we will employ the convention that each matrix 
in $\bbC^{m \times m}$ will be identified with a column vector in $\bbC^{m^2}$ in the manner that we list the entries of the matrix from left to right, starting from the 1st row to the $m$-th row. We also recall the possible identifications, 
\begin{equation}
\bbC^{m \times m}_{\rm HS} \simeq \cB_2(\bbC^m) \simeq \bbC^{m^2}, \quad 
\cB(\bbC^{m \times m}_{\rm HS}) \simeq \cB\big(\bbC^{m^2}\big) \simeq \bbC^{m^2 \times m^2},  
\end{equation}
consistently employing the Euclidean norm on $\bbC^m$ and $\bbC^{m^2}$.

In addition, given $A \in \bbC^{m \times m}$ we introduce the linear operator $M_A$ of right multiplication by $A$ on $\bbC^{m \times m}$ via,
\begin{equation}
M_A(B) := BA, \quad B \in \bbC^{m \times m}.   \lb{B.2} 
\end{equation}
Since $M_A$ is a linear operator on $\bbC^{m^2}$, it is representable by a matrix 
$K_A \in \bbC^{m^2 \times m^2}$, and the latter may be described upon inspection as follows:

%%%%%%%%
\begin{lemma} \lb{lB.1}  
$K_A$ is a block matrix with $m^2$ blocks, $m$ blocks across horizontally and $m$ blocks vertically. Each block is an $m \times m$ matrix, the diagonal blocks each equal $A^{\top}$ $($the transpose of $A$$)$, and all off-diagonal blocks equal the zero matrix in $\bbC^{m \times m}$.  
\end{lemma}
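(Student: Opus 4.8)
The plan is to unwind the row-major identification of $\bbC^{m \times m}$ with $\bbC^{m^2}$ and read off the matrix of $M_A$ directly from the formula for matrix multiplication; no serious machinery is needed. First I would pin down the notation induced by the stated convention: for $B = \{B_{i,\ell}\}_{1 \le i,\ell \le m} \in \bbC^{m \times m}$, the associated vector in $\bbC^{m^2}$ is obtained by concatenating the rows of $B$, so that the $i$-th block of $m$ consecutive coordinates is the column vector $v_i(B) := (B_{i,1},\dots,B_{i,m})^{\top} \in \bbC^m$, $1 \le i \le m$. Dually, the $r$-th coordinate inside the $p$-th block corresponds to the matrix unit $E_{p,r}$.

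Next I would compute the entries of $M_A(B) = BA$. Since
$(BA)_{i,j} = \sum_{k=1}^m B_{i,k} A_{k,j} = \sum_{k=1}^m (A^{\top})_{j,k}\, B_{i,k}$,
the $i$-th row of $BA$ depends only on the $i$-th row of $B$, and in the column-vector language this dependence is exactly $v_i(B) \mapsto v_i(BA) = A^{\top} v_i(B)$. In other words, $M_A$ respects the direct sum decomposition $\bbC^{m^2} = \bigoplus_{i=1}^m \bbC^m$ coming from the row-major ordering and acts on each summand by left multiplication by $A^{\top}$. Equivalently, one can test on the basis $\{E_{p,q}\}$: from $(E_{p,q} A)_{i,j} = \delta_{i,p} A_{q,j}$ one reads off that the column of $K_A$ indexed by $(p,q)$ is supported in the $p$-th block of rows, with entries $A_{q,1},\dots,A_{q,m}$, so the $p$-th diagonal block of $K_A$ has $(r,q)$-entry $A_{q,r} = (A^{\top})_{r,q}$ and the off-diagonal blocks vanish.

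From this the claim follows immediately: $K_A$ is the $m \times m$ array of $m \times m$ blocks whose diagonal blocks all equal $A^{\top}$ and whose off-diagonal blocks are zero; equivalently $K_A = I_m \otimes A^{\top}$.

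I do not anticipate any genuine obstacle. The only subtlety worth flagging explicitly is the appearance of $A^{\top}$ rather than $A$ itself, which is forced by the row-major identification: turning each row of $B$ into a column vector converts the right action $b \mapsto bA$ on row vectors into the left action $v \mapsto A^{\top} v$ on column vectors. Once the indexing convention is fixed, the verification is a one-line index computation.
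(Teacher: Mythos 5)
Your proposal is correct. The paper states Lemma \ref{lB.1} without proof, merely asserting that $K_A$ ``may be described upon inspection''; your computation (either via $v_i(BA)=A^{\top}v_i(B)$ showing $M_A$ preserves the row-block decomposition of $\bbC^{m^2}$, or via the matrix-unit check $(E_{p,q}A)_{i,j}=\delta_{i,p}A_{q,j}$) is exactly the inspection being appealed to, and the identification $K_A = I_m \otimes A^{\top}$ is right.
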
 
%%%%%%%%

Then one obtains the following result for the operator norm of $M_A$.

%%%%%%%%
\begin{proposition} \lb{pB.2} 
Let $A \in \bbC^{m \times m}$, then 
\begin{equation}
\|M_A\|_{\cB(\bbC^{m^2})} 
= \|K_A\|_{\cB(\bbC^{m^2})} = \|A\|_{\cB(\bbC^m)},
\end{equation}
where, according to our conventions, $\bbC^m$ and $\bbC^{m^2}$ are equipped with the Euclidean norm.
\end{proposition}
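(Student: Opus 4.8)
The plan is to reduce everything to Lemma \ref{lB.1} together with an elementary computation for block-diagonal matrices. First I would make the identification in Lemma \ref{lB.1} explicit: writing $B \in \bbC^{m \times m}$ as the column vector in $\bbC^{m^2}$ obtained by stacking its rows $b_1, \dots, b_m \in \bbC^m$, where $b_j = (B_{j,1}, \dots, B_{j,m})^{\top}$, one checks directly from $(BA)_{j,\ell} = \sum_{k=1}^m B_{j,k} A_{k,\ell}$ that the $j$-th row of $BA$, viewed as a column vector in $\bbC^m$, equals $A^{\top} b_j$. Hence, under the chosen identification, $M_A$ acts block-wise via $(b_1, \dots, b_m) \mapsto (A^{\top} b_1, \dots, A^{\top} b_m)$, which is precisely the assertion that $K_A = \diag(A^{\top}, \dots, A^{\top})$, with $m$ diagonal blocks all equal to $A^{\top}$ and all off-diagonal blocks equal to the zero matrix; this is Lemma \ref{lB.1}.

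Next I would compute the operator norm of such a block-diagonal matrix. For any $v = (v_1, \dots, v_m) \in \bbC^{m^2}$ with $v_j \in \bbC^m$ one has
\begin{equation}
\Norm{K_A v}_{\bbC^{m^2}}^2 = \sum_{j=1}^m \Norm{A^{\top} v_j}_{\bbC^m}^2
\leq \Norm{A^{\top}}_{\cB(\bbC^m)}^2 \sum_{j=1}^m \Norm{v_j}_{\bbC^m}^2
= \Norm{A^{\top}}_{\cB(\bbC^m)}^2 \, \Norm{v}_{\bbC^{m^2}}^2,
\end{equation}
so $\Norm{K_A}_{\cB(\bbC^{m^2})} \leq \Norm{A^{\top}}_{\cB(\bbC^m)}$; the reverse inequality follows by choosing $v = (w, 0, \dots, 0)$ with $w \in \bbC^m$ a (near-)maximiser of $\Norm{A^{\top} w}_{\bbC^m}/\Norm{w}_{\bbC^m}$. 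Therefore $\Norm{M_A}_{\cB(\bbC^{m^2})} = \Norm{K_A}_{\cB(\bbC^{m^2})} = \Norm{A^{\top}}_{\cB(\bbC^m)}$.

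Finally I would record the elementary fact that $\Norm{A^{\top}}_{\cB(\bbC^m)} = \Norm{A}_{\cB(\bbC^m)}$: the operator norm equals the largest singular value, and $A$ and $A^{\top}$ have the same singular values; equivalently, $\Norm{A^{\top}}_{\cB(\bbC^m)} = \Norm{(A^{\top})^*}_{\cB(\bbC^m)} = \Norm{\ol{A}}_{\cB(\bbC^m)} = \Norm{A}_{\cB(\bbC^m)}$, using that complex conjugation is an isometry of $\bbC^m$. Combining this with the previous paragraph yields the full chain of equalities. I do not anticipate any genuine obstacle here; the only point that needs care is the bookkeeping with the row-stacking convention underlying Lemma \ref{lB.1}, so as to be sure that right multiplication by $A$ becomes the block-diagonal operator with diagonal blocks $A^{\top}$ (rather than some interleaved arrangement of the entries of $A$).
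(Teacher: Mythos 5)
Your proof is correct. It is also a slightly different argument from the paper's: the paper establishes the upper bound by identifying $\bbC^{m^2}$ with $\bbC^{m\times m}_{\rm HS}$ and invoking the submultiplicativity $\|BA\|_{\cB_2(\bbC^m)} \leq \|B\|_{\cB_2(\bbC^m)} \|A\|_{\cB(\bbC^m)}$, whereas you compute $\|K_A v\|^2 = \sum_j \|A^\top v_j\|^2$ directly from the block-diagonal form in Lemma~\ref{lB.1}, which makes the upper bound a one-line consequence of the same structure that gives the lower bound. Your lower bound is also a touch simpler: you concentrate all mass on a single block ($v = (w,0,\dots,0)$), while the paper spreads it uniformly over all $m$ blocks via $\omega_j = m^{-1/2}v_j$; both work, but yours avoids the normalization step. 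The two proofs meet at the final observation $\|A^\top\|_{\cB(\bbC^m)} = \|A\|_{\cB(\bbC^m)}$, which the paper also uses (at the end of its lower-bound argument). Net effect: your version is more uniform and more elementary, relying only on the block-diagonal structure; the paper's version trades that for an appeal to the Hilbert--Schmidt ideal property, which fits naturally with the $\bbC^{m\times m}_{\rm HS}$ framework it has already set up. Either is a complete proof.
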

%%%%%%%%
\begin{proof}
Let $\{u_j\}_{j \in \bbN}$ be a sequence in $\bbC^m$ such that $\|u_j\|_{\bbC^m} = 1$, $j \in \bbN$, 
and $\lim_{j \to \infty} \big\|A^{\top} u_j\big\|_{\bbC^m} = \big\|A^{\top}\big\|_{\cB(\bbC^m)}$. 
For each $j \in \bbN$, let $v_j \in \bbC^{m^2}$ be the column vector obtained by repeating $u_j$ 
$m$ times down the column, and introduce 
\begin{equation}
\omega_j = m^{-1/2} v_j \in \bbC^{m^2}, \, \text{ such that } \, \|\omega_j\|_{\bbC^{m^2}} =1, \quad 
j \in \bbN. 
\end{equation} 
Then for all $j \in \bbN$, $K_A \omega_j \in \bbC^{m^2}$ is the column vector obtained upon repeating $m^{-1/2} A^{\top} u_j$ $m$ times down the column such that 
\begin{equation}
\|K_A \omega_j\|_{\bbC^{m^2}} = \big\|A^{\top} u_j\big\|_{\bbC^m} 
\underset{j \to \infty}{\longrightarrow} \big\|A^{\top}\big\|_{\cB(\bbC^m)}. 
\end{equation} 
Thus,
\begin{equation}
\|K_A\|_{\cB(\bbC^{m^2})} \geq \big\|A^{\top}\big\|_{\cB(\bbC^m)} = \|A\|_{\cB(\bbC^m)}, 
\quad A \in \bbC^{m \times m}.  
\end{equation}
To prove the opposite inequality we identify 
$\bbC^{m \times m}_{\rm HS} = (\bbC^{m \times m}, \|\, \cdot \,\big\|_{\rm HS})$ 
with $\bbC^{m^2}$ and observe that for all $B \in \bbC^{m \times m} \simeq \bbC^{m^2}$ one has 
\begin{align}
\|M_A(B)\|_{\bbC^{m^2}} &= \|BA\|_{(\bbC^{m \times m}, \, \|\, \cdot \,\|_{\rm HS})} 
= \|BA\|_{\cB_2(\bbC^m)} \leq \|B\|_{\cB_2(\bbC^m)} \|A\|_{\cB(\bbC^m)}     \no \\
& = \|B\|_{\bbC^{m^2}} \|A\|_{\cB(\bbC^m)}, 
\end{align} 
implying
\begin{equation}
\|M_A(B)\|_{\bbC^{m^2}} \leq \|A\|_{\cB(\bbC^m)}. 
\end{equation} 
\end{proof}
%%%%%%%%

At this point we can turn to the principal aim of this appendix: \\[1mm]  
{\it Proof of \eqref{4.41A}}. Let 
\begin{align}
\begin{split} 
& \Phi : \bbR^n \to \cB(\bbC^{m \times m}_{\rm HS}) \simeq \bbC^{m^2 \times m^2} \, 
\text{  be measurable,} \\
& \text{and assume that } \, 
\|\Phi\|_{\infty,m^2} = {\rm ess.sup}_{x \in \bbR^n} \|\Phi(x)\|_{\cB(\bbC^{m \times m}_{\rm HS})} < \infty,  
\lb{B.9} 
\end{split} 
\end{align}
and introduce 
\begin{equation}
S_{\Phi} \colon \begin{cases} L^2(\bbR^n, \bbC^{m \times m}_{\rm HS}) 
\to L^2(\bbR^n, \bbC^{m \times m}_{\rm HS}),     \\
(S_{\Phi} f)^{\wedge}(y) = \Phi(y) f^{\wedge}(y) \, \text{ for a.e.\ $y \in \bbR^n$}. 
\end{cases}  
\end{equation}

%%%%%%%
\begin{lemma} \lb{lB.3}
Assume \eqref{B.9}. Then
\begin{equation}
\|S_{\Phi}\|_{\cB(L^2(\bbR^n, \, \bbC^{m \times m}_{\rm HS}))} \leq \|\Phi\|_{\infty,m^2}. 
\end{equation}
\end{lemma}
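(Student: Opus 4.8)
The plan is to obtain the bound directly from the Plancherel identity \eqref{3.25} for the Hilbert--Schmidt-valued $L^2$-space, together with the elementary submultiplicativity of the operator norm on $\bbC^{m^2}$; this is just the classical Plancherel proof that an $L^\infty$ Fourier multiplier acts boundedly on $L^2$, transplanted to the matrix-valued setting. First I would record that \eqref{B.9} makes $S_\Phi$ well defined: for $f \in L^2(\bbR^n, \bbC^{m \times m}_{\rm HS})$ one has $f^\wedge \in L^2(\bbR^n, \bbC^{m \times m}_{\rm HS})$ by \eqref{3.25}, the map $y \mapsto \Phi(y) f^\wedge(y)$ is measurable (matrix--vector multiplication being continuous in $(\Phi(y), f^\wedge(y)) \in \bbC^{m^2 \times m^2} \times \bbC^{m^2}$), and the estimate below shows it is square-integrable; applying $\cF^{-1}$ then produces $S_\Phi f \in L^2(\bbR^n, \bbC^{m \times m}_{\rm HS})$, and $S_\Phi$ is visibly linear.

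The core step is a pointwise inequality. Identifying $\bbC^{m \times m}_{\rm HS}$ with $\bbC^{m^2}$ equipped with the Euclidean norm (so that $\|\cdot\|_{\cB_2(\bbC^m)}$ becomes the Euclidean norm on $\bbC^{m^2}$) and $\Phi(y)$ with the corresponding element of $\bbC^{m^2 \times m^2}$, the definition of the operator norm gives, for a.e.\ $y \in \bbR^n$,
\begin{equation}
\|\Phi(y) f^\wedge(y)\|_{\cB_2(\bbC^m)} \leq \|\Phi(y)\|_{\cB(\bbC^{m \times m}_{\rm HS})} \|f^\wedge(y)\|_{\cB_2(\bbC^m)} \leq \|\Phi\|_{\infty,m^2} \|f^\wedge(y)\|_{\cB_2(\bbC^m)}.
\end{equation}
Squaring, integrating in $y$ over $\bbR^n$, and invoking the Plancherel identity \eqref{3.25} twice --- first to replace $\|S_\Phi f\|_{L^2(\bbR^n, \bbC^{m \times m}_{\rm HS})}$ by $\|(S_\Phi f)^\wedge\|_{L^2(\bbR^n, \bbC^{m \times m}_{\rm HS})}$, and then to replace $\|f^\wedge\|_{L^2(\bbR^n, \bbC^{m \times m}_{\rm HS})}$ by $\|f\|_{L^2(\bbR^n, \bbC^{m \times m}_{\rm HS})}$ --- yields $\|S_\Phi f\|_{L^2(\bbR^n, \bbC^{m \times m}_{\rm HS})} \leq \|\Phi\|_{\infty,m^2} \|f\|_{L^2(\bbR^n, \bbC^{m \times m}_{\rm HS})}$, which is the assertion.

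I do not expect a genuine obstacle here; the only point meriting explicit mention is that \eqref{3.25} is exactly the form of Plancherel's theorem valid in $L^2(\bbR^n, \bbC^{m \times m}_{\rm HS})$ (obtained by identifying that space with the $L^2$-space of $\bbR^n$ into the Euclidean space $\bbC^{m^2}$), and that under this identification the operator-norm inequality $\|\Phi(y) v\| \le \|\Phi(y)\| \, \|v\|$ on $\bbC^{m^2}$ turns into the displayed pointwise bound in the Hilbert--Schmidt norm. Everything else is routine.
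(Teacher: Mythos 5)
Your argument is correct and follows exactly the paper's own route: apply Plancherel \eqref{3.25} to pass to the Fourier side, use the pointwise operator-norm bound $\|\Phi(y) f^\wedge(y)\|_{\bbC^{m^2}} \leq \|\Phi(y)\|_{\cB(\bbC^{m\times m}_{\rm HS})}\|f^\wedge(y)\|_{\bbC^{m^2}}$, bound $\|\Phi(y)\|$ by the essential supremum, and apply Plancherel once more. The remarks on well-definedness and measurability are harmless additions the paper leaves implicit.
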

%%%%%%%
\begin{proof}
Let $f \in L^2(\bbR^n, \bbC^{m \times m}_{\rm HS})$ with 
$\|f\|_{L^2(\bbR^n, \, \bbC^{m \times m}_{\rm HS})} = 1$. Then
\begin{align}
\|S_{\Phi} f\|_{L^2(\bbR^n, \, \bbC^{m \times m}_{\rm HS})} 
& = \big\|(S_{\Phi} f)^{\wedge}\big\|_{L^2(\bbR^n, \, \bbC^{m \times m}_{\rm HS})}  \no \\
& = \bigg(\int_{\bbR^n} \big\|\Phi(y) f^{\wedge}(y)\big\|^2_{\bbC^{m^2}} \, d^nx\bigg)^{1/2}  
\no \\
& \leq \bigg(\int_{\bbR^n} \|\Phi(y)\|^2_{\cB(\bbC^{m \times m}_{\rm HS})} 
\big\|f^{\wedge}(y)\big\|^2_{\bbC^{m^2}} \, d^nx\bigg)^{1/2}    \no \\ 
& \leq \|\Phi\|_{\infty,m^2} \bigg(\int_{\bbR^n} 
\big\|f^{\wedge}(y)\big\|^2_{\bbC^{m^2}} \, d^nx\bigg)^{1/2}    \no \\
& = \|\Phi\|_{\infty,m^2} \big\|f^{\wedge}\big\|_{L^2(\bbR^n, \, \bbC^{m \times m}_{\rm HS})}    \no \\
& = \|\Phi\|_{\infty,m^2} \|f\|_{L^2(\bbR^n, \, \bbC^{m \times m}_{\rm HS})}    \no \\
& = \|\Phi\|_{\infty,m^2}. 
\end{align}
\end{proof}
%%%%%%%

%%%%%%%
\begin{lemma} \lb{lB.4}
Assume that $\wti \Phi$ is a simple function, that is, there exist $J \in \bbN$, $a_j \in \bbC$, 
$\Phi_j \in \cB(\bbC^{m \times m}_{\rm HS})$, with $\|\Phi_j\|_{\cB(\bbC^{m \times m}_{\rm HS})} = 1$, and $E_j \in \gB_n$, $1 \leq j \leq J$, such that $\wti \Phi$ is of the type, 
\begin{equation}  
\wti \Phi = \sum_{j=1}^J a_j \Phi_j \chi_{E_j}. 
\end{equation} 
Then
\begin{equation}
\|S_{\wti \Phi}\|_{\cB(L^2(\bbR^n, \, \bbC^{m \times m}_{\rm HS}))} = \big\|\wti \Phi\big\|_{\infty,m^2}. 
\end{equation}
\end{lemma}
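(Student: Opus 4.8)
The plan is to prove the two inequalities $\|S_{\wti\Phi}\|_{\cB(L^2(\bbR^n,\,\bbC^{m\times m}_{\rm HS}))}\le\big\|\wti\Phi\big\|_{\infty,m^2}$ and $\|S_{\wti\Phi}\|_{\cB(L^2(\bbR^n,\,\bbC^{m\times m}_{\rm HS}))}\ge\big\|\wti\Phi\big\|_{\infty,m^2}$ separately. The first is immediate from Lemma \ref{lB.3}, since a simple function $\wti\Phi$ of the stated type satisfies \eqref{B.9} (it is measurable and bounded, with $\big\|\wti\Phi\big\|_{\infty,m^2}=\max\{|a_j|:|E_j|>0\}$), so no further work is needed there. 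The real content is the lower bound, which captures that the multiplier norm cannot drop below the essential supremum of the symbol norm.

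For the lower bound, first I would identify the index $j_0\in\{1,\dots,J\}$ for which $|a_{j_0}|\,\|\Phi_{j_0}\|_{\cB(\bbC^{m\times m}_{\rm HS})}=|a_{j_0}|$ attains $\big\|\wti\Phi\big\|_{\infty,m^2}$ among those $j$ with $|E_j|>0$; without loss of generality, after discarding null pieces and merging, we may take the $E_j$ pairwise disjoint, so that $\wti\Phi(y)=a_{j_0}\Phi_{j_0}$ for a.e.\ $y\in E_{j_0}$. Pick a unit vector $w\in\bbC^{m^2}\simeq\bbC^{m\times m}_{\rm HS}$ with $\|\Phi_{j_0}w\|_{\bbC^{m^2}}$ arbitrarily close to $\|\Phi_{j_0}\|_{\cB(\bbC^{m\times m}_{\rm HS})}=1$, and a Borel set $E\subseteq E_{j_0}$ with $0<|E|<\infty$. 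Now I would \emph{construct a test function by prescribing its Fourier transform}: since $\cF$ is unitary on $L^2(\bbR^n,\bbC^{m\times m}_{\rm HS})$ by \eqref{3.25}, it suffices to choose $g:=(f)^\wedge$ directly. Set $g(y)=|E|^{-1/2}\chi_E(y)\,w$, so that $\|g\|_{L^2(\bbR^n,\,\bbC^{m\times m}_{\rm HS})}=\|w\|_{\bbC^{m^2}}=1$ and hence $\|f\|_{L^2}=1$. Then $(S_{\wti\Phi}f)^\wedge(y)=\wti\Phi(y)g(y)=|E|^{-1/2}\chi_E(y)\,a_{j_0}\Phi_{j_0}w$, whence
\begin{equation}
\|S_{\wti\Phi}f\|_{L^2(\bbR^n,\,\bbC^{m\times m}_{\rm HS})}^2
=|E|^{-1}\!\int_E |a_{j_0}|^2\,\|\Phi_{j_0}w\|_{\bbC^{m^2}}^2\,d^ny
=|a_{j_0}|^2\,\|\Phi_{j_0}w\|_{\bbC^{m^2}}^2.
\end{equation}
Letting $\|\Phi_{j_0}w\|_{\bbC^{m^2}}\uparrow 1$ over a sequence of choices of $w$ gives $\|S_{\wti\Phi}\|_{\cB(L^2(\bbR^n,\,\bbC^{m\times m}_{\rm HS}))}\ge|a_{j_0}|=\big\|\wti\Phi\big\|_{\infty,m^2}$, which together with Lemma \ref{lB.3} yields equality.

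I do not anticipate a serious obstacle here; the only points requiring mild care are (a) the reduction to pairwise disjoint $E_j$ with positive measure, which is routine since overlapping or null contributions can be absorbed or removed without changing either side, and (b) confirming that the $\bbC^{m^2}$-norm of $g$ as an $L^2(\bbR^n,\bbC^{m\times m}_{\rm HS})$-function is computed with the Hilbert--Schmidt (Euclidean on $\bbC^{m^2}$) norm, which is exactly the norm in which $\|\Phi_j\|_{\cB(\bbC^{m\times m}_{\rm HS})}=1$ is normalized — so the supremum defining the operator norm of $\Phi_{j_0}$ is attained in the limit by unit vectors of $\bbC^{m^2}$. Proposition \ref{pB.2} is not strictly needed for this lemma, but it confirms the normalization is the natural one. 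The mild subtlety worth a sentence is that $S_{\wti\Phi}$ is well defined as a bounded operator precisely by Lemma \ref{lB.3}, so the supremum in its operator norm is over a nonempty set and the limiting argument is legitimate.
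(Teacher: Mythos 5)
Your proof is correct and is essentially the paper's argument: both reduce to pairwise disjoint $E_j$ of positive measure, select the index attaining $\|\widetilde\Phi\|_{\infty,m^2}$, prescribe the test function via its Fourier transform as a constant-direction $\chi_E$-bump normalized in $L^2$, and pass to a sequence of near-maximizing unit vectors for $\Phi_{j_0}$ to obtain the lower bound, with the upper bound coming from Lemma \ref{lB.3}. The only cosmetic difference is that you pick an explicit finite-measure subset $E\subseteq E_{j_0}$ rather than assuming $0<|E_1|<\infty$ outright, which is if anything a slightly more careful statement of the same reduction.
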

%%%%%%%
\begin{proof}
Without loss of generality we may assume in addition that the sets $E_j$ are pairwise disjoint, that 
$|E_j| > 0$, $1 \leq j \leq J$, $0 < |E_1| < \infty$, $|a_1| \geq |a_j| > 0$, $2 \leq j \leq J$, implying 
\begin{equation}
\big\|\wti \Phi\big\|_{\infty,m^2} = |a_1|. 
\end{equation} 
Since by assumption, $\|\Phi_1\|_{\cB(\bbC^{m \times m}_{\rm HS})} = 1$, there exists a sequence 
$\{u_{\ell}\}_{\ell \in \bbN} \subset \bbC^{m \times m}_{\rm HS}$, with $\|u_{\ell}\|_{\cB_2(\bbC^m)} = 1$, 
$\ell \in \bbN$, such that 
\begin{equation}
\lim_{\ell \to \infty} \|\Phi_1 u_{\ell}\|_{\cB_2(\bbC^m)} = 1. 
\end{equation} 
Introducing $f_{\ell} \in L^2(\bbR^n, \bbC^{m \times m}_{\rm HS})$, $\ell \in \bbN$, via
\begin{equation}
f_{\ell} = \big(|E_1|^{-1/2} u_{\ell} \chi_{E_1}\big)^{\vee}, \quad \ell \in \bbN,
\end{equation}
one infers,
\begin{align}
& \|f_{\ell}\|_{L^2(\bbR^n, \, \bbC^{m \times m}_{\rm HS})} = 
\big\|f^{\wedge}_{\ell}\big\|_{L^2(\bbR^n, \, \bbC^{m \times m}_{\rm HS})} = 
\big\||E_1|^{-1/2} u_{\ell} \chi_{E_1}\big\|_{L^2(\bbR^n, \, \bbC^{m \times m}_{\rm HS})}  \no \\
& \quad = |E_1|^{-1/2} \bigg(\int_{\bbR^n} 
\sum_{j,k=1}^m |(u_{\ell} \chi_{E_1}(x))_{j,k}|^2 \, d^nx\bigg)^{1/2}     \no \\
& \quad = |E_1|^{-1/2} \bigg(\int_{E_1} \sum_{j,k=1}^m |(u_{\ell})_{j,k}|^2 \, d^nx\bigg)^{1/2} = 1, 
\quad \ell \in \bbN,
\end{align}
and 
\begin{align}
& \|S_{\Phi} f_{\ell}\|^2_{L^2(\bbR^n, \, \bbC^{m \times m}_{\rm HS})} = 
\big\|(S_{\Phi} f_{\ell})^{\wedge}\big\|^2_{L^2(\bbR^n, \, \bbC^{m \times m}_{\rm HS})}   \no \\
& \quad = \bigg(\int_{\bbR^n} 
\sum_{j,k=1}^m \big|\big(\wti \Phi(x) |E_1|^{-1/2} u_{\ell} \chi_{E_1}(x)\big)_{j,k}\big|^2 \, d^nx\bigg)^{1/2} 
\no \\
& \quad = \bigg(|E_1|^{-1} \int_{\bbR^n} 
\sum_{j,k=1}^m |(a_1 \Phi_1 u_{\ell} \chi_{E_1}(x))_{j,k}|^2 \, d^nx\bigg)^{1/2}   \no \\
& \quad = \bigg(|E_1|^{-1} |a_1|^2 \int_{E_1} 
\sum_{j,k=1}^m |(\Phi_1 u_{\ell})_{j,k}|^2 \, d^nx\bigg)^{1/2}   \no \\
& \quad = |a_1| \|\Phi_1 u_{\ell}\|_{\cB_2(\bbC^m)} \underset{\ell \to \infty}{\longrightarrow} |a_1|. 
\end{align}
Thus,
\begin{equation}
\|S_{\wti \Phi}\|_{\cB(L^2(\bbR^n, \, \bbC^{m \times m}_{\rm HS}))} \geq |a_1| 
= \big\|\wti \Phi\big\|_{\infty,m^2}, 
\end{equation}
and Lemma \ref{lB.3} provides the converse inequality. 
\end{proof}
%%%%%%%

%%%%%%%
\begin{lemma} \lb{lB.5}
Assume \eqref{B.9}, then
\begin{equation}
\|S_{\Phi}\|_{\cB(L^2(\bbR^n, \, \bbC^{m \times m}_{\rm HS}))} = \|\Phi\|_{\infty,m^2}.
\end{equation}
\end{lemma}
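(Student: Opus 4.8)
The plan is to prove the equality by combining Lemma \ref{lB.3}, which already supplies the bound $\|S_{\Phi}\|_{\cB(L^2(\bbR^n, \, \bbC^{m \times m}_{\rm HS}))} \leq \|\Phi\|_{\infty,m^2}$, with a matching lower bound obtained from a single well-chosen test function; in particular, Lemma \ref{lB.4} becomes a special case and the argument below is self-contained. Write $M := \|\Phi\|_{\infty,m^2}$. If $M = 0$, then $\Phi = 0$ a.e.\ and $S_{\Phi} = 0$, so the identity is trivial; hence I assume $M > 0$ and fix an arbitrary $\varepsilon \in (0, M)$.

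First I would reduce matters to a single fixed direction in $\bbC^{m^2}$, thereby bypassing any measurable selection theorem. Let $D \subset \bbC^{m^2}$ be a fixed countable dense subset of the Euclidean unit sphere. Since $v \mapsto \|\Phi(x) v\|_{\bbC^{m^2}}$ is continuous for each $x$, one has $\|\Phi(x)\|_{\cB(\bbC^{m \times m}_{\rm HS})} = \sup_{v \in D} \|\Phi(x) v\|_{\bbC^{m^2}}$ for every $x \in \bbR^n$. Because $M = \|\Phi\|_{\infty,m^2}$, the set $\{x \in \bbR^n : \|\Phi(x)\|_{\cB(\bbC^{m \times m}_{\rm HS})} > M - \varepsilon\}$ has positive Lebesgue measure, and it equals the countable union over $v \in D$ of the measurable sets $\{x : \|\Phi(x) v\|_{\bbC^{m^2}} > M - \varepsilon\}$; hence one of these, for some fixed $v_0 \in D$, has positive measure. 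By $\sigma$-finiteness of $\bbR^n$ I may then pass to a measurable subset $E$ with $0 < |E| < \infty$ and $\|\Phi(x) v_0\|_{\bbC^{m^2}} > M - \varepsilon$ for all $x \in E$.

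Next I would construct the test function. Put $g := v_0 \chi_E \in L^2(\bbR^n, \bbC^{m \times m}_{\rm HS})$; since $\|v_0\|_{\bbC^{m^2}} = 1$ one has $\|g\|_{L^2(\bbR^n, \, \bbC^{m \times m}_{\rm HS})} = |E|^{1/2}$. Set $f := g^{\vee}$, so that $f \in L^2(\bbR^n, \bbC^{m \times m}_{\rm HS})$ with $\|f\|_{L^2(\bbR^n, \, \bbC^{m \times m}_{\rm HS})} = |E|^{1/2}$ by the Plancherel identity \eqref{3.25}, and $f^{\wedge} = g$. Then $(S_{\Phi} f)^{\wedge}(y) = \Phi(y) f^{\wedge}(y) = \Phi(y) v_0 \chi_E(y)$, and applying \eqref{3.25} once more yields
\[
\|S_{\Phi} f\|_{L^2(\bbR^n, \, \bbC^{m \times m}_{\rm HS})}^2
= \int_{E} \|\Phi(y) v_0\|_{\bbC^{m^2}}^2 \, d^n y
\geq (M - \varepsilon)^2 |E|.
\]
Therefore $\|S_{\Phi}\|_{\cB(L^2(\bbR^n, \, \bbC^{m \times m}_{\rm HS}))} \geq \|S_{\Phi} f\|_{L^2(\bbR^n, \, \bbC^{m \times m}_{\rm HS})} / \|f\|_{L^2(\bbR^n, \, \bbC^{m \times m}_{\rm HS})} \geq M - \varepsilon$, and letting $\varepsilon \downarrow 0$ gives $\|S_{\Phi}\|_{\cB(L^2(\bbR^n, \, \bbC^{m \times m}_{\rm HS}))} \geq M = \|\Phi\|_{\infty,m^2}$. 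Combined with Lemma \ref{lB.3}, this establishes the asserted equality.

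The only genuinely delicate point is the measurability issue in selecting $v_0$: I would handle it not via a measurable selection theorem but by the elementary separability trick above, which replaces the supremum over the unit sphere by a supremum over a fixed countable dense set and thereby makes one single direction $v_0$ already effective on a set of positive measure. Finally, the multiplier norm equality \eqref{4.41A} follows from Lemma \ref{lB.5} by choosing $\Phi(x) = M_{F(x)}$ (right multiplication by $F(x)$ on $\bbC^{m \times m}_{\rm HS}$), since then $S_{\Phi} = F(- i \nabla)$ on $L^2(\bbR^n, \bbC^{m \times m}_{\rm HS})$ and, by Proposition \ref{pB.2}, $\|\Phi\|_{\infty,m^2} = {\rm ess.sup}_{x \in \bbR^n} \|M_{F(x)}\|_{\cB(\bbC^{m^2})} = {\rm ess.sup}_{x \in \bbR^n} \|F(x)\|_{\cB(\bbC^m)} = \|F\|_{\infty,m}$.
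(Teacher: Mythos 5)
Your proof is correct, and it takes a genuinely different and more direct route than the paper. The paper proceeds in two stages: it first establishes the norm identity for simple functions $\wti\Phi = \sum_j a_j \Phi_j \chi_{E_j}$ (Lemma \ref{lB.4}), and then proves Lemma \ref{lB.5} by approximating a general $\Phi$ entrywise by simple scalar functions, assembling these into matrix-valued simple functions $\Psi_\ell$, and passing to the limit via the operator-norm estimate \eqref{B.29} together with \eqref{B.32}. You instead go straight to the lower bound for a general measurable $\Phi$: the separability trick (writing $\|\Phi(x)\|_{\cB(\bbC^{m\times m}_{\rm HS})} = \sup_{v \in D} \|\Phi(x)v\|_{\bbC^{m^2}}$ over a countable dense $D$ on the unit sphere, then using that a countable union of null sets is null) identifies a single direction $v_0$ that is nearly extremal on a set $E$ of positive finite measure, and the test function $f = (v_0 \chi_E)^{\vee}$ immediately gives $\|S_\Phi\| \geq M - \varepsilon$ via Plancherel. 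This bypasses Lemma \ref{lB.4} and the approximation machinery entirely; as you note, the simple-function case then falls out as a special case rather than being the foundation of the argument. Both approaches are sound; yours is shorter and more self-contained, while the paper's is structured so that the simple-function case is isolated as a reusable intermediate result. The measurability of $\{x : \|\Phi(x)v\|_{\bbC^{m^2}} > M-\varepsilon\}$ for fixed $v$ and the $\sigma$-finiteness reduction to $0 < |E| < \infty$ are both handled correctly, and the upper bound is taken from Lemma \ref{lB.3} exactly as the paper does.
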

%%%%%%%
\begin{proof}
Let $c_m \geq 1$ such that 
\begin{equation}
c_m^{-1} \max_{1 \leq j,k \leq m^2} |A_{j,k}| \leq \|A\|_{\cB(\bbC^{m \times m}_{\rm HS})} \leq 
c_m  \max_{1 \leq j,k \leq m^2} |A_{j,k}|, \quad A \in \bbC^{m^2 \times m^2}, 
\end{equation}
holds. Then, for (Lebesgue) a.e.\ $x \in \bbR^n$, 
\begin{equation}
|\Phi(x)_{j,k}| \leq \max_{1 \leq r,s \leq m^2} |\Phi(x)_{r,s}| \leq 
c_m \|\Phi(x)\|_{\cB(\bbC^{m \times m}_{\rm HS})} \leq c_m \|\Phi\|_{\infty,m^2}.  
\end{equation}
Thus, for each $j,k \in \{1,\dots, ,m^2\}$, there exists a sequence of simple functions 
$\Psi_{j,k,\ell} \colon \bbR^n \to \bbC$, $\ell \in \bbN$, such that for a.e.~$x \in \bbR^n$, 
\begin{equation}
|\Phi(x)_{j,k} - \Psi(x)_{j,k,\ell}| \leq 2^{(1/2) - \ell},    \lb{B.25}
\end{equation}
and
\begin{equation} 
 |\Psi(x)_{j,k,\ell}| \leq |\Phi(x)_{j,k}|. 
\end{equation}
Next, introduce $\Psi_{\ell} \colon \bbR^n \to \bbC^{m^2 \times m^2}$ via 
$(\Psi_{\ell}(x))_{j,k} = \Psi(x)_{j,k,\ell}$, $1 \leq j,k \leq m^2$, $x \in \bbR^n$. Then for 
a.e.\ $x \in \bbR^n$,
\begin{equation}
\|\Phi(x) - \Psi_{\ell}(x)\|_{\cB(\bbC^{m \times m}_{\rm HS})} \leq c_m 
\max_{1 \leq j,k \leq m^2} |\Phi(x)_{j,k} - \Psi(x)_{j,k,\ell}| \leq 2^{(1/2) - \ell} c_m.  \lb{B.27} 
\end{equation}
Combining Lemma \ref{lB.3} and \eqref{B.27} results in 
\begin{align}
\begin{split} 
& \|S_{\Phi} - S_{\Psi_{\ell}}\|_{\cB(L^2(\bbR^n, \, \bbC^{m \times m}_{\rm HS}))} 
= \|S_{(\Phi - \Psi_{\ell})}\|_{\cB(L^2(\bbR^n, \, \bbC^{m \times m}_{\rm HS}))}    \\
& \quad \leq \|\Phi - \Psi_{\ell}\|_{\infty,m^2} \leq 2^{(1/2) - \ell} c_m,  
\end{split} 
\end{align}
implying  
\begin{equation}
\big| \|S_{\Phi}\|_{\cB(L^2(\bbR^n, \, \bbC^{m \times m}_{\rm HS}))} 
- \|S_{\Psi_{\ell}}\|_{\cB(L^2(\bbR^n, \, \bbC^{m \times m}_{\rm HS}))}\big| 
\leq 2^{(1/2) - \ell} c_m.    \lb{B.29}
\end{equation}
Since $\Psi_{\ell}$ is a simple function, Lemma \ref{lB.4} implies 
\begin{equation}
\|S_{\Psi_{\ell}}\|_{\cB(L^2(\bbR^n, \, \bbC^{m \times m}_{\rm HS}))} 
= \|\Psi_{\ell}\|_{\infty,m^2}.  \lb{B.30}
\end{equation}
Employing \eqref{B.25} one obtains for a.e.\ $x \in \bbR^n$,
\begin{align}
\begin{split}
& \big|\|\Phi(x)\|_{\cB(\bbC^{m \times m}_{\rm HS})} 
- \|\Psi_{\ell}(x)\|_{\cB(\bbC^{m \times m}_{\rm HS})}\big|
\leq \|\Phi(x) - \Psi_{\ell}(x)\|_{\cB(\bbC^{m \times m}_{\rm HS})}   \\
& \quad \leq c_m \max_{1 \leq j,k \leq m^2} |\Phi(x)_{j,k} - \Psi_{\ell}(x)_{j,k}| \leq 2^{(1/2) - \ell} c_m, 
\end{split}
\end{align} 
implying 
\begin{equation}
\|\Phi\|_{\infty,m^2} = \lim_{\ell \to \infty} \|\Psi_{\ell}\|_{\infty,m^2}.    \lb{B.32}
\end{equation}
Combining \eqref{B.29}, \eqref{B.30}, and \eqref{B.32} finally yields 
\begin{equation}
\|S_{\Phi}\|_{\cB(L^2(\bbR^n, \, \bbC^{m \times m}_{\rm HS}))} = \|\Phi\|_{\infty,m^2}. 
\end{equation} 
\end{proof}
%%%%%%%

We note that Lemma \ref{lB.5} has been proven in \cite{GJR00} in the infinite-dimensional 
context. 

%%%%%%%
\begin{corollary} \lb{cB.6} 
Let $F \colon \bbR^n \to \bbC^{m \times m}$ be measurable and suppose that 
$\|F\|_{\infty,m} = {\rm ess.sup}_{x \in \bbR^n} \|F(x)\|_{\cB(\bbC^m)} < \infty$. Then, 
\begin{equation}
\|F(- i \nabla)\|_{\cB(L^2(\bbR^n, \, \bbC^{m \times m}_{\rm HS}))} = \|F\|_{\infty,m}.
\end{equation}
\end{corollary}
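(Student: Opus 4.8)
The plan is to realize $F(-i\nabla)$ as an operator of the type $S_{\Phi}$ analyzed in Lemmas \ref{lB.3}--\ref{lB.5} and then quote Lemma \ref{lB.5}. Concretely, for a.e.\ $y \in \bbR^n$ set $\Phi(y) = M_{F(y)} \in \cB(\bbC^{m \times m}_{\rm HS})$, the operator of right multiplication by $F(y)$ introduced in \eqref{B.2} and represented on $\bbC^{m^2}$ by the block matrix $K_{F(y)}$ of Lemma \ref{lB.1}. Since $y \mapsto F(y)$ is measurable and the assignment $A \mapsto M_A$ is linear (hence continuous) from $\bbC^{m \times m}$ into $\cB(\bbC^{m \times m}_{\rm HS}) \simeq \bbC^{m^2 \times m^2}$, the map $\Phi$ is measurable. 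By Proposition \ref{pB.2}, $\|\Phi(y)\|_{\cB(\bbC^{m \times m}_{\rm HS})} = \|M_{F(y)}\|_{\cB(\bbC^{m^2})} = \|F(y)\|_{\cB(\bbC^m)}$ for a.e.\ $y$, so that $\|\Phi\|_{\infty,m^2} = \|F\|_{\infty,m} < \infty$; in particular $\Phi$ satisfies hypothesis \eqref{B.9}.

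Next I would identify $F(-i\nabla)$ with $S_{\Phi}$. Recalling that $F(-i\nabla)f = \cF^{-1}(M_F(\cF f))$ with $(M_F g)(x) = g(x) F(x)$, and using the identification of $L^2(\bbR^n, \bbC^{m \times m})$ with $L^2(\bbR^n, \bbC^{m \times m}_{\rm HS})$ as sets together with the Plancherel identity \eqref{3.25}, one has for every $f \in L^2(\bbR^n, \bbC^{m \times m}_{\rm HS})$ and a.e.\ $y \in \bbR^n$
\begin{equation}
(F(-i\nabla)f)^{\wedge}(y) = f^{\wedge}(y) F(y) = M_{F(y)} f^{\wedge}(y) = \Phi(y) f^{\wedge}(y),
\end{equation}
which is exactly the defining relation of $S_{\Phi}$. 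Hence $F(-i\nabla) = S_{\Phi}$ as bounded operators on $L^2(\bbR^n, \bbC^{m \times m}_{\rm HS})$.

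Finally, Lemma \ref{lB.5} applied to this $\Phi$ gives
\begin{equation}
\|F(-i\nabla)\|_{\cB(L^2(\bbR^n, \, \bbC^{m \times m}_{\rm HS}))}
= \|S_{\Phi}\|_{\cB(L^2(\bbR^n, \, \bbC^{m \times m}_{\rm HS}))}
= \|\Phi\|_{\infty,m^2} = \|F\|_{\infty,m},
\end{equation}
which is the desired equality. There is no substantial obstacle here: the entire content has already been isolated in Lemma \ref{lB.5}, and the only point requiring a little care is the bookkeeping between the isometrically identical models $\bbC^{m \times m}_{\rm HS} \simeq \bbC^{m^2}$ and the matching of $\|M_A\|_{\cB(\bbC^{m^2})}$ with $\|A\|_{\cB(\bbC^m)}$ — both of which are settled by Proposition \ref{pB.2} and Lemma \ref{lB.1}. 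Accordingly, the main work of the appendix sits in Lemmas \ref{lB.3}--\ref{lB.5}, and Corollary \ref{cB.6} is merely their specialization to the multiplier $\Phi = M_{F(\cdot)}$.
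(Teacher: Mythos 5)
Your proof is correct and follows the paper's argument exactly: you set $\Phi = M_{F(\cdot)}$, invoke Proposition \ref{pB.2} to identify $\|\Phi(x)\|_{\cB(\bbC^{m \times m}_{\rm HS})}$ with $\|F(x)\|_{\cB(\bbC^m)}$, and apply Lemma \ref{lB.5}. The extra remarks on measurability of $\Phi$ and on the identification $F(-i\nabla) = S_{\Phi}$ are accurate and only make explicit what the paper leaves implicit.
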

%%%%%%%
\begin{proof}
Given $A \in \bbC^{m \times m}$, let $M_A \in \cB(\bbC^{m \times m}_{\rm HS})$ be defined as in 
\eqref{B.2}, 
\begin{equation}
M_A(B) = BA, \quad B \in \bbC^{m \times m}_{\rm HS},
\end{equation}
and introduce $\Phi \colon \bbR^n \to \cB(\bbC^{m \times m}_{\rm HS})$ by
\begin{equation}
\Phi(x) = M_{F(x)}, \quad x \in \bbR^n.
\end{equation}
By Proposition \ref{pB.2}, 
\begin{equation}
\|\Phi(x)\|_{\cB(\bbC^{m \times m}_{\rm HS})} = \|F(x)\|_{\cB(\bbC^m)}, \quad x \in \bbR^n, 
\end{equation}
and hence by Lemma \ref{lB.5}, 
\begin{align}
& \|F(- i \nabla)\|_{\cB(L^2(\bbR^n, \, \bbC^{m \times m}_{\rm HS}))} 
= \|S_{\Phi}\|_{\cB(L^2(\bbR^n, \, \bbC^{m \times m}_{\rm HS}))} 
= {\rm ess.sup}_{x \in \bbR^n} \|\Phi(x)\|_{\cB(\bbC^{m \times m}_{\rm HS})}   \no \\
& \quad = {\rm ess.sup}_{x \in \bbR^n} \|F(x)\|_{\cB(\bbC^m)} = \|F\|_{\infty,m}. 
\end{align}
\end{proof}
%%%%%%%

\medskip

%%%%%%%%%%%%%%%%%%%%%%%%%%%%%%%%%%%%%
\noindent
{\bf Acknowledgments.} We are indebted to Loukas Grafakos, Alexander 
Sakhnovich, Lev Sakhnovich, Fedor Sukochev, and Yuri Tomilov for very helpful correspondence. Particular thanks are due to Loukas Grafakos for help with the 
proof of the multiplier equality \eqref{4.41A}, and to Yuri Tomilov for providing us 
with a most relevant list of references. We also thank both referees for a critical reading of our manuscript and for very helpful comments. 
%%%%%%%%%%%%%%%%%%%%%%%%%%%%%%%%%%%%%
 
%%%%%%%%%%%%%%%%%%%%%%%%%%%%%%%%
%%%%%%%%%%%%%%%%%%%%%%%%%%%%%%%%
 
\end{document}